\patchcmd{\BR@backref}{\newblock}{\newblock[cited on p.~}{}{}
\patchcmd{\BR@backref}{\par}{]\par}{}{} \makeatother
\sloppy \pagestyle{plain}
\numberwithin{equation}{section}
\newtheorem{thm}{Theorem}[section]\newtheorem{lemma}[thm]{Lemma}
\newtheorem{prop}[thm]{Proposition} \newtheorem{cor}[thm]{Corollary}
{\theorembodyfont{\rmfamily} 
   
  \newtheorem{rmk}[thm]{Remark} }
\renewcommand{\thesubsection}{\arabic{section}.\arabic{subsection}}
\newcommand{\qed}{\hfill \mbox{\raggedright \rule{.07in}{.1in}}}
\newenvironment{proof}{\vspace{1ex}\noindent{\bf
    Proof}\hspace{0.5em}}{\hfill\qed\vspace{1ex}}
\newenvironment{pfof}[1]{\vspace{1ex}\noindent{\bf Proof of
    #1}\hspace{0.5em}}{\hfill\qed\vspace{1ex}}
\newcommand{\R}{{\mathbb R}} 
\newcommand{\C}{{\mathbb C}}
\newcommand{\Z}{{\mathbb Z}} 
\newcommand{\T}{{\mathbb T}} 
\newcommand{\E}{{\mathbb E}}
\newcommand{\PP}{{\mathbb P}}
 \newcommand{\D}{{\mathbb D}}
\newcommand{\barD}{{\overline{\D}}}
\newcommand{\cB}{{\mathcal B}}
\newcommand{\cM}{{\mathcal M}}
\newcommand{\cH}{{\mathcal H}}
\newcommand{\hF}{{\widehat F}} 
\newcommand{\eps}{{\epsilon}}
\newcommand{\spec}{\operatorname{spec}}
\newcommand{\diam}{\operatorname{diam}}
\newcommand{\sgn}{\operatorname{sgn}}
\newcommand{\divv}{\operatorname{div}}
\newcommand{\BV}{{\operatorname{BV}}}
\newcommand{\Var}{\operatorname{Var}}
\newcommand{\Int}{\operatorname{Int}}
\newcommand{\Leb}{\operatorname{Leb}}
\newcommand{\SMALL}{\textstyle} \newcommand{\BIG}{\displaystyle}
\newcommand{\uex}{4} 
\FPeval{\uexminus}{clip(\uex-1)}
\FPeval{\uexsquare}{clip(\uex*uex)}
\FPeval{\uexsquareminus}{clip(\uex*uex-1)}
\FPeval{\twiceuex}{clip(2*\uex)}
\newcommand{\uexfrac}{\frac{\uexminus}{\uex}}
\newcommand{\uexfracone}{\frac{1}{\uex}}
\newcommand{\uexfractwo}{\frac{\uex}{\uexminus}}
\newcommand{\uexfracsquare}{\frac{\uexsquareminus}{\uexsquare}}
\newcommand{\graph}{\operatorname{graph}}
\newcommand\ve{\varepsilon}
\newcommand{\epstail}{\ve_0}
 \newcommand{\curvlen}{1+\delta}
\newcommand\metr{\mathbf{d}} 
\title{Sharp Statistical Properties for a Family of Multidimensional NonMarkovian Nonconformal Intermittent Maps.}
\author{ Peyman Eslami
\thanks{Dipartimento di Matematica, II Universit\`a di Roma (Tor Vergata),
00133 Roma, Italy.
\newline  peslami7@gmail.com)
}
\and Ian Melbourne
\thanks{Mathematics Institute, University of Warwick, Coventry, CV4 7AL, UK.
\newline i.melbourne@warwick.ac.uk}
 \and Sandro Vaienti 
\thanks{Aix Marseille Universit\'e, Universit\'e de Toulon, CNRS, CPT, 13009 Marseille, France. vaienti@cpt.univ-mrs.fr}
}
\date{24 March 2019.  Updated 30 June 2021.}
\begin{document}

\maketitle

\begin{abstract}
  Intermittent maps of Pomeau-Manneville type are well-studied in one-dimension,
and also in higher dimensions if the map happens to be Markov.
In general, the nonconformality of multidimensional intermittent maps represents a challenge that up to now is only partially addressed.
We show how to prove sharp polynomial bounds on decay of correlations 
for a class of multidimensional intermittent maps.
In addition we show that the optimal results on  statistical limit laws for one-dimensional intermittent maps hold also for the maps considered here. This includes the (functional) central limit theorem and local limit theorem, Berry-Esseen estimates, large deviation estimates, convergence to
 stable laws and L\'evy processes, and infinite measure mixing.
\end{abstract}

\section{Introduction} 
\label{sec:intro}

Intermittent maps were introduced by Pomeau \& Manneville~\cite{PomeauManneville80} as a model for turbulence.  These are maps that are uniformly expanding except for the presence of neutral fixed points.
In the smooth ergodic theory literature, they have provided the archetypal examples of nonuniformly expanding dynamical systems.  For one-dimensional intermittent maps,~\cite{Thaler80}
studied the invariant densities in the case when the map is Markov with respect to a suitable partition, and the nonMarkovian case was analysed in~\cite{Zweimuller98}.

The paper of Liverani, Saussol \& Vaienti~\cite{LSV99} set out to study the statistical properties of one-dimensional intermittent maps by considering the simplest possible example
$f:[0,1]\to[0,1]$, namely
\begin{align} \label{eq:LSV}
  f(x)=\begin{cases} x(1+2^\gamma x^\gamma), & 0\le x\le \frac12 \\
    2 x-1, & \frac12 < x\le1\end{cases}.
\end{align}
Here $\gamma>0$ is a real parameter.  For $\gamma\in(0,1)$, there
is a unique absolutely continuous probability measure $\mu$.
Let 
\begin{align} \label{eq:rho}
\rho_{v,w}(n)=\int v\,w\circ f^n\,d\mu-\int v\,d\mu\int w\,d\mu.
\end{align}
By~\cite{Hu04,Young99}, $\rho_{v,w}(n)=O(n^{-(\frac 1 \gamma -1)})$
for $v$ H\"older and $w\in L^\infty$ and this decay rate is
optimal~\cite{Gouezel04a,Sarig02}.
For $\gamma\in(0,\frac12)$, the central limit theorem (CLT)
holds for H\"older observables by~\cite{LSV99,Young99} as does the weak invariance principle (WIP)~\cite{MN05}. Berry-Esseen estimates and local limit theorems were obtained in~\cite{Gouezel05}.
When $\gamma\in[\frac12,1)$, the CLT fails for H\"older observables that are nonzero at $x=0$; stable laws were proved in this situation by~\cite{Gouezel04}
and the corresponding WIP holds by~\cite{MZ15}.

In addition, for $\gamma\in(0,1)$, sharp results on large deviations and convergence of moments were obtained in~\cite{DedeckerMerlevede15,GouezelM14,M09,MN08,MTorok12}.

For $\gamma\ge1$, there is a unique absolutely continuous invariant $\sigma$-finite measure up to scaling, but the measure is infinite.  Results on mixing for infinite measure systems were obtained in~\cite{Gouezel11,MT12}

Although~\cite{LSV99} initially focused on the specific maps~\eqref{eq:LSV},
the results described above have by now been shown to hold for very general classes of one-dimensional intermittent maps and extend to many multi-dimensional examples in cases when the map $f$ is Markov.  
In such cases, the standard approach is to construct an induced map $F$ with infinitely many branches and to deduce quasicompactness properties of the transfer operator for $F$ acting on a suitable function space.
In the Markov case, it is natural to consider observables that are H\"older with respect to a symbolic metric; in the one-dimensional case, one can consider observables of bounded variation.

Currently, multidimensional intermittent maps are poorly understood in general. 
The aim of this paper is to approach the problem of multidimensional intermittent maps in the same spirit that~\cite{LSV99} approached one-dimensional intermittent maps, focusing on some simple examples that exhibit all the problematic features: multidimensional, intermittent, nonconformal, nonMarkovian.  

\subsection{Statement of the main results}
Let $M=[0,1]\times\T$ where $\T=\R/\Z$.
Our counterpart of the family~\eqref{eq:LSV}
is the family of maps $f:M\to M$ with
$f(x,\theta)=(f_1(x,\theta),f_2(\theta))$, where
$f_1:M\to[0,1]$ is a (not
necessarily Markov) nonuniformly expanding map for each $\theta$.
Specifically, we assume that 
\begin{align} \label{eq:EMV}
  f_1(x,\theta)=\begin{cases} x(1+x^\gamma u(x,\theta)), & 0\le x\le \uexfrac \\
    \uex x-\uexminus, & \uexfrac < x\le1\end{cases}, \qquad
f_2(\theta)=\uex\theta\bmod1,
\end{align} 
where $\gamma>0$, and $u:[0,\uexfrac]\times\T\to(0,\infty)$ is a
positive $C^2$ function satisfying $u(0,\theta)\equiv c_0>0$.
(Implicitly, it is assumed that 
$x(1+x^\gamma u(x,\theta))\le1$ for all $x\in[0,\uexfrac]$, $\theta\in\T$.)
In addition, we assume that $|(Df)_{(x,\theta)}v|\ge|v|$
for all $(x,\theta)\in[0,\uexfrac]\times\T$, $v\in\R^2$.

In particular, as in~\cite{LSV99}, 
$f$ is an everywhere expanding map with a 
neutral invariant circle $\{x=0\}$, and $f$ is uniformly expanding on
$[\delta,1]\times\T$ for all $\delta>0$.  Also,
$f_1(\uexfrac,\theta)>\uexfrac$ for $\theta\in\T$.
For definiteness, we suppose that 
$f_1(\uexfrac,\theta)>\uexfracsquare$ for $\theta\in\T$.
Our final assumption is that $u$ is sufficiently close to constant, in
  the sense that $|x\frac{\partial u}{\partial x}|_\infty$ and
  $|\frac{\partial u}{\partial \theta}|_\infty$ are sufficiently
  small.  
(See Remarks~\ref{rmk:u} and~\ref{rmk:boundary}.)

Note that $f$ has $8$ branches; again as in~\cite{LSV99} half of the branches are linear.   However, typically the remaining branches are not full and there is no Markovian structure.   Moreover, the maps expand polynomially in $x$ and exponentially in $\theta$ and hence are highly nonconformal.

For the maps~\eqref{eq:EMV}, we obtain almost identical results to the ones described above for the one-dimensional maps~\eqref{eq:LSV}.  
Recall that $v:M\to\R$ is H\"older with exponent $\eta\in(0,1)$, denoted $v\in C^\eta(M)$, if
$\|v\|_\eta=|v|_\infty+\sup_{x\neq y}|v(x)-v(y)|/|x-y|^\eta$ is finite.
Our results are formulated mainly for H\"older observables, but occasionally for observables in $\BV_\infty(M)=\BV(M)\cap L^\infty(M)$.  (The definition of bounded variation on $M$ is recalled in Section~\ref{sec:BV}.)
In particular, all of the results hold for $C^1$ observables.

Lemma~\ref{lem:fmix} states that for $\gamma<1$, there is a unique absolutely continuous $f$-invariant probability measure, denoted $\mu$, and this measure is mixing.
Our main result gives sharp polynomial upper and lower bounds on the rate of mixing.
We set $\alpha=1/\gamma$ throughout. 
Define the correlation function $\rho_{v,w}$ as in~\eqref{eq:rho}.

\begin{thm} \label{thm:decay}
Suppose that $\gamma<1$.
\begin{itemize}
\item[(a)] 
 Let $\eta\in(0,1)$.  There exists $C>0$ such that
\[
|\rho_{v,w}(n)|\le C{\|v\|}_{\eta}|w|_\infty\,n^{-(\alpha-1)}
\quad\text{for all $n\ge1$},
\]
for all $v\in C^\eta(M)$, $w\in L^\infty(M)$.
\item[(b)]  
Define
\(
E(n)=\begin{cases} n^{-\alpha} & \alpha>2
\\ n^{-2}\log n & \alpha=2 \\ n^{-2(\alpha-1)}
& 1<\alpha<2
\end{cases}.
\)
There exists $C>0$, $c>0$ such that
\[
\Big|\rho_{v,w}(n)-cn^{-(\alpha-1)}\int v\,d\mu\int w\,d\mu\Big|\le CE(n)({\|v\|}_\BV+|v|_\infty)|w|_1
\quad\text{for all $n\ge1$},
\]
for all $v\in \BV_\infty(M)$, $w\in L^1(M)$ supported in $[\uexfrac,1]\times\T$.
In particular,
\(
\rho_{v,w}(n)\sim cn^{-(\alpha-1)}\int v\,d\mu \int w\,d\mu
\)
as $n\to\infty$.
\item[(c)]  There exists $C>0$ such that
\[
|\rho_{v,w}(n)|\le C({\|v\|}_\BV+|v|_\infty)|w|_1 n^{-\alpha}
\quad\text{for all $n\ge1$},
\]
for all $v\in \BV_\infty(M)$, $w\in L^1(M)$ 
supported in $[\uexfrac,1]\times\T$ with $\int v\,d\mu=0$.
\end{itemize}
\end{thm}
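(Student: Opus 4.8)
The plan is to reduce Theorem~\ref{thm:decay} to the study of the first-return map $F=f^\varphi\colon Y\to Y$, where $Y=[\uexfrac,1]\times\T$ is the region on which $f$ is uniformly expanding and $\varphi\colon Y\to\{1,2,\dots\}$ is the return time. There are four ingredients. (i) $\varphi$ is finite $\mu$-a.e.\ on $Y$ and $F$ is a piecewise $C^2$ uniformly expanding map. (ii) The transfer operator $L_F$ of $F$ has a spectral gap on $\BV_\infty(Y)$, and likewise for the twisted operators $R(z)=\sum_{n\ge1}z^nR_n$ with $R_nv=\mathbf 1_Y\,\hat f^{\,n}(\mathbf 1_{\{\varphi=n\}}v)$, where $\hat f$ is the transfer operator of $f$. (iii) $\mu(\varphi>n)$ admits a precise expansion $cn^{-\alpha}(1+o(1))$ with a controlled second-order term. (iv) The operator renewal theory of Sarig, Gou\"ezel and Melbourne--Terhesiu converts (ii)--(iii) into asymptotics for $T_n=\mathbf 1_Y\,\hat f^{\,n}\,\mathbf 1_Y$ via the renewal equation $\sum_n z^nT_n=(I-R(z))^{-1}$, and a ``last return to $Y$'' decomposition of $\mathbf 1_Y\hat f^{\,n}(vh)$, where $h=d\mu/d\Leb$, yields the stated correlation asymptotics.

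For ingredient (i): a point leaving $Y$ enters $[0,\uexfrac)\times\T$, and since $\{x=0\}$ is the only neutral set while $f_1(\uexfrac,\theta)>\uexfrac$, every such orbit re-enters $Y$. For $n\ge2$ the return domain $\{\varphi=n\}$ is a thin strip $I_n\times\T$ with $I_n\subset(\uexfrac,1)$ of width $\asymp n^{-(\alpha+1)}$ accumulating at $x=\uexfrac$, and its image under $F$ is (essentially) the fixed horizontal strip $[\uexfrac,f_1(\uexfrac^-,\cdot)]\times\T$; on it $F$ expands by $\asymp n^{\alpha+1}$ in $x$ and by $\uex^{\,n}$ in $\theta$. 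The crucial structural point, which makes $\BV$ viable here despite nonconformality and the absence of a Markov partition, is that $f$ is a skew product over the uniformly expanding circle map $f_2$: the $\uex^{\,n}$ many $\theta$-branches of $F$ over $\{\varphi=n\}$ all have the same image and reassemble into a function that is smooth across the (artificial) $\theta$-cuts, so these cuts contribute nothing to the variation, and all the intermittency and all the genuine jumps of $L_Fv$ live in the $x$-fibre. Bounded distortion of $F$ along returns, and the near-horizontality and bounded curvature of the fibre-boundary curves $\{f_1^{\,j}=\text{const}\}$, are exactly what the hypothesis that $|x\partial_xu|_\infty$ and $|\partial_\theta u|_\infty$ are small is there to control.

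For ingredient (ii): using (i) I would establish a two-dimensional Lasota--Yorke inequality $\Var(L_F^{\,N}v)\le\lambda\Var(v)+C|v|_1$ with $\lambda<1$ for a suitable iterate $N$ (after $N$ steps the minimal expansion is $\ge\uex^{\,N}$ against a bounded number of smooth boundary faces, once the curvature is small); the summability $\sum_n n^{-(\alpha+1)}<\infty$ makes $L_F$ bounded on $\BV_\infty(Y)$, while $\sum_n\mu(\varphi>n)=\bar\varphi<\infty$ -- which holds \emph{precisely because} $\gamma<1$ -- ensures the suspension over $Y$ with roof $\varphi$ has finite measure. Quasicompactness then follows from this inequality together with compactness of $\BV_\infty(Y)\hookrightarrow L^1$; mixing of $f$ (Lemma~\ref{lem:fmix}) gives mixing of $F$ and aperiodicity of $\varphi$, upgrading this to a simple leading eigenvalue $1$ with a genuine spectral gap, and to an analytic family $z\mapsto R(z)$ of quasicompact operators near $z=1$ with leading eigenvalue $\lambda(z)$, $\lambda(1)=1$.

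For ingredient (iii), the standard distortion analysis at the neutral fibre gives $y_k\sim(c_0\gamma k)^{-\alpha}$ for the relevant thresholds and hence $\mu(\varphi>n)=cn^{-\alpha}+O(n^{-\alpha-1}\log n)$ (the second-order term is $\theta$-averaged and controlled because $u\in C^2$), which is sharp enough to feed the Melbourne--Terhesiu error estimates. Plugging the renewal equation into the Sarig--Gou\"ezel expansion gives $\langle T_n\psi,w\rangle=\bar\varphi^{-1}\mu_Y(w)\int_Y\psi+\bar\varphi^{-2}\big(\sum_{k>n}\mu(\varphi>k)\big)\mu_Y(w)\int_Y\psi+O\big(E(n)\|\psi\|_{\BV}|w|_1\big)$, and the last-return decomposition expresses $\rho_{v,w}(n)$, for $w$ supported in $Y$, through the $T_k$ applied to functions whose total $\mu_Y$-mass is $\propto\int v\,d\mu$; since $\sum_{k>n}\mu(\varphi>k)\sim c'n^{-(\alpha-1)}$ this yields (b). Bounding crudely and lifting $v$ to the tower, where the Hölder norm controls fibrewise regularity, gives the upper bound (a), the orbit segment before the first entry to $Y$ contributing only $O(\mu(\varphi>n))=O(n^{-\alpha})$; and when $\int v\,d\mu=0$ both the leading $n^{-(\alpha-1)}$ term and the ``convolution-square'' error terms of size $n^{-2(\alpha-1)}$ (which all carry a factor $\int v\,d\mu$) vanish, leaving $O(n^{-\alpha})$, which is (c). I expect the main obstacle to be ingredient (ii): proving the two-dimensional Lasota--Yorke inequality for the nonconformal, nonMarkovian induced map -- that is, controlling the widths, boundary curvatures and distortion of the return partition uniformly in the return time -- which is where multidimensionality genuinely bites and where the near-constancy of $u$ is indispensable.
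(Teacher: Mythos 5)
Your strategy for parts (b) and (c) --- inducing on $Y$, a two-dimensional Lasota--Yorke inequality for the twisted transfer operators on $\BV$, the tail asymptotics $\mu_Y(\varphi=n)\sim c\,n^{-(1+\alpha)}$, and Gou\"ezel's operator renewal theorem --- is exactly the route the paper takes, and the obstacle you single out (uniform control of widths, boundary curvature and distortion of the return partition, using the near-constancy of $u$) is indeed where the work lies. One caveat: the horizontal cuts are \emph{not} harmless. Each $Y_{n,j}$ has two horizontal edges, both mapped by $f_2^n$ into the same circle $\{\theta=0\}$ of the image strip, so each branch term $1_{Fa}(gv)\circ F_a^{-1}$ has a genuine jump there; the paper must and does estimate these contributions (Lemmas~\ref{lem:Ha} and~\ref{lem:small}), and they produce an $L^1$ term rather than vanishing.

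The genuine gap is in part (a). H\"older functions on a two-dimensional domain are not of bounded variation, so the $\BV$/renewal machinery cannot yield (a); and the first return map $F$ is \emph{not} Markov (its branches are not full and there is no countable Markov partition), so you cannot simply ``lift $v$ to the tower'' over $(Y,F,\varphi)$ and invoke Young's theory --- the separation time and the symbolic H\"older space require a Gibbs--Markov base. The paper's resolution, entirely absent from your proposal, is to \emph{reinduce}: Lemma~\ref{lem:M} (proved via~\cite{Esl19}) produces a full-branched Gibbs--Markov map $G=F^\rho:Z\to Z$ with $\Leb(\rho>k)=O(\delta^k)$, and the tower for $f$ is built over $Z$ with return time $\tau=\varphi_\rho$. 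Obtaining the sharp tail $\mu_Z(\tau>n)=O(n^{-\alpha})$ is then itself nontrivial: the naive composition of tails gives only $O((\log n)^\alpha n^{-\alpha})$, and the Sz\'asz--Varj\'u argument (Proposition~\ref{prop:SV}, Lemma~\ref{lem:tautails}) is needed to remove the logarithm, without which the exponent in (a) would not be sharp. The same reinducing step is what delivers the mixing of $F$ and $f$ and the aperiodicity needed for the renewal theory, which you assert follow from ``mixing of $f$''; in the paper these are consequences of the Gibbs--Markov structure (Lemmas~\ref{lem:Fmix},~\ref{lem:fmix} and~\ref{lem:aperiodic}), not inputs to it.
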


For $\gamma<\frac12$, corresponding to summable decay of correlations in Theorem~\ref{thm:decay}(a), we obtain the CLT and related results.
Define $v_n=\sum_{j=0}^{n-1}v\circ f^j$.
Also, define $W_n(t)=n^{-1/2}v_{nt}$ for $t=0,\frac1n,\frac2n,\dots,1$ and linearly interpolate to obtain $W_n\in C[0,1]$.

\begin{thm} \label{thm:stats}
Suppose that $\gamma<\frac12$.  Let $v:M\to\R$ be H\"older with $\int v\,d\mu=0$.
\begin{itemize}
\item[(a)] {\bf CLT} $n^{-1/2}v_n$ converges in distribution\footnote{Here and elsewhere, convergence in distribution (or weak convergence) holds on the probability space $(M,\mu)$ and equivalently~\cite{Zweimuller07} on the probability space $(M,\Leb_M)$
where $\Leb_M$ denotes normalised Lebesgue measure on the support of $\mu$.}
 to a normal distribution $G=_d N(0,\sigma^2)$.  The variance $\sigma^2$ is zero if and only if $v=\chi\circ f-\chi$ for some $\chi$ measurable.
\item[(b)] {\bf Berry-Esseen} There exists $C>0$ such that 
\[
|\mu(n^{-1/2}v_n\le a)-\PP(G\le a)|\le Cn^{-q}
\quad\text{for all $a\in\R$, $n\ge1$,}
\]
where $q=\frac12$ for $\alpha>3$ and $q=(\alpha-2)/2$ for $\alpha\in(2,3)$
(Any $q<\frac12$ works for $\alpha=3$.)
\item[(c)]  {\bf Local limit theorem} Suppose that $v$ is aperiodic\footnote{Aperiodic means that it is not possible to write $v\equiv\chi-\chi\circ T+{\rm constant}\bmod \lambda\Z$ for some $\chi$ measurable and $\lambda>0$.}.
For all $a,b,\kappa\in\R$ with $a<b$, all $k_n\in\R$ with $k_n\sim \kappa n^{1/2}$, and all $u\in C^\eta(M)$, $w:M\to\R$ measurable,
\[
\lim_{n\to\infty}n^{1/2}\mu\big\{x\in M:v_n(x)-\kappa_n-u(x)-w(f^nx)\}\in[a,b]\big\}=(b-a)\dfrac{e^{-\kappa^2/(2\sigma^2)}}{(2\pi\sigma^2)^{1/2}}.
\]
\item [(d)] {\bf WIP}
$W_n$ converges weakly in $C[0,1]$ to Brownian motion $W$ with $W(1)=_d G$.
\item[(e)] {\bf Error rate in WIP} For any $q<(\alpha-2/(4\alpha)$, there exists $C>0$ such that
$\pi_1(W_n,W)\le Cn^{-q}$ for all $n\ge1$.
\footnote{Let $A^\eps$ denote the $\eps$-neighborhood of $A$. The Prokhorov metric $\pi_1$ is given by \newline
$\pi_1 (X, Y ) = \inf\{\eps > 0 : \PP(X \in A) \le  \PP(Y \in A^\eps ) +\eps
\text{ for all closed sets $A \subset C[0,1]$}
\}$.}
\item[(f)] {\bf Almost sure invariance principle}
For any $\eps>0$, there is a probability space supporting $W$ and a sequence of random variables $\{\tilde v_n;\,n\ge1\}$ 
with the same joint distributions as $\{v_n\}$ such that
$\tilde v_n=W(n)+O(n^\gamma(\log n)^{\gamma+\eps})$ a.e.
\end{itemize}
\end{thm}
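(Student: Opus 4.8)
The plan is to reduce every item to an established limit theorem for the Gibbs--Markov induced map constructed in the proof of Theorem~\ref{thm:decay}, and then to transfer (``un-induce'') the resulting statement from the induced map back to $f$. Throughout, the hypothesis $\gamma<\tfrac12$ enters only through the equivalent inequality $\alpha>2$.

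First I would recall the inducing scheme underlying Theorem~\ref{thm:decay}: a reference set $Y\subset M$ built from the linear branches (essentially $[\uexfrac,1]\times\T$), a first return time $\tau\colon Y\to\Z^+$, and the first return map $F=f^\tau\colon Y\to Y$, which is Gibbs--Markov with respect to a suitable countable partition and an adapted metric, with invariant probability measure $\mu_Y$ of density bounded above and below, and with the sharp tail bound $\mu_Y(\tau>n)=O(n^{-\alpha})$. Since $\alpha>2$, this gives $\tau\in L^q(Y)$ for every $1\le q<\alpha$; in particular $\tau\in L^2(Y)$, and $\tau\in L^3(Y)$ when $\alpha>3$. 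Given a mean-zero $v\in C^\eta(M)$, I would set $V=\sum_{j=0}^{\tau-1}v\circ f^j\colon Y\to\R$ and check that $V$ is piecewise H\"older on the partition of $Y$ in the adapted metric (via the standard telescoping estimate) and that $|V|\le|v|_\infty\,\tau$ pointwise, hence $V\in L^q(Y)$ for every $q<\alpha$. This regularity check is where nonconformality must be handled carefully: because $f$ expands at very different rates in $x$ and in $\theta$, one has to verify that two points lying in a common $F$-cylinder stay exponentially close in the adapted (hence Euclidean) metric at every intermediate time $j<\tau$, using the standing assumption $|(Df)v|\ge|v|$ together with uniform expansion on $[\delta,1]\times\T$.

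With $(Y,F,\mu_Y,V)$ in place, each assertion follows by quoting an abstract result for $F$ and lifting. For (a), the CLT and the coboundary dichotomy hold for $V$ under the Gibbs--Markov map $F$ since $V\in L^2(Y)$, and they pass to $v$ under $f$ because $\tau\in L^2$ (e.g.\ \cite{MN05,MTorok12}); $\sigma^2=0$ is equivalent to $V$ being an $L^2$-coboundary for $F$, and in that case a standard argument yields a measurable $\chi$ with $v=\chi\circ f-\chi$. For (d), the WIP follows from the same $L^2$ machinery for nonuniformly expanding maps modeled by Young towers \cite{MN05}. For (b), the Berry--Esseen theorem for such towers \cite{Gouezel05}, together with $\mu_Y(\tau>n)=O(n^{-\alpha})$ and $V\in L^q$ for $q<\alpha$, gives the rate $n^{-1/2}$ when $\alpha>3$ and $n^{-(\alpha-2)/2}$ when $\alpha\in(2,3)$ (and any $q<\tfrac12$ at $\alpha=3$). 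For (c), the local limit theorem follows from the twisted-transfer-operator analysis of $F$ as in \cite{Gouezel05}, once one checks that aperiodicity of $v$ for $f$ is equivalent to the non-lattice condition for $V$ under $F$; the stated conclusion, with the error written as $u(x)+w(f^nx)$, is exactly what this method produces. Finally, (e) and (f) follow from the quantitative (Prokhorov-metric) WIP and the almost sure invariance principle for nonuniformly expanding maps with polynomial return-time tails; specializing to tail exponent $\alpha=1/\gamma$ yields the rate $q<(\alpha-2)/(4\alpha)$ in (e) and the ASIP error $O(n^\gamma(\log n)^{\gamma+\eps})$ in (f).

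The only part carrying genuine content beyond invoking these theorems is the lifting step above: establishing H\"older regularity of the induced observable in the adapted, highly nonconformal metric, its membership in $L^q(Y)$ for every $q<\alpha$, and the faithful translation of the degeneracy and aperiodicity conditions between $f$ and $F$. I expect that --- not the (by now routine) citation of the abstract limit theorems, which becomes available as soon as the Gibbs--Markov structure of $F$ from the proof of Theorem~\ref{thm:decay} is in hand --- to be the main obstacle.
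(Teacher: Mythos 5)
Your overall strategy --- induce, lift the observable, and quote standard limit theorems for Young towers --- is the same as the paper's, and your citations for (a)--(f) are essentially the right ones. But there is a genuine gap at the very first step: you assert that the first return map to $Y$ (your $F=f^\tau$, the paper's $F=f^\varphi$) ``is Gibbs--Markov with respect to a suitable countable partition''. For the maps~\eqref{eq:EMV} this is false, and it is precisely the difficulty advertised in the title: $F$ has finitely many images $([\uexfrac,1]\times\T)\cap X_i$, but these are not unions of elements of $\alpha^Y$ (their curved right boundaries cut through the cells $Y_{1,j}$), so there is no Markov structure and none of the abstract theorems you invoke apply to $(Y,F,\mu_Y,V)$ as you have set things up. The paper's resolution is a second inducing step (Lemma~\ref{lem:M}, proved in Appendix~\ref{app-tower} by verifying the expansion, distortion and controlled-complexity hypotheses of~\cite{Esl19}): one constructs a full-branched Gibbs--Markov map $G=F^\rho:Z\to Z$ on a subset $Z\subset Y$ with $\Leb(\rho>k)=O(\delta^k)$, and the Young tower for $f$ is built over $(Z,G)$ with return time $\tau=\sum_{\ell=0}^{\rho-1}\varphi\circ F^\ell$, not over $(Y,F)$ with return time $\varphi$.

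This matters quantitatively as well as structurally. The sharp rates in parts (b), (e), (f) require $\mu_Z(\tau>n)=O(n^{-\alpha})$ for the \emph{composed} return time; the standard estimate only gives $O((\log n)^{\alpha}n^{-\alpha})$, and removing the logarithm is the content of Lemma~\ref{lem:tautails}, via the Sz\'asz--Varj\'u-type argument of Proposition~\ref{prop:SV} and Corollary~\ref{cor:SV}. You cannot simply quote the tail of the first return time. Conversely, the step you single out as the main obstacle --- H\"older regularity of the induced observable in the nonconformal adapted metric --- is in fact the easy part here: since $|DF|\ge\uex$ everywhere (Proposition~\ref{prop:expand}), points in a common $G$-cylinder satisfy $|z-z'|\le\sqrt2\,\uex^{-s(z,z')}$, so H\"older observables are automatically dynamically H\"older (Proposition~\ref{prop:H}), and one works directly with the lift $\tilde v=v\circ\tilde\pi$ on the tower rather than with the induced sum $V$. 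The aperiodicity bookkeeping you mention for (c) is likewise handled on the tower side, in Section~\ref{sec:ap}, using Lemma~\ref{lem:M}(c).
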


For $\gamma\in(\frac12,1)$, the CLT with normalization $n^{-1/2}$ fails for general H\"older observables,  and we obtain results on anomalous diffusion.
Let $G_\alpha$ denote the totally skewed $\alpha$-stable law with
characteristic function
$\E(e^{itG_\alpha})= \exp\{-|t|^\alpha(1-i\sgn t\tan{\SMALL\frac{\alpha\pi}{2}})\}$.

\begin{thm} \label{thm:stab}
Let $v:M\to\R$ be H\"older with $\int v\,d\mu=0$.
Suppose that 
\mbox{$\int_\T v(0,\theta)\,d\theta\neq0$}.
Then there exists $c>0$ such that
$n^{-1/\alpha}v_n$ converges in distribution to $cG_\alpha$.

Moreover the process defined by $W_n(t)=n^{-1/\alpha}v_{[nt]}$ converges weakly in $D[0,1]$ with the $\cM_1$ topology\footnote{We refer to~\cite{Skorohod56,Whitt} for background information on $D[0,1]$ and the Skorohod $\cM_1$ topology.} to the $\alpha$-stable L\'evy process $W$ with $W(1)=_d cG_\alpha$.
\end{thm}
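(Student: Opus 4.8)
The plan is to pass to the first-return map of $f$ to a uniformly expanding part of $M$, reduce to the abstract stable-limit and L\'evy-invariance-principle theorems for induced systems, and then carry out the (somewhat delicate) analysis of the induced observable. The proof of Theorem~\ref{thm:decay} is based on inducing on a reference set $Y\subset M$ bounded away from the neutral circle $\{x=0\}$: the first-return map $F=f^\varphi:Y\to Y$, with return time $\varphi:Y\to\Z^+$, is uniformly expanding, its transfer operator has a spectral gap on a suitable space of functions of bounded variation (equivalently, $F$ is modelled by a Young tower with exponential tails), the induced $F$-invariant probability $\mu_Y$ is equivalent to $\mu|_Y$, and --- this being the estimate that drives part~(b) --- there is $c_0>0$ with $\mu_Y(\varphi>n)=c_0n^{-\alpha}\bigl(1+o(1)\bigr)$, where $\alpha=1/\gamma\in(1,2)$ since $\gamma\in(\tfrac12,1)$; in particular $\int_Y\varphi\,d\mu_Y<\infty$, as it must be because $\mu$ is finite. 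Granting this, Theorem~\ref{thm:stab} follows from the abstract criterion for convergence to a stable law of~\cite{Gouezel04} and the abstract criterion for weak convergence to an $\alpha$-stable L\'evy process in the $\cM_1$ topology of~\cite{MZ15}, \emph{provided} the induced observable $\tilde v=v_\varphi=\sum_{j=0}^{\varphi-1}v\circ f^j$ equals a nonzero multiple of $\varphi$ up to a remainder whose tails are lighter than $n^{-\alpha}$. Verifying this is the substance of the argument.

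Set $\bar v=\int_\T v(0,\theta)\,d\theta$, nonzero by hypothesis, and write $\tilde v=\bar v\,\varphi+\psi$. On $\{\varphi=N\}$ the orbit $(x_j,\theta_j)_{0\le j<N}$ obeys $x_{j+1}=x_j\bigl(1+x_j^\gamma u(x_j,\theta_j)\bigr)$ and $\theta_j=\uex^{\,j}\theta_0\bmod1$, and the standard estimate at the neutral circle gives $x_j\asymp(N-j)^{-\alpha}$ for $N-j$ large. Since $v\in C^\eta(M)$,
\[
\psi\big|_{\{\varphi=N\}}=\sum_{j=0}^{N-1}\bigl(v(x_j,\theta_j)-v(0,\theta_j)\bigr)+\sum_{j=0}^{N-1}\bigl(v(0,\theta_j)-\bar v\bigr);
\]
the first sum is $O\bigl(\sum_{m=1}^{N}m^{-\alpha\eta}\bigr)$, hence $O(1)$ when $\alpha\eta>1$ and $O(N^{1-\alpha\eta})$ otherwise, while the second is a Birkhoff sum of the mean-zero H\"older function $\theta\mapsto v(0,\theta)-\bar v$ under the expanding circle map $\theta\mapsto\uex\theta$, hence of size $O(\sqrt N)$ in every $L^q$. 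Together with $\mu_Y(\varphi=N)\asymp N^{-\alpha-1}$ this yields $\psi\in L^q(\mu_Y)$ for some $q>\alpha$ (any $q<\min\{2\alpha,\ \alpha/(1-\alpha\eta)\}$ works, the second term read as $+\infty$ when $\alpha\eta\ge1$). Consequently $\mu_Y(\tilde v>t)\sim|\bar v|^\alpha c_0\,t^{-\alpha}$ when $\bar v>0$, with the mirror statement for $\{\tilde v<-t\}$ when $\bar v<0$: $\tilde v$ lies in the domain of attraction of the totally skewed $\alpha$-stable law, the direction of skewness being $\sgn(\bar v)$.

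Feeding the induced data --- a spectral gap for $F$, the tail $\mu_Y(\varphi>n)\sim c_0n^{-\alpha}$, and $\tilde v=\bar v\,\varphi+\psi$ with $\psi\in L^q(\mu_Y)$, $q>\alpha$ --- into~\cite{Gouezel04} gives that $n^{-1/\alpha}v_n$ converges in distribution to $cG_\alpha$, and into~\cite{MZ15} gives weak convergence of $W_n(t)=n^{-1/\alpha}v_{[nt]}$ in $D[0,1]$ with the $\cM_1$ topology to the $\alpha$-stable L\'evy process $W$ with $W(1)=_d cG_\alpha$ (which in any case already contains the one-dimensional statement, evaluating at $t=1$). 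Here $c>0$ is an explicit constant built from $c_0$, $\bar v$ and $\mu(Y)$ (replace $G_\alpha$ by $-G_\alpha$, equivalently flip the sign of $c$, when $\bar v<0$). The use of $\cM_1$ rather than the stronger $J_1$ topology is forced: during a long excursion of length $N\asymp n^{1/\alpha}$ the Birkhoff sum increases by about $\bar v$ at each step, so $W_n$ climbs an approximately linear ramp of height $\asymp n^{1/\alpha}$ over a time interval of length $N/n\to0$, which converges to a jump of $W$ in $\cM_1$ but not in $J_1$.

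The genuine difficulties are inherited from, or run parallel to, the proof of Theorem~\ref{thm:decay}. The first is the sharp tail $\mu_Y(\varphi>n)\sim c_0n^{-\alpha}$ with an honest leading constant, valid uniformly across $\theta$-fibres despite the exponential expansion in $\theta$ and the absence of a Markov partition --- precisely the delicate point behind part~(b), and the engine of everything here. The second is that, in the bound on $\psi$, the return time $\varphi$ and the entry point $(x_0,\theta_0)$ of an excursion are strongly correlated, so controlling the ``$\theta$-Birkhoff-sum'' piece in $L^q(\mu_Y)$ requires knowing that the conditional law of $\theta_0$ on $\{\varphi=N\}$ is comparable to Lebesgue on $\T$ --- which again rests on the distortion and tower estimates of the earlier sections. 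A lesser point is to confirm that the contribution of the incomplete excursion at time $n$, which can itself reach order $n^{1/\alpha}$, is exactly the feature the $\cM_1$-convergence statement of~\cite{MZ15} is designed to absorb, rather than an error term to be discarded.
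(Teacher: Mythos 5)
Your proposal is correct in outline and follows the same skeleton as the paper: induce on $Y$, show that the induced observable equals a nonzero multiple of $\varphi$ plus a remainder with lighter tails, establish the stable law for $\varphi$ itself via the spectral gap and the sharp tail asymptotics of Proposition~\ref{prop:mu}, and then invoke abstract lifting theorems (the paper uses~\cite{MVsub} in place of your~\cite{Gouezel04,MZ15}, verifying its conditions~(3.1) and~(3.2) and, for the $\cM_1$ statement, the within-excursion monotonicity bound on $M_1$). The one step where you genuinely diverge is the treatment of the $\theta$-dependence of $v(0,\theta)$. You absorb the mean-zero circle-map Birkhoff sum $\sum_{j<\varphi}(v(0,\theta_j)-\bar v)$ into the induced remainder $\psi$ and control it in $L^q(\mu_Y)$ for some $q<2\alpha$; as you note, this forces you to know that the conditional law of $\theta_0$ on $\{\varphi=N\}$ is comparable to Lebesgue on $\T$. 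That is true here (the strips $Y_{N,j}$, $1\le j\le 4^N$, tile $\T$ in the $\theta$-direction with widths $y_{N-1}-y_N$ uniform in $\theta$ by Proposition~\ref{prop:xn}, and $h_Y$ is bounded above and below), but it is extra work. The paper sidesteps it entirely by splitting $v=v'+v''$ \emph{before} inducing, with $v''(y,\theta)=v(0,\theta)-I_v$ depending only on $\theta$: then $v''$ is an observable of the uniformly expanding factor map $f_2:\T\to\T$, its Birkhoff sums satisfy the classical WIP with normalisation $n^{1/2}=o(n^{1/\alpha})$, so $W_n''\to_w 0$ and never enters the induction; meanwhile $v'$ has $v'(0,\theta)\equiv I_v$ constant, so the induced remainder is purely the H\"older piece $O(\varphi^{1-\delta})$ of Proposition~\ref{prop:I}, which also yields the $M_1$ bound needed for $\cM_1$-convergence without having to track a $\varphi^{1/2}$-sized fluctuation inside excursions. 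Your route works but buys nothing over the paper's cleaner decomposition; the paper's route also supplies the explicit constant $c=\bar\varphi^{-1/\alpha}I_v\sigma$.
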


Next, we consider large deviation estimates and moment estimates.
\begin{thm} \label{thm:LD}
Suppose that $\gamma<1$ and let $v:M\to\R$ be H\"older.
\begin{itemize}
\item[(a)] {\bf Large deviation estimates}
For any $a>0$, there exists $C>0$ such that
\[
\mu\Big\{\Big|\frac1n v_n-\int v\,d\mu\Big|>a\Big\}\le Cn^{-(\alpha-1)}
\quad\text{for all $n\ge1$}.
\]
\item[(b)] {\bf Moment estimates}  For any $p\ge1$, there exists $C>0$
such that for all $n\ge1$ 
\[
\int|v_n|^p\,d\mu
\le C\max\{g(n),n^{p-\alpha+1}\}
\quad\text{where}\quad
g(n)=\begin{cases} n^{p/2} & \alpha>2 \\ (n\log n)^{p/2} & \alpha=2 
\\ n^{p/\alpha} & 1<\alpha<2,\,p\neq\alpha
\\ n\log n & 1<\alpha<2,\,p=\alpha
 \end{cases}.
\]
\item[(c)] {\bf Convergence of moments}  
If $\gamma<\frac12$, then
$\int |n^{-1/2}v_n|^p\,d\mu\to \E|G|^p$ for all $p<2(\alpha-1)$.

If $\gamma\in(\frac12,1)$, then
$\int |n^{-1/\alpha}v_n|^p\,d\mu\to \E|cG_\alpha|^p$ for all $p<\alpha$
where $c$ is the constant in Theorem~\ref{thm:stab}.
\end{itemize}
\end{thm}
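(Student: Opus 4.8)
The plan is to derive all three parts from known abstract results for nonuniformly expanding maps, applied to the inducing scheme that drives the proof of Theorem~\ref{thm:decay}. Recall that there $f$ is modelled by a Young tower over the (uniformly expanding, but nonconformal) first-return map to $Y=[\uexfrac,1]\times\T$, with return-time function $\varphi$ whose tail satisfies $\mu(\varphi>n)=O(n^{-\alpha})$; only this upper bound, not the sharp asymptotic, is needed here. Since $f$ is everywhere expanding there is no stable direction, and a H\"older observable $v\in C^\eta(M)$ lifts to an observable on the tower in the function class for which the abstract theorems apply (dynamically H\"older, with summable variations up the tower). Matching the Euclidean regularity of $v$ on $M$ to the regularity imposed by the nonconformal tower metric is the one genuinely technical point, but it is handled exactly as in the proof of Theorem~\ref{thm:decay}; granting it, each part reduces to a citation plus elementary exponent arithmetic.

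For part~(a) I would invoke the polynomial large deviation estimates for nonuniformly expanding maps of~\cite{MN08} (see also~\cite{M09}): with return-time tail $O(n^{-\alpha})$, $\alpha>1$, and $v$ admissible, one has $\mu\{|\tfrac1n v_n-\int v\,d\mu|>a\}=O(n^{-(\alpha-1)})$ for every $a>0$, with no centring hypothesis required. For part~(b) I would apply the Rosenthal-type moment bounds for Birkhoff sums over Young towers with tail $O(n^{-\alpha})$ from~\cite{DedeckerMerlevede15,GouezelM14,MTorok12}, which produce precisely $\int|v_n|^p\,d\mu\le C\max\{g(n),n^{p-\alpha+1}\}$ with the stated $g$: the term $g(n)$ is the Gaussian ($\alpha\ge2$) or stable ($1<\alpha<2$) contribution, with the logarithmic losses at $\alpha=2$ and at $p=\alpha$, while $n^{p-\alpha+1}$ records the long excursions near $\{x=0\}$. (As usual one may assume $\int v\,d\mu=0$ here.)

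For part~(c) I would combine~(b) with the distributional limits already proved. For $\gamma<\tfrac12$ (so $\alpha>2$), given $p<2(\alpha-1)$ pick $p'\in(p,2(\alpha-1))$; since $\tfrac12 p'<\alpha-1$, (b) gives $\sup_n\int|n^{-1/2}v_n|^{p'}\,d\mu<\infty$, so $\{|n^{-1/2}v_n|^p\}$ is uniformly integrable, and with $n^{-1/2}v_n$ converging in distribution to $G$ by Theorem~\ref{thm:stats}(a) we obtain $\int|n^{-1/2}v_n|^p\,d\mu\to\E|G|^p$. For $\gamma\in(\tfrac12,1)$ (so $1<\alpha<2$), with $v$ as in Theorem~\ref{thm:stab}, given $p<\alpha$ pick $p'\in(p,\alpha)$; then $g(n)=n^{p'/\alpha}$ and $n^{p'-\alpha+1}\le n^{p'/\alpha}$ because $p'<\alpha$, so $\sup_n\int|n^{-1/\alpha}v_n|^{p'}\,d\mu<\infty$, $\{|n^{-1/\alpha}v_n|^p\}$ is uniformly integrable, and with $n^{-1/\alpha}v_n$ converging in distribution to $cG_\alpha$ by Theorem~\ref{thm:stab} we obtain $\int|n^{-1/\alpha}v_n|^p\,d\mu\to\E|cG_\alpha|^p$; note $\E|G|^p$ and $\E|cG_\alpha|^p$ are finite for the stated ranges of $p$.

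The main obstacle is thus not in Theorem~\ref{thm:LD} itself but upstream: constructing and analysing the tower for a nonconformal, nonMarkov map and pinning down its return-time tail (accomplished for Theorem~\ref{thm:decay}), together with the verification that H\"older observables on $M$ are admissible inputs for the abstract results. Once those are in place, Theorem~\ref{thm:LD} is essentially bookkeeping.
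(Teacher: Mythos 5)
Your proposal is correct and follows essentially the same route as the paper: all three parts are read off from the Young tower with $O(n^{-\alpha})$ return-time tails (the paper cites \cite[Theorem~1.2]{M09} for (a), \cite[Theorem~1.4]{GouezelM14} for (b), and deduces (c) from (b) plus the distributional limits by exactly your uniform-integrability argument). The only imprecision is that the relevant tower is built over the reinduced full-branch Gibbs--Markov map $G=f^\tau:Z\to Z$ with tail $\mu_Z(\tau>n)=O(n^{-\alpha})$ (Lemma~\ref{lem:tautails}), not directly over the non-Markov first return map $F:Y\to Y$ with return time $\varphi$; since you defer this to the machinery already established for Theorem~\ref{thm:decay}, it does not affect the argument.
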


For $\gamma\ge1$, Lemma~\ref{lem:fmix} states that up to scaling there is a unique absolutely continuous $f$-invariant $\sigma$-finite measure $\mu$, but now $\mu(M)=\infty$.  We prove the following mixing property for infinite measure systems.

\begin{thm} \label{thm:infinite}
\begin{itemize}
\item[(a)]
Suppose that $\gamma>1$.  There exists $c>0$ such that
\[
\lim_{n\to\infty}n^{1-\alpha}\int v\,w\circ f^n\,d\mu = c \int v\,d\mu \int w\,d\mu,
\]
for all $v\in\BV_\infty(M)$, $w\in L^1(M)$ 
supported in $[\uexfrac,1]\times\T$.

For $\gamma=1$, the same result holds with 
$n^{1-\alpha}$ replaced by $\log n$.
\item[(b)]
For $\gamma>1$, there exists $C>0$ such that
\[
\Big|\int v\,w\circ f^n\,d\mu\Big|\le C({\|v\|}_\BV+|v|_\infty)|w|_1 n^{-\alpha}
\quad\text{for all $n\ge1$},
\]
for all $v\in\BV_\infty(M)$, $w\in L^1(M)$
supported in $[\uexfrac,1]\times\T$ with $\int v\,d\mu=0$.
\end{itemize}
\end{thm}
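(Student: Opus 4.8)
The plan is to pass to the first-return map on the ``good'' rectangle $Y=[\uexfrac,1]\times\T$ and then invoke operator renewal theory for infinite-measure-preserving transformations in the form of~\cite{MT12} (see also~\cite{Gouezel11}). I would import all of the dynamical input that is special to the present multidimensional, nonconformal, nonMarkov setting from the analysis already carried out for Theorem~\ref{thm:decay}; what remains is then a ``soft'' argument.

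First I would fix the induced system. Let $R\colon Y\to\{1,2,\dots\}$ be the first-return time to $Y$ and $F=f^R\colon Y\to Y$ the first-return map. As in the proof of Theorem~\ref{thm:decay}, $F$ is a countably-branched piecewise expanding map whose transfer operator has a simple leading eigenvalue $1$ with a spectral gap on the anisotropic bounded-variation space $\cB$ used there (BV in $x$, with a modulus of continuity in $\theta$ tuned to the strong expansion in that direction and to the curvature of the non-full branches). Long return times come only from orbit segments dwelling near the neutral circle $\{x=0\}$, where $f_1(x,\theta)=x\bigl(1+c_0x^\gamma+o(x^\gamma)\bigr)$, so the tail estimate established in the course of proving Theorem~\ref{thm:decay} reads $\mu\{y\in Y:R(y)>n\}=b\,n^{-\alpha}\bigl(1+O(n^{-\eps})\bigr)$ for some $b,\eps>0$, with $\alpha=1/\gamma\le1$; the $C^2$ assumption on $u$ is what yields the error term. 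Equivalently $Y$ is a Darling--Kac set with return sequence $a_n\asymp n^{1-\alpha}$ for $\gamma>1$ and $a_n\asymp\log n$ for $\gamma=1$.

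Next I would run the renewal argument. Writing $\widehat f$ for the transfer operator of $f$ relative to $\mu$, put $T_n=\mathbf{1}_Y\widehat f^{\,n}\mathbf{1}_Y$ and $R_n=\mathbf{1}_Y\widehat f^{\,n}\mathbf{1}_{\{R=n\}}$ on $\cB$, so that $\sum_{n\ge0}T_nz^n=\bigl(I-\sum_{n\ge1}R_nz^n\bigr)^{-1}$ for $|z|<1$. The operator $R(1)=\sum_nR_n$ is the transfer operator of $F$, hence has a spectral gap; combined with the tail expansion and with the aperiodicity of $R$ (so that $I-R(e^{i\vartheta})$ is invertible on $\cB$ for $\vartheta\in(0,2\pi)$, which holds because $R$ attains every sufficiently large integer value on a positive-measure set), the Karamata/Tauberian theorems of~\cite{MT12} give $a_nT_n\to c'\bigl(\int_Y\,\cdot\,d\mu\bigr)\mathbf{1}_Y$. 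Since $w$ is supported in $Y$, $\int_M v\,w\circ f^n\,d\mu=\int_Y(\widehat f^{\,n}v)\,w\,d\mu$; splitting $v$ by the first entrance time $\varphi$ into $Y$ gives the first-entrance decomposition $\mathbf{1}_Y\widehat f^{\,n}v=\sum_{k=0}^nT_{n-k}A_kv$ with $A_k=\mathbf{1}_Y\widehat f^{\,k}\mathbf{1}_{\{\varphi=k\}}$, $\sum_k\|A_k\|_\cB<\infty$, and $\sum_k\int_YA_kv\,d\mu=\int_Mv\,d\mu$ by conservation of mass. Feeding this into the operator asymptotics gives part~(a), namely $a_n\int_M v\,w\circ f^n\,d\mu\to c\int v\,d\mu\int w\,d\mu$, with the $\log n$ normalisation when $\gamma=1$. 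For part~(b) I would use that $\int v\,d\mu=0$ makes the leading term vanish, so that the quantitative form of the renewal expansion in~\cite{MT12} controls the remainder by $O(\mu\{R>n\})=O(n^{-\alpha})$ in the relevant operator norm, yielding $\bigl|\int_M v\,w\circ f^n\,d\mu\bigr|\le C(\|v\|_\BV+|v|_\infty)|w|_1\,n^{-\alpha}$.

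The hard part — and the only place the multidimensionality, nonconformality and lack of Markov structure really bite — is not in the renewal bookkeeping above but in the induced system: one has to build the space $\cB$ so that $\widehat f$, the twisted operators $R(z)$ and the first-entrance operators $A_k$ all act boundedly on it with uniform Lasota--Yorke/Doeblin--Fortet estimates, so that the spectral gap of $R(1)$ persists under perturbation in $z$ near the unit circle, and one must describe the curved, non-full branch boundaries of $F$ sharply enough to read off the return-time tail with its error term. This is exactly the anisotropic-BV analysis carried out for Theorem~\ref{thm:decay}; granting it, checking aperiodicity of $R$ and the hypotheses of~\cite{MT12} is routine, and the case $\gamma=1$ is merely the boundary case of the Tauberian theorem.
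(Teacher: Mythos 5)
Your proposal follows essentially the same route as the paper: induce on $Y=[\uexfrac,1]\times\T$, establish the spectral gap, aperiodicity and the bound $\|R_n\|=O(n^{-(1+\alpha)})$ for the (twisted) transfer operators on a BV-type space, combine this with the tail asymptotics $\mu_Y(\varphi>n)\sim \mathrm{const}\cdot n^{-\alpha}$ from Proposition~\ref{prop:mu}, and invoke the operator renewal theorems of Gou\"ezel~\cite{Gouezel11} and Melbourne--Terhesiu~\cite{MT12} for parts (a) and (b). The only cosmetic differences are that the paper's space is genuine two-dimensional BV conjugated by the invariant density ($\cB(Y)=h_Y^{-1}\BV(Y)$ of Section~\ref{sec:R}) rather than an anisotropic space, that aperiodicity is verified via the Gibbs--Markov reinducing rather than the quick argument you sketch, and that both $v$ and $w$ are taken supported in $Y$, so no first-entrance decomposition is required.
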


\begin{rmk} The constants $c$ in Theorems~\ref{thm:decay}(b),~\ref{thm:stab} and~\ref{thm:infinite}(a) are given explicitly in Sections~\ref{sec:R} and~\ref{sec:levy}.
\end{rmk}

\begin{rmk}
It is an easy but tedious exercise to extend to cases where $\theta$ is of general dimension
and $f_2:\T^{d-1}\to\T^{d-1}$ is a general smooth uniformly expanding map with worst expansion sufficiently large  (strictly larger than $3$ suffices when $d=2$),
but we restrict to the current situation for readability.

A notationally simpler example would have $f_2(\theta)=2\theta\bmod1$, but it is well-known that the extra expansion is useful  in higher dimensions.
Our assumption that $u$ is sufficiently close to constant is of the same flavour and can be relaxed by assuming sufficient expansivity of $f$, see Remark~\ref{rmk:boundary}.
\end{rmk}

\subsection{Comparison with other results and methods}

There is a considerable amount of work on uniformly expanding maps in higher dimensions.  In the analytic setting, see~\cite{Buzzi00,Tsujii00}.
For $C^2$ maps, still with finitely many branches, see~\cite{Cowieson00,Cowieson02,GoraBoyarsky89}.
The paper~\cite{Liverani13} sets
out a general approach to multidimensional uniformly expanding maps with infinitely many branches.  This method could in principle be applied to the first return maps $F$ mentioned in Subsection~\ref{sec:F}.  However, the assumptions therein do not hold for the examples in~\cite{HuVaienti09,HuVaientiapp} nor the examples~\eqref{eq:EMV}.
(Condition~4 in~\cite{Liverani13} fails due to the lack of conformality; the condition has the form $\lim_{\eps\to0}A_\eps=0$ but in our examples $A_\eps=\infty$ for $\eps>0$.)  Another approach in~\cite{Saussol00} uses quasi-H\"older spaces, but these also have drawbacks as discussed below.

Turning to multidimensional intermittent maps, we mention the work of~\cite{BahsounBoseDuan14,BahsounBose16,BahsounBoseRuziboev19,Gouezel07} which treats examples like those in~\eqref{eq:EMV} with $\gamma$ depending on $\theta$.  However, these papers require that $f$ is Markovian and hence do not encounter the issues treated here.

In contrast, there are has been very little work on multidimensional nonMarkovian nonuniformly expanding maps.  We now list all papers on this topic that we know of.
A large class of multidimensional intermittent maps was considered in~\cite{HuVaienti09,HuVaientiapp} using the quasi-H\"older spaces from~\cite{Saussol00}.  In particular,~\cite{HuVaienti09} obtained results on existence of absolutely continuous invariant measures, but it was convenient to consider maps that were close to conformal.   For statistical limit laws, it seems that quasi-H\"older spaces handle nonconformality of multidimensional maps quite poorly.  A more recent paper~\cite{BMTsub} obtains almost optimal, but still nonoptimal, results on decay of correlations for the maps
in~\cite{HuVaienti09,HuVaientiapp}.  Moreover, the methods in~\cite{BMTsub} do
not seem to apply to the maps~\eqref{eq:EMV} considered here.

\subsection{Structure of the paper}
\label{sec:F}
The method in this paper starts off, as usual, by constructing a convenient
first return map $F:Y\to Y$, and from then on is a hybrid of two standard methods.  
Reinducing enables us to model $f$ by a Young tower with polynomial tails, leading to existence of absolutely continuous invariant measures and a spectral decomposition.  For $\gamma\in(0,\frac12)$, this already yields sharp upper bounds on decay of correlations as well as a number of statistical limit laws.  Combining the information on invariant measures with
bounded variation methods for $F$, we obtain sharp lower bounds on decay of correlations, 
as well as 
convergence to stable laws and L\'evy processes, and results on infinite measure mixing.  
This hybrid method bypasses many of the problems associated with multi-dimensional bounded variation (namely, that the function space is not contained in $L^\infty$; supports of invariant densities need not {\em a priori} have nonempty interior; certain aperiodicity assumptions are hard to verify).

The reinducing step, Lemma~\ref{lem:M} below, makes use of recent
work~\cite{Esl19} based on the method of standard pairs \cite{Che1, Dol00}, and gives precise joint control on the first return time to $Y$ and the reinducing return time (denoted respectively as $\varphi$ and $\rho$ below).   As already noted, the reinducing approach adopted in~\cite{BMTsub} seems not applicable for the examples in this paper and in any case gives much less control on return times.

The remainder of this paper is organised as follows.
In Section~\ref{sec:calc}, we 
construct a convenient first return map~$F$
and obtain estimates for the first return time and distortion bounds for $F$.  In Section~\ref{sec:mix},
we derive mixing properties of \(f\) and \(F\) and results on aperiodicity.
In the process of doing this, we show that $f$ can be modelled by a Young tower with polynomial decay of correlations.  We use this to prove Theorems~\ref{thm:decay}(a),~\ref{thm:stats} and~\ref{thm:LD}.  

Section~\ref{sec:BV}
contains functional analytic estimates in bounded variation.
In Section~\ref{sec:R}, we prove Theorems~\ref{thm:decay}(b,c) and~\ref{thm:infinite}.  Finally, we prove Theorem~\ref{thm:stab} 
in Section~\ref{sec:levy}.

\paragraph{Notation}
We use the ``big $O$'' and $\ll$ notation interchangeably, writing $a_n=O(b_n)$ or $a_n\ll b_n$ if there is a constant $C>0$ such that
$a_n\le Cb_n$ for all $n\ge1$.  
Also, $a_n=o(b_n)$ as $n\to\infty$ means that $\lim_{n\to\infty}a_n/b_n=0$ and
$a_n\sim b_n$ as $n\to\infty$ means that $\lim_{n\to\infty}a_n/b_n=1$.

We set $\barD=\{\omega\in\C:|\omega|\le1\}$.
Throughout, $|\cdot|$ denotes Euclidean distance.

\section{Estimates for the first return map $F$}
\label{sec:calc}

\subsection{Construction of $F$}
Let $f:M\to M$, $M=[0,1]\times\T$, belong to the class of maps~\eqref{eq:EMV}.

Define
\[
  \SMALL X_i=\{(x,\theta)\in M : 0\le x\le
  f_1(\uexfrac,\frac{i+\theta}{\uex})\}
  =f([0,\uexfrac]\times[\frac{i}{\uex},\frac{i+1}{\uex}]), \quad i=0,1,2, \uexminus.
\]
Then $X=\bigcup_{i=0}^{\uexminus}X_i$ is an invariant set for $f$ and $f(X)=X$.

We induce on the set $Y=([\uexfrac,1]\times\T)\cap X$.  Let
$\varphi:Y\to\Z^+$ be the first return time, with first return map
$F=f^\varphi:Y\to Y$.  The sets
\[
  Y_{n,j}=\{(y,\theta)\in Y:\varphi(y,\theta)=n:(j-1)/\uex^n< \theta
  < j/\uex^n\}, \quad n\ge1,\,1\le j\le \uex^n,
\]
form a (mod $0$) partition $\alpha^Y$ of $Y$.  Note that $F:a\to Fa$
is a diffeomorphism for each $a\in\alpha^Y$.
We have 
\begin{equation} \label{eq:Y1}
\SMALL Y_{1,j}=\{(y,\theta)\in Y: \uexfracsquare< y< f_1(\uexfrac),\,(j-1)/\uex< \theta < j/\uex\}.  
\end{equation}
Also,
$FY_{n,j} \in \{([\uexfrac,1]\times\T)\cap X_{i}\}_{i=0}^{\uexminus}$
 for $n \ge 2$.
 In particular, $F$ has finitely many images, i.e.\ $\{Fa:a\in\alpha^Y\}$ is finite.

Figure~\ref{fig-F} is a sketch of $f(\cdot,\theta)$ and
$F(\cdot,\theta)$ for $\theta$ fixed.  Figure~\ref{fig-a} is a
schematic picture of the partition
$\alpha^Y=\{Y_{n,j}:n\ge1,\,1\le j\le \uex^n\}$.
\begin{figure}
  \centering
 \includegraphics[width = 0.8\columnwidth, keepaspectratio]{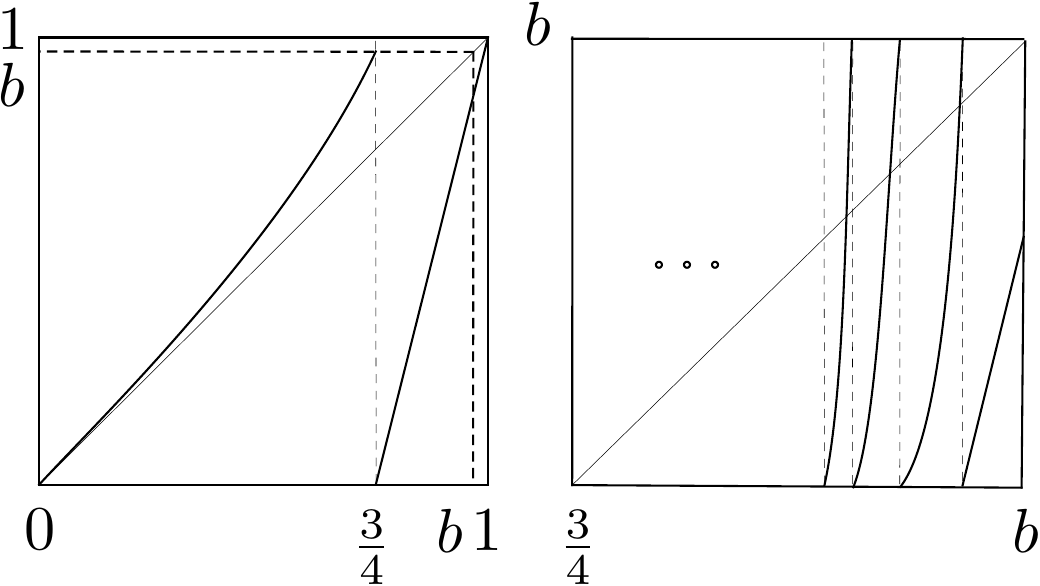}
  \caption{The maps $f$ and $F$ for fixed $\theta$.  The letter $b$
    denotes $f_1(\uexfrac,\theta)$}
  \label{fig-F}
\end{figure}

\begin{figure}
  \centering
  \includegraphics[width = 0.4\columnwidth, keepaspectratio]{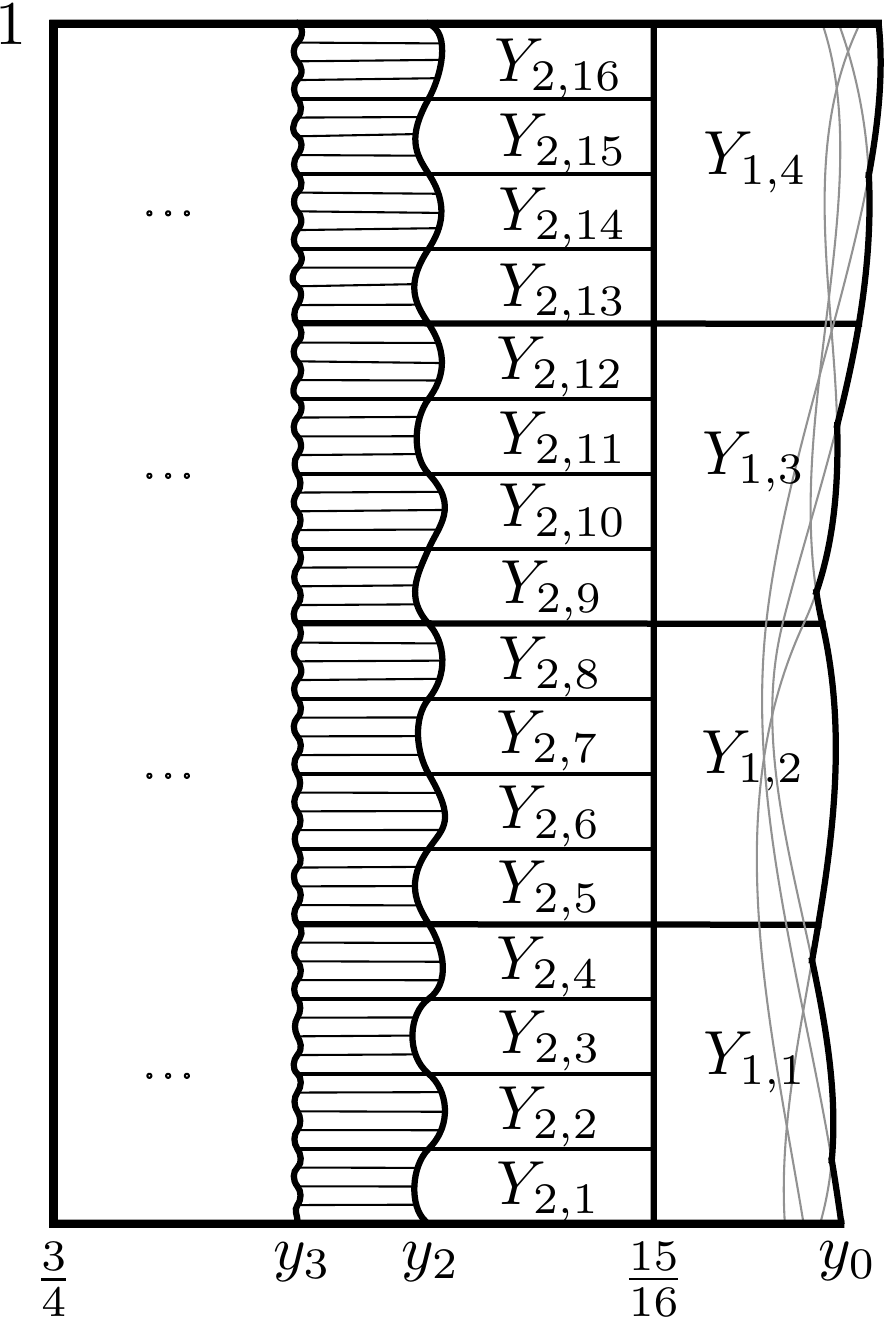}
  \caption{The partition $\alpha^Y=\{Y_{n,j},\,n\ge1,\,1\le j\le \uex^n\}$}
  \label{fig-a}
\end{figure}

\begin{prop} \label{prop:expand}
$|(DF)_{(y,\theta)}v|\ge \uex|v|$ for all $(y,\theta)\in Y$, $v\in\R^2$.
\end{prop}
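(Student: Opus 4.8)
The plan is to decompose $(DF)_{(y,\theta)}$ along the orbit $y,fy,\dots,f^{\varphi-1}y$ and exploit the fact that once the orbit leaves the neutral region $\{x\le \uexfrac\}$ it experiences a strong uniform expansion from the linear branch $f_1(x,\theta)=\uex x-\uexminus$. Concretely, write $(DF)_{(y,\theta)}=\prod_{k=\varphi-1}^{0}(Df)_{f^k(y,\theta)}$. Since $(y,\theta)\in Y\subset[\uexfrac,1]\times\T$, the first step $k=0$ uses the linear branch of $f_1$, and since $f_1(\uexfrac,\theta)>\uexfracsquare$, the point $f(y,\theta)$ lands (in its first coordinate) in $[\uexfracsquare,1]$; for $1<\varphi$ the intermediate iterates $f(y,\theta),\dots,f^{\varphi-1}(y,\theta)$ all lie in the region $x\le\uexfrac$ (that is what it means not to have returned yet), where the standing hypothesis $|(Df)_{(x,\theta)}v|\ge|v|$ guarantees each intermediate factor is nonexpanding-free, i.e.\ norm at least $1$.

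Thus I would argue: $|(DF)_{(y,\theta)}v|=\big|(Df)_{f^{\varphi-1}(y,\theta)}\cdots(Df)_{f(y,\theta)}\,(Df)_{(y,\theta)}v\big|\ge \big|(Df)_{(y,\theta)}v\big|$, using the chain of inequalities $|(Df)_{f^k(y,\theta)}w|\ge|w|$ valid for $1\le k\le\varphi-1$ (those iterates are in $[0,\uexfrac]\times\T$). It then remains to show $|(Df)_{(y,\theta)}v|\ge\uex|v|$ for $(y,\theta)\in Y$. On $Y$ the first coordinate satisfies $y>\uexfrac$, so $f_1$ is given by the linear branch $\uex x-\uexminus$; hence $(Df)_{(y,\theta)}$ has the block-triangular form $\begin{pmatrix}\uex & 0\\ * & \uex\end{pmatrix}$ (the $\partial f_1/\partial\theta$ entry vanishes on the linear branch, actually giving the clean diagonal $\mathrm{diag}(\uex,\uex)$), so $|(Df)_{(y,\theta)}v|=\uex|v|$ exactly. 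Combining, $|(DF)_{(y,\theta)}v|\ge\uex|v|$.

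One point that needs a little care: the case $\varphi(y,\theta)=1$, where there are no intermediate factors at all and the bound is immediate from the linear branch; and the measure-zero set of points whose orbit hits the partition boundaries, which can be ignored since $F$ is defined mod $0$ and the estimate is a closed condition. The main (very mild) obstacle is simply making precise that the iterates $f^k(y,\theta)$ for $1\le k\le \varphi-1$ indeed lie in $[0,\uexfrac]\times\T$ so that the hypothesis $|(Df)v|\ge|v|$ applies to them: this follows directly from the definition of the first return time $\varphi$ to $Y=([\uexfrac,1]\times\T)\cap X$ together with the invariance $f(X)=X$, since a point of $X$ with first coordinate in $(\uexfrac,1]$ would by definition have already returned. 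No real computation is required beyond writing down $Df$ on the two branches.
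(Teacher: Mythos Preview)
Your proof is correct and matches the paper's (very terse) argument: $Df=\mathrm{diag}(\uex,\uex)$ on $Y$ gives the factor $\uex$ at step $k=0$, and the standing hypothesis $|(Df)v|\ge|v|$ on $[0,\uexfrac]\times\T$ handles the remaining steps $1\le k\le\varphi-1$. The parenthetical claim that $f(y,\theta)$ lands with first coordinate in $[\uexfracsquare,1]$ is false (the linear branch maps $(\uexfrac,1]$ onto $(0,1]$, and the assumption $f_1(\uexfrac,\theta)>\uexfracsquare$ concerns the \emph{left} branch at $x=\uexfrac$), but you never use it, so the argument stands.
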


\begin{proof} 
We have $Df=\left(\begin{array}{cc} \uex & 0 \\ 0 & \uex \end{array}\right)$ on $Y$
and 
$|(Df)v|\ge |v|$ on $X$.
\end{proof}

\begin{prop} \label{prop:ftopmix} $f:X\to X$ is topologically exact:
  for any nonempty open subset $U\subset X$, there exists $n\ge0$ such
  that $f^nU\supset X\setminus\{x=0\}$.
\end{prop}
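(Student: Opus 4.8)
The plan is to build on two structural features of~\eqref{eq:EMV}. First, the angular factor $f_2(\theta)=\uex\theta\bmod1$ is \emph{autonomous} and topologically exact on $\T$, so any arc of $\T$ spreads over all of $\T$ after boundedly many iterates. Second, on $(\uexfrac,1]\times\T$ the map acts by $(x,\theta)\mapsto(\uex x-\uexminus,\uex\theta)$, so it carries $x$-values slightly above $\uexfrac$ to $x$-values slightly above $0$; this ``linear branch'' is the device that lets one push an open set down against the neutral circle $\{x=0\}$. Accordingly I would reduce the statement to: \emph{for every nonempty open $U\subset X$ there are $n\ge0$, $\eps>0$ and an open arc $I\subset\T$ with $f^n(U)\supset(0,\eps)\times I$.}

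Granting this, the rest is essentially a computation. From $(0,\eps)\times I$ (WLOG $\eps\le\uexfrac$) one checks from~\eqref{eq:EMV} that $f\big((0,\eps)\times J\big)\supset(0,\eps)\times f_2(J)$ for any arc $J$: the $x$-strip survives because $f_1(x,\theta)>x$ for $0<x\le\uexfrac$, and $0$ stays the left endpoint because $f_1(0,\theta)=0$ and $f_1(\cdot,\theta)$ is increasing. Iterating and using exactness of $f_2$, after finitely many steps $f^m(U)\supset(0,\eps)\times\T$. Next, for $0<c\le\uexfrac$ one has $f\big((0,c)\times\T\big)\supset\big(0,h(c)\big)\times\T$ with $h(c)=\min_{\theta\in\T}\max_{0\le i\le\uexminus}f_1\big(c,\tfrac{\theta+i}{\uex}\big)$; since $u$ is bounded below by a positive constant, $h(c)-c$ is bounded below on $[\eps,\uexfrac]$, so the iterates of $h$ reach a value $c_K>\uexfrac$ after finitely many steps (they cannot accumulate at $\uexfrac$ since $f_1(\uexfrac,\theta)>\uexfracsquare$). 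One more application of $f$ then gives exactly $X\setminus\{x=0\}$: the portion $x\in(0,\uexfrac]$ produces, over each $\theta'$, the fibre $\big(0,\max_i f_1(\uexfrac,\tfrac{\theta'+i}{\uex})\big]$, which by the definition of $X$ is precisely the $X$-fibre over $\theta'$ with the point $x=0$ removed, while the portion $x\in(\uexfrac,c_K)$ contributes nothing more (it maps into $(0,\uex c_K-\uexminus)$, and $\uex c_K-\uexminus\le\max_i f_1(\uexfrac,\tfrac{\theta'+i}{\uex})$ because $f^{K+1}(U)\subset X$). Hence $f^{K+1}(U)\supset X\setminus\{x=0\}$.

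To produce $(0,\eps)\times I$, take an open box $B_0=(a_0,b_0)\times(c_0,d_0)\subset U$ with $0<a_0$ and $d_0-c_0<\uexfracone$. If $a_0<\uexfrac<b_0$, then $(\uexfrac,\min(b_0,\uexfracsquare))\times(c_0,d_0)\subset B_0$ sits in the linear branch and $f$ maps it exactly onto $(0,\eps)\times I$ with $\eps=\uex\min(b_0,\uexfracsquare)-\uexminus>0$; done with $n=1$. In general I would iterate $B_0$ until it reaches this straddling configuration. Off $\{x=0\}$ the map $f$ is a local diffeomorphism ($\partial_xf_1\ge1$ by $|(Df)v|\ge|v|$), so following $B_0$ over the orbit of a generic $\theta_0\in(c_0,d_0)$ and along a stretch on which no iterate crosses $\{x=\uexfrac\}$ produces, for each $k$, a connected $x$-interval $I_k$ lying in the fibre of $f^k(B_0)$ over $\uex^k\theta_0$, carried there by a genuine diffeomorphism. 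This interval cannot remain inside $[0,\uexfrac]$ for all $k$ — its left endpoint is the $x$-coordinate of the orbit of a point with $x>0$, which increases with increments $\asymp x^{1+\gamma}$ bounded below and so exceeds $\uexfrac$ in finitely many steps — and it cannot remain inside $(\uexfrac,1]$ either, since there its length is multiplied by $\uex$ at each step while being bounded by $1-\uexfrac$; since its length is also bounded by $1$, it cannot fit in $[0,\uexfrac]$ once the length exceeds $\uexfrac$. A dichotomy argument along these lines — the same one used in one dimension in~\cite{LSV99} — shows that $\uexfrac\in\operatorname{int}(I_k)$ for some $k$, whence $f^k(B_0)$ contains an open box straddling $\{x=\uexfrac\}$, which is the case already treated. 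This proves topological exactness of $f:X\to X$.

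The one real difficulty is the discontinuity of $f$ across $\{x=\uexfrac\}$: the left branch sends $\uexfrac$ to $f_1(\uexfrac,\cdot)>\uexfracsquare$ while the linear branch sends $\uexfrac^+$ to $0^+$, so iterates of a box break into finitely many pieces along the preimages of $\{x=\uexfrac\}$, and keeping track of which piece of $I_k$ to follow above — in particular, what happens each time a preimage of $\{x=\uexfrac\}$ enters the interval — requires some care. I expect this to be the main obstacle, though it is bookkeeping rather than a conceptual difficulty, being exactly the one-dimensional situation of the maps~\eqref{eq:LSV} carried out fibrewise in $x$ with $f_2$ taking care of the transverse direction.
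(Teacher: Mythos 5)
Your proposal is correct and follows essentially the same route as the paper: show that the iterates of $U$ eventually straddle $\{x=\uexfrac\}$, use the linear branch to push part of the image down to a set accumulating on $\{x=0\}$, and then fill out $X\setminus\{x=0\}$ using the dynamics near the neutral circle together with exactness of $f_2$. The bookkeeping obstacle you flag at the end is sidestepped in the paper by working with the projection $\pi f^nU$ onto the first coordinate, which remains an interval as long as it avoids $\uexfrac$, must lie in $(\uexfrac,1]$ infinitely often by the escape property~\eqref{eq:mix}, and hence has diameter growing without bound --- a contradiction that dispenses with any fibrewise tracking across the discontinuity.
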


\begin{proof}
  It suffices to consider rectangles $U=U_1\times U_2$ where
  $U_1\subset [0,1]$, $U_2\subset \T$ are intervals.  Let
  $\pi:M\to[0,1]$ be projection onto the first coordinate.
  Note that
  \begin{equation} \label{eq:mix} \SMALL \text{For any
      $(x,\theta)\in(0,\uexfrac)\times\T$, there exists $n\ge1$ such
      that $\pi f^n(x,\theta)> \uexfrac$.}
  \end{equation}

  If $\uexfrac\in\pi U$, then $0\in\overline{\pi fU}$.  Since $0$ is a
  fixed point and $f_1$ is continuous on $[0,\uexfrac]\times\T$, it
  follows from~\eqref{eq:mix} that $(0,\uexfrac]\subset \pi f^nU$ for
  all $n$ sufficiently large.  Also $f_2:\T\to\T$ is continuous and
  uniformly expanding, so it is immediate that
  $(0,\uexfrac]\times\T\subset f^nU$ for some $n$ and hence that
  $f^{n+1}U\supset X\setminus\{x=0\}$.

  For a general rectangle $U$, it remains to show that
  $\uexfrac\in \pi f^nU$ for some $n\ge0$.  Suppose this is not the
  case.  Since $f_1$ is continuous on $[0,\uexfrac)\times\T$ and
  $(\uexfrac,1]\times\T$, it follows that $\pi f^nU$ is an interval for
  all $n$.  By~\eqref{eq:mix}, $\pi f^nU\subset(\uexfrac,1]$ infinitely
  often.  On each such occasion, $\diam \pi f^{n+1}U=\uex\diam \pi f^nU$,
  so $\diam \pi f^nU\to\infty$ which is impossible.
\end{proof}

\begin{prop} \label{prop:Ftopmix} $F:Y\to Y$ is topologically exact.
\end{prop}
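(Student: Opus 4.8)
The plan is to deduce topological exactness of $F:Y\to Y$ from that of $f:X\to X$ (Proposition~\ref{prop:ftopmix}), using the fact that $Y$ is a subset of $X$ with the induced map structure, and that $Y=([\uexfrac,1]\times\T)\cap X$ has nonempty interior in $X$ (indeed it contains $(\uexfrac,1]\times\T$, modulo the carved-out region $X$). The key observation is that $F$-orbits are nested inside $f$-orbits: if $U\subset Y$ is open (in $Y$, hence also open in $X$ since $Y$ is relatively open in $X$ away from its lower boundary), then by Proposition~\ref{prop:ftopmix} there is $n\ge0$ with $f^nU\supset X\setminus\{x=0\}$, and in particular $f^nU\supset Y\setminus\{x=0\}=Y$ (note $\{x=0\}\cap Y=\emptyset$ since points of $Y$ have $x\ge\uexfrac$).

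First I would record that it suffices to treat open rectangles $U=U_1\times U_2$ inside $Y$, by the same reduction as in Proposition~\ref{prop:ftopmix}, and that such a $U$ is open in $X$. Next I would apply Proposition~\ref{prop:ftopmix} to obtain $n\ge0$ with $f^nU\supset X\setminus\{x=0\}\supset Y$. The remaining—and only substantive—task is to convert the statement ``$f^nU\supset Y$'' into a statement about $F$, i.e.\ to show $F^NU\supset Y$ for some $N$. This is where one must be careful: the image $f^nU$ covers $Y$, but the points of $U$ realizing a given target $z\in Y$ may reach $z$ at different times $n$, not all of which are $F$-iterates. The standard fix is: once $f^nU\supset Y$ for some fixed $n$, iterate once more under $F$. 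Concretely, pick any single point $p\in U$ and follow its $f$-orbit; it returns to $Y$ infinitely often (since $\mu(Y)>0$ and $f$ is conservative, or more elementarily by~\eqref{eq:mix} together with topological exactness), say at times $\varphi_1<\varphi_2<\cdots$. A small neighbourhood $V\subset U$ of $p$ then has $F^kV$ an open subset of $Y$ for each $k$, and since $F$ is (piecewise) expanding by Proposition~\ref{prop:expand} with factor $\uex>1$ in both directions on the $\theta$-fibres, $\diam F^kV$ grows until $F^kV$ contains a full partition element $a\in\alpha^Y$ with $Fa=([\uexfrac,1]\times\T)\cap X_i$ for some $i$.

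So the cleaner route, which I would actually pursue, is to argue directly with the partition $\alpha^Y$: since $F$ restricted to each $a\in\alpha^Y$ is a diffeomorphism onto its image, and the images $Fa$ lie in the finite collection $\{([\uexfrac,1]\times\T)\cap X_i\}_{i=0}^{\uexminus}$ whose union is $Y$, it is enough to show that for any open $U\subset Y$ there is $N$ with $F^NU\supset a$ for some $a\in\alpha^Y$; one more iterate then gives $F^{N+1}U\supset Fa$, and since the union of the $Fa$ over the finitely many images is all of $Y$... wait, each individual $Fa$ is only one of the sets $([\uexfrac,1]\times\T)\cap X_i$, so to cover all of $Y$ I would then need $F^NU$ to contain one partition element mapping onto each $X_i$-slice, or alternatively apply exactness of $f$ on $X$ to get $f^mU\supset X\setminus\{x=0\}$ directly and then note each point of $Y$ is hit; I would show $F^{N'}U\supset Y$ by: (i) expanding $U$ under $F$ (using Proposition~\ref{prop:expand}) until it contains some $a_0\in\alpha^Y$; (ii) then $F^{N}U\supset Fa_0\supset$ a set of the form $([\uexfrac,1]\times\T)\cap X_i$, which is relatively open in $X$; (iii) applying Proposition~\ref{prop:ftopmix} to this open set to get $f^mFa_0\supset X\setminus\{x=0\}\supset Y$; (iv) finally, a last inducing step: since $f^mFa_0\supset Y$ and $Y$ is a union of the finitely many full-measure pieces, covering $Y$ by $f$-images upgrades to covering by $F$-images because returning to $Y$ only requires further iteration and the $\theta$-expansion guarantees each fibre eventually surjects.

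The main obstacle is precisely step (iv): the passage from ``$f$-images cover $Y$'' to ``$F$-images cover $Y$'' is not formal, because the first-return structure could in principle conspire so that the $f$-preimages of a point $z\in Y$ inside $f^mFa_0$ are never reached exactly at return times. The resolution is that topological exactness of $f$ on $X$ is robust: applying it to an open subset of $Y$ yields, for all large $m$, $f^mW\supset X\setminus\{x=0\}$; in particular for infinitely many such $m$ we may intersect with $Y$ and observe that within $W$ there is an open subset all of whose points first return to $Y$ at a common time (by continuity of $f_1$ on the two branches and the structure of $\alpha^Y$), and pushing that subset forward under $F$ and re-applying $f$-exactness finitely often yields $F^{N'}W\supset Y$. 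I expect this ``common return time'' / continuity bookkeeping to be the only place requiring genuine care; everything else is a routine consequence of Propositions~\ref{prop:expand} and~\ref{prop:ftopmix} and the finiteness of the image partition of $F$.
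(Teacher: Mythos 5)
Your plan has a genuine gap at precisely the step you flag as needing care, and the detour through Proposition~\ref{prop:ftopmix} turns out to be both unnecessary and the source of the difficulty. The paper's proof never invokes exactness of $f$; it argues directly with the partition $\alpha^Y$ via a dichotomy. Either some iterate $F^kU$ meets the boundary of a strip $\{\varphi=n\}$, or none does. In the latter case $\diam\pi F^kU\ge\uex^k\diam\pi U\to\infty$ by Proposition~\ref{prop:expand}, a contradiction, so the former must occur. And in the former case the mechanism is geometric rather than metric: a set straddling the boundary curve between $\{\varphi=n\}$ and $\{\varphi=n+1\}$ has $F$-image whose closure meets the left edge $\{\uexfrac\}\times\T$, where the partition elements $Y_{m,j}$ accumulate with diameters shrinking to zero; hence $F^{k+1}U$ contains an entire $Y_{m,j}$. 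Then $F^{k+2}U\supset FY_{m,j}=Y\cap X_i$ for some $i$, and since $Y\cap X_i\supset[\uexfrac,\uexfracsquare]\times\T$ contains every partition element with $\varphi\ge2$, whose $F$-images exhaust the finitely many sets $Y\cap X_{i'}$, one further iterate gives $F^{k+3}U=Y$.

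Measured against this, your step (i) is unjustified as stated: diameter growth alone does not force $F^NU$ to contain a partition element (the elements near the left edge are arbitrarily small, and a connected open set can have large diameter without containing any single one); the missing ingredient is exactly the observation that straddling a strip boundary makes the next image reach the left edge where the partition elements shrink. Your steps (iii)--(iv) then try to convert $f$-exactness into $F$-exactness, and the proposed fix --- extract a sub-rectangle with a common first return time and ``re-apply $f$-exactness finitely often'' --- has no termination argument: after one such step you are back to an arbitrary open subset of $Y$, and nothing in the sketch shows the $F$-images ever cover all of $Y$ rather than merely recycling the situation. In fact, once you reach your step (ii), i.e.\ $F^NU\supset Fa_0=Y\cap X_i$, you are two $F$-iterates from the conclusion by the counting above, so all of (iii)--(iv) can be deleted; but as written the argument is incomplete at both (i) and (iv).
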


\begin{proof}
  Let $U\subset Y$ be a nonempty open rectangle.  If $U$ intersects
  the boundary of the strip $\{\varphi=n\}$ for some $n$, then
  $\overline{FU}\cap(\{\uexfrac\}\times\T)\neq\emptyset$. But then $FU$
  contains a partition element $Y_{n,j}$. 
It follows that $F^2U\supset FY_{n,j}=Y\cap X_i$ for some $i$ and hence that 
$F^3U=Y$.

Again, let $\pi$ denote projection onto the first coordinate.
  If $F^kU$ does not intersect the boundary of $\{\varphi=n\}$ for all
  $n$ and all $k\ge0$, then $\diam \pi F^kU\ge \uex^k\diam \pi U$ for all $k$,
  which is impossible.
\end{proof}

\subsection{Estimates for the partition}
\label{sec:partition}

Recall that \(u:[0,\uexfrac]\times\T\to(0,\infty)\) is \(C^{2}\) and
$u(0,\theta)\equiv c_0>0$.  In fact, we only use the following consequences of this property:
\[
  u(x,\theta)-c_0=o(1), \quad x\frac{\partial u}{\partial
    x}(x,\theta)=o(1), \quad\text{and}\quad x^2\frac{\partial^2
    u}{\partial x^2}(x,\theta)=o(1),
\]
as $x\to0$ uniformly in $\theta$.

\begin{prop} \label{prop:xn} Suppose that $(x_n,\theta_n)$, $n\ge1$, is a
  sequence in $[0,\uexfrac]\times\T$ such that
  $f^n(x_n,\theta_n)=(\uexfrac,\theta)$ where $\theta\in\T$.  
Then
  \[
    x_n\sim c_1n^{-\alpha} \enspace\text{and}\enspace
    x_n-x_{n+1}\sim c'n^{-(1+\alpha)} \enspace\text{as
      $n\to\infty$},
  \]
  uniformly in $\theta$, where $c_1=(c_0\gamma)^{-\alpha}$ and
  $c'=c_1^{1+\gamma} c_0$.

  In addition, the curve $\theta\to x_n(\theta)$ is $C^1$ 
and 
  there exists a constant $C>0$ independent of
  $n,\theta$ such that $|x_n'(\theta)|\le Cn^{-(1+\alpha)}$.
\end{prop}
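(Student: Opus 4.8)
The plan is to reduce the proposition to a one‑dimensional Pomeau--Manneville computation along the orbit approaching the neutral circle $\{x=0\}$; the angular dynamics is autonomous and enters the $x$‑recursion only through $u$ and its first derivatives, which by the standing hypotheses on $u$ are controlled uniformly in $\theta$. Fix $\theta$ and the sequence, and let $w_0=x_n,w_1,\dots,w_n=\uexfrac$ be the first coordinates of the forward orbit $f^j(x_n,\theta_n)$, with angles $\psi_0=\theta_n,\dots,\psi_n=\theta$; as such points arise the orbit stays in the left branch until it reaches $\uexfrac$, so $w_{j+1}=w_j\bigl(1+w_j^\gamma u(w_j,\psi_j)\bigr)$ for $0\le j\le n-1$ and $(w_j)$ is strictly increasing. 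For the first asymptotic I would use the classical substitution $z_j=w_j^{-\gamma}$: Taylor expansion gives, uniformly for $w_j$ below a fixed threshold $\ve_0$,
\[
z_j-z_{j+1}=w_j^{-\gamma}\bigl[1-(1+w_j^\gamma u(w_j,\psi_j))^{-\gamma}\bigr]=\gamma\,u(w_j,\psi_j)+O(w_j^\gamma).
\]
Running this telescoping first with only the crude bounds $\tfrac{\gamma c_0}{2}\le z_j-z_{j+1}\le 2\gamma c_0$ — valid once $w_j\le\ve_0$, which fails for only a bounded number of indices $j$, uniformly in $n$, $\theta$ and the branch — yields the a priori estimate $w_j\asymp(n-j)^{-\alpha}$ with uniform constants, so in particular $\sum_{j<n}w_j^\gamma=O(\log n)=o(n)$. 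Using $u(0,\cdot)\equiv c_0$ and $u(w_j,\psi_j)-c_0=o(1)$ uniformly as $w_j\to0$ one gets $\sum_{j<n}\gamma u(w_j,\psi_j)=\gamma c_0 n+o(n)$; summing $z_j-z_{j+1}$ over $0\le j<n$ then gives $x_n^{-\gamma}=\gamma c_0 n(1+o(1))$, i.e. $x_n\sim(\gamma c_0 n)^{-\alpha}=c_1 n^{-\alpha}$, uniformly in $\theta$.

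For the increment $x_n-x_{n+1}$ I would use that the sequence is nested, $f(x_{n+1},\theta_{n+1})=(x_n,\theta_n)$ (this is the relevant configuration; for unrelated choices of the two points the asymptotic constant is not correct), so that $x_{n+1}$ is the left‑branch preimage of $x_n$ and hence $x_n-x_{n+1}=x_{n+1}^{1+\gamma}u(x_{n+1},\theta_{n+1})$ identically. By the first step $x_{n+1}\sim c_1 n^{-\alpha}$, so $x_{n+1}^{1+\gamma}\sim c_1^{1+\gamma}n^{-\alpha(1+\gamma)}=c_1^{1+\gamma}n^{-(1+\alpha)}$ (using $\alpha\gamma=1$) while $u(x_{n+1},\theta_{n+1})\to c_0$, giving $x_n-x_{n+1}\sim c_1^{1+\gamma}c_0\,n^{-(1+\alpha)}=c' n^{-(1+\alpha)}$, uniformly.

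For the $C^1$ statement, fix a choice of inverse branches; then $\theta_n(\theta)$ is an affine branch of $f_2^{-n}$ with $\theta_n'(\theta)=\uex^{-n}$, and $x_n(\theta)$ is defined implicitly by $f_1^{[n]}(x_n(\theta),\theta_n(\theta))=\uexfrac$, where $f_1^{[n]}:=\pi\circ f^n$. Since $f$ is $C^2$ on $(0,\uexfrac)\times\T$, the relevant orbit lies there at times $0,\dots,n-1$, and $\partial_x f_1^{[n]}=\prod_{j<n}p_j>0$ with $p_j:=\partial_x f_1(w_j,\psi_j)>0$, the implicit function theorem gives $x_n\in C^1(\T)$ (indeed $C^2$). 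On the left branch $Df=\left(\begin{smallmatrix}p&q\\ 0&\uex\end{smallmatrix}\right)$ with $q=\partial_\theta f_1=w^{1+\gamma}\partial_\theta u$ and $|q|\le Cw^{1+\gamma}$, so the chain rule for products of such upper‑triangular matrices together with implicit differentiation gives
\[
x_n'(\theta)=-\uex^{-n}\,\frac{\partial_\theta f_1^{[n]}(x_n,\theta_n)}{\partial_x f_1^{[n]}(x_n,\theta_n)}=-\sum_{j=0}^{n-1}\frac{q_j}{\uex^{\,n-j}\,\prod_{i=0}^{j}p_i},\qquad q_j:=q(w_j,\psi_j).
\]
Comparing $p_j$ with $(w_{j+1}/w_j)^{1+\gamma}=(1+w_j^\gamma u(w_j,\psi_j))^{1+\gamma}$ one finds $p_j=(w_{j+1}/w_j)^{1+\gamma}\bigl(1+O(w_j^{\min\{2\gamma,1+\gamma\}})\bigr)$ with errors summable in $j$ (since $\min\{2\gamma,1+\gamma\}\,\alpha>1$ and $w_j\asymp(n-j)^{-\alpha}$), hence $\prod_{i=0}^{j}p_i\ge c\,(w_{j+1}/w_0)^{1+\gamma}$ with $c>0$ uniform. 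Since $|q_j|\le Cw_j^{1+\gamma}$ and $w_j<w_{j+1}$, the $j$‑th summand is at most $\tfrac{C}{c}\,w_0^{1+\gamma}\uex^{-(n-j)}$, so $|x_n'(\theta)|\le\tfrac{C}{c}w_0^{1+\gamma}\sum_{l\ge1}\uex^{-l}\ll x_n(\theta)^{1+\gamma}\ll n^{-\alpha(1+\gamma)}=n^{-(1+\alpha)}$, using the first step uniformly in $\theta$.

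The main obstacle, exactly as in one dimension, is the uniform a priori control $w_j\asymp(n-j)^{-\alpha}$ with constants independent of $n$, $\theta$ and the branch (and the attendant verification that the orbit really stays in the left branch, so the recursion $w_{j+1}=w_j(1+w_j^\gamma u)$ is valid throughout); once this is in hand everything else is bookkeeping, the one pleasant point being the telescoping comparison $p_j\approx(w_{j+1}/w_j)^{1+\gamma}$, which combined with the geometric gain $\uex^{-(n-j)}$ from the expansion in $\theta$ collapses the derivative bound to a constant times $x_n(\theta)^{1+\gamma}$.
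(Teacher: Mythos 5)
Your proposal is correct. The first two assertions are proved essentially as in the paper: your telescoping of $z_j=w_j^{-\gamma}$ along the forward orbit, the a priori bound $w_j\asymp(n-j)^{-\alpha}$, and the Ces\`aro step $\sum_j u(w_j,\psi_j)=c_0n+o(n)$ are the same computation the paper performs via the closed-form inverse-branch identity $[\psi^n]_1=[x^{-\gamma}+\gamma\sum_{j<n}\hat u\circ\psi^j]^{-\alpha}$, and the increment estimate $x_n-x_{n+1}=x_{n+1}^{1+\gamma}u(x_{n+1},\theta_{n+1})$ is identical (the paper likewise assumes the nested configuration $(x_n,\theta_n)=f(x_{n+1},\theta_{n+1})$). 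Where you genuinely diverge is the derivative bound. The paper differentiates the inverse-branch formula, obtains the recursive inequality $|x_n'|\le Kn^{-(1+\alpha)}+Kn^{-1}\sum_{j<n}|x_j'|$, and closes it with a discrete Gronwall lemma. You instead apply the implicit function theorem to $f_1^{[n]}(x_n(\theta),\theta_n(\theta))=\frac{\uexminus}{\uex}$, expand $\partial_\theta f_1^{[n]}$ via the upper-triangular cocycle (the same expansion the paper uses later for $B(y,\theta)$ in Corollary~\ref{cor:DF}), and control the denominator by the telescoping comparison $p_j=(w_{j+1}/w_j)^{1+\gamma}(1+O(w_j^{\min\{2\gamma,1+\gamma\}}))$ with summable relative errors. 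This avoids Gronwall entirely and yields the slightly sharper pointwise statement $|x_n'(\theta)|\ll x_n(\theta)^{1+\gamma}$, with the geometric factor $\uex^{-(n-j)}$ absorbing the sum; the paper's route is more mechanical but requires no comparison of $\partial_x f_1$ with the ratio $w_{j+1}/w_j$. Both arguments must, and do, address the same residual point about smoothness across the finitely many branch boundaries of $f^n$, which the paper resolves by noting that the curve $x_n(\theta)$ is defined intrinsically on the cylinder.
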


\begin{proof}   
  By construction, for each choice of inverse sequence $\theta_n$, the
  sequence $x_n$ is unique and monotonically decreasing to zero.  We
  do the computation for $\theta_n=\theta/\uex^n$, but the result is
  independent of this choice.  Write $\theta_0=\theta$.

  The inverse branch
  $\psi:M\to[0,\uexfrac]\times[0,\uexfracone]$ has the form
  $\psi(x,\theta)=(\psi_1(x,\theta),\uexfracone\theta)$.  Compute that
  \[
    \begin{split}
    \psi_1(x,\theta) & =
                       x(1-x^{\gamma} \tilde u(x,\theta))
                       =[x^{-{\gamma}}(1-x^{\gamma} \tilde u(x,\theta))^{-\gamma}]^{-\alpha} 
    \\ & =[x^{-{\gamma}}+\gamma\hat u(x,\theta)]^{-\alpha},
  \end{split}
  \]
  where $\hat u(x,\theta)=c_0+o(1)$ as $x\to0$ uniformly in $\theta$.
  Inductively,
  \[
    [\psi^n]_1(x,\theta) =
    \Bigl[x^{-{\gamma}}+\gamma\sum_{j=0}^{n-1}\hat u(\psi^j(x,\theta))\Bigr]^{-\alpha}.
  \]
  In particular,
  \begin{equation} \label{eq:induct}
    \begin{split}
      x_n &
    =[\psi^n]_1({\SMALL\uexfrac},\theta_0)=
    \Bigl[({\SMALL\uexfractwo})^{\gamma}+\gamma\sum_{j=0}^{n-1}\hat u(x_j,\theta_j)\Bigr]^{-\alpha}
=
    \Bigl[\gamma\sum_{j=0}^{n-1}\hat u(x_j,\theta_j)+O(1)\Bigr]^{-\alpha}.
  \end{split}
  \end{equation}
  Since $x_n\to0$, we have $\hat u(x_j,\theta_j)\to c_0$ and hence
  $\sum_{j=0}^{n-1}\hat u(\psi^j(x,\theta))=nc_0+o(n)$.  Substituting
  this into~\eqref{eq:induct} yields the desired expression for $x_n$.

  Since $(x_n,\theta_n)=f(x_{n+1},\theta_{n+1})$, we have
  $x_n=x_{n+1}(1+x_{n+1}^\gamma u(x_{n+1},\theta_{n+1}))$ and so
  \[
    x_n-x_{n+1}=x_{n+1}^{1+\gamma}u(x_{n+1},\theta_{n+1})\sim
    c_1^{1+\gamma}n^{-(1+\alpha)}c_0=c'n^{-(1+\alpha)}.
  \]

  Differentiating the formula for $x_n$ in~\eqref{eq:induct},
  \[
    x_n'(\theta)=-
    \Bigl[({\SMALL\uexfractwo})^{\gamma}+\gamma\sum_{j=0}^{n-1}\hat u(x_j,\theta_j)\Bigr]^{-(1+\alpha)}\Bigl(
    \sum_{j=0}^{n-1}\frac{\partial \hat u}{\partial
      x}(x_j,\theta_j)x_j'(\theta)+ \sum_{j=0}^{n-1}\frac{\partial
      \hat u}{\partial\theta}(x_j,\theta_j)\uex^{-j}\Bigr).
  \]
  It is easy to verify that $\hat u$ inherits the properties
  $\BIG x\frac{\partial \hat u}{\partial x}=o(1)$ and
  $\BIG \frac{\partial \hat u}{\partial \theta}=O(1)$ imposed on
  $u$.  Hence, there is a constant $K>0$ such that
  \[
    |x_n'(\theta)| \le
    Kn^{-(1+\alpha)}+ Kn^{-(1+\alpha)}
\sum_{j=0}^{n-1}j^{1/\gamma}|x_j'(\theta)|
    \le Kn^{-(1+\alpha)}+ Kn^{-1}
\sum_{j=0}^{n-1}|x_j'(\theta)|.
  \]
The discrete version of Gronwall's inequality states that if $|b_n|\le C_1+C_2\sum_{j=0}^{n-1}|b_j|$, then $|b_n|\le C_1(1+C_2)^n$.
Hence
$|x_n'(\theta)|\le Kn^{-(1+\alpha)}(1+Kn^{-1})^n\ll n^{-(1+\alpha)}$.

For each $n\ge1$, we have established 
smoothness of the curve $x_n(\theta)$ and the estimate $|x_n'(\theta)|\le Cn^{-(1+\alpha)}$
except at finitely many points related to the partition into inverse branches of $f^n$.
Since $x_n(\theta)=\{(x,\theta)\in[0,\uexfrac]\times\T^1:f^n(x,\theta)\in\{\uexfrac\}\times\T\}$ is defined intrinsically on the cylindrical domain $[0,\uexfrac]\times\T^1$, independent of any choice of partition, these smoothness properties are uniform in $\theta\in\T$.
\end{proof}

\begin{rmk} \label{rmk:square} It follows from
  Proposition~\ref{prop:xn} that if
  $\theta,\theta'\in[(j-1)\uex^{-n},j\uex^{-n}]$ for some $j=1,\dots,\uex^n$,
  then $x_n(\theta)-x_{n+1}(\theta')\sim c'n^{-(1+\alpha)}$ uniformly in $j$.
\end{rmk}

The partition elements $Y_{1,j}$, $1\le j\le\uex$, are as in~\eqref{eq:Y1}.
The remaining partition elements 
$Y_{n,j}$, 
$n\ge2$, $1\le j\le \uex^n$, are given by
\[
  Y_{n,j}=\{(y,\theta):y\in(y_n(\theta),y_{n-1}(\theta)),\,\theta\in((j-1)/\uex^n,j/\uex^n)\},
\]
where $y_n(\theta)=\uexfracone(x_{n-1}(f_2(\theta))+\uexminus)$. Note
that \(y_n'(\theta) = x_{n-1}'(\theta)\).
  
By
Remark~\ref{rmk:square}, $Y_{n,j}$ is almost rectangular for $n$
large, and
$y_n(\theta)-y_{n+1}(\theta)\sim \uexfracone c' n^{-(1+\alpha)}$ uniformly in $\theta$.

\begin{rmk} \label{rmk:u} By choosing $u$ sufficiently close to constant, in
  the sense that $|x\frac{\partial u}{\partial x}|_\infty$ and
  $|\frac{\partial u}{\partial \theta}|_\infty$ are sufficiently
  small, we can arrange that 
$|x_n'(\theta)|$ is uniformly small in $n$ and $\theta$.
In fact, we require that $|x_n'(\theta)|< 7/\sqrt{72}$ for
  all $n$ and~$\theta$.  This turns out to be convenient for technical
  reasons, see Remark~\ref{rmk:boundary}.
\end{rmk}

\begin{cor} \label{cor:Leb}
  $\Leb(\varphi>n)\sim \uexfracone c_1 n^{-\alpha}$ as $n\to\infty$.  In
  particular, $\varphi\in L^1$ if and only if $\gamma<1$.  In
  addition, $\Leb(\varphi=n)\sim \uexfracone c'(n^{-(1+\alpha)})$.
\end{cor}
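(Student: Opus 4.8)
The plan is to read both asymptotics straight off the fibrewise description of the partition $\alpha^Y$ together with Proposition~\ref{prop:xn}, integrating over $\theta\in\T$. Recall that the sets $Y_{n,j}$ form a (mod~$0$) partition of $Y$ with $\varphi=n$ on $\bigcup_{j=1}^{\uex^n}Y_{n,j}$, and that for $n\ge2$
\[
Y_{n,j}=\{(y,\theta):y_n(\theta)<y<y_{n-1}(\theta),\ (j-1)\uex^{-n}<\theta<j\uex^{-n}\},
\qquad y_m(\theta)=\uexfracone\big(x_{m-1}(f_2\theta)+\uexminus\big).
\]
Since the $\theta$-intervals $\{((j-1)\uex^{-n},j\uex^{-n})\}_{j}$ cover $\T$ mod~$0$, for a.e.\ fixed $\theta$ the fibre of $\{\varphi=n\}$ over $\theta$ is the single interval $(y_n(\theta),y_{n-1}(\theta))$, so by Fubini $\Leb(\varphi=n)=\int_\T\big(y_{n-1}(\theta)-y_n(\theta)\big)\,d\theta$ for $n\ge2$. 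For $\{\varphi>n\}=\bigcup_{m\ge n+1}\bigcup_j Y_{m,j}$ the fibres telescope: as $m\to\infty$ we have $y_m(\theta)\downarrow\uexfracone(0+\uexminus)=\uexfrac$, so the fibre over $\theta$ is the single interval $(\uexfrac,y_n(\theta))$, of length $y_n(\theta)-\uexfrac=\uexfracone x_{n-1}(f_2\theta)$; hence $\Leb(\varphi>n)=\uexfracone\int_\T x_{n-1}(f_2\theta)\,d\theta$.

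Now I would invoke the \emph{uniform} estimates of Proposition~\ref{prop:xn}. Since $x_m(\theta)\sim c_1m^{-\alpha}$ uniformly in $\theta$, we get $x_{n-1}(f_2\theta)\sim c_1n^{-\alpha}$ uniformly in $\theta$, and integrating over $\T$ (which has total mass $1$) preserves this asymptotic, giving $\Leb(\varphi>n)\sim\uexfracone c_1 n^{-\alpha}$. Similarly $y_{n-1}(\theta)-y_n(\theta)=\uexfracone\big(x_{n-2}(f_2\theta)-x_{n-1}(f_2\theta)\big)\sim\uexfracone c' n^{-(1+\alpha)}$ uniformly in $\theta$ — equivalently one can quote the displayed estimate $y_n(\theta)-y_{n+1}(\theta)\sim\uexfracone c' n^{-(1+\alpha)}$ already recorded after Remark~\ref{rmk:square} — so $\Leb(\varphi=n)\sim\uexfracone c' n^{-(1+\alpha)}$. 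For the $L^1$ statement, $\int_Y\varphi\,d\Leb=\sum_{n\ge0}\Leb(\varphi>n)$; since $\Leb(\varphi>0)=\Leb(Y)<\infty$ and $\Leb(\varphi>n)\sim\uexfracone c_1n^{-\alpha}$, this sum is finite iff $\sum_n n^{-\alpha}<\infty$, i.e.\ iff $\alpha>1$, i.e.\ iff $\gamma<1$.

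There is no genuine obstacle here: the corollary is essentially a direct consequence of Proposition~\ref{prop:xn}. The only points needing a little care are the telescoping identity for the fibres of $\{\varphi>n\}$ (which relies on $y_m(\theta)\to\uexfrac$, and on the partition covering $Y$ mod~$0$ so that nothing is lost to boundary sets) and the harmless index shifts $n\mapsto n-1,\,n-2$, which do not affect the leading-order asymptotics. Uniformity in $\theta$ is what lets the asymptotic equivalences survive the $\theta$-integration; if one preferred, the same conclusion follows from dominated convergence using the uniform bounds $x_m(\theta)\le C m^{-\alpha}$ and $x_{m-1}(\theta)-x_m(\theta)\le C m^{-(1+\alpha)}$.
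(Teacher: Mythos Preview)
Your proof is correct and arrives at the same conclusion via essentially the same input, namely the uniform asymptotics of Proposition~\ref{prop:xn}. The only difference from the paper's argument is organizational: the paper pushes $\{\varphi=n\}$ forward under the linear branch $f|_Y$, observing that $f$ maps $\{\varphi=n\}$ four-to-one onto the annular region $X_{n-1}\setminus X_n\subset[0,\uexfrac]\times\T$ with constant Jacobian $\uex^2$, so that $\Leb(\varphi=n)=\uexfracone\Leb(X_{n-1}\setminus X_n)$ and $\Leb(\varphi>n)=\uexfracone\Leb(X_n)$; you instead stay in $Y$ and compute the fibre lengths directly via $y_m(\theta)=\uexfracone(x_{m-1}(f_2\theta)+\uexminus)$ and Fubini. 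These are two sides of the same change of variables, and neither is more or less elementary than the other.
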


\begin{proof}
Let $C_n=\{x_n(\theta)\}$, \(n
\ge 0\), be the smooth curve defined in Proposition~\ref{prop:xn} and let $X_{n+1}$ be the region in $[0,\uexfrac]\times\T$ to the left of $C_n$.  
Then
  $X_{n+1}$ consists of precisely those points
  in $[0,\uexfrac]\times\T$ that require at least $n+1$ iterates of $f$
  to enter $Y$ for the first time.  
  By Proposition~\ref{prop:xn}, 
  $\Leb(X_n)\sim c_1n^{-\alpha}$, 
  and $\Leb(X_{n-1}\setminus X_n)\sim c'n^{-(1+\alpha)}$.

  Now, $f$ maps $\{\varphi=n\}$ onto $X_{n-1}\setminus X_n$ for $n\ge2$, and
  the mapping is $\uex$ to $1$.  Each branch is linear and scales areas
  by a factor of $\uex^2$, so
  $\Leb(\{\varphi=n\})=\uexfracone\Leb(X_{n-1}\setminus X_n)$.  Hence
  $\Leb(\varphi=n)\sim \uexfracone c'n^{-(1+\alpha)}$ and
  $\Leb(\varphi>n)=\uexfracone\Leb(X_n)\sim \uexfracone c_1n^{-\alpha}$.
\end{proof}

\begin{rmk}
  Suppose further that $u(x,\theta)=c_0+O(x^{\gamma})$ uniformly in
  $\theta$.  This property is inherited by $\tilde u$ and $\hat u$
  in the above calculations and we obtain that
  $x_n=c_1n^{-\alpha}(1+O(n^{-1}\log n))$ uniformly in $\theta$.
\end{rmk}

\subsection{Distortion estimates}

\begin{lemma} \label{lem:DF} Let $(y,\theta)\in Y_{n,j}$.  Then 
  \[
    \prod_{k=\ell}^m\frac{\partial f_1}{\partial x}(f^k(y,\theta))\sim
    \Bigl(\frac{n-\ell}{n-m}\Bigr)^{1+\alpha}
\quad\text{as $n\to\infty$}.
  \]
This estimate holds uniformly in $(y,\theta)\in Y_{n,j}$, in $j$, and in
$1\le \ell\le m\le n-1$.
\end{lemma}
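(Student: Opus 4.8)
The plan is to track the $f$-orbit of a point of $Y_{n,j}$, identify its first coordinates with the curves $x_m(\cdot)$ of Proposition~\ref{prop:xn}, convert the product into a Riemann sum, and exponentiate. For $(y,\theta)\in Y_{n,j}$ with $n\ge 2$, write $(x_k,\theta_k)=f^k(y,\theta)$. The description of $Y_{n,j}$ in Section~\ref{sec:partition} gives $x_1=f_1(y,\theta)\in(x_{n-1}(\theta_1),x_{n-2}(\theta_1))$, and since $f$ acts through the nonlinear branch $f_1(x,\theta)=x(1+x^\gamma u)$ on $[0,\uexfrac]\times\T$ and maps $x_m(\cdot)$ onto $x_{m-1}(\cdot)$, iterating $f$ yields $x_k\in(x_{n-k}(\theta_k),x_{n-k-1}(\theta_k))$ for $1\le k\le n-1$. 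By Proposition~\ref{prop:xn} and Remark~\ref{rmk:square}, $x_k\sim c_1(n-k)^{-\alpha}$ as $n-k\to\infty$, uniformly in $(y,\theta)$ and $j$, while for $n-k$ bounded $x_k$ stays in a fixed compact subinterval of $(0,\uexfrac]$; moreover, since $u$ is $C^2$ with $u(0,\cdot)\equiv c_0$, one has $u(x,\theta)=c_0+O(x)=c_0+O(x^\gamma)$, so the refinement recorded after Corollary~\ref{cor:Leb} applies and gives the sharper $x_k=c_1(n-k)^{-\alpha}\bigl(1+O((n-k)^{-1}\log(n-k))\bigr)$, uniformly.

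Next I would compute $\frac{\partial f_1}{\partial x}(x,\theta)=1+(1+\gamma)x^\gamma u(x,\theta)+x^{1+\gamma}\frac{\partial u}{\partial x}(x,\theta)$; using $u=c_0+O(x^\gamma)$, $x\frac{\partial u}{\partial x}=o(1)$, and $\gamma<1$ (so $x^{1+\gamma}=o(x^\gamma)$),
\[
\log\frac{\partial f_1}{\partial x}(x,\theta)=(1+\gamma)c_0\,x^\gamma+O(x^{2\gamma})\qquad(x\to0),
\]
uniformly in $\theta$. Substituting $x=x_k$ and using the arithmetic identity $(1+\gamma)c_0c_1^{\gamma}=(1+\gamma)/\gamma=1+\alpha$ gives $\log\frac{\partial f_1}{\partial x}(x_k,\theta_k)=\frac{1+\alpha}{n-k}+O\bigl((n-k)^{-2}\log(n-k)\bigr)$ as $n-k\to\infty$. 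Summing over $\ell\le k\le m$, treating the $O(1)$ iterates with $n-k$ bounded separately (on these $\frac{\partial f_1}{\partial x}$ lies in a fixed compact subinterval of $[1,\infty)$ and so contributes a bounded factor), and using $\sum_{j=a}^{b}j^{-1}=\log(b/a)+O(1/a)$ together with convergence of $\sum_j j^{-2}\log j$, I get $\sum_{k=\ell}^m\log\frac{\partial f_1}{\partial x}(x_k,\theta_k)=(1+\alpha)\log\frac{n-\ell}{n-m}+o(1)$; exponentiating yields the claimed asymptotic, uniformly because every error term above is.

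The hard part is the uniformity, and in particular the handful of iterates $k$ near $n$ where $x_k$ is of order one rather than small: there $\frac{\partial f_1}{\partial x}(x_k,\theta_k)$ is bounded but bounded away from $1$, and one must check that these finitely many factors are correctly absorbed into the power $\bigl(\frac{n-\ell}{n-m}\bigr)^{1+\alpha}$ --- this is exactly where the precise endpoint values $x_k\in(x_{n-k}(\theta_k),x_{n-k-1}(\theta_k))$ and the monotonicity of $m\mapsto x_m(\theta)$ are used. For the weaker conclusion that the product is comparable, up to a uniform multiplicative constant, to $\bigl(\frac{n-\ell}{n-m}\bigr)^{1+\alpha}$ --- which is all a bounded-distortion argument needs --- the crude bound $x_k\asymp(n-k)^{-\alpha}$ already suffices, the logarithmic rate obtained in the first paragraph being required only to upgrade ``comparable'' to ``asymptotically equal.''
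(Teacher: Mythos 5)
Your argument follows the same route as the paper's own proof: the orbit asymptotics $x_k\sim c_1(n-k)^{-\alpha}$ from Proposition~\ref{prop:xn}, the expansion $\log\frac{\partial f_1}{\partial x}(x,\theta)=c_0(1+\gamma)x^\gamma(1+o(1))$ from~\eqref{eq:f1x}, summation of the resulting harmonic-type series over $n-m\le n-k\le n-\ell$, and exponentiation. You are in fact more scrupulous than the paper at exactly the two delicate points you flag --- the quantitative error terms needed to pass from an asymptotic for the logarithm of the product to one for the product itself, and the boundedly many iterates with $n-k=O(1)$ --- which the published proof treats only implicitly; your handling of them is consistent with how the lemma is actually used later (only up to uniform multiplicative constants).
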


\begin{proof}
  Let $(y_k,\theta_k)=f^k(y,\theta)$, $k=0,\dots,n-1$ and recall from
  Proposition~\ref{prop:xn} that
  $y_{n-k}\sim (c_0\gamma k)^{-\alpha}$ as $k\to\infty$ uniformly in
  the initial choice of $\theta$.  Now,
  \begin{align} \label{eq:f1x}
    \frac{\partial f_1}{\partial x}(x,\theta)=1+(1+\gamma)x^\gamma
    u(x,\theta) +x^{1+\gamma}\frac{\partial u}{\partial
      x}(x,\theta) =1+c_0(1+\gamma)x^\gamma(1+o(1)),
  \end{align}
  as $x\to0$ uniformly in $\theta$, so
  \[
    \log \frac{\partial f_1}{\partial x}(f^{n-k}(y,\theta))\sim
    c_0(1+\gamma)y_{n-k}^\gamma \sim (1+\alpha)k^{-1}.
  \]
  It follows that
  \[
    \log\prod_{k=\ell}^m\frac{\partial f_1}{\partial x}(f^k(y,\theta))
    =\sum_{k=n-m}^{n-\ell}\log\frac{\partial f_1}{\partial
      x}(f^{n-k}(y,\theta))
    =(1+\alpha)\sum_{k=n-m}^{n-\ell}b_k(\theta),
  \]
  where $b_k(\theta)\sim k^{-1}$ as $k\to\infty$ uniformly in
  $\theta$.  Hence
  $\log\prod_{k=\ell}^m\frac{\partial f_1}{\partial
    x}(f^k(y,\theta))\sim (1+\alpha)(\log(n-\ell)-\log(n-m))$ and
  the result follows.
\end{proof}

\begin{cor} \label{cor:DF} Let $(y,\theta)\in Y_{n,j}$.  Then
  \[
    (DF)_{(y,\theta)}=\left(\begin{array}{cc} A(y,\theta) & B(y,\theta) \\
                              0 & \uex^n \end{array}\right),
                        \]
                        where $A(y,\theta)\sim \uex n^{1+\alpha}$ as
                        $n\to\infty$ 
and $B(y,\theta)=O(\uex^n)$.
These estimates hold uniformly in $(y,\theta)\in Y_{n,j}$ and in $j$.
                      \end{cor}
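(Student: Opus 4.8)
The plan is to read everything off the block‑triangular form of $Df$. Since $f(x,\theta)=(f_1(x,\theta),f_2(\theta))$ with $f_2(\theta)=\uex\theta$, at every point we have $Df=\begin{pmatrix}\partial_x f_1 & \partial_\theta f_1\\ 0 & \uex\end{pmatrix}$, so $DF_{(y,\theta)}=\prod_{k=0}^{n-1}Df_{f^k(y,\theta)}$ is a product of upper‑triangular matrices and hence upper‑triangular; this is already the asserted shape, with $(2,1)$‑entry $0$ and $(2,2)$‑entry $\prod_{k=0}^{n-1}\uex=\uex^n$. For $(y,\theta)\in Y_{n,j}$ I would first note that the orbit uses the linear branch at time $0$ (because $y>\uexfrac$, which is immediate from~\eqref{eq:Y1} and from the description of $Y_{n,j}$ for $n\ge2$ in terms of the curves $y_n(\theta)>\uexfrac$) and the nonlinear branch at times $1,\dots,n-1$. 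In particular $Df_{(y,\theta)}=\uex I$, so that factor contributes only a scalar $\uex$, and the case $n=1$ is trivial.

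Writing $(y_k,\theta_k)=f^k(y,\theta)$ and $a_k=\partial_x f_1(y_k,\theta_k)$, the $(1,1)$‑entry is $A(y,\theta)=\uex\prod_{k=1}^{n-1}a_k$, and Lemma~\ref{lem:DF} with $\ell=1$, $m=n-1$ gives $\prod_{k=1}^{n-1}a_k\sim (n-1)^{1+\alpha}\sim n^{1+\alpha}$ uniformly in $(y,\theta)\in Y_{n,j}$ and in $j$; hence $A\sim\uex n^{1+\alpha}$. For the off‑diagonal entry, expanding the triangular product yields
\[
B(y,\theta)=\sum_{k=1}^{n-1}\Bigl(\prod_{j=k+1}^{n-1}a_j\Bigr)\,\partial_\theta f_1(y_k,\theta_k)\,\uex^{k},
\]
the $k=0$ term dropping out since $\partial_\theta f_1\equiv0$ on the linear branch. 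On the nonlinear branch $\partial_\theta f_1(x,\theta)=x^{1+\gamma}\partial_\theta u(x,\theta)$, so using that $|\partial_\theta u|_\infty$ is bounded and that $y_k\sim c_1(n-k)^{-\alpha}$ by Proposition~\ref{prop:xn} (with $y_k\le\uexfrac$ whenever $n-k$ is bounded), one gets $|\partial_\theta f_1(y_k,\theta_k)|\ll y_k^{1+\gamma}\ll (n-k)^{-(1+\alpha)}$ uniformly for $1\le k\le n-1$; and Lemma~\ref{lem:DF} (with $\ell=k+1$, $m=n-1$, the empty product being $1$ when $k=n-1$) gives $\prod_{j=k+1}^{n-1}a_j\ll (n-k)^{1+\alpha}$. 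Thus each summand is $\ll (n-k)^{1+\alpha}(n-k)^{-(1+\alpha)}\uex^k=\uex^k$, and summing over $k$ gives $|B|\ll\sum_{k=1}^{n-1}\uex^k\ll\uex^n$.

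The argument is essentially bookkeeping on top of Lemma~\ref{lem:DF} and Proposition~\ref{prop:xn}; the only step needing any care is the off‑diagonal term, where one must check that the exponential growth $\uex^k$ coming from $f_2$ is exactly absorbed by the polynomial decay $(n-k)^{-(1+\alpha)}$ of $\partial_\theta f_1$ multiplied against the polynomial growth $(n-k)^{1+\alpha}$ of the tail product of the $a_j$'s, uniformly down to the boundary cases $k=0$ (linear branch, zero contribution) and $k$ close to $n$ (where $y_k$ is of order one rather than small, so only the crude bound $y_k\le\uexfrac<1$ is available). Uniformity in $(y,\theta)\in Y_{n,j}$ and in $j$ is inherited directly from the uniformity already present in Lemma~\ref{lem:DF} and Proposition~\ref{prop:xn}.
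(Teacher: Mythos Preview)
Your proof is correct and takes essentially the same approach as the paper: both read off the triangular form of $DF$ from that of $Df$, apply Lemma~\ref{lem:DF} for $A$, and expand $B$ as $\sum_k\bigl(\prod_{j>k}a_j\bigr)\,\partial_\theta f_1(z_k)\,4^k$. The only cosmetic difference is that you use the sharper bound $|\partial_\theta f_1(y_k,\theta_k)|\ll(n-k)^{-(1+\alpha)}$ via Proposition~\ref{prop:xn} (giving summands $\ll4^k$), whereas the paper uses only $\partial_\theta f_1=O(1)$ and in the process records the intermediate estimate $\partial_\theta[f^k]_1(y,\theta)\ll(n-k)^{-(1+\alpha)}4^n$ for general $k$, which it reuses in the proof of Lemma~\ref{lem:DJ}.
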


\begin{proof}
  Let $z_k=f^k(y,\theta)$, $k=0,\dots,n-1$ and write
  \[
    (DF)_{(y,\theta)}=(Df)_{z_{n-1}}\cdots (Df)_{z_1}(Df)_{z_0}.
  \]
  Clearly, $(Df)_{z_k}$ is upper triangular with diagonal entries
  $\frac{\partial f_1}{\partial x}(z_k)$ and $\uex$.  Hence
  $(DF)_{(y,\theta)}$ has the required form with
  \[
    A(y,\theta)=\prod_{k=0}^{n-1}\frac{\partial f_1}{\partial x}(z_k)
    =\uex\prod_{k=1}^{n-1}\frac{\partial f_1}{\partial x}(z_k)
  \]
  The required asymptotics for $A(y,\theta)$ follows from Lemma~\ref{lem:DF}.

  Next, 
  \[
    \frac{\partial[f^{k+1}]_1}{\partial \theta}(z_0) =
    \frac{\partial f_1}{\partial x}(z_k) \frac{[\partial f^k]_1}{\partial
      \theta}(z_0)+
    \frac{\partial f_1}{\partial \theta}(z_k) 4^k,
  \]
so by induction,
  \[
    \frac{\partial[f^k]_1}{\partial \theta}(y,\theta) =
    \sum_{\ell=0}^{k-1}\Bigl[\prod_{m=\ell+1}^{k-1}\frac{\partial
      f_1}{\partial x}(z_m)\Bigr] \frac{\partial f_1}{\partial
      \theta}(z_{\ell})\uex^{\ell}.
  \]
  By Lemma~\ref{lem:DF},
  $\prod_{m=\ell}^{k-1}\frac{\partial f_1}{\partial x}(z_m)\ll
  [(n-\ell)/(n-k)]^{1+\alpha}$ and so
  \[
    \begin{split}
      \frac{\partial[f^k]_1}{\partial \theta}(y,\theta) & \ll
      (n-k)^{-(1+\alpha)}\sum_{\ell=1}^k
      (n-\ell)^{1+\alpha}\uex^\ell \\ & \ll
      (n-k)^{-(1+\alpha)}\sum_{\ell=n-k}^n
      \ell^{1+\alpha}\uex^{n-\ell} \ll (n-k)^{-(1+\alpha)}\uex^n,
    \end{split}
  \]
  yielding the required estimate for $B(y,\theta)$.
\end{proof}

Let $JF=\det DF$.  By Corollary~\ref{cor:DF},
$JF\sim \uex^{n+1} n^{1+\alpha}$ on $Y_{n,j}$.

\begin{lemma} \label{lem:DJ} There is a constant $C>0$ such that
  \[
    \Bigl|\frac{\partial}{\partial y}(JF)_{(y,\theta)}^{-1}\Bigr|\le
    C\uex^{-n}, \quad \Bigl|\frac{\partial}{\partial
      \theta}(JF)_{(y,\theta)}^{-1}\Bigr|\le Cn^{-(1+\alpha)},
  \]
  for all $(y,\theta)\in Y_{n,j}$ and all $n,j$.
\end{lemma}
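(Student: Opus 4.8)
The plan is to differentiate the asymptotic formula $JF\sim \uex^{n+1}n^{1+\alpha}$ obtained from Corollary~\ref{cor:DF}, but to do this rigorously I would work with the explicit product expression $JF=\uex^n\,A(y,\theta)=\uex^n\prod_{k=0}^{n-1}\frac{\partial f_1}{\partial x}(z_k)$, where $z_k=f^k(y,\theta)$, rather than the asymptotics alone. Writing $(JF)^{-1}=\uex^{-n}\prod_{k=0}^{n-1}\big(\frac{\partial f_1}{\partial x}(z_k)\big)^{-1}$, I would apply the logarithmic-derivative trick: for a partial derivative $\partial\in\{\partial_y,\partial_\theta\}$,
\[
\partial (JF)^{-1} = -(JF)^{-1}\sum_{k=0}^{n-1}\frac{\partial\big(\frac{\partial f_1}{\partial x}(z_k)\big)}{\frac{\partial f_1}{\partial x}(z_k)}.
\]
Since $(JF)^{-1}\asymp \uex^{-n}n^{-(1+\alpha)}$, the task reduces to bounding the sum of logarithmic derivatives by $O(n^{1+\alpha})$ for the $y$-derivative and by $O(1)$ for the $\theta$-derivative.

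First I would record the size of $\partial_y z_k$ and $\partial_\theta z_k$. The chain rule gives $D z_k = (Df)_{z_{k-1}}\cdots(Df)_{z_0}$ restricted to the relevant column; by Lemma~\ref{lem:DF} the first-coordinate entry of $\partial_y z_k$ is $\prod_{m=0}^{k-1}\frac{\partial f_1}{\partial x}(z_m)\asymp (n/(n-k))^{1+\alpha}$, the second coordinate of $\partial_y z_k$ vanishes (the map preserves the $\theta$-fibration and $\partial f_2/\partial y=0$), and for $\partial_\theta z_k$ the computation in the proof of Corollary~\ref{cor:DF} already gives the first coordinate $\frac{\partial[f^k]_1}{\partial\theta}\ll (n-k)^{-(1+\alpha)}\uex^n$ while the second coordinate is $\uex^k$. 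Next, from \eqref{eq:f1x} one has $\frac{\partial f_1}{\partial x}(x,\theta)=1+O(x^\gamma)$ with $\frac{\partial f_1}{\partial x}\ge 1$, and differentiating \eqref{eq:f1x} again in $x$ and in $\theta$ (using the imposed bounds $x\partial_x u=o(1)$, $x^2\partial_x^2u=o(1)$, $\partial_\theta u=O(1)$) yields $|\partial_x\frac{\partial f_1}{\partial x}|\ll x^{\gamma-1}$ and $|\partial_\theta\frac{\partial f_1}{\partial x}|\ll x^\gamma$. Combining via the chain rule with $y_k\asymp (n-k)^{-\alpha}$ (from Proposition~\ref{prop:xn}), the individual logarithmic-derivative terms become: for $\partial_y$, roughly $y_k^{\gamma-1}\cdot(n/(n-k))^{1+\alpha}\ll (n-k)^{1-1/\gamma}\cdot n^{1+\alpha}(n-k)^{-(1+\alpha)} = n^{1+\alpha}(n-k)^{-2\alpha}$ up to harmless factors — wait, I should be careful; let me instead just say the terms are $\ll n^{1+\alpha}(n-k)^{-s}$ for a suitable exponent $s$ making the sum over $k$ converge, giving total $O(n^{1+\alpha})$; for $\partial_\theta$, the terms are $\ll y_k^{\gamma-1}\cdot(n-k)^{-(1+\alpha)}\uex^n$ from the first-coordinate part plus $y_k^\gamma\cdot\uex^k$ from the explicit $\theta$-dependence of $\frac{\partial f_1}{\partial x}$, and one checks these sum to $O(\uex^n)$, i.e. after multiplying by $(JF)^{-1}\asymp\uex^{-n}n^{-(1+\alpha)}$ one gets $O(n^{-(1+\alpha)})$ — hmm, I want $O(1)$ for the sum in the $\theta$ case, so the first part must actually be $O(1)$ after cancellation; I would track the $\uex^n$ factor carefully and note $y_k^{\gamma-1}(n-k)^{-(1+\alpha)}=(n-k)^{\gamma-1-\gamma-\alpha\gamma}\cdots$ summing geometrically against the implicit structure.

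The main obstacle I anticipate is the bookkeeping in the $\partial_\theta$ estimate: unlike $\partial_y$, here the derivative of $z_k$ in the $x$-direction already carries a full factor $\uex^n$, and one must verify that after dividing by $JF\asymp\uex^{n+1}n^{1+\alpha}$ the surviving bound is $n^{-(1+\alpha)}$ and not merely $n^{-(1+\alpha)}$ times a spurious power of $n$. This requires using the sharp exponent $1+\alpha$ (not just $\ll$) in Lemma~\ref{lem:DF} inside the sum $\sum_k y_k^{\gamma-1}\,\partial_\theta[f^k]_1(z_0)$, and checking that the resulting sum $\sum_{k=0}^{n-1}(n-k)^{-\alpha(\gamma-1)}(n-k)^{-(1+\alpha)}\uex^n$... — more prudently, I would reduce everything to: (i) $\frac{\partial f_1}{\partial x}(z_k)\ge 1$ so the product $(JF)^{-1}\le\uex^{-n}$ trivially, which already gives the crude $\uex^{-n}$ bound needed for the $y$-derivative after pairing with $\sum|\partial_y(\cdot)|\ll n^{1+\alpha}$; and (ii) for the $\theta$-derivative, pair the sharp two-sided bound $(JF)^{-1}\asymp\uex^{-n}n^{-(1+\alpha)}$ with a careful summation showing $\sum_k|\log\text{-deriv in }\theta|\ll 1$, the convergence coming from the geometric factor $\uex^{k}$ being beaten by $\uex^{-n}$ in $(JF)^{-1}$ together with the polynomial weight $(n-k)^{-(1+\alpha)}$ reindexed as $j^{-(1+\alpha)}$ with $j=n-k$. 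The routine but delicate part is confirming that all error terms $o(1)$ from $u$ being near-constant stay summable; I would invoke the imposed smallness of $|x\partial_x u|_\infty$ and $|\partial_\theta u|_\infty$ exactly as in Proposition~\ref{prop:xn} and Remark~\ref{rmk:u} to absorb these.
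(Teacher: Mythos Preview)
Your approach is the same as the paper's: write $JF=\uex^n A$ with $A=\prod_k\partial_x f_1(z_k)$, take the logarithmic derivative, and bound $\partial\log A$ using the chain rule together with the estimates from Lemma~\ref{lem:DF} and Corollary~\ref{cor:DF}. Your computation for $\partial_y\log A$ is essentially correct (one arithmetic slip: $y_k^{\gamma-1}\asymp(n-k)^{\alpha-1}$, not $(n-k)^{1-\alpha}$, since $\alpha\gamma=1$; the correct product then gives $(n-k)^{-2}$, and $\sum_j j^{-2}<\infty$ finishes it cleanly).

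The real confusion is in the $\theta$-estimate. You state that the task is to show $\sum_k|\partial_\theta\log(\partial_x f_1(z_k))|=O(1)$, but that is not the right target: since $(JF)^{-1}\asymp\uex^{-n}n^{-(1+\alpha)}$ and you want $|\partial_\theta(JF)^{-1}|\ll n^{-(1+\alpha)}$, what you actually need is $|\partial_\theta\log A|\ll \uex^n$. Your own computation already delivers exactly this---both the term $\sum_k y_k^{\gamma-1}(n-k)^{-(1+\alpha)}\uex^n\ll\uex^n\sum_j j^{-2}\ll\uex^n$ and the term $\sum_k y_k^\gamma\,\uex^k\ll\uex^n$ are fine---and multiplying by $(JF)^{-1}$ gives the claimed $n^{-(1+\alpha)}$. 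So your ``hmm, I want $O(1)$'' is a miscount of what is required, not a mathematical obstruction; once you correct the target to $O(\uex^n)$ the argument goes through with no further cancellation or smallness of $u$ needed beyond what is already used.
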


\begin{proof}   
  By Corollary~\ref{cor:DF}, $JF(y,\theta)=\uex^nA(y,\theta)$ where
  $A(y,\theta)\sim \uex n^{1+\alpha}$ uniformly on $Y_{n,j}$.  We
  have
  \[
  \begin{split}
    \frac{\partial}{\partial y}(JF(y,\theta))^{-1}
    & =\uex^{-n}(A(y,\theta))^{-2}\frac{\partial}{\partial y}A(y,\theta)
    \\ & =\uex^{-n}A(y,\theta)^{-1}\frac{\partial}{\partial y}\log A(y,\theta)
         \sim \uex^{-(n+1)}n^{-(1+\alpha)}\frac{\partial}{\partial y}\log A(y,\theta).
       \end{split}
       \]
  Similarly,
  $ \frac{\partial}{\partial \theta}(JF(y,\theta))^{-1} \sim
  \uex^{-(n+1)}n^{-(1+\alpha)} \frac{\partial}{\partial \theta}\log
  A(y,\theta)$.  Hence, it suffices to show that
  \begin{equation} \label{eq:A} \frac{\partial}{\partial y}\log
    A(y,\theta)\ll n^{1+\alpha}, \quad \frac{\partial}{\partial
      \theta}\log A(y,\theta)\ll \uex^n.
  \end{equation}

  Writing $z_k=f^k(y,\theta)$,
  \[
    \frac{\partial}{\partial y}\log
    A(y,\theta)=\sum_{k=1}^{n-1}\frac{\partial}{\partial y}\log
    \frac{\partial f_1}{\partial x}(z_k)
    =\sum_{k=1}^{n-1}\Bigl(\frac{\partial f_1}{\partial
      x}(z_k)\Bigr)^{-1}\frac{\partial^2 f_1}{\partial x^2}(z_k)
    \frac{\partial[f^k]_1}{\partial y}(y,\theta).
  \]
  By~\eqref{eq:f1x} and Proposition~\ref{prop:xn},
  $\frac{\partial f_1}{\partial x}(z_k)\sim 1$ and
  $\frac{\partial^2 f_1}{\partial x^2}(z_k)\ll z_k^{\gamma-1}\ll
  (n-k)^{-(1-\alpha)}$.  Moreover, it follows from
  Lemma~\ref{lem:DF} that
  \[
    \frac{\partial [f^k]_1}{\partial y}(y,\theta)\ll
    (n/(n-k))^{1+\alpha}.
  \]
  Hence
  \[
  \begin{split}
    \frac{\partial}{\partial y}\log A(y,\theta)
    & \ll \sum_{k=1}^{n-1}(n-k)^{-(1-\alpha)}n^{1+\alpha}(n-k)^{-(1+\alpha)}
      =n^{1+\alpha}\sum_{k=1}^n k^{-2}
      \ll n^{1+\alpha},
    \end{split}
    \]
  establishing the first estimate in~\eqref{eq:A}.

  Proceeding similarly for the second estimate
  \begin{equation} \label{eq:A2} \frac{\partial}{\partial \theta}\log
    A(y,\theta) =\sum_{k=1}^{n-1}\Bigl(\frac{\partial f_1}{\partial
      x}(z_k)\Bigr)^{-1} \Bigl[ \frac{\partial^2 f_1}{\partial
      x^2}(z_k) \frac{\partial[f^k]_1}{\partial \theta}(y,\theta)+
    \frac{\partial^2 f_1}{\partial x\partial\theta}(z_k)
    \frac{\partial[f^k]_2}{\partial \theta}(y,\theta) \Bigr].
  \end{equation}

  Our assumptions on $f_1$ imply in particular that
  $\frac{\partial^2 f_1}{\partial x\partial\theta}$ is bounded, so
  the second term in~\eqref{eq:A2} is $O(\uex^n)$.  The calculation at
  the end of the proof of Corollary~\ref{cor:DF} shows that
  $\frac{\partial[f^k]_1}{\partial \theta}(y,\theta) \ll
  (n-k)^{-(1+\alpha)}\uex^n$.  Hence the first term in~\eqref{eq:A2}
  is bounded up to a constant by
  \[
    \sum_{k=1}^{n-1}(n-k)^{-(1-\alpha)} (n-k)^{-(1+\alpha)}\uex^n
    =\uex^n\sum_{k=1}^{n-1}k^{-2} \ll \uex^n,
  \]
  establishing the second estimate in~\eqref{eq:A}.
\end{proof}

\begin{cor} \label{cor:PE2} There is a constant $C>0$ such that
  \[
    |1/JF(y,\theta)-1/JF(y',\theta')|\le C
    \inf_a(1/JF)|F(y,\theta)-F(y',\theta')|,
  \]
  for all $(y,\theta),\,(y',\theta')\in a=Y_{n,j}$ and all $n,j$.
\end{cor}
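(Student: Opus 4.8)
The plan is to reduce the desired bound to the two partial-derivative estimates already established in Lemma~\ref{lem:DJ}, using the expansion estimate of Proposition~\ref{prop:expand} to convert distances on $Y_{n,j}$ into distances on $F(Y_{n,j})$. Throughout, fix a partition element $a=Y_{n,j}$ and write $g=1/JF$ on $a$; by Corollary~\ref{cor:DF} we have $g\sim \uex^{-(n+1)}n^{-(1+\alpha)}$ uniformly on $a$, so $g$ is comparable to $\inf_a g$ up to a universal constant, and it suffices to bound $|g(y,\theta)-g(y',\theta')|$ by $C\,\inf_a g\,|F(y,\theta)-F(y',\theta')|$.

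First I would bound the oscillation of $g$ on $a$ in terms of the intrinsic distance on $a$. Splitting along coordinate directions and applying the mean value theorem,
\[
|g(y,\theta)-g(y',\theta')|\le \Bigl|\frac{\partial g}{\partial y}\Bigr|_\infty |y-y'| + \Bigl|\frac{\partial g}{\partial \theta}\Bigr|_\infty |\theta-\theta'|
\le C\uex^{-n}|y-y'| + Cn^{-(1+\alpha)}|\theta-\theta'|,
\]
using Lemma~\ref{lem:DJ} for the two sup-norms. (One must be slightly careful that the segment joining $(y,\theta)$ to $(y',\theta)$ and then to $(y',\theta')$ stays in $a$, or rather in the closure where the derivative bounds still hold; since $Y_{n,j}$ is, by the description in Section~\ref{sec:partition}, the region between two nearly horizontal graphs over a $\theta$-interval, a standard L-shaped path works, possibly after first moving in $\theta$ then in $y$ depending on orientation, and the bounds of Lemma~\ref{lem:DJ} are stated uniformly on $Y_{n,j}$.)

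Next I would relate $|y-y'|$ and $|\theta-\theta'|$ to $|F(y,\theta)-F(y',\theta')|$. By Proposition~\ref{prop:expand}, $F$ restricted to $a$ is a diffeomorphism onto its image with $|(DF)v|\ge \uex|v|$, hence its inverse is Lipschitz with constant $\uex^{-1}$, giving $|(y,\theta)-(y',\theta')|\le \uex^{-1}|F(y,\theta)-F(y',\theta')|$; in particular both $|y-y'|$ and $|\theta-\theta'|$ are bounded by $|F(y,\theta)-F(y',\theta')|$. Therefore
\[
|g(y,\theta)-g(y',\theta')|\le C(\uex^{-n}+n^{-(1+\alpha)})|F(y,\theta)-F(y',\theta')|\le C\uex^{-n}|F(y,\theta)-F(y',\theta')|,
\]
since $\uex^{-n}$ dominates $n^{-(1+\alpha)}$ up to a constant. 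Finally, since $\inf_a g\sim \uex^{-(n+1)}n^{-(1+\alpha)}$, we certainly have $\uex^{-n}\le C'\uex\, n^{1+\alpha}\inf_a g$, so $\uex^{-n}\ll n^{1+\alpha}\inf_a g$; absorbing the polynomial factor is wasteful but harmless here — in fact the cleaner route is simply $\uex^{-n}\le C''\inf_a g\cdot \uex^{n}n^{1+\alpha}$, which is even weaker than needed. The honest comparison is $\uex^{-n} = (\uex^{n+1}n^{1+\alpha})\cdot \uex^{-(2n+1)}n^{-(1+\alpha)}$, and since $\uex^{-(2n+1)}n^{-(1+\alpha)}\le C\inf_a g$ fails — so instead one uses that $\uex^{-n}$ and $\inf_a g$ differ by the factor $\uex^{n+1}n^{1+\alpha}$ which is $\ge 1$, hence $\inf_a g\le \uex^{-n}$ is the wrong direction. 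The correct and only needed observation is the trivial one: $\uex^{-n}\le \uex\cdot n^{1+\alpha}\uex^{n}\cdot \uex^{-(n+1)}n^{-(1+\alpha)}$, i.e.\ it is enough to note $g$ is comparable to $\inf_a g$ and that the whole oscillation bound is itself $O(\uex^{-n})=O(\uex^{n+2}n^{1+\alpha}\inf_a g)$; but the statement only asks for a \emph{constant} multiple of $\inf_a g$, so one must in fact prove the sharp inequality $|g(y,\theta)-g(y',\theta')|\le C\inf_a g\,|F(\cdot)-F(\cdot)|$ — which follows because $|y-y'|$ is controlled not just by $\uex^{-1}|F(\cdot)-F(\cdot)|$ but, looking at the first row of $DF$ in Corollary~\ref{cor:DF} where $A\sim \uex n^{1+\alpha}$, by roughly $\uex^{-1}n^{-(1+\alpha)}|F(\cdot)-F(\cdot)|$, and similarly $|\theta-\theta'|\le \uex^{-n}|F(\cdot)-F(\cdot)|$, so that $|g(y,\theta)-g(y',\theta')|\le C\uex^{-n}n^{-(1+\alpha)}|F(\cdot)-F(\cdot)|\le C'\inf_a g\,|F(\cdot)-F(\cdot)|$.

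The main obstacle, then, is not the derivative bounds (those are Lemma~\ref{lem:DJ}) but extracting the \emph{right} Lipschitz constants for the inverse of $F$ in each coordinate separately: a crude application of Proposition~\ref{prop:expand} only gives $\uex^{-1}$ in both directions, which is too lossy to reach a pure constant times $\inf_a g$. One must instead invert the explicit upper-triangular form of $DF$ from Corollary~\ref{cor:DF}: from $(DF)^{-1}=\begin{pmatrix}A^{-1}&-A^{-1}B\uex^{-n}\\0&\uex^{-n}\end{pmatrix}$ one reads off $|y-y'|\le |A^{-1}|_\infty|\Delta(FY)_1|+|A^{-1}B\uex^{-n}|_\infty|\Delta(FY)_2|\ll n^{-(1+\alpha)}|F(\cdot)-F(\cdot)|$ (using $A\sim \uex n^{1+\alpha}$, $B=O(\uex^n)$) and $|\theta-\theta'|\le \uex^{-n}|F(\cdot)-F(\cdot)|$. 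Plugging these into the oscillation bound gives exactly $C\uex^{-n}n^{-(1+\alpha)}|F(\cdot)-F(\cdot)|$, and since $\inf_a(1/JF)\sim \uex^{-(n+1)}n^{-(1+\alpha)}$, this is $\le C\inf_a(1/JF)\,|F(y,\theta)-F(y',\theta')|$, as required. A minor technical point is convexity of $Y_{n,j}$ (needed for the mean value theorem along coordinate paths); by Remark~\ref{rmk:square} and the description following it, $Y_{n,j}$ is the region between two $C^1$ graphs $y=y_n(\theta),y=y_{n-1}(\theta)$ with small slopes over a $\theta$-interval, which is a Lipschitz domain star-shaped enough that L-shaped coordinate paths stay inside, so this causes no difficulty.
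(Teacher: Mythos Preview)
Your final argument --- bound the oscillation of $g=1/JF$ via Lemma~\ref{lem:DJ}, then use the explicit upper-triangular form of $(DF)^{-1}$ from Corollary~\ref{cor:DF} to obtain $|y-y'|\ll n^{-(1+\alpha)}|F(\cdot)-F(\cdot)|$ and $|\theta-\theta'|\le \uex^{-n}|F(\cdot)-F(\cdot)|$, yielding a bound of order $\uex^{-n}n^{-(1+\alpha)}\sim\inf_a(1/JF)$ --- is correct and is exactly the paper's approach. (The middle section with the failed attempt using only Proposition~\ref{prop:expand} should be deleted; the crude bound $|y-y'|\le \uex^{-1}|F(\cdot)-F(\cdot)|$ is indeed too weak, as you eventually recognise.)

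There is one genuine, if minor, gap: your claim that L-shaped coordinate paths stay inside $Y_{n,j}$ is not correct. The left and right boundaries of $Y_{n,j}$ are the curves $y=y_n(\theta)$, $y=y_{n-1}(\theta)$, which have nonzero (though small) slope, so a segment parallel to an axis starting near one of these curves can exit $Y_{n,j}$. The paper handles this by observing, via Proposition~\ref{prop:xn}, that any such segment can only stray into neighbouring partition elements $Y_{m,j'}$ with $m\ge c_2 n$ for some universal $c_2>0$; since the derivative bounds of Lemma~\ref{lem:DJ} depend on the level only through the orders $\uex^{-m}$ and $m^{-(1+\alpha)}$, they still hold along the entire path with a comparable constant, and the mean-value-theorem step goes through unchanged.
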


\begin{proof}
  First, we prove the result under the simplifying assumption that $a$
  is a rectangle.  In particular, the line segments
  $[(y,\theta),(y',\theta)]$ and $[(y',\theta),(y',\theta')]$ lie in
  $a$.  By Lemma~\ref{lem:DJ} and the mean value theorem,
  $|1/JF(y,\theta)-1/JF(y',\theta)|\ll \uex^{-n}|y-y'|$.  By
  Corollary~\ref{cor:DF},
  $\uex^{-n}|y-y'|\ll
  \uex^{-n}n^{-(1+\alpha)}|F(y,\theta)-F(y',\theta)| \ll
  \inf_a(1/JF)|F(y,\theta)-F(y',\theta)|$.  Hence
  $|1/JF(y,\theta)-1/JF(y',\theta)|\ll
  \inf_a(1/JF)|F(y,\theta)-F(y',\theta)|$.  Similarly,
  $|1/JF(y',\theta)-1/JF(y',\theta')|\ll
  \inf_a(1/JF)|F(y',\theta)-F(y',\theta')|$.  The desired estimate
  follows.

  In general, Proposition~\ref{prop:xn} ensures that there is a
  constant $c_2>0$ such that the line segments lie in the union of
  partition elements $Y_{m,j}$ with $m\ge c_2n$, and the argument
  above is unaffected.
\end{proof}

Let $\alpha^Y_k$ denote the refinement of $\alpha^Y$ into $k$-cylinders.

\begin{cor} \label{cor:PEGM} There exists a constant $C>0$ such that
  $\sup_a JF^k\le C\inf_a JF^k$ for all $a\in\alpha^Y_k$, $k\ge1$.
\end{cor}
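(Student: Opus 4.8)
The plan is to establish bounded distortion for the iterates $F^k$ by telescoping the one-step distortion bound of Corollary~\ref{cor:PE2}. That corollary controls the ratio $JF(F^iz)/JF(F^iz')$ in terms of the distance $|F^{i+1}z-F^{i+1}z'|$, and the uniform expansion of $F$ from Proposition~\ref{prop:expand} forces these distances to decay geometrically in $k-i$; the product over $0\le i\le k-1$ of the resulting factors then converges uniformly in $k$.

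In detail, I would fix $a\in\alpha^Y_k$ and $z,z'\in a$ and use the chain rule $JF^k(z)=\prod_{i=0}^{k-1}JF(F^iz)$. For each $i$, the set $F^ia$ is a $(k-i)$-cylinder and so lies in a single element $a_i\in\alpha^Y$; in particular $F^iz,F^iz'\in a_i$. Multiplying the inequality of Corollary~\ref{cor:PE2} (applied on $a_i$ to the points $F^iz,F^iz'$) through by $JF(F^iz)$, and using $JF(F^iz)\inf_{a_i}(1/JF)=JF(F^iz)/\sup_{a_i}JF\le1$, gives
\[
\frac{JF(F^iz)}{JF(F^iz')}\le 1+C\,|F^{i+1}z-F^{i+1}z'|,
\]
where $C$ is the constant of Corollary~\ref{cor:PE2}. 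To bound the right-hand side, recall that $F$ maps each element of $\alpha^Y$ diffeomorphically onto its image with $|(DF)v|\ge\uex|v|$ (Proposition~\ref{prop:expand}), so the inverse branch of $F^{k-i-1}$ carrying $F^ka$ back onto $F^{i+1}a$ has derivative of operator norm at most $\uex^{-(k-i-1)}$ at every point. Exactly as in the proof of Corollary~\ref{cor:PE2}, Proposition~\ref{prop:xn} ensures that the cylinder images involved are almost rectangular (with geometry controlled uniformly in $k$ and $a$), so this inverse branch is $\uex^{-(k-i-1)}$-Lipschitz on $F^ka$, whence
\[
|F^{i+1}z-F^{i+1}z'|\le\uex^{-(k-i-1)}|F^kz-F^kz'|\le\uex^{-(k-i-1)}\diam Y,
\]
with $\diam Y<\infty$. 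Taking the product over $0\le i\le k-1$ then yields
\[
\frac{JF^k(z)}{JF^k(z')}\le\prod_{i=0}^{k-1}\bigl(1+C\diam(Y)\,\uex^{-(k-1-i)}\bigr)\le\prod_{j=0}^{\infty}\bigl(1+C\diam(Y)\,\uex^{-j}\bigr)=:C_0<\infty,
\]
a bound independent of $k$, $a$, $z$ and $z'$; taking the supremum over $z$ and the infimum over $z'$ gives $\sup_aJF^k\le C_0\inf_aJF^k$.

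The only point requiring genuine care is the middle step: verifying that Euclidean distance really contracts by the factor $\uex^{-m}$ under an inverse branch of $F^m$, i.e.\ that the cylinder images are path-connected and can be joined by paths of length comparable to the distance between their endpoints, so that the pointwise bound $\|D(F^m)^{-1}\|\le\uex^{-m}$ can be integrated. This is the same geometric fact — almost-rectangularity of the images, via Proposition~\ref{prop:xn} — already used in the proof of Corollary~\ref{cor:PE2}, so it introduces no new difficulty; at worst it costs a fixed multiplicative geometry constant, which is harmless since the product above still converges. The case $k=1$ is the degenerate instance of the argument, with the identity playing the role of the inverse branch.
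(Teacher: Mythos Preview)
Your proposal is correct and follows essentially the same approach as the paper: telescope the one-step distortion bound from Corollary~\ref{cor:PE2} along the orbit and use the uniform expansion of $F$ to make the resulting series summable. The paper's proof differs only cosmetically---it passes to logarithms rather than working with the product directly---and in fact you are more explicit than the paper about the geometric point (that the derivative bound $\|(DF)^{-1}\|\le\uexfracone$ integrates to a Lipschitz bound on inverse branches), which the paper simply asserts; the relevant fact, however, is really that the finitely many images $Fa$ are nice domains (cf.\ Lemma~\ref{lem:unifexp}), rather than the almost-rectangularity of the cylinders themselves from Proposition~\ref{prop:xn}.
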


\begin{proof}
  Write $x=(y,\theta),\,x'=(y',\theta')$.  First suppose that
  $x,x'\in a$, $a\in\alpha^Y$.  By Corollary~\ref{cor:PE2}, there is a
  constant $C_1>0$ such that
  \[
    \begin{split}
    \frac{JF(x)}{JF(x')}  =1+\frac{1/JF(x')-1/JF(x)}{1/JF(x)}
    & \le 1+\frac{|1/JF(x)-1/JF(x')|}{\inf_a 1/JF} 
    \\ & \le 1+C_1|Fx-Fx'| \le e^{C_1|Fx-Fx'|}.
  \end{split}
  \]
  Hence $|\log JF(x)-\log JF(x')|\le C_1|Fx-Fx'|$.

  Now suppose that $x,x'\in a$, $a\in\alpha^Y_k$ for some $k\ge1$.  Then
\[  \begin{split}
    |\log JF^k(x)- & \log JF^k(x')|
                     \le 
                     \sum_{j=0}^{k-1}|\log JF(F^jx)-\log JF(F^jx')|
    \\ & \le C_1\sum_{j=0}^{k-1}|F^{j+1}x-F^{j+1}x'|
         \le C_1\sum_{j=0}^{k-1}\uex^{-j}|F^kx-F^kx'|\le \uexfractwo C_1\diam Y.
       \end{split}
       \]
  The result follows with $C=e^{\uexfractwo C_1\diam Y}$.
\end{proof}

\begin{cor}[Bounded distortion] \label{cor:DJ} There is a constant
  $C>0$ such that
  \[
    \sup_a |\nabla(JF)^{-1})(DF)^{-1}|JF\le C \quad\text{for all $a\in\alpha^Y$}.
  \]
\end{cor}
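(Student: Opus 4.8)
The plan is to unwind the (compressed) expression $|\nabla((JF)^{-1})\,(DF)^{-1}|\,JF$ on a single partition element $a=Y_{n,j}$, where $\nabla((JF)^{-1})=\big(\partial_y(JF)^{-1},\,\partial_\theta(JF)^{-1}\big)$ is the gradient of the scalar $1/JF$ written as a row vector, and then to show the result is bounded by a constant uniform in $n$ and $j$. Since $\alpha^Y$ consists precisely of such elements, this yields the asserted bound over all $a\in\alpha^Y$.

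First I would assemble the three ingredients already proved. By Corollary~\ref{cor:DF}, on $Y_{n,j}$ we have $DF=\left(\begin{smallmatrix}A&B\\0&\uex^n\end{smallmatrix}\right)$ with $A\asymp\uex n^{1+\alpha}$ and $B=O(\uex^n)$, uniformly in $(y,\theta)\in Y_{n,j}$ and in $j$; hence $JF=\uex^nA\asymp\uex^{n+1}n^{1+\alpha}$ and
\[
(DF)^{-1}=\begin{pmatrix} A^{-1} & -B(A\uex^n)^{-1} \\ 0 & \uex^{-n} \end{pmatrix}.
\]
By Lemma~\ref{lem:DJ}, $|\partial_y(JF)^{-1}|\ll\uex^{-n}$ and $|\partial_\theta(JF)^{-1}|\ll n^{-(1+\alpha)}$, also uniformly in $n,j$. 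Carrying out the row-vector-times-matrix product,
\[
\nabla((JF)^{-1})\,(DF)^{-1}=\Big(\,A^{-1}\partial_y(JF)^{-1}\,,\ -B(A\uex^n)^{-1}\partial_y(JF)^{-1}+\uex^{-n}\partial_\theta(JF)^{-1}\,\Big).
\]

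The rest is bookkeeping. The first coordinate is $\ll\uex^{-n}(\uex n^{1+\alpha})^{-1}=\uex^{-(n+1)}n^{-(1+\alpha)}$. In the second coordinate, the cross term is $\ll\uex^n\cdot(A\uex^n)^{-1}\cdot\uex^{-n}\asymp\uex^{-(n+1)}n^{-(1+\alpha)}$, while the remaining term is $\ll\uex^{-n}n^{-(1+\alpha)}$; hence $|\nabla((JF)^{-1})\,(DF)^{-1}|\ll\uex^{-n}n^{-(1+\alpha)}\asymp(JF)^{-1}$. Multiplying through by $\sup_a JF\ll\uex^{n+1}n^{1+\alpha}$ gives a bound independent of $n$ and $j$, and taking the supremum over $a\in\alpha^Y$ finishes the argument.

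I do not expect a genuine obstacle: the estimate is a direct consequence of Corollary~\ref{cor:DF} and Lemma~\ref{lem:DJ}. The one spot deserving a second look is the cross term built from the off-diagonal entry $B=O(\uex^n)$ of $DF$, where one must check that this large factor is absorbed by the small factors $A^{-1}\asymp\uex^{-1}n^{-(1+\alpha)}$ and $\uex^{-n}$ coming from $(DF)^{-1}$, together with the bound $|\partial_y(JF)^{-1}|\ll\uex^{-n}$ from Lemma~\ref{lem:DJ} --- which, as above, it is.
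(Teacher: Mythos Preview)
Your proposal is correct and is essentially the same as the paper's own proof: both combine the entrywise bounds on $(DF)^{-1}$ from Corollary~\ref{cor:DF} with the gradient bounds on $(JF)^{-1}$ from Lemma~\ref{lem:DJ}, multiply out the row-vector-times-matrix product to get $|\nabla((JF)^{-1})(DF)^{-1}|\ll \uex^{-n}n^{-(1+\alpha)}$, and then use $JF\asymp \uex^{n+1}n^{1+\alpha}$. The only difference is cosmetic---the paper records the bound on $(DF)^{-1}$ as a matrix of magnitudes and multiplies in one line, whereas you expand the two coordinates separately and check the cross term by hand.
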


\begin{proof}
Let $a=Y_{n,j}$.
  By Corollary~\ref{cor:DF},
  \begin{equation} \label{eq:DF}
    (DF)^{-1}=\uex^{-n}A^{-1}\begin{pmatrix} \uex^n & -B\\
      0 & A \end{pmatrix} \ll
    \begin{pmatrix}
      n^{-(1+\alpha)} & n^{-(1+\alpha)} \\
      0 & \uex^{-n} \end{pmatrix}
  \end{equation}
uniformly on $a$.
  By Lemma~\ref{lem:DJ},
  \[
    |(\nabla(JF)^{-1})(DF^{-1})| \ll
    \begin{pmatrix} \uex^{-n} & n^{-(1+\alpha)}
    \end{pmatrix}
    \begin{pmatrix}
      n^{-(1+\alpha)} & n^{-(1+\alpha)} \\
      0 & \uex^{-n} \end{pmatrix}
  \ll \uex^{-n}n^{-(1+\alpha)}
  \]
  on $a$.  Finally, apply Corollary~\ref{cor:DF}.
\end{proof}

\section{Mixing properties of $f$ and $F$}
\label{sec:mix}

In this section, we show that the first return map $F:Y\to Y$ has a unique absolutely continuous
invariant probability measure $\mu_Y$ and that $F$ is mixing.
We also show that the underlying map $f:X\to X$ has a unique (up to scaling) absolutely
continuous invariant $\sigma$-finite measure $\mu$.  When
$\gamma<1$, this is a finite measure and it is mixing.
These results are obtained in Subsection~\ref{subsec:mix}.
In the process of obtaining these results we show that $f$ is modelled
by a Young tower with polynomial tails.  A logarithmic factor in this tail rate is removed in Subsection~\ref{sec:SV}. This is already sufficient to obtain many of the results announced in the Introduction, as explained in Subsection~\ref{sec:stat}.
In Subsection~\ref{sec:ap}, we obtain an aperiodicity property for
$F$.

Recall that the first return map $F:Y\to Y$ is topologically mixing
with finite images, and has bounded distortion.  If in addition $F$
were Markov, then the results in this section would be easier to
deduce from standard results.
Our strategy is to further induce \(F\), with exponential tails, to a
full-branched Gibbs-Markov map $G:Z\to Z$ as follows: 

\begin{lemma} \label{lem:M} There exists a refinement $\alpha^Y_1$ of the
  partition $\alpha^Y$ for $F:Y\to Y$, an open set \(Z \subset Y\)
  consisting of a union of elements of \(\alpha^Y_1\) and a map
  $\rho:Z \to\Z^+$ constant on elements of $\alpha^Z=\{a\in\alpha^Y_1:a\subset Z\}$ such that
  \begin{enumerate}[label=(\alph*)]
  \item $G=F^\rho:Z\to Z$ is a full-branched Gibbs-Markov map
with partition $\alpha^Z$.
  \item \(\Leb(\rho>k)=O(\delta^k)\) for some \(\delta \in (0,1)\).
\item $\gcd\{n\ge1:\{\varphi=n,\,\rho=1\}\neq\emptyset\}=1$.
 \item There exists $n\ge1$ such that 
 $F\big(Z\cap\Int\{\varphi=n\}\big)=Y$.
  \end{enumerate}
\end{lemma}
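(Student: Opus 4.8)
The plan is to obtain parts~(a) and~(b) by applying the reinducing construction of~\cite{Esl19} to the first return map $F\colon Y\to Y$, and to read off parts~(c) and~(d) from the explicit geometry of the partition $\alpha^Y$ worked out in Section~\ref{sec:calc}. For~(a),(b) the input needed is that $F$ is piecewise $C^{1+\Lip}$, uniformly expanding with $|(DF)v|\ge\uex|v|$ (Proposition~\ref{prop:expand}), has finitely many images, is topologically exact (Proposition~\ref{prop:Ftopmix}), and obeys the bounded-distortion estimates of Corollaries~\ref{cor:PE2}, \ref{cor:PEGM} and~\ref{cor:DJ}. Under these hypotheses the standard-pairs method of~\cite{Esl19} produces an open base $Z\subset Y$ that is a union of elements of the $1$-cylinder refinement $\alpha^Y_1$, together with a return time $\rho\colon Z\to\Z^+$ constant on the elements of $\alpha^Z$, such that $G=F^\rho$ maps each element of $\alpha^Z$ diffeomorphically onto $Z$ with uniformly bounded distortion — this is the full-branched Gibbs--Markov property in~(a). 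The exponential tail $\Leb(\rho>k)=O(\delta^k)$ in~(b) is the usual consequence of uniform expansion, bounded distortion, finitely many images and topological exactness: every cylinder of $\alpha^Y$ grows to cover $Y\supset Z$ within a uniformly bounded number of $F$-iterates, so at each stage a definite proportion of mass completes a full branch into $Z$ while the ``not-yet-returned'' mass contracts geometrically. The precise \emph{joint} control of $(\varphi,\rho)$ is exactly what~\cite{Esl19} supplies, and I would invoke it as a black box.

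The one piece of freedom to exploit is the choice of base. I would take $Z$ to contain the neutral-edge collar $\{\varphi\ge2\}$. By the formulas for $y_n(\theta)$ in Section~\ref{sec:partition} (and since $\{\varphi=1\}=\{y\ge\uexfracsquare\}\cap Y$), this collar is exactly $\{\uexfrac\le y<\uexfracsquare\}$, and because $f_1(\uexfrac,\theta)>\uexfracsquare$ for every $\theta$ it is contained in \emph{every} image set $Y\cap X_i$, $i=0,\dots,\uexminus$. So $Z$ may be chosen as an open union of $\alpha^Y_1$-elements with $\{\varphi\ge2\}\subseteq Z\subseteq\bigcap_{i}(Y\cap X_i)$, and the reinducing construction can be run from this $Z$. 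For~(c): for each $n\ge2$ and each $j$ we have $FY_{n,j}=Y\cap X_i\supseteq Z$, so the inverse branch $(F|_{Y_{n,j}})^{-1}$ is defined on all of $Z$ and $(F|_{Y_{n,j}})^{-1}(Z)\subset Y_{n,j}\subset\{\varphi=n\}\subseteq Z$ is a single full branch of $F$ onto $Z$, hence lies in $\{\rho=1\}$ and carries $\varphi\equiv n$. Thus $\{\varphi=n,\rho=1\}\neq\emptyset$ for all $n\ge2$, so $\gcd\{n\ge1:\{\varphi=n,\rho=1\}\neq\emptyset\}=1$. For~(d) take $n=2$: as $j$ runs through $1,\dots,\uexsquare$ the angular coordinate $\uex\theta\bmod1$ of $f(Y_{2,j})$ sweeps each of the quarter-circles $[i/\uex,(i+1)/\uex]$, so every image $Y\cap X_i$, $i=0,\dots,\uexminus$, occurs among the $FY_{2,j}$; since $X=\bigcup_i X_i$ and $Y\subset X$, and $\{\varphi=2\}\subseteq Z$, this gives $F(Z\cap\Int\{\varphi=2\})=\bigcup_j F(\Int Y_{2,j})=\bigcup_i\Int(Y\cap X_i)=Y$ (up to the null set of partition boundaries).

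The only real work is setting up and invoking~\cite{Esl19}: one must check that its hypotheses hold for $F$ — in particular a \emph{uniform} strengthening of Proposition~\ref{prop:Ftopmix}, namely that cylinders of $\alpha^Y$ cover $Y$ after a bounded number of $F$-iterates, which is the mechanism behind the geometric tail of $\rho$ — and that forcing the base $Z$ to lie inside $\bigcap_i(Y\cap X_i)$ (unavoidable, since each branch image $Y\cap X_i$ is a proper subset of $Y$) is compatible with the full-branched construction. The estimates of Section~\ref{sec:calc} furnish precisely the expansion and distortion inputs this needs; modulo quoting~\cite{Esl19}, parts~(c) and~(d) are then a direct unwinding of the branch structure of $F$.
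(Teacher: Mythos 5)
Your overall strategy (verify the hypotheses of~\cite{Esl19} and invoke its reinducing theorem as a black box, then read off (c) and (d) from the branch structure) is the same as the paper's, but the proposal has genuine gaps at the two places where the real work lies. First, you list uniform expansion, bounded distortion, finitely many images and topological exactness as the inputs to~\cite{Esl19} and assert that the exponential tail of $\rho$ is ``the usual consequence'' of these. In the multidimensional, nonMarkov setting this is false: one must also control the growth of complexity of the discontinuity set of $F$ under iteration. This \emph{controlled complexity} condition is the fourth hypothesis of \cite[Theorem~3]{Esl19}, it is the bulk of Appendix~A (the generalization of the B\'alint--T\'oth sublemma, Proposition~\ref{BTGenLip}, and the case analysis in Subsection~\ref{subsec:Complexity}), and it is precisely what forces the standing assumptions on $u$ (Remark~\ref{rmk:u}) and the extra expansion $\uex=4$ (Remark~\ref{rmk:boundary}). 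Omitting it leaves the invocation of~\cite{Esl19} unjustified.

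Second, your choice of base is not admissible. The hypothesis of \cite[Theorem~3]{Esl19} used here (Theorem~\ref{thm:PE}) requires nested rectangles $Z\subset Z'$ with $\Leb Z<\Leb Z'$ and $\diam Z'\le\delta$ for arbitrarily small $\delta$; the paper therefore takes $Z$ to be a tiny square at the corner $(\uexfrac,0)$, where infinitely many elements $Y_{n,j}$ accumulate. Your $Z\supseteq\{\varphi\ge2\}$ is a macroscopic strip of diameter exceeding $1$ and cannot serve as the base. Relatedly, your argument for (c) assumes that any subset of $Z$ carried by a single full branch of $F$ onto $Z$ automatically lies in $\{\rho=1\}$. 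But $\rho$ is not the first full-branch return time to $Z$; it is the stopping time produced by the standard-pairs construction, and the only control one gets on $\{\rho=1\}$ is \cite[Theorem~3(b)]{Esl19}, which guarantees $\Leb(\{\rho=1\}\cap a_i)>0$ for two \emph{designated} elements $a_1,a_2\subset Z$ with $Fa_i\supset Z'$. The paper obtains (c) by choosing $a_1,a_2$ with $\varphi|_{a_2}-\varphi|_{a_1}=1$ (equation~\eqref{eq:a1a2}), so that $\gcd\{n_0,n_0+1\}=1$; and (d) by showing $F(Z\cap\Int\{\varphi=n\})=Y$ for all large $n$, using that $Z$ contains, for each large $n$, a block of $\uex$ consecutive-in-$\theta$ elements $Y_{n,j}$ whose images exhaust all the sets $Y\cap X_i$. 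Your versions of (c) and (d) would need to be rerun from this small $Z$ along those lines.
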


We postpone the proof of Lemma~\ref{lem:M} to Appendix~\ref{app-tower}.
Parts (a) and (b) can be proven in the general setting of piecewise
expanding maps, but parts (c) and (d) are specific to our map \(F\). 
Part (c) is used to prove that $F$ and $f$ are mixing in Lemmas~\ref{lem:Fmix} and~\ref{lem:fmix} respectively.
Part~(d) is
used in the proof of Lemma \ref{lem:Fmix} to prove that the invariant density for $F$ is bounded below.  

By~\cite[Theorem~4.7.4]{Aaronson},
there exists a unique absolutely continuous $G$-invariant probability measure $\mu_Z$ on $Z$.  Moreover, $\mu_Z$ is mixing and the density
\(h_Z=d\mu_Z/d\Leb\) is bounded above and below on $Z$.

\subsection{Densities and mixing} \label{subsec:mix}

In this subsection, we study the mixing properties of $f$ and $F$, the existence and uniqueness of absolutely continuous invariant measures,
and the boundedness properties of the corresponding densities.

\begin{lemma} \label{lem:Fmix} There exists a unique absolutely
  continuous $F$-invariant probability measure~$\mu_Y$.  The density
  $h_Y=d\mu_Y/d\Leb$ is bounded above and below and $F$ is mixing.
\end{lemma}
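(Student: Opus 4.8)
The strategy is to transfer the existence, uniqueness, boundedness, and mixing statements from the full-branched Gibbs-Markov tower map $G = F^\rho : Z \to Z$ (where these facts are already known by \cite[Theorem~4.7.4]{Aaronson} and the discussion after Lemma~\ref{lem:M}) up to $F : Y \to Y$, using the inducing scheme of Lemma~\ref{lem:M}. Concretely, let $\mu_Z$ be the $G$-invariant probability on $Z$ with density $h_Z = d\mu_Z/d\Leb$ bounded above and below. Form the standard pull-up measure
\[
\mu_Y = \frac{1}{\int_Z \rho\,d\mu_Z}\sum_{k\ge0} F^k_*\big(\mu_Z|_{\{\rho>k\}}\big),
\]
which is the canonical $F$-invariant measure built from $G$; it is a probability because $\rho\in L^1(\mu_Z)$ by part (b) of Lemma~\ref{lem:M} (exponential tails). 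Standard inducing theory gives that $\mu_Y$ is $F$-invariant and absolutely continuous with respect to Lebesgue, and that any absolutely continuous $F$-invariant probability must, upon inducing back to $Z$, be $G$-invariant and absolutely continuous, hence equal to $\mu_Z$ by uniqueness there; this yields uniqueness of $\mu_Y$.

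The next step is the upper and lower bounds on $h_Y = d\mu_Y/d\Leb$. For the upper bound, write $h_Y$ as $(\int\rho\,d\mu_Z)^{-1}\sum_k L_F^k(h_Z \mathbf{1}_{\{\rho>k\}})$, where $L_F$ is the transfer operator of $F$ with respect to Lebesgue, and use the bounded distortion of $F$ (Corollary~\ref{cor:PEGM}, i.e.\ $\sup_a JF^k \le C\inf_a JF^k$) together with the exponential decay of $\Leb(\rho>k)$ to get a summable bound; this controls $|h_Y|_\infty$. For the lower bound, one first shows $h_Y$ is bounded below on $Z$ (the $k=0$ term already contributes $h_Z \ge \mathrm{const}>0$ there), and then propagates the positivity to all of $Y$: by topological exactness of $F$ (Proposition~\ref{prop:Ftopmix}) some forward image of (a subset of) $Z$ covers $Y$, and by bounded distortion the push-forward of a density bounded below stays bounded below. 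Part (d) of Lemma~\ref{lem:M} — that $F(Z\cap\Int\{\varphi=n\})=Y$ for some $n$ — is precisely the clean statement that makes this covering happen in one controlled step with uniformly bounded distortion, giving $h_Y \ge \mathrm{const}>0$ on all of $Y$.

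Finally, mixing. The map $G$ is mixing as a Gibbs-Markov map; equivalently $(F,\mu_Y)$ is ergodic and the return-time (tower) structure over $G$ has an aperiodic structure. Mixing of $(F,\mu_Y)$ follows from mixing of $(G,\mu_Z)$ together with the $\gcd$ condition in part (c) of Lemma~\ref{lem:M}: the set $\{\varphi=n,\rho=1\}$ consists of points that return to $Y$ under $F$ in a single $G$-step after exactly $n$ iterates of the original map, and $\gcd$ of these values being $1$ rules out the only obstruction to mixing in a tower over a mixing base, namely a nontrivial common period. (Concretely one invokes the standard criterion — e.g.\ via Aaronson-type or Young-tower arguments — that a tower over a mixing Gibbs-Markov base is mixing iff the set of heights is not contained in a proper sublattice; here the heights are the values of $\varphi$ on $\{\rho=1\}$ and the $\gcd$ is $1$.)

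**Main obstacle.** The routine parts are the pull-up construction and the upper bound on $h_Y$, which are textbook given bounded distortion and exponential $\rho$-tails. The delicate point is the \emph{lower} bound on $h_Y$ on all of $Y$ in the nonMarkovian, nonconformal setting: one cannot simply quote Markov-tower lower-bound lemmas, and one must use Proposition~\ref{prop:Ftopmix} and — crucially — Lemma~\ref{lem:M}(d) to realize $Y$ as a single bounded-distortion image of a positive-density piece, then combine with Corollary~\ref{cor:PEGM} to keep the Jacobian ratios under control along the orbit. This is exactly the step where the paper's hybrid design (reinducing to a Gibbs-Markov map, plus the extra structural facts (c),(d)) earns its keep.
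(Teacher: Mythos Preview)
Your plan matches the paper for existence of $\mu_Y$ (Young tower over $G$ with height $\rho$, projected via $\pi(z,\ell)=F^\ell z$) and for the lower bound on $h_Y$ (first $h_Y\ge(1/\bar\rho)h_Z$ on $Z$, then Lemma~\ref{lem:M}(d) propagates positivity to all of $Y$ in one controlled step). Your mixing paragraph is slightly confused, though: the tower realizing $(F,\mu_Y)$ over $(G,\mu_Z)$ has heights $\rho$, not $\varphi$. What Lemma~\ref{lem:M}(c) gives here is merely that $\{\rho=1\}\neq\emptyset$, hence $\gcd\{\rho(a):a\in\alpha^Z\}=1$, and then~\cite[Theorem~1]{Young99} yields mixing of $\mu_Y$. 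The $\gcd$ condition on $\varphi$-values is used later for $f$, not for $F$. The paper also derives uniqueness differently, as a consequence of $h_Y$ being bounded below rather than by inducing back to $Z$; your route is fine too.

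The substantive gap is the upper bound on $h_Y$. Writing $h_Y=(1/\bar\rho)\sum_{k\ge0}L_F^k(h_Z\mathbf{1}_{\{\rho>k\}})$ and appealing to ``bounded distortion plus exponential tails of $\rho$'' does not give $|h_Y|_\infty<\infty$. Corollary~\ref{cor:PEGM} controls $1/JF^k$ on $k$-cylinders $a\in\alpha^Y_k$, so together with finite images it yields $|L_F^k\mathbf{1}_{\{\rho>k\}}|_\infty\le C\sum_{a\in\alpha^Y_k:\,a\cap\{\rho>k\}\neq\emptyset}\Leb(a)$; but $\{\rho>k\}$ is a union of elements of $\alpha^Z\subset\alpha^Y_1$, not of $k$-cylinders, and there is no stated relation between $\Leb(b)$ for $b\in\alpha^Z$ and $\Leb(a_b)$ for the $k$-cylinder $a_b\supset b$. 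So you cannot pass from $\Leb(\rho>k)=O(\delta^k)$ to summable decay of $|L_F^k(h_Z\mathbf{1}_{\{\rho>k\}})|_\infty$; all you get is the uniform bound $|L_F^k\mathbf{1}|_\infty\le C$, which does not sum. The paper avoids this entirely: it applies Rychlik's theorem~\cite[Theorem~1]{RychlikPhD} directly to $F$, checking the three hypotheses (bounded distortion from Corollary~\ref{cor:PEGM}, the finite-image property of $F$ which makes the second condition vacuous, and $\sum_{a\in\alpha^Y}\sup_a 1/JF<\infty$ from Corollary~\ref{cor:DF}). This produces an $L^\infty$ invariant density which, by uniqueness, is $h_Y$.
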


\begin{proof}
Let $G=F^\rho:Z\to Z$ be the full-branched Gibbs-Markov map on $Z\subset Y$ obtained in Lemma~\ref{lem:M}, with ergodic invariant probability measure $\mu_Z$.  
  By Lemma~\ref{lem:M}(b), \(\bar\rho=\int_Z\rho\,d\mu_Z< \infty\). 

  Form the Young tower $g:\Delta\to\Delta$ where
  \[
    \Delta=\{(z,\ell)\in Z\times\Z: 0\le \ell\le\rho(z)-1\}, \qquad
    g(z,\ell)=\begin{cases} (z,\ell+1) & \ell\le\rho(z)-2 \\ (Gz,0) &
      \ell=\rho(z)-1 \end{cases}.
  \]
  The measure $\mu_\Delta=(\mu_Z\times{\rm counting})/\bar\rho$ is an
  ergodic $g$-invariant probability measure on $\Delta$.  The
  projection $\pi:\Delta\to Y$, $\pi(z,\ell)=F^\ell z$ defines a
  semiconjugacy between $g$ and $F$, and $\mu_Y=\pi_*\mu_\Delta$ is an
  absolutely continuous $F$-invariant probability measure on
  \(Y\). Since $G$ is full-branch and \(\gcd(\rho(a):a\in\alpha^Z)=1\) by
Lemma~\ref{lem:M}(c), it follows from~\cite[Theorem~1]{Young99} that
 \(\mu_Y\) is mixing.

Next, for $E\subset Z$ measurable,
\begin{align} \label{eq:E}
\mu_Y(E)=\mu_\Delta(\pi^{-1}E)
    & =(1/\bar\rho)\int_Z\sum_{\ell=0}^{\rho-1}1_E\circ F^\ell\,d\mu_Z
    \\ & \ge (1/\bar\rho)\int_Z1_E\,d\mu_Z = (1/\bar\rho)\mu_Z(E).
\nonumber
  \end{align}
It follows that $h_Y\ge (1/\bar\rho)h_Z$ on $Z$.
Moreover, letting $n\ge1$ be as in Lemma~\ref{lem:M}(d), for any $y\in Y$ there exists $z\in Z\cap\Int\{\varphi=n\}$ with $Fz=y$.
Since $f^n$ has finitely many continuous branches, $M=|Jf^n|_\infty<\infty$.  
We obtain
\begin{align*}
h_Y(y)  =\sum_{Fy'=y}JF(y')^{-1}h_Y(y') & \ge JF(z)^{-1}h_Y(z) 
  \\ & = Jf^n(z)^{-1}h_Y(z) \ge \bar\rho^{-1}M^{-1}\inf h_Z>0.
\end{align*}
Hence $h_Y$ is bounded below.  Uniqueness of $h_Y$ follows.

  It remains to show that $h_Y$ is bounded above. This follows from a
  result of Rychlik \cite[Theorem~1]{RychlikPhD} once we check three
  conditions:
  \begin{enumerate}
  \item There exists a constant $C>0$ such that
    $\sup_a JF^k \leq C\inf_a JF^k$ for all $a\in\alpha^Y_k$, $k\ge1$.
  \item There exists $\eps>0$, $r \in (0,1)$ such that if
    $a\in\alpha^Y_k$ for some $k\ge1$ and \mbox{$\Leb(F^ka)<\eps$}, then
    $\sum_{\{a'\in\alpha^Y: \Leb(a'\cap F^ka)>0\}} \sup_{a'} 1/JF \leq
    r$.
  \item $\sum_{a\in\alpha^Y} \sup_a 1/JF < \infty$.
  \end{enumerate}
  By~\cite[Theorem~1]{RychlikPhD}, there exists an $F$-invariant
  density $h_1 \in L^\infty(Y)$. Since $h_Y$ is the unique $F$-invariant
  density, we have $h_Y = h_1$ bounded.

  Now, condition~1 holds by Corollary~\ref{cor:PEGM}.  Condition~2 is
  trivially satisfied since the set $\{F^ka:a\in\alpha^Y_k,\,k\ge1\}$ is
  finite.  By Corollary~\ref{cor:DF},
  $1/JF \sim \uex^{-{n+1}} n^{-(1+\alpha)}$ uniformly on
  $a=Y_{n,j},\,j=1,\dots,\uex^n$ as $n\to\infty$, and the third
  condition follows.
\end{proof}

Define \(\tau=\varphi_\rho=\sum_{\ell=0}^{\rho-1}\varphi\circ F^\ell:Z\to\Z^+\).

\begin{prop} \label{prop:tau} $\tau$ is Lebesgue integrable (equivalently
$\int_Z\tau\,d\mu_Z<\infty$) if and
  only if $\gamma<1$.  
\end{prop}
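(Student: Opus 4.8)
The plan is to reduce integrability of $\tau$ to that of the first return time $\varphi$, whose tail is already pinned down by Corollary~\ref{cor:Leb}, by pushing $\mu_Y$ back through the Young tower that was used in the proof of Lemma~\ref{lem:Fmix}. The whole argument only uses objects constructed before this point; in particular it does not appeal to any $f$-invariant measure (which has not yet been constructed).

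First I would reduce to a statement about $\mu_Z$: since the density $h_Z=d\mu_Z/d\Leb$ is bounded above and below on $Z$ (recorded just after Lemma~\ref{lem:M}), the integrals $\int_Z\tau\,d\Leb$ and $\int_Z\tau\,d\mu_Z$ are simultaneously finite or infinite, so it suffices to settle finiteness of $\int_Z\tau\,d\mu_Z$. This also gives the parenthetical equivalence in the statement.

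Next I would invoke a Kac-type identity coming from the tower. Recall from the proof of Lemma~\ref{lem:Fmix} that $\mu_Y=\pi_*\mu_\Delta$, where $\Delta=\{(z,\ell):0\le\ell\le\rho(z)-1\}$, $\mu_\Delta=(\mu_Z\times\mathrm{counting})/\bar\rho$ with $\bar\rho=\int_Z\rho\,d\mu_Z$, and $\pi(z,\ell)=F^\ell z$. Exactly as in the computation behind~\eqref{eq:E}, for every nonnegative measurable $g:Y\to[0,\infty]$,
\[
\int_Y g\,d\mu_Y=\int_\Delta g\circ\pi\,d\mu_\Delta=\bar\rho^{-1}\int_Z\sum_{\ell=0}^{\rho-1}g\circ F^\ell\,d\mu_Z ,
\]
the passage from indicators to general $g\ge0$ being monotone convergence. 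Taking $g=\varphi$ and recalling $\tau=\sum_{\ell=0}^{\rho-1}\varphi\circ F^\ell$ gives $\int_Z\tau\,d\mu_Z=\bar\rho\int_Y\varphi\,d\mu_Y$. Since $0<\bar\rho<\infty$ by Lemma~\ref{lem:M}(b), it follows that $\int_Z\tau\,d\mu_Z<\infty$ if and only if $\int_Y\varphi\,d\mu_Y<\infty$.

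Finally, since $h_Y$ is bounded above and below by Lemma~\ref{lem:Fmix}, $\int_Y\varphi\,d\mu_Y<\infty$ if and only if $\int_Y\varphi\,d\Leb<\infty$; and $\int_Y\varphi\,d\Leb=\sum_{n\ge0}\Leb(\varphi>n)$ is finite if and only if $\alpha>1$, i.e.\ $\gamma<1$, by Corollary~\ref{cor:Leb}. Combining the three reductions proves the proposition. I do not expect a genuine obstacle; the only point that merits an explicit word is the extension of~\eqref{eq:E} from sets to the unbounded observable $\varphi$, which is immediate from Tonelli/monotone convergence since every quantity involved is nonnegative.
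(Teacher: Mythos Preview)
Your argument is correct, and it is a genuinely different route from the paper's own proof. The paper establishes two-sided tail bounds $\mu_Z(\tau>n)=O((\log n)^\alpha n^{-\alpha})$ and $\mu_Z(\tau>n)\gg(\log n)^{-1}n^{-\alpha}$ by invoking the standard Chernov--Zhang/Markarian machinery, and reads off (non)integrability from those. You instead derive the exact Kac/Abramov identity $\int_Z\tau\,d\mu_Z=\bar\rho\int_Y\varphi\,d\mu_Y$ directly from the tower representation $\mu_Y=\pi_*\mu_\Delta$ already set up in the proof of Lemma~\ref{lem:Fmix}, and then transfer the question to the known tail of $\varphi$ via Corollary~\ref{cor:Leb}. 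Your approach is more elementary and self-contained for the bare integrability statement; the extension of~\eqref{eq:E} to the unbounded nonnegative observable $\varphi$ is indeed just Tonelli, as you note. The paper's approach yields quantitative tail information on $\tau$ as a by-product, but since the sharp tail estimate is re-derived from scratch in Lemma~\ref{lem:tautails} anyway, that extra information is not used elsewhere and your shortcut loses nothing.
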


\begin{proof}
A standard argument, see for instance~\cite{ChernovZhang05,Markarian04}, shows that $\tau=\varphi_\rho$ satisfies $\mu_Z(\tau>n)=O((\log n)^{\alpha}n^{-\alpha})$.
Integrability for $\gamma<1$ follows.

Similarly, $\mu_Z(\tau>n)\gg (\log n)^{-1}n^{-\alpha}$
(see for example~\cite[Proposition~5.1(b)]{BMTsub}) proving nonintegrability for $\gamma\ge1$.
\end{proof}

\begin{lemma} \label{lem:fmix} There exists a unique (up to scaling)
  absolutely continuous $f$-invariant $\sigma$-finite measure~$\mu$.
  Moreover, the density $h_X=d\mu/d\Leb$ is bounded below.

  The measure $\mu$ is finite if and only if $\gamma<1$, in which
  case $f$ is mixing.
\end{lemma}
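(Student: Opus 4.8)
The plan is to construct $\mu$ on $X$ by spreading out the measure $\mu_Y$ from the induced system, just as was done one level up for $\mu_Y$ in terms of $\mu_Z$. Concretely, I would define $\mu=\sum_{n\ge0}f_*^n(\mu_Y|_{\{\varphi>n\}})$, equivalently $\mu(A)=\int_Y\sum_{\ell=0}^{\varphi-1}1_A\circ f^\ell\,d\mu_Y$ for $A\subset X$ measurable. A standard computation (using that $F=f^\varphi$ preserves $\mu_Y$ and Kac-type rearrangement) shows $\mu$ is $f$-invariant and $\sigma$-finite, and $\mu(X)=\int_Y\varphi\,d\mu_Y$. By Corollary~\ref{cor:Leb}, $\varphi\in L^1(\Leb)$ iff $\gamma<1$, and since $h_Y$ is bounded above and below (Lemma~\ref{lem:Fmix}), $\varphi\in L^1(\mu_Y)$ iff $\gamma<1$; hence $\mu$ is finite exactly when $\gamma<1$. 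Absolute continuity of $\mu$ with respect to Lebesgue on $X$ is inherited from that of $\mu_Y$ together with the fact that each $f^\ell$ maps Lebesgue-null sets to Lebesgue-null sets.

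For the lower bound on $h_X$, I would argue as follows. First, $\mu|_Y\ge\mu_Y$ (taking only the $\ell=0$ term in the sum), so $h_X\ge h_Y\ge\text{const}>0$ on $Y$ by Lemma~\ref{lem:Fmix}. To get a positive lower bound on all of $X$, use topological exactness of $f:X\to X$ (Proposition~\ref{prop:ftopmix}): for any point $x\in X\setminus\{x=0\}$ there is a neighborhood $U$ and an iterate $N$ with $f^N$ mapping some small ball $B\subset Y$ diffeomorphically onto a neighborhood of $x$ with Jacobian bounded above on $B$ (the relevant branch of $f^N$ is a composition of finitely many smooth branches of $f$, so its Jacobian is bounded on $B$). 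Then $h_X(x)=\sum_{f^Nx'=x}(Jf^N(x'))^{-1}h_X(x')\ge (Jf^N|_B)_\infty^{-1}\inf_Y h_X>0$, and by compactness of $X$ away from $\{x=0\}$ (combined with lower semicontinuity arguments, or by covering $X\setminus\{x=0\}$ by finitely many such neighborhoods) one gets a uniform positive lower bound; the boundary circle $\{x=0\}$ has $\Leb$-measure zero so can be ignored. Actually the cleanest route is to note that Proposition~\ref{prop:ftopmix} gives $f^nU\supset X\setminus\{x=0\}$ for some $n$, so starting from any small ball $B\subset Y$, $f^nB$ covers $X\setminus\{x=0\}$ and the transfer-operator inequality $h_X\ge (Jf^n|_B)_\infty^{-1}(\inf_B h_X)$ holds Lebesgue-a.e.\ on $X$, giving the uniform lower bound in one stroke.

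Finally, mixing of $f$ when $\gamma<1$: since $F$ is mixing with respect to $\mu_Y$ (Lemma~\ref{lem:Fmix}) and $\varphi\in L^1(\mu_Y)$, the tower $f$ over $(Y,F,\mu_Y)$ with roof $\varphi$ satisfies the aperiodicity condition $\gcd\{\varphi=n\}=1$. This follows from Lemma~\ref{lem:M}(c), which gives $\gcd\{n\ge1:\{\varphi=n,\rho=1\}\neq\emptyset\}=1$, hence a fortiori $\gcd\{n:\{\varphi=n\}\neq\emptyset\}=1$. By the standard criterion (e.g.\ \cite[Theorem~1]{Young99} applied to the tower, or \cite{Aaronson}), an ergodic probability-preserving tower with integrable roof and $\gcd$ of the roof equal to $1$ is mixing; thus $(f,\mu)$ is mixing. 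Uniqueness up to scaling of the $\sigma$-finite absolutely continuous invariant measure follows from ergodicity of $f$ on $X$ (itself a consequence of topological exactness plus the tower structure and uniqueness of $\mu_Y$, since any such measure induces an absolutely continuous $F$-invariant measure on $Y$ which must coincide with $\mu_Y$ up to scaling).

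The main obstacle I anticipate is the uniform lower bound on $h_X$ near the neutral circle $\{x=0\}$: a priori the density could be unbounded or degenerate there, and one must be careful that the covering argument via Proposition~\ref{prop:ftopmix} genuinely delivers a bound that is uniform down to (but not necessarily at) $x=0$, since the Jacobian of $f^n$ restricted to the \emph{source} ball $B\subset Y$ is what enters, and that is harmless, whereas naively iterating forward toward $x=0$ would involve arbitrarily large return times. Phrasing the argument so that $B$ is fixed and $f^nB\supset X\setminus\{x=0\}$ sidesteps this cleanly.
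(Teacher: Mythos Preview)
Your construction of $\mu$ by spreading $\mu_Y$ via the first-return tower over $(Y,F,\varphi)$ is perfectly valid and gives existence, absolute continuity, $\sigma$-finiteness, the finiteness criterion $\mu(X)=\int_Y\varphi\,d\mu_Y<\infty\Leftrightarrow\gamma<1$, and the lower bound on $h_X$ (your final version, using $f^nY\supset X\setminus\{x=0\}$ and $|Jf^n|_\infty<\infty$, is clean and matches the paper's argument with $Z$ replaced by $Y$). Uniqueness then follows from ergodicity together with $h_X>0$ a.e., as you say.

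The genuine gap is in your mixing step. The assertion ``an ergodic probability-preserving tower with integrable roof and $\gcd$ of the roof equal to $1$ is mixing'' is false, and \cite[Theorem~1]{Young99} does not say this: that theorem requires the \emph{base} to be a full-branched Gibbs--Markov map, which $F$ is not (indeed, the nonMarkovian nature of $F$ is the whole difficulty this paper addresses). Even assuming $F$ is mixing, the conclusion fails in general: take any mixing $F$, any measurable $\psi:Y\to\{0,1\}$ with $\{\psi\circ F-\psi=\pm1\}$ both of positive measure, and set $\varphi=\psi\circ F-\psi+2\in\{1,2,3\}$; then $\gcd\{\varphi\}=1$, yet $v=(-1)^\psi$ solves $v\circ F=(-1)^\varphi v$, so the tower has $-1$ as an eigenvalue and is not weak mixing. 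What is needed is precisely the aperiodicity statement of Lemma~\ref{lem:aperiodic}, which the paper proves \emph{later} and by going back through the Gibbs--Markov map $G$.

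The paper's proof avoids this by building the Young tower directly over $(Z,G,\tau)$ with $\tau=\varphi_\rho$. Since $G$ is full-branched Gibbs--Markov and $\gcd\{\tau(a):a\in\alpha^Z\}=1$ (Lemma~\ref{lem:M}(c)), \cite[Theorem~1]{Young99} applies verbatim to give mixing of $\tilde\mu_\Delta$ and hence of $\mu=\tilde\pi_*\tilde\mu_\Delta$. Your route through $F$ can be salvaged, but only by invoking the factor map $\pi_\Delta:\tilde\Delta\to\Delta$ of Proposition~\ref{prop:Delta} to inherit mixing from the Markov tower---at which point you have essentially reproduced the paper's argument.
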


\begin{proof}
  Since $F=f^\varphi:Y\to Y$ and $G=F^\rho:Z\to Z$, it follows that
  $G=f^\tau:Z\to Z$.

  We proceed similarly to the proof of Lemma~\ref{lem:Fmix} but with
  $\rho$ replaced by $\tau$ and $F$ replaced by $f$.  Form the new
  Young tower $\tilde g:\tilde \Delta\to\tilde \Delta$,
  \[
    \tilde \Delta=\{(y,\ell)\in Z\times\Z: 0\le \ell\le\tau(y)-1\},
    \qquad \tilde g(y,\ell)=\begin{cases} (y,\ell+1) &
      \ell\le\tau(y)-2 \\ (Gy,0) & \ell=\tau(y)-1 \end{cases}.
  \]
  The ergodic $\tilde g$-invariant measure $\mu_Z\times{\rm counting}$
  is finite if and only if
  \mbox{$\bar\tau=\int_Z\tau\,d\mu_Z<\infty$}.  Equivalently
  $\int_Z\tau\,d\Leb<\infty$, and by Proposition~\ref{prop:tau}, this
  holds if and only if $\gamma<1$.

  When $\gamma<1$, the measure
  $\tilde \mu_\Delta=(\mu_Z\times{\rm counting})/\bar\tau$ is an
  ergodic $\tilde g$-invariant probability measure on $\tilde \Delta$.
  The projection $\tilde \pi:\tilde \Delta\to X$,
  $\tilde \pi(y,\ell)=f^\ell y$ defines a semiconjugacy between
  $\tilde g$ and $f$, and $\mu=\tilde \pi_*\tilde \mu_\Delta$ is an
  absolutely continuous $f$-invariant probability measure.  
  Lemma~\ref{lem:M}(c) implies that
  \(\gcd\{\tau(a),\,a\in\alpha^Z\}=1\).  Since $G$ is a
  full-branch Gibbs-Markov map, it follows
  from~\cite[Theorem~1]{Young99} that $\tilde \mu_\Delta$, and hence
  $\mu$, is mixing.

  Again, as in the proof of Lemma~\ref{lem:Fmix},
  $h_X\ge (1/\bar\tau)h_Z$ on $Z$.  By Lemma~\ref{lem:M}, $Z$ is open, so by Proposition~\ref{prop:ftopmix}
  there exists $n\ge1$ such that $f^nZ=X$.  
Since $f^n$ has finitely many branches, $M=|Jf^n|_\infty<\infty$.  
Given $x\in X$, choose $z\in Z$ such that $f^nz=x$.  Then
  \[
    h_X(x)= \sum_{f^nx'=x} Jf^n(x')^{-1} h_X(x')
    \ge  Jf^n(z)^{-1} h_X(z)\ge (1/\bar\tau)M^{-1}\inf h_Z>0.
  \]
  Hence $h_X$ is
  bounded below, and uniqueness of $\mu$ follows.

  When $\gamma\ge1$, we proceed in the same way but without
  normalising by $\bar\tau$.
\end{proof}

\subsection{Tail estimate for $\tau=\varphi_\rho$}
\label{sec:SV}

As noted in the proof of Proposition~\ref{prop:tau}, the induced return time 
$\tau=\varphi_\rho$ satisfies the tail estimate
$\mu(\tau>n)=O((\log n)^\alpha n^{-\alpha})$.
In this subsection, we show how to remove the logarithmic factor using ideas from
Sz\'asz and Varj\'u~\cite{SV07}.  

\begin{lemma} \label{lem:tautails}
$\mu_Z(\tau>n)=O(n^{-\alpha})$.
\end{lemma}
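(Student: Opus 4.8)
The plan is to follow the strategy of Sz\'asz and Varj\'u~\cite{SV07}, which replaces the crude bound $\mu_Z(\tau>n)=O((\log n)^\alpha n^{-\alpha})$ (coming from naively combining $\Leb(\varphi>n)\sim \uexfracone c_1 n^{-\alpha}$ with the exponential tails of $\rho$) by a sharper analysis that exploits the exponential decay of $\Leb(\rho>k)$ more carefully. The point is that a return of $\tau$-length $n$ decomposes as a sum $\tau = \varphi\circ F^0 + \cdots + \varphi\circ F^{\rho-1}$ over $\rho$ excursions, and the heavy tail of $\varphi$ forces that, with overwhelming probability, at most one of these summands is large (of order $n$) while the rest contribute $O(\text{typical})$; the logarithmic loss in the crude estimate comes precisely from failing to rule out the (exponentially unlikely in $k=\rho$) scenario of several moderately large summands.

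First I would set up notation: write $\tau = \sum_{\ell=0}^{\rho-1}\varphi\circ F^\ell$ on $Z$, and recall from Corollary~\ref{cor:Leb} that $\Leb(\varphi=m)\sim \uexfracone c' m^{-(1+\alpha)}$ and $\Leb(\varphi>m)\sim \uexfracone c_1 m^{-\alpha}$, and from Lemma~\ref{lem:M}(b) that $\Leb(\rho>k)=O(\delta^k)$. Since $h_Z$ is bounded above and below, estimates with respect to $\Leb$ and with respect to $\mu_Z$ are interchangeable; I would work with $\mu_Z$ so as to use $F$-invariance of the relevant conditional structure. The key combinatorial estimate is: for fixed $k$,
\[
\mu_Z\big(\rho = k,\ \tau > n\big) \le \mu_Z\Big(\rho=k,\ \max_{0\le \ell<k}\varphi\circ F^\ell > n/k\Big) + \mu_Z\Big(\rho=k,\ \textstyle\sum_{\ell}\varphi\circ F^\ell > n,\ \max_\ell \varphi\circ F^\ell \le n/k\Big),
\]
and then one bounds the first term by a union bound using the heavy-tail estimate for $\varphi$ (giving something like $k\cdot \delta^{k}\cdot (n/k)^{-\alpha}$ up to constants, after controlling the joint law of $\{\varphi\circ F^\ell\}$ on $\{\rho=k\}$ via the Gibbs–Markov/bounded-distortion structure of $G$), while the second term — a sum of $k$ bounded-below-in-heaviness but truncated variables — is handled by a large-deviation / Chernoff-type bound or a Kesten-type renewal estimate showing it is superpolynomially small in $k$ once truncated at $n/k$. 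Summing over $k\ge 1$ then gives $\mu_Z(\tau>n) = O(n^{-\alpha})$, the geometric factor $\delta^k$ (and the superpolynomial smallness of the truncated-sum term) killing the polynomial-in-$k$ losses that would otherwise produce the $\log n$.

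The main obstacle I anticipate is making the ``joint law of the excursion lengths $\varphi\circ F^\ell$ conditioned on $\rho=k$'' argument rigorous in the non-Markovian setting of $F$: one does not have exact independence, so one must use the bounded distortion estimates (Corollaries~\ref{cor:PEGM},~\ref{cor:DJ}) and the fact that $G=F^\rho$ is Gibbs–Markov with finite images to get uniform comparability of conditional measures, effectively reducing to a sum of ``quasi-independent'' heavy-tailed variables. A clean way to package this is to note that on each element of $\alpha^Z$ the value of $\rho$ is constant and $F$-images are among finitely many sets, so the law of $(\varphi, \varphi\circ F, \dots)$ restricted to a given $\alpha^Z$-cell is, up to bounded multiplicative distortion, a product-like measure; once that comparability is in hand, the SV renewal argument applies essentially verbatim. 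I would cite~\cite{SV07} for the renewal/truncation lemma and devote the bulk of the written proof to verifying its hypotheses in the present setting, rather than reproving it.
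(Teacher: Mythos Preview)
Your high-level plan (SV strategy: restrict to short $\rho$, separate the ``one big excursion'' regime from the rest) is correct and is what the paper does. But your specific decomposition has a genuine gap. With threshold $n/k$ on $\{\rho=k\}$, your second term is \emph{empty}: $k$ summands each $\le n/k$ cannot exceed $n$, so no Chernoff argument is needed there. This forces all the work onto your first term, and for that you need the product bound $\mu_Z(\rho=k,\,\varphi\circ F^\ell>n/k)\ll \delta^k(n/k)^{-\alpha}$. Your proposed justification --- that on each $a\in\alpha^Z$ the law of $(\varphi,\varphi\circ F,\dots)$ is ``product-like up to distortion'' --- is not right: each $a\in\alpha^Z$ lies inside a single $\rho(a)$-cylinder of $\alpha^Y$, so the entire vector $(\varphi,\varphi\circ F,\dots,\varphi\circ F^{\rho-1})$ is \emph{constant} on $a$. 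There is no conditional law to factorize; what you really need is a joint statement about how $\mu_Z$-mass is distributed over those cells, and there is no reason for $\rho(a)$ and the $\varphi$-itinerary of $a$ to decouple in the construction of~\cite{Esl19}. Using only the two trivial bounds $\mu_Z(\rho=k)\ll\delta^k$ and $\mu_Z(\varphi\circ F^\ell>m)\ll m^{-\alpha}$ separately, your sum gives at best $n^{-\alpha}(\log n)^{2+\alpha}$, i.e.\ no improvement over the crude estimate.

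The paper avoids this entirely by choosing a $\rho$-independent threshold $n/2$ for the maximal excursion. The ``one big jump'' regime $\{\max_{\ell<\rho}\varphi\circ F^\ell>n/2\}$ is then bounded directly by pushing forward to $\mu_Y(\varphi>n/2)$ via the tower projection (no decoupling needed). The complementary regime, restricted to $\{\rho\le b\log n\}$, forces \emph{two} excursions of size $\gtrsim n/\log n$ within $\le b\log n$ steps of each other. The crucial ingredient is then Proposition~\ref{prop:SV}: for $p<(1-q)\alpha$,
\[
\mu_Y\big(\varphi=n,\ \varphi\circ F^k>n^{1-q}\big)\ll n^{-(1+\alpha+p)}
\quad\text{uniformly in }k,
\]
which is proved purely in $(Y,\mu_Y,F)$ using bounded distortion and $F$-invariance (no reference to $\rho$ or $Z$). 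Summing over $k\le 2b\log n$ and over admissible $n$ gives $o(n^{-\alpha})$ for the two-jump regime. This ``two large jumps are rare'' estimate is the actual content of the SV lemma you cite --- not a quasi-independence statement --- and it is what you should aim to verify.
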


Following~\cite[Lemma~5.1]{ChernovZhang08} and~\cite[Lemma~16]{SV07}, the crucial ingredient for proving Lemma~\ref{lem:tautails} is the following estimate.
Fix $p,q\in(0,1)$ satisfying $p<(1-q)\alpha$.
Let
\[
  Y(k,n)=\{\varphi = n \text{ and } \varphi \circ F^k > n^{1-q}\}\subset Y.
\]

\begin{prop} \label{prop:SV}
There exists \(C>0\) such that
\[
\mu_Y(Y(k,n)) \le Cn^{-(1+\alpha+p)}
\quad\text{for all $k,n\ge1$}.
\]
\end{prop}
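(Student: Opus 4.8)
The plan is to estimate $\mu_Y(Y(k,n))$ by decomposing the event $Y(k,n)$ according to which partition element $a\in\alpha^Y$ the point lands in after $k$ steps. Write $Y(k,n)=\bigcup_{m>n^{1-q}}\big(\{\varphi=n\}\cap F^{-k}\{\varphi=m\}\big)$. Since $\mu_Y$ has density $h_Y$ bounded above and below (Lemma~\ref{lem:Fmix}), it suffices to work with Lebesgue measure, and since $\mu_Y$ is $F$-invariant we may push forward or pull back as convenient. First I would note that $\{\varphi=n\}$ has $\mu_Y$-measure $\asymp\Leb(\varphi=n)\asymp n^{-(1+\alpha)}$ by Corollary~\ref{cor:Leb}, so the claimed bound $n^{-(1+\alpha+p)}$ asks us to gain an extra factor $n^{-p}$ from the constraint $\varphi\circ F^k>n^{1-q}$.

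The key mechanism: conditioned on being in a fixed element $a\subset\{\varphi=n\}$, the distribution of $F^k$ on $a$ has bounded distortion (Corollary~\ref{cor:PEGM}), so the conditional probability that $F^ka$-point lies in $\{\varphi>n^{1-q}\}$ is comparable to $\Leb(\varphi>n^{1-q})\asymp (n^{1-q})^{-\alpha}=n^{-\alpha(1-q)}$, uniformly over $a$ and over $1\le j\le k$. Therefore
\[
\mu_Y(Y(k,n))=\sum_{a\subset\{\varphi=n\}}\mu_Y(a\cap F^{-k}\{\varphi>n^{1-q}\})\ll \mu_Y(\varphi=n)\, n^{-\alpha(1-q)}\ll n^{-(1+\alpha)}n^{-\alpha(1-q)}.
\]
Then the condition $p<(1-q)\alpha$ gives $\alpha(1-q)>p$, so $n^{-(1+\alpha)-\alpha(1-q)}\le n^{-(1+\alpha+p)}$, which is the desired bound. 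This requires that the measure of $\{\varphi>N\}$ decays like $N^{-\alpha}$ not just for Lebesgue but in a way that is uniform after conditioning on an arbitrary element of $\alpha^Y$ and applying $F^k$; this uses that $F$ has finitely many images (so $F^k a$ is one of finitely many sets, or at least contains/is contained in a union of images of controlled geometry) together with bounded distortion of $F^k$ on $a\in\alpha^Y_k$.

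The main obstacle is handling the pushforward $F^k$-measure correctly: $F^k a$ need not be all of $Y$, and the geometry of $\{\varphi>n^{1-q}\}\cap F^k a$ must be controlled. I expect one resolves this using Lemma~\ref{lem:M}: rather than working with $F$ directly, pass to the Gibbs-Markov map $G=F^\rho$ on $Z$ and the first-return structure, using that $F$ has finitely many images $\{([\uexfrac,1]\times\T)\cap X_i\}$ and that each image has $\Leb(\varphi>N\mid \text{image})\asymp N^{-\alpha}$ by Corollary~\ref{cor:Leb} combined with the bounded distortion of the relevant branches of $f$. Concretely, I would fix the element $a\in\alpha^Y_k$ containing the point, observe $JF^k$ has bounded distortion on $a$ (Corollary~\ref{cor:PEGM}), so $\Leb(a\cap F^{-k}\{\varphi=m\})\asymp \Leb(a)\,\Leb(\{\varphi=m\}\cap F^ka)/\Leb(F^ka)$, sum over $m>n^{1-q}$ to get $\asymp \Leb(a)\,(n^{1-q})^{-\alpha}$ (using that $F^ka$ is a union of full images so $\Leb(\{\varphi>N\}\cap F^ka)/\Leb(F^ka)\asymp N^{-\alpha}$), and finally sum $\Leb(a)$ over all $a\in\alpha^Y_k$ with $a\subset\{\varphi=n\}$, which totals $\Leb(\varphi=n)\asymp n^{-(1+\alpha)}$. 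Transferring from $\Leb$ to $\mu_Y$ costs only the bounded constants $\inf h_Y,\sup h_Y$. The slightly delicate bookkeeping is that $F^ka$ might be a proper image rather than all of $Y$, but since there are only finitely many possible images and each has comparable mass and comparable tail behavior for $\varphi$, the uniformity over $k$ is preserved.
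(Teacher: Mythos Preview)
Your high-level plan is right: the target factorizes as $\mu_Y(\varphi=n)\asymp n^{-(1+\alpha)}$ times $\mu_Y(\varphi>n^{1-q})\asymp n^{-(1-q)\alpha}$, and then $p<(1-q)\alpha$ finishes. But the concrete execution has a gap. You assert that ``$F^ka$ is a union of full images so $\Leb(\{\varphi>N\}\cap F^ka)/\Leb(F^ka)\asymp N^{-\alpha}$''. For $k=1$ it is true that $Fa$ is one of the finitely many sets $Y\cap X_i$. For $k\ge2$, however, $F$ is \emph{not Markov}: a $k$-cylinder $a$ has $F^ka$ equal only to a proper subset of some $Y\cap X_i$, with no a priori lower bound on its size or control on its position. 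If $F^ka$ happens to be a thin region near the left edge $\{y=\uexfrac\}$ (where $\{\varphi>N\}$ lives), the conditional tail $\Leb(\{\varphi>N\}\cap F^ka)/\Leb(F^ka)$ could be close to~$1$ rather than $\ll N^{-\alpha}$. Bounded distortion of $JF^k$ (Corollary~\ref{cor:PEGM}) controls measure ratios \emph{inside} $a$, but says nothing about where $F^ka$ sits in $Y$; and ``finitely many images'' is a statement about $F$, not about $F^k$.

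The paper avoids this by a device you mention but do not deploy at the right spot: $F$-invariance of $\mu_Y$. Fix a $1$-cylinder $a$ with $\varphi|_a=n$ and factor each inverse branch $h\in\cH^k$ with $a^h\subset a$ as $h=h_a\circ\tilde h$, where $h_a:Fa\to a$ is the single first-step branch and $\tilde h\in\cH^{k-1}$. Then, with $S_n=\{\varphi>n^{1-q}\}$,
\[
\sum_{h:\,a^h\subset a}\mu_Y(h(S_n))\ll |Jh_a|_\infty\sum_{\tilde h\in\cH^{k-1}}\mu_Y(\tilde h(S_n))
=|Jh_a|_\infty\,\mu_Y(F^{-(k-1)}S_n)=|Jh_a|_\infty\,\mu_Y(S_n).
\]
The last $k-1$ steps are absorbed \emph{exactly} by invariance, with no distortion argument and no need to understand $F^ka$. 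Summing $|Jh_a|_\infty\asymp 4^{-n}n^{-(1+\alpha)}$ over the $4^n$ elements with $\varphi(a)=n$ gives $\asymp n^{-(1+\alpha)}$, and $\mu_Y(S_n)\asymp n^{-(1-q)\alpha}$ closes the estimate. In short: peel off one branch to decouple from $\{\varphi=n\}$, then use invariance rather than $k$-step distortion for the rest.
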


\begin{proof}
For $k\ge1$, denote by \(\cH^k \) the set of inverse branches $h:F^ka^h\to a^h$  of \(F^k\). 
Define $S_n=\{\varphi>n^{1-q}\}$, and notice that
\[
    Y(k,n) = \Bigg(\bigcup_{j=1}^{4^n} Y_{n,j}\Bigg) \cap F^{-k}(S_n)  
    =
    \bigcup_{j=1}^{4^n} \bigcup_{h \in \cH^k} h(S_n)\cap Y_{n,j}.
\]
But $h(S_n)$ is contained in the $k$-cylinder $a^h\in\alpha^Y_k$ while
$Y_{n,j} \in \alpha^Y$. Therefore, if $h(S_n) \cap Y_{n,j}\neq \emptyset$ then $a^h \subset Y_{n,j}$.
It follows that $\bigcup_{h \in \cH^k} h(S_n)\cap Y_{n,j}\subset \bigcup_{h \in \cH^k: a^h \subset Y_{n,j}} h(S_n)$ and so
\[
 Y(k,n)\subset \bigcup_{j=1}^{4^n} \bigcup_{h \in \cH^k\,:\, a^h \subset Y_{n,j}} h(S_n).
\]
Hence
\[
  \mu_Y(Y(k,n))\le \sum_{\{a \in \alpha^Y\,:\,\varphi(a) =n\}} \sum_{\{h \in
    \cH^k\,:\, a^h \subset a
  \}} \mu_Y( h(S_n)).
\]

If $a^h\subset a$, then \(h=
  h_a\circ \tilde h\), where $h_a\in\cH^1$ and $\tilde h\in\cH^{k-1}$ are inverse branches with \(h_a:Fa \to a\).
By Lemma~\ref{lem:Fmix}, the density $d\mu_Y/d\Leb$ is bounded above and below.  Using this and $F$-invariance of $\mu_Y$,
\[
\begin{split}
\sum_{\{h \in \cH^k\,:\, a^h \subset a \}}  \mu_Y( h(S_n))
 & \ll |Jh_a|_\infty \sum_{\tilde h \in \cH^{k-1}}\mu_Y(\tilde h(S_n)) \\
& = |Jh_a|_\infty \,\mu_Y(F^{-(k-1)}(S_n)) 
= |Jh_a|_\infty \,\mu_Y(S_n).
\end{split}
\]
By Corollaries~\ref{cor:Leb} and~\ref{cor:DF},
\[
\mu_Y(Y(k,n))\ll  \mu_Y(S_n) \sum_{\{a \in \alpha^Y\,:\,\varphi(a) =n\}}|Jh_a|_\infty 
 \ll 
   n^{-(1-q)\alpha}n^{-(1+\alpha)}
\ll n^{-(1+\alpha+p)}
\]
by the choice of $p$ and $q$.
\end{proof}

Lemma~\ref{lem:tautails} now holds by standard arguments.  We follow the exposition in~\cite{BBM19}.
Define
\[
Z_b(n)=\big\{\rho\le b\log n \text{ and } \max_{0\le\ell<\rho}\varphi\circ F^\ell \le\tfrac12 n \text{ and } \tau\ge n\big\}\subset Z.
\]

\begin{cor} \label{cor:SV}
Let $b>0$.  Then $\mu_Z(Z_b(n))=o(n^{-\alpha})$.
\end{cor}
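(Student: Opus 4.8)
The plan is to bound $\mu_Z(Z_b(n))$ by covering $Z_b(n)$ with sets to which Proposition~\ref{prop:SV} applies, then summing over the admissible values of $\rho$ and the location of the ``large'' return time among the first $\rho$ iterates of $F$. The key observation is that if a point $z\in Z_b(n)$ has $\tau(z)=\sum_{\ell=0}^{\rho-1}\varphi(F^\ell z)\ge n$ but each individual term $\varphi(F^\ell z)\le\tfrac12 n$, and there are at most $b\log n$ terms, then at least one term must be fairly large: there exists $0\le\ell<\rho$ with $\varphi(F^\ell z)\ge n/(b\log n)$. Thus, writing $m=m(z)$ for the value of $\varphi$ at such an index and $k$ for the preceding index, $z$ lands (under $F^k$, $k<\rho\le b\log n$) in a set of the form $\{\varphi=m,\ \varphi\circ F^{k'}\text{ large}\}$ for some $m$ in the range $[n/(b\log n),\tfrac12 n]$. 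More precisely I would decompose according to which $F^\ell$-coordinate carries the first return time exceeding $n/(b\log n)$, and then recognise the relevant piece as contained in a set $Y(k,m)$ up to pulling back finitely many $G$-branches.

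First I would fix $q\in(0,1)$ with $q$ small enough that $n^{1-q}\le n/(b\log n)$ for all large $n$ (any $q>0$ works eventually since $\log n=o(n^q)$), and fix $p\in(0,(1-q)\alpha)$; this is the pair of exponents in Proposition~\ref{prop:SV}. Next, for $z\in Z_b(n)$, since $\rho(z)\le b\log n$ and $\tau(z)\ge n$, some $\ell_0<\rho(z)$ has $\varphi(F^{\ell_0}z)\ge n/(b\log n)\ge n^{1-q}$. Let $\ell_0$ be the first such index and put $m=\varphi(F^{\ell_0}z)$, so $m\in[n^{1-q},\tfrac12 n]$. If $\ell_0=0$ this says $z$ itself lies in $\{\varphi\ge n^{1-q}\}=S_{n}$ in the notation of Proposition~\ref{prop:SV}; if $\ell_0\ge1$, then $z$ lies in $F^{-\ell_0}\{\varphi=m\}\cap\{\varphi\circ F^{\ell_0}>m^{1-q'}\}$-type sets, which after replacing $\ell_0$ by the single ``step'' in $Y(k,n)$ and pulling back through the at most $b\log n$ inverse branches of $F$ (each of which is built from finitely many $G$-branches by Lemma~\ref{lem:M}), is covered by a bounded number of translates of $Y(\ell_0-1,m)$ together with the bounded-distortion / bounded-above-density control from Lemma~\ref{lem:Fmix}. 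I would then apply Proposition~\ref{prop:SV} to get $\mu_Y$ of each such piece $\ll m^{-(1+\alpha+p)}$, and use $F$-invariance of $\mu_Y$ together with the fact that $\mu_Z$ is comparable to $\mu_Y$ restricted to $Z$ (density bounded above and below) to transfer back to $\mu_Z$.

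Finally I would sum the estimates: over $\ell_0\in\{0,1,\dots,\lfloor b\log n\rfloor\}$ (a factor $O(\log n)$), over the admissible values $m\in[n^{1-q},\tfrac12 n]$ contributing $\sum_{m\ge n^{1-q}} m^{-(1+\alpha+p)}\ll n^{-(1-q)(\alpha+p)}$, and noting any remaining combinatorial multiplicities (choice of which pieces, number of relevant $F$-images) are polynomially bounded. The total is $\ll (\log n)\, n^{-(1-q)(\alpha+p)}\cdot(\text{poly}\log)$, and since $p>0$ and $q$ can be taken arbitrarily small, $(1-q)(\alpha+p)>\alpha$, so this is $o(n^{-\alpha})$, as required. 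The main obstacle I anticipate is the bookkeeping in the middle step: correctly identifying $Z_b(n)$-pieces as pullbacks of the sets $Y(k,m)$ of Proposition~\ref{prop:SV} — in particular handling the composition $\rho$-many $F$-branches without losing more than a polylogarithmic factor, and making sure the ``first large index'' decomposition does not overcount. Once the combinatorial identification is pinned down, the summation is routine. This is exactly the structure of \cite[Lemma~16]{SV07} and \cite{BBM19}, whose exposition I would follow closely.
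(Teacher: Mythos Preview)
Your overall strategy---cover $Z_b(n)$ by pullbacks of the sets $Y(k,m)$ from Proposition~\ref{prop:SV}, then sum---is the paper's strategy, but there is a genuine gap in how you identify those pullbacks. You locate only \emph{one} large return time (the first index $\ell_0$ with $\varphi(F^{\ell_0}z)\ge n/(b\log n)$) and then try to place $z$ or $F^{\ell_0}z$ inside a set $Y(k,m)$. But $Y(k,m)=\{\varphi=m\ \text{and}\ \varphi\circ F^k>m^{1-q}\}$ requires \emph{two} large values of $\varphi$, separated by $k$ steps; a single large index gives only membership in $\{\varphi=m\}$, whose measure is $\asymp m^{-(1+\alpha)}$, not the $m^{-(1+\alpha+p)}$ you need. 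In particular your case $\ell_0=0$ already fails: it places $z$ in $\{\varphi\ge n^{1-q}\}$, which has measure $\asymp n^{-(1-q)\alpha}\gg n^{-\alpha}$. Your attempted identification ``$F^{-\ell_0}\{\varphi=m\}\cap\{\varphi\circ F^{\ell_0}>m^{1-q'}\}$'' is tautological (the second condition reads $m>m^{1-q'}$), and the reference to $Y(\ell_0-1,m)$ does not correspond to any constraint you have established on $z$.

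The missing observation, used explicitly in the paper (and in \cite{SV07,BBM19}), is that the \emph{second} largest return time is also large. Since $\tau(z)\ge n$, $\max_\ell\varphi(F^\ell z)\le n/2$, and there are at most $b\log n$ terms, removing the largest term leaves a sum $\ge n/2$ distributed over fewer than $b\log n$ terms; hence the second maximum $\varphi_2$ also satisfies $\varphi_2\ge n/(2b\log n)$. If $\ell_1,\ell_2$ are the indices of the two largest values and $m=\varphi(F^{\ell_1}z)$, then $F^{\ell_1}z$ has $\varphi=m$ \emph{and} $\varphi\circ F^{|\ell_2-\ell_1|}=\varphi_2>m^{1-q}$ for large $n$, so $F^{\ell_1}z\in Y(|\ell_2-\ell_1|,m)$ with $|\ell_2-\ell_1|\le b\log n\le 2b\log m$. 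This is what lets Proposition~\ref{prop:SV} supply the extra factor $m^{-p}$; from there your summation goes through essentially as written.
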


\begin{proof}
Define
\[
   Y_b(n)=\{\varphi = n \text{ and } \varphi \circ F^k > n^{1-q} \text{ for some } 1\le k\le 2b \log n\}\subset Y.
\]
By Proposition~\ref{prop:SV},
\[
\mu_Y(Y_b(n))\le C(2b\log n)n^{-(1+\alpha+p)} \ll n^{-(1+\alpha+p/2)}.
\]

Let $z\in Z_b(n)$.  Define $\varphi_1(z)=\max_{0\le\ell<\rho(z)}\varphi(F^\ell z)$
and choose $\ell_1(z)\in\{0,\dots,\rho(z)-1\}$ such that
$\varphi_1(z)=\varphi(F^{\ell_1(z)}z)$.
Define $\varphi_2(z)=\max_{0\le \ell<\rho(z)\,,\,\ell\neq\ell_1(z)}\varphi(F^\ell z)$.

Now, $n\le\tau\le\varphi_1+(\rho-1)\varphi_2\le \frac12 n+(b\log n)\varphi_2$.
Hence
\[
\frac{n}{2b\log n}\le \varphi_2\le \varphi_1\le \frac{n}{2}.
\]
In particular, $\varphi_1>\varphi_2^{1-q}$ and $\varphi_2>\varphi_1^{1-q}$ for $n$ large.

Choose $\ell_2(z)\in\{0,\dots,\rho(z)-1\}$ such that $\ell_2(z)\neq\ell_1(z)$ and
$\varphi_2(z)=\varphi(F^{\ell_2(z)})$.  
Suppose for definiteness that $\ell_1(z)<\ell_2(z)$ (the other case is similar). 
Let $m=\varphi_1(z)$, $k=\ell_2(z)-\ell_1(z)$.
Then 
\begin{itemize}

\parskip=-2pt
\item $\varphi(F^{\ell_1(z)}z)=\varphi_1(z)=m$;
\item $\varphi\circ F^k(F^{\ell_1(z)}z)=\varphi(F^{\ell_2(z)}z)=\varphi_2(z)>
\varphi_1(z)^{1-q}=m^{1-q}$; 
\item $1\le k\le\ell_2(z)\le b\log n\le2 b\log\varphi_1(z)=2b\log m$
for $n$ large.
\end{itemize}
Hence, $F^{\ell_1(z)}z\in Y_b(m)$ for $n$ large.
We have shown that
\[
Z_b(n)\subset F^{-\ell}Y_b(m)
\quad\text{for some $\ell<b\log n,\,m\ge n/(2b\log n)$},
\]
and so
\[
\begin{split}
\mu_Z(Z_b(n))
& \ll \mu_Y(Z_b(n))\ll \log n\sum_{m\ge n/(2b\log n)}\mu_Y(Y_b(m)) \\
& \ll \log n\sum_{m\ge n/(2b\log n)}m^{-(1+\alpha+p/2)}
\ll \log n(n/\log n)^{-(\alpha+p/2)}=o(n^{-\alpha})
\end{split}
\]
as required.
\end{proof}

\begin{pfof}{Lemma~\ref{lem:tautails}}
Let $\Delta=\{(z,\ell)\in Z\times\Z:0\le\ell<\rho(z)\}$ be the Young tower from the proof of Lemma~\ref{lem:Fmix} with probability
measure $\mu_\Delta=(\mu_Z\times{\rm counting})/\bar\rho$.
Recall that $\mu_Y=\pi_*\mu_\Delta$ where $\pi(z,\ell)=F^\ell z$.

Write $\max_{0\le \ell<\rho(z)}\varphi(F^\ell z)=\varphi(F^{\ell_1(z)}z)$ where
$\ell_1(z)\in\{0,\dots,\rho(z)-1\}$.
Then 
\[
\begin{split}
\mu_Z\big( & z\in Z  :\max_{0\le\ell<\rho(z)}\varphi(F^\ell z)>n/2\big)
=\bar\rho\mu_\Delta\{(z,0)\in\Delta:\varphi(F^{\ell_1(z)}z)>n/2\}
\\ & =\bar\rho \mu_\Delta\{(z,\ell_1(z)):\varphi(F^{\ell_1(z)}z)>n/2\}
 =\bar\rho \mu_\Delta\{(z,\ell_1(z)):\varphi\circ\pi(z,\ell_1(z))>n/2\}
 \\ &
\le \bar\rho \mu_\Delta\{p\in\Delta:\varphi\circ\pi(p)>n/2\}
= \bar\rho \mu_Y\{y\in Y:\varphi>n/2\}
=O(n^{-\alpha}).
\end{split}
\]
Hence, by Corollary~\ref{cor:SV},
\[
\mu_Z(\rho\le b\log n \text{ and }\tau> n)=O(n^{-\alpha}).
\]
Finally, by Lemma~\ref{lem:M}(a),
 $\mu_Z(\rho>b\log n)=O(\delta^{b\log n})=O(n^{b\log \delta})=o(n^{-\alpha})$
for any $b$ fixed sufficiently large.  Hence
$\mu_Z(\tau>n)=O(n^{-\alpha})$ as required.
\end{pfof}

\subsection{Proof of upper bounds for decay of correlations, and various statistical properties}
\label{sec:stat}

We suppose throughout this subsection that $\gamma<1$, and set $\alpha=1/\gamma$.
In the proof of Lemma~\ref{lem:fmix}, we showed that 
the intermittent map $f:X\to X$ is modelled by a Young tower $\tilde g:\tilde\Delta\to\tilde\Delta$ with
first return $G=f^\tau:Z\to Z$.
By Lemma~\ref{lem:tautails}, the return time tails satisfy
$\mu_Z(\tau>n)=O(n^{-\alpha})$.
Accordingly we can read off
numerous statistical properties that hold for all H\"older (and dynamically H\"older) observables $v:X\to\R$.  

Recall that $\tilde\pi:\tilde\Delta\to X$, given by $\tilde\pi(z,\ell)=f^\ell z$ is a semiconjugacy between $\tilde g$ and~$f$.  Moreover, we have invariant ergodic probability measures $\tilde\mu_\Delta$ on $\tilde\Delta$ and $\mu$ on $X$ where
$\tilde\mu_\Delta=(\mu_Z\times{\rm counting})/\bar\tau$ and
$\mu=\tilde\pi_*\mu_\Delta$.

Recall from Lemma~\ref{lem:M} that $G:Z\to Z$ is a full-branched Gibbs-Markov map with partition $\alpha^Z$.
Define the separation time $s(z,z')$ on $Z$ to be the least integer $n\ge0$ such that $G^nz,G^nz'$ lie in distinct partition elements in $\alpha^Z$.
For $\theta\in(0,1)$ and $v:X\to\R$, define
\[
\|v\|_{\cH_\theta}=|v|_\infty+|v|_{\cH_\theta},
\quad
|v|_{\cH_\theta}=\sup_{z,z'\in Z:z\neq z'}\sup_{0\le\ell<\tau(z)}
\frac{|v(f^\ell z)-v(f^\ell z')|}{\theta^{s(z,z')}}.
\]
An observable $v:X\to\R$ is said to be {\em dynamically-H\"older} if $\|v\|_{\cH_\theta}<\infty$ for some choice of $\theta$.

\begin{prop} \label{prop:H}
H\"older observables are dynamically H\"older.
Moreover, for $v\in C^\eta(X)$, $\eta\in(0,1)$,
we have $\|v\|_{\cH_\theta}\le 2^{\eta/2}\theta^{-1}\|v\|_\eta$
where $\theta=4^{-\eta}$.
\end{prop}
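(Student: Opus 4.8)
The plan is to show that if two points $z,z'\in Z$ lie in the same $k$-cylinder of the Gibbs-Markov partition (i.e.\ $s(z,z')\ge k$), then for every $0\le\ell<\tau(z)$ the images $f^\ell z$ and $f^\ell z'$ are close in the Euclidean metric, with an explicit exponential rate in $k$; combining this with H\"older continuity of $v$ gives the claimed bound. First I would recall that $G=f^\tau=F^\rho$ and that, since $G$ is a full-branched Gibbs-Markov map, two points with $s(z,z')\ge k$ share the first $k$ symbols of their $\alpha^Z$-itinerary, hence also share an initial block of symbols in the finer coding by $\alpha^Y$-cylinders. The key geometric input is the uniform expansion of $F$: by Proposition~\ref{prop:expand}, $|(DF)v|\ge 4|v|$ on $Y$, so $F$ contracts inverse branches by a factor at least $4^{-1}$ at each application. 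More precisely, if $z,z'$ lie in a common $\alpha^Y$-cylinder of depth $m$ (in the $F$-dynamics), then $|z-z'|\le 4^{-m}\diam Y\le 4^{-m}\sqrt2$.

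Next I would translate this into a bound valid for the intermediate iterates $f^\ell z$ with $0\le\ell<\tau(z)$. Fix such an $\ell$ and let $n$ be the number of complete $F$-returns contained in the first $\ell$ iterates of $f$, so $f^\ell z = F^n z'' $ followed by a partial excursion; since $v$ is evaluated along $f^\ell$, I need $f^\ell z$ and $f^\ell z'$ to be close. Because $z,z'$ agree on at least $k$ symbols of the $\alpha^Z$-coding and $G=f^\tau$, the points $f^\ell z,f^\ell z'$ are both obtained by applying the same string of $f$-iterates, and they remain in a common inverse-branch domain of $f^{\tau(z)(k-\text{something})}$; the residual contraction available is at least one full $F$-step for each $\alpha^Z$-symbol still agreed upon after the current one, so $|f^\ell z - f^\ell z'|\le 4^{-(s(z,z')-1)}\diam Y \le \sqrt2\,4^{-(s(z,z')-1)} = \sqrt2\cdot4\cdot 4^{-s(z,z')}$. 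Wait — let me be a touch more careful: one uses that the excursion from $f^\ell z$ eventually re-enters $Z$ and thereafter undergoes $s(z,z')-1$ more $G$-returns during which the two orbits stay in a common cylinder, and each $G$-return expands distances by at least $4$ (since each $G=F^\rho$ with $\rho\ge1$ and $F$ expands by $4$), so distances at time $\ell$ are at most $4^{-(s(z,z')-1)}\diam Y$.

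Then the estimate is immediate: for $v\in C^\eta(X)$,
\[
\frac{|v(f^\ell z)-v(f^\ell z')|}{\theta^{s(z,z')}}
\le \|v\|_\eta\,\frac{|f^\ell z-f^\ell z'|^\eta}{\theta^{s(z,z')}}
\le \|v\|_\eta\,\frac{(\sqrt2)^\eta 4^{-\eta(s(z,z')-1)}}{(4^{-\eta})^{s(z,z')}}
= 2^{\eta/2} 4^{\eta}\|v\|_\eta
= 2^{\eta/2}\theta^{-1}\|v\|_\eta,
\]
using $\diam Y\le\sqrt2$ (as $Y\subset[0,1]\times\T$ with $\T=\R/\Z$ of diameter $\tfrac12$, so in fact $\diam Y\le\sqrt{1+1/4}$, but $\sqrt2$ is a safe bound) and $\theta=4^{-\eta}$, so that $4^{\eta}=\theta^{-1}$. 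Taking the supremum over $z\ne z'$ and over $0\le\ell<\tau(z)$, and adding $|v|_\infty$, yields $\|v\|_{\cH_\theta}\le|v|_\infty+2^{\eta/2}\theta^{-1}\|v\|_\eta<\infty$, so $v$ is dynamically H\"older. The main obstacle is the bookkeeping in the middle step: correctly matching the $\alpha^Z$-separation time $s(z,z')$ (counted in $G$-iterates) with the number of genuine $F$-contractions available along the orbit segment up to the intermediate time $\ell<\tau(z)$, and checking that the factor-of-$4$ contraction per surviving symbol is not lost during the partial excursion; everything else is routine.
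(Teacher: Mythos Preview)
Your proposal is correct and follows essentially the same approach as the paper. The paper sidesteps the bookkeeping you flag as the main obstacle by noting in one line that $|f^\ell z-f^\ell z'|\le|Gz-Gz'|$ for every $0\le\ell<\tau(z)$ (since $f$ is nonstrictly expanding everywhere on $M$, so the remaining $\tau(z)-\ell$ iterates of $f$ can only increase distance), and then applying the already-established bound $|Gz-Gz'|\le\sqrt2\cdot4^{-s(Gz,Gz')}=\sqrt2\cdot4^{-(s(z,z')-1)}$.
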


\begin{proof}
Let $z,z'\in Z$ with $s(z,z')=n$.  Then
\[
\sqrt 2 =\diam M\ge |G^nz-G^nz'|\ge 4^n|z-z'|,
\]
so $|z-z'|\le \sqrt 2\, 4^{-s(z,z')}$ for all $z,z'\in Z$.

Now, let $z,z'\in Z$, $0\le\ell<\tau(z)$.  
Then
\begin{align*}
|v(f^\ell z-v(f^\ell z')| & \le \|v\|_\eta \,|f^\ell z-f^\ell z'|^\eta
\le \|v\|_\eta \,|Gz-Gz'|^\eta
\\ & \le \|v\|_\eta 2^{\eta/2} \,\theta^{s(Gz,Gz')}
\le \|v\|_\eta 2^{\eta/2}\theta^{-1} \,\theta^{s(z,z')}
\end{align*}
yielding the desired estimate.
\end{proof}

\begin{pfof}{Theorems~\ref{thm:decay}(a),~\ref{thm:stats} and~\ref{thm:LD}}
(The proof of Theorem~\ref{thm:LD}(c) for $\gamma\in(\frac12,1)$ is momentarily contingent on Theorem~\ref{thm:stab}.)

Given $v:X\to\R$, we define the lifted observable $\tilde v=v\circ\tilde\pi:\tilde\Delta\to\R$.  Since $\tilde\pi$ is a measure-preserving semiconjugacy, the desired statistical properties for $v$ follow from those for $\tilde v$.  
Also, 
$|v|_{\cH_\theta}=
\sup_{z,z'\in Z:z\neq z'}\sup_{0\le\ell<\tau(z)}
\frac{|\tilde v(z,\ell)-\tilde v(z',\ell)|}{\theta^{s(z,z')}}$,
so dynamically H\"older observables lie in the standard function space $C_\theta(\tilde\Delta)$ considered on one-sided Young towers~\cite{Young99}.
The upper bound on decay of correlations in Theorem~\ref{thm:decay}(a) now follows from~\cite[Theorem~3]{Young99}.  

Next, we turn to Theorem~\ref{thm:stats}.
Part~(a) holds by~\cite[Theorem~4]{Young99}.
Parts~(b) and~(c) follow respectively from~\cite[Theorems~1.3 and~1.2]{Gouezel05}.
Part~(f) is proved in~\cite[Theorem~5.3]{CunyDedeckerKorepanovMerlevedesub} and
part~(d) is a standard consequence.
Part~(e) is proved in~\cite[Theorem~2.2]{AntoniouMapp}.

Finally, we consider Theorem~\ref{thm:LD}.
Part~(a) follows from Theorem~\ref{thm:decay}(a) by~\cite[Theorem~1.2]{M09}.
Part~(b) is proved in~\cite[Theorem~1.4]{GouezelM14}.
Part~(c) is an immediate consequence of part~(b) together with the CLT for
$\gamma<\frac12$ and Theorem~\ref{thm:stab} for $\gamma\in(\frac12,1)$.
\end{pfof}

\begin{rmk}
Alternative references for some of the results in the above proof include~\cite{CunyMerlevede15,DedeckerMerlevede15,Korepanov18,KKM18,MN05,MN08,MTorok12}.

For brevity, we have omitted various other statistical properties that follow from the existence of the Young tower $\tilde\Delta$ such as  concentration inequalities~\cite{GouezelM14}.  Also, for $\gamma<\frac12$,
homogenization (convergence of fast-slow systems to a stochastic differential equation) holds when the fast dynamics is given by $f$, see~\cite{CFKMZ,CFKMZsub,GM13,KM16,KM17,KKMsub}.  
\end{rmk}

\subsection{Aperiodicity}
\label{sec:ap}

Let $S^1=\{\omega\in\C:|\omega|=1\}$ and consider the cohomological equation
\begin{equation} \label{eq:cohom} v\circ F=\omega^\varphi v,
\end{equation}
where $v:Y\to S^1$ is measurable and $\omega\in S^1$.  If $\omega=1$, then since
$F$ is ergodic, the measurable solutions to equation~\eqref{eq:cohom}
are precisely the constant solutions.  Absence of solutions for
$\omega\ne1$ is called {\em aperiodicity}.  In this subsection, we prove:

\begin{lemma} \label{lem:aperiodic} For each
  $\omega\in S^1\setminus\{1\}$ there are no measurable solutions
  $v:Y\to S^1$ to equation~\eqref{eq:cohom}.
\end{lemma}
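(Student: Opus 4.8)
The strategy is a standard gcd/aperiodicity argument exploiting the finitely-many-images structure of $F$ together with Lemma~\ref{lem:M}(c). Suppose $v:Y\to S^1$ is a measurable solution of~\eqref{eq:cohom} for some $\omega\in S^1$. First I would iterate the cohomological equation along the tower $G=F^\rho:Z\to Z$. Since $F$ has finitely many images and $G$ is full-branched Gibbs--Markov, on each element $a\in\alpha^Z$ the return $\tau=\varphi_\rho$ is constant and $G:a\to Z$ is onto; iterating~\eqref{eq:cohom} gives $v\circ G=\omega^{\tau}v$ on each such $a$. Restricting to $Z$, this means $v$ solves the cohomological equation for the Gibbs--Markov map $G$ with the locally constant cocycle $\tau$. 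Because $G$ is mixing and full-branched and $\gcd\{\tau(a):a\in\alpha^Z\}=1$ (which follows from Lemma~\ref{lem:M}(c), as used in Lemma~\ref{lem:fmix}), a standard argument (e.g.\ the one in~\cite{AaronsonDenker01} or \cite[Theorem~1]{Young99}) forces $\omega=1$; one can also see this directly: comparing two branches $a,a'$ with $\tau$-values $p,p'$ and using that $v$ is, after modification on a null set, locally constant-ish, one obtains $\omega^{p-p'}=1$ for enough pairs to conclude $\omega^d=1$ where $d=\gcd$, so $\omega=1$.

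More carefully, to make the direct argument rigorous I would not assume $v$ locally constant but instead use the following trick. Fix $\omega\neq1$ and suppose $v$ exists. Consider the function $V(y,\ell)=\omega^{-\varphi_\ell(y)}v(y)$ on the Young tower $\Delta$ over $G$ (with $\varphi_\ell=\sum_{k=0}^{\ell-1}\varphi\circ F^k$); a short computation shows $V\circ g = V$ up to the factor picked up at the top, and the cohomological equation for $v$ is exactly what makes the induced function on $Z$ satisfy $\tilde v\circ G=\omega^{\tau}\tilde v$. So WLOG work on $(Z,\mu_Z,G,\alpha^Z,\tau)$: $G$ is full-branched Gibbs--Markov, $\tau$ locally constant, and I have $v:Z\to S^1$ measurable with $v\circ G=\omega^\tau v$. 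Now iterate using only branches with $\rho=1$, i.e.\ the branches where $\tau=\varphi$ takes the values in $S:=\{n:\{\varphi=n,\rho=1\}\neq\emptyset\}$, which by Lemma~\ref{lem:M}(c) has $\gcd S=1$. Along such a branch $a$ with $\tau|_a=n$, the map $G|_a:a\to Z$ is a bijection and conjugating shows that the ratio $v(Gy)/v(y)=\omega^n$ is \emph{independent of $y\in a$}. Applying this to finitely many branches $a_1,\dots,a_r$ with $\tau$-values $n_1,\dots,n_r$ and $\gcd(n_1,\dots,n_r)=1$, and chasing a suitable orbit segment that returns close to its start (possible since $G$ is topologically mixing and full-branched), one produces a point $z$ and integers $m_i$ with $G^{\sum m_i}z$ near $z$ and $v(G^{\sum m_i}z)/v(z)=\omega^{\sum m_i n_i}$; by Bézout choose the $m_i\in\Z_{\ge0}$ (using that we may pad with further returns) so that $\sum m_i n_i$ realizes any sufficiently large integer, hence $\omega^1=1$ by taking differences. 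This contradicts $\omega\neq1$.

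The cleanest execution, and the one I expect the authors take, is to cite the general Gibbs--Markov aperiodicity result: for a mixing full-branched Gibbs--Markov map $G$ and a locally constant integer cocycle $\tau$ with $\gcd$ of its values equal to $1$, the only measurable solutions of $v\circ G=\omega^\tau v$ with $|\omega|=1$ have $\omega=1$ (this is essentially \cite[Prop.~? ]{AaronsonDenker01} or follows from the Livsic-type / spectral argument behind \cite[Theorem~1]{Young99}). Then the entire content of Lemma~\ref{lem:aperiodic} is: (i) reduce the cohomological equation on $(Y,F,\varphi)$ to one on $(Z,G,\tau)$ via the tower, which is formal; and (ii) invoke Lemma~\ref{lem:M}(c) to get $\gcd\{\tau(a)\}=1$ — exactly the computation already made in the proof of Lemma~\ref{lem:fmix}, namely $\{\varphi=n,\rho=1\}\neq\emptyset$ implies $\tau=n$ on the corresponding element of $\alpha^Z$.

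The main obstacle is the reduction step (i): one must check that a measurable solution for $F$ genuinely restricts/descends to a measurable solution for $G$ with the correct cocycle $\tau=\varphi_\rho$, and conversely — this needs $G=F^\rho$, the identity $\tau=\varphi_\rho$, and a little care that "measurable on $Y$" restricts to "measurable on $Z$" and that no information is lost because $\bigcup_{\ell,a} F^\ell a$ recovers all of $Y$ (so ergodicity/mixing of $F$ is genuinely equivalent to that of $G$ on the tower). Everything else is either the quoted Gibbs--Markov fact or the already-established gcd statement. I would therefore write the proof in three short moves: lift to the tower and descend to $G$; note $\gcd\{\tau(a):a\in\alpha^Z\}=1$ from Lemma~\ref{lem:M}(c); apply the standard Gibbs--Markov aperiodicity theorem to conclude $\omega=1$, hence $v$ is constant.
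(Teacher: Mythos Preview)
Your approach is correct and more direct than the paper's. The reduction---iterating $v\circ F=\omega^\varphi v$ through $\rho$ steps to obtain $(v|_Z)\circ G=\omega^\tau(v|_Z)$---is immediate, and from there the clean finish is the fixed-point argument (each full branch $a\in\alpha^Z$ contains a fixed point $z_a$ of $G$, whence $\omega^{\tau(a)}=1$, and $\gcd\{\tau(a)\}=1$ from Lemma~\ref{lem:M}(c) forces $\omega=1$); your B\'ezout/orbit-chasing detour and the ``locally constant-ish'' remark are unnecessary, and the latter is not justified. The paper instead lifts $v$ to an eigenfunction $u(y,\ell)=\omega^\ell v(y)$ on the Young tower $\Delta$ built over $(Y,F,\varphi)$, reducing the question to weak mixing of $g:\Delta\to\Delta$; it then constructs a \emph{second} tower $\tilde\Delta$ over $(Z,G,\tau)$, proves $\tilde g$ weak mixing via exactly the fixed-point argument above, and transfers this down through a separately-established measure-preserving semiconjugacy $\pi_\Delta:\tilde\Delta\to\Delta$ (Proposition~\ref{prop:Delta}). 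Your route is shorter for this lemma; the paper's yields weak mixing of both towers as a byproduct, which is conceptually tidy but not otherwise used.
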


Aperiodicity is useful for ruling out peripheral spectra for certain
twisted transfer operators.  Instances of this are seen in
Corollaries~\ref{cor:LY}(e) and~\ref{cor:specR}(ii) below.

For the moment, consider an  arbitrary
ergodic measure-preserving transformations $F:Y\to Y$ defined on a
probability space $(Y,\mu_Y)$.  Let $U:L^1(Y)\to L^1(Y)$ denote the
Koopman operator $Uv=v\circ F$ and define the transfer operator
$R:L^1(Y)\to L^1(Y)$, where $\int_Y Rv\,w\,d\mu_Y=\int_Y v\,Uw\,d\mu_Y$
for all $v\in L^1(Y)$, $w\in L^\infty(Y)$.

For $\omega\in S^1$, we define the twisted Koopman and transfer operators
$U(\omega)v=\bar \omega^{\varphi}Uv =\bar \omega^{\varphi}v\circ F$ and
$R(\omega)v=R(\omega^\varphi v)$.  Note that $R(\omega)$ is the $L^2$ adjoint of
$U(\omega)$ but that $R(\omega):L^1\to L^1$ is the dual of
$U(\bar \omega):L^\infty\to L^\infty$.  (This discrepancy between adjoints
and duals over the complex numbers is standard.)

\begin{prop} \label{prop:aperiodic}
  Suppose that $F:Y\to Y$ is ergodic.  Let $\omega\in S^1$ and let
  $v:Y\to\C$ be $L^1$.  Then $U(\omega)v=v$ if and only if $R(\omega)v=v$, in
  which case $|v|$ is constant.
\end{prop}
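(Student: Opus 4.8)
The plan is to prove the two implications separately, then deduce that $|v|$ is constant. Throughout, recall that $R(\omega)$ is the dual of $U(\bar\omega)$ acting on $L^\infty$, i.e.\ $\int_Y R(\omega)v\,\bar w\,d\mu_Y=\int_Y v\,\overline{U(\bar\omega)w}\,d\mu_Y$ for $v\in L^1$, $w\in L^\infty$; spelled out, $\int_Y R(\omega^\varphi v)\,\bar w\,d\mu_Y=\int_Y v\,\overline{\omega^\varphi\,w\circ F}\,d\mu_Y$. For the easy direction, suppose $U(\omega)v=v$, that is $\bar\omega^\varphi\,v\circ F=v$. Then $\omega^\varphi v = \omega^\varphi\,\bar\omega^\varphi\,v\circ F = v\circ F$, so $R(\omega)v=R(\omega^\varphi v)=R(v\circ F)=R(Uv)=v$, using that $RU=I$ for a transfer operator of a measure-preserving map. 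This gives $R(\omega)v=v$.

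For the reverse implication I would use a duality/convexity argument rather than try to invert directly. Suppose $R(\omega)v=v$ with $v\in L^1$. First observe that $|R(\omega)v|\le R|v|$ pointwise a.e., because $R(\omega)v=R(\omega^\varphi v)$ and $R$ is a positive operator with $R$ applied to $|\omega^\varphi v|=|v|$; hence $|v|\le R|v|$ a.e. Integrating and using $\int_Y R|v|\,d\mu_Y=\int_Y |v|\,d\mu_Y$ forces $|v|=R|v|$ a.e. Since $R$ preserves integrals and $R|v|\ge|v|$ with equal integrals, $R|v|=|v|$ is an $R$-invariant nonnegative $L^1$ function; but $F$ is ergodic, so the only such functions are constants (the fixed space of $R$ in $L^1$ is one-dimensional, spanned by the density of $\mu_Y$, which here is $\mathbf 1$ since $\mu_Y$ is the invariant probability measure). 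Therefore $|v|$ is constant, which also establishes the last claim of the proposition once either of the fixed-point equations holds.

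It remains, in the reverse direction, to promote ``$|v|$ constant'' to ``$U(\omega)v=v$''. With $|v|\equiv c$ (and $c\neq 0$, else $v=0$ and the statement is trivial), equality in $|R(\omega)v|\le R|v|$ must hold a.e. For a transfer operator this is a rigidity statement: writing $Rg(y)=\sum_{Fx=y} (Jg)(x)\,g(x)$ with positive weights summing to $1$, equality $|R(\omega^\varphi v)|=R|v|$ at a point $y$ forces all the complex numbers $\{\omega^{\varphi(x)}v(x):Fx=y\}$ to have the same argument, hence (dividing by $c$) $\omega^{\varphi(x)}v(x)$ is independent of the preimage $x$ and equals its average, namely $R(\omega)v(y)=v(y)$. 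Thus for a.e.\ $y$ and every preimage $x$ of $y$ we get $\omega^{\varphi(x)}v(x)=v(Fx)$, i.e.\ $v\circ F=\omega^\varphi v$, which is exactly $U(\omega)v=v$.

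The main obstacle is this last rigidity step: justifying cleanly that equality in the triangle inequality $|R(\omega^\varphi v)|\le R|v|$ propagates to the pointwise relation $v\circ F=\omega^\varphi v$. This needs the transfer operator to be genuinely given by a sum over preimages with strictly positive weights (valid here since $F$ is a piecewise invertible nonsingular map with at most countably many branches), and a measure-theoretic argument to handle the ``for a.e.\ $y$'' quantifier uniformly across branches — most transparently by testing against indicator functions of cylinders and using that $\int_Y |v\circ F - \omega^\varphi v|\,d\mu_Y = \int_Y(R|v| - |R(\omega)v|)\,d\mu_Y = 0$ via the computation above. Everything else is formal manipulation of $R$, $U$, and ergodicity.
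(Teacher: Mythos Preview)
Your route to ``$|v|$ is constant'' is different from the paper's and arguably cleaner. The paper first proves $v\in L^\infty$ by a contradiction argument using the pointwise ergodic theorem and dominated convergence, and only then, via the $L^2$ adjoint computation, obtains $U(\omega)v=v$ (whence $|v|\circ F=|v|$ is constant). You instead go straight from positivity of $R$: $|v|=|R(\omega)v|\le R|v|$ together with $\int|v|=\int R|v|$ forces $R|v|=|v|$, and then ergodicity (one-dimensionality of $\ker(R-I)$ on $L^1$) gives $|v|$ constant. This bypasses the ergodic-theorem detour entirely.

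The gap is in your rigidity step. The proposition is stated for an \emph{arbitrary} ergodic measure-preserving system (the paper says so explicitly just before the statement), so you are not entitled to write $R$ as a sum over preimages with positive Jacobian weights; there need not be any branches or cylinders. Your fallback identity
\[
\int_Y|v\circ F-\omega^\varphi v|\,d\mu_Y=\int_Y\bigl(R|v|-|R(\omega)v|\bigr)\,d\mu_Y
\]
is not an algebraic identity either: for the doubling map with $\omega=1$, $\varphi\equiv1$, $v(x)=e^{2\pi ix}$, the left side is $4/\pi$ and the right side is $1$. Under your hypotheses the right side is $0$, but the left side is precisely the quantity you are trying to show vanishes, so the displayed equality is circular rather than a derivation.

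The fix is immediate once you notice that your own argument already gives $v\in L^\infty\subset L^2$. At that point the paper's computation applies verbatim: $R(\omega)=U(\omega)^*$ on $L^2$ and $U(\omega)$ is an $L^2$ isometry, so
\[
\|U(\omega)v-v\|_2^2=\|v\|_2^2-2\Re\langle v,R(\omega)v\rangle+\|v\|_2^2=0.
\]
If you prefer to stay with your rigidity idea, it can be made abstract via conditional expectation: set $g=\omega^\varphi v$ and $\mathcal G=F^{-1}\mathcal B$; then $\E[g\mid\mathcal G]=UR(\omega)v=Uv$ has modulus $c=|v|$ everywhere, and since $|g|\equiv c$ as well, the equality case of $|\E[g\mid\mathcal G]|\le\E[|g|\mid\mathcal G]$ forces $g=\E[g\mid\mathcal G]$, i.e.\ $\omega^\varphi v=v\circ F$. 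Either way, the missing ingredient is an $L^2$ or conditional-expectation argument, not an $L^1$ identity.
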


\begin{proof}
  First, note that if $U(\omega)v=v$, then $|v|\circ F=|v|$ and so $|v|$ is
  constant by ergodicity.

  Next, recall that $RU=I$ and hence $R(\omega)U(\omega)=I$ for all $\omega$.  If
  $U(\omega)v=v$, then $v=R(\omega)U(\omega)v=R(\omega)v$, proving one direction.

  Conversely, suppose that $R(\omega)v=v$.  By duality,
  $\int_Y v\,U(\bar \omega)^nw\,d\mu_Y=\int_Y vw\,d\mu_Y$ for every
  $w\in L^\infty$ and $n\ge1$.
  We claim that $v$ is bounded and $|v|_\infty\le |v|_1$.
  Suppose the claim is false.  Then there is a set $E$ of positive
  measure and $c>|v|_1$ such that $|v|\ge c$ on $E$.  Choose
  $w=1_E\bar v/|v|$ on $\{v\neq0\}$ and $w=1_E$ elsewhere.  Then
  \[
    \begin{split}
      c\mu_Y(E) & \le \int_E|v|\,d\mu_Y=\int_Y vw\,d\mu_Y=\int_Y v \frac1n
      \sum_{j=0}^{n-1}U(\bar \omega)^jw\,d\mu_Y \\ & =\int_Y v\frac1n
      \sum_{j=0}^{n-1}\omega^{\varphi_j}\, w\circ F^j\,d\mu_Y \le \int_Y
      |v|\frac1n \sum_{j=0}^{n-1}|w|\circ F^j\,d\mu_Y.
    \end{split}
  \]
  The last integrand is dominated by $|w|_\infty |v|\in L^1$ and
  converges a.e.\ to $|v|\int_Y|w|\,d\mu_Y$ by the pointwise ergodic
  theorem.  By the dominated convergence theorem,
  \[
    \lim_{n\to\infty}\int_Y |v|\frac1n \sum_{j=0}^{n-1}|w|\circ
    F^j\,d\mu_Y = \int_Y |v|\,d\mu_Y\; \mu_Y(E).
  \]
  Hence $c\le \int_Y|v|\,d\mu_Y$ which is a contradiction.

  This proves the claim, so $v$ is bounded. In particular, $v\in L^2$
  and a computation using that $R(\omega)=U(\omega)^*$ and $R(\omega)v=v$ shows that
  $\langle U(\omega)v-v,U(\omega)v-v\rangle=0$ so that $U(\omega)v=v$ as required.
\end{proof}

Returning to the intermittent maps~\eqref{eq:EMV}, we obtain
\begin{cor} \label{cor:aperiodic} For each
  $\omega\in S^1\setminus\{1\}$ there are no $L^1$ functions $v:Y\to S^1$
  such that $R(\omega)v=v$. 
\end{cor}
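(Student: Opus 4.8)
The plan is to obtain the corollary as an immediate consequence of Lemma~\ref{lem:aperiodic} and Proposition~\ref{prop:aperiodic}; no new estimates are required.

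First I would record that $F:Y\to Y$ is ergodic (indeed mixing) by Lemma~\ref{lem:Fmix}, so that Proposition~\ref{prop:aperiodic} is applicable. Then I would argue by contradiction: suppose $\omega\in S^1\setminus\{1\}$ and $v:Y\to S^1$ is an $L^1$ function with $R(\omega)v=v$. (Note that any function into $S^1$ is bounded, hence automatically lies in $L^1$ on the probability space $(Y,\mu_Y)$, so the hypothesis is really just $v:Y\to S^1$ with $R(\omega)v=v$.) By Proposition~\ref{prop:aperiodic}, the relation $R(\omega)v=v$ forces $U(\omega)v=v$.

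Next I would unwind the definition $U(\omega)v=\bar\omega^{\varphi}\,v\circ F$: the identity $U(\omega)v=v$ reads $\bar\omega^{\varphi}\,v\circ F=v$, i.e.\ $v\circ F=\omega^{\varphi}v$, which is precisely the cohomological equation~\eqref{eq:cohom}. Thus $v:Y\to S^1$ would be a measurable solution of~\eqref{eq:cohom} with $\omega\neq1$, contradicting Lemma~\ref{lem:aperiodic}. This proves the corollary.

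I do not anticipate any genuine obstacle at this step: the substantive content has already been absorbed into Lemma~\ref{lem:aperiodic} (whose proof is the real task) and Proposition~\ref{prop:aperiodic}. The one mildly delicate point---the distinction between the $L^2$-adjoint and the $L^1$-dual of the twisted operators $U(\omega)$ and $R(\omega)$---is handled inside the proof of Proposition~\ref{prop:aperiodic}, so it plays no further role here. The corollary is simply the restatement of aperiodicity in the form---absence of fixed points of $R(\omega)$---in which it will be used to rule out peripheral spectrum of twisted transfer operators (cf.\ Corollaries~\ref{cor:LY}(e) and~\ref{cor:specR}(ii)).
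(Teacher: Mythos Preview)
Your proof is correct and follows exactly the paper's approach: the paper's proof is simply ``This is immediate from Lemma~\ref{lem:aperiodic} and Proposition~\ref{prop:aperiodic}.'' You have unpacked this immediacy (including the observation that ergodicity of $F$ from Lemma~\ref{lem:Fmix} is what makes Proposition~\ref{prop:aperiodic} applicable), but the route is the same.
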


\begin{proof}
This is immediate from Lemma~\ref{lem:aperiodic} and
Proposition~\ref{prop:aperiodic}.
\end{proof}

To prove Lemma~\ref{lem:aperiodic}, we make use of two Young towers
$g:\Delta\to\Delta$ and $\tilde g:\tilde \Delta\to\tilde\Delta$.  The
second of these coincides with the tower in the proof of
Lemma~\ref{lem:fmix}.  The first tower is different from those
considered so far in this paper (in particular, that of
Lemma \ref{lem:Fmix}), and is defined as follows:
\[
  \Delta=\{(y,\ell)\in Y\times\Z: 0\le \ell\le\varphi(y)-1\}, \qquad
  g(y,\ell)=\begin{cases} (y,\ell+1) & \ell\le\varphi(y)-2 \\ (Fy,0) &
    \ell=\varphi(y)-1 \end{cases}.
\]

As in
Lemma~\ref{lem:fmix}, we have ergodic $g$-invariant and
$\tilde g$-invariant
measures $\mu_\Delta=\mu_Y\times{\rm counting}$ and
$\tilde\mu_\Delta=(\mu_Z\times{\rm counting})/\bar\sigma$ on
$\Delta$ and $\tilde \Delta$.
(When $\gamma<1$, 
it follows from Corollary~\ref{cor:Leb} and
Lemma~\ref{lem:Fmix} that $\bar\varphi=\int_Y\varphi\,d\mu_Y<\infty$ and
hence we can normalize further by $\bar\varphi$ to obtain probability measures
$\mu_\Delta$ and $\tilde\mu_\Delta$.)

\begin{rmk}
  A standard strategy, used below, to establish aperiodicity is to
  show that $g$ is weak mixing.  This is made complicated by that fact
  that $g$ is nonMarkov, so we pass to the Markov extension
  $\tilde g$.  (This is similar in spirit, though the notation is more
  complicated, to the derivation of mixing properties for $F$ from
  mixing properties for $G$ in Subsection~\ref{subsec:mix}.)
\end{rmk}

Recall that \(\tau: Z \to \Z^+\),
$\tau=\varphi_\rho=\sum_{i=0}^{\rho-1}\varphi\circ F^i$.  Write
$\varphi_j=\sum_{i=0}^{j-1}\varphi\circ F^i$.  Any element of
$\tilde\Delta$ can be written uniquely as $(z,\varphi_j(z)+\ell)$
where $0\le j\le \rho(z)-1$ and $0\le \ell\le\varphi(F^jz)-1$, valid
for \(z \in Z\).

Define
\[
  \pi_\Delta:\tilde\Delta\to\Delta, \qquad
  \pi_\Delta(z,\varphi_j(z)+\ell)=(F^jz,\ell).
\]

\begin{prop} \label{prop:Delta} 
  $\pi_\Delta$ is a measure-preserving semiconjugacy from $\tilde g$
  to $g$.
\end{prop}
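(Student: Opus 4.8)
The plan is to verify separately the two properties defining a measure-preserving semiconjugacy: the intertwining identity $\pi_\Delta\circ\tilde g=g\circ\pi_\Delta$, and $(\pi_\Delta)_*\tilde\mu_\Delta=\mu_\Delta$. Measurability of $\pi_\Delta$ is clear, since on each cylinder of $\tilde\Delta$ it is given by iterating the piecewise-smooth map $F$, and essential surjectivity will come for free: once the pushforward identity is known, $\mu_\Delta\big(\Delta\setminus\pi_\Delta(\tilde\Delta)\big)=\tilde\mu_\Delta\big(\pi_\Delta^{-1}(\Delta\setminus\pi_\Delta(\tilde\Delta))\big)=0$, while $\mu_\Delta=\mu_Y\times{\rm counting}$ is equivalent to $\Leb\times{\rm counting}$ on $\Delta$ by Lemma~\ref{lem:Fmix}, so $\pi_\Delta$ is onto mod zero.

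For the intertwining identity I would use the unique representation of a point of $\tilde\Delta$ as $(z,\varphi_j(z)+\ell)$ with $z\in Z$, $0\le j\le\rho(z)-1$, $0\le\ell\le\varphi(F^jz)-1$ recorded just above, and split into three cases according to the position of the level $\varphi_j(z)+\ell$. \emph{(i)} If $\ell\le\varphi(F^jz)-2$ (an interior level of the $j$-th block), then both $\tilde g$ and $g$ raise the fibre coordinate by one and leave the block index $j$ unchanged. \emph{(ii)} If $\ell=\varphi(F^jz)-1$ and $j\le\rho(z)-2$ (top of the $j$-th block, not of $\tilde\Delta$), then $\varphi_j(z)+\ell+1=\varphi_{j+1}(z)$, so $\tilde g$ lands at the bottom of block $j+1$, matching $g(F^jz,\varphi(F^jz)-1)=(F^{j+1}z,0)$. \emph{(iii)} If $j=\rho(z)-1$ and $\ell=\varphi(F^{\rho(z)-1}z)-1$ (top of $\tilde\Delta$), then $\tilde g(z,\tau(z)-1)=(Gz,0)$, while $g\big(\pi_\Delta(z,\tau(z)-1)\big)=g(F^{\rho(z)-1}z,\varphi(F^{\rho(z)-1}z)-1)=(F^{\rho(z)}z,0)=(Gz,0)=\pi_\Delta(Gz,0)$. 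In all cases the two sides agree.

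For the measure identity, since both $\mu_\Delta$ and $\tilde\mu_\Delta$ are, up to a normalising constant, products of a base measure with counting measure on fibres, it suffices to compute the pushforward on sets of the form $A\times\{\ell\}$ with $A\subset\{y\in Y:\varphi(y)>\ell\}$ measurable. By the unique representation, $\pi_\Delta^{-1}(A\times\{\ell\})=\{(z,\varphi_j(z)+\ell):z\in Z,\ 0\le j<\rho(z),\ F^jz\in A\}$, in which each admissible $j$ contributes exactly one fibre coordinate, so
\[
(\mu_Z\times{\rm counting})\big(\pi_\Delta^{-1}(A\times\{\ell\})\big)
=\sum_{j\ge0}\int_Z 1_{\{\rho>j\}}\,(1_A\circ F^j)\,d\mu_Z
=\bar\rho\,\mu_Y(A),
\]
the last step being the identity $\mu_Y(E)=\bar\rho^{-1}\int_Z\sum_{j=0}^{\rho-1}1_E\circ F^j\,d\mu_Z$ (valid for all measurable $E\subset Y$, being $\mu_Y=\pi_*\mu_\Delta$ as in the proof of Lemma~\ref{lem:Fmix}). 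Since $(\mu_Y\times{\rm counting})(A\times\{\ell\})=\mu_Y(A)$, this gives $(\pi_\Delta)_*(\mu_Z\times{\rm counting})=\bar\rho\,(\mu_Y\times{\rm counting})$; dividing by the relevant normalising constants yields $(\pi_\Delta)_*\tilde\mu_\Delta=\mu_\Delta$, the consistency of the constants in the case $\gamma<1$ being the Abramov/Kac relation $\int_Z\tau\,d\mu_Z=\bar\rho\,\bar\varphi$, itself obtained by summing the displayed formula over $\ell$ with $E=\{\varphi>\ell\}$.

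There is no serious obstacle here; the only care required is in keeping the three-way case analysis straight (especially the two ``top-of-a-block'' cases and the repeated use of $\varphi_j+\varphi\circ F^j=\varphi_{j+1}$) and in matching the normalising constants through the Kac identity. A more conceptual alternative, which could replace the level-by-level computation, is to observe that $\tilde\Delta$ is canonically the tower built over the Young tower of Lemma~\ref{lem:Fmix} (base $Z$, map $G$, roof $\rho$) with roof function $\varphi$ composed with that tower's projection to $Y$; under this identification $\pi_\Delta$ is that projection crossed with the identity, and the measure statement is immediate since the projection already pushes the tower measure to $\mu_Y$.
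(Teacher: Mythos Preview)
Your proof is correct and follows essentially the same route as the paper: a three-case verification of the intertwining identity $\pi_\Delta\circ\tilde g=g\circ\pi_\Delta$ on points $(z,\varphi_j(z)+\ell)$, followed by testing the pushforward on cylinders $A\times\{\ell\}$ and invoking the identity $\mu_Y(E)=\bar\rho^{-1}\int_Z\sum_{j=0}^{\rho-1}1_E\circ F^j\,d\mu_Z$ from equation~\eqref{eq:E}. The only cosmetic difference is that the paper tests on sets $E\times\{\ell\}$ with $E$ contained in a single partition element of $\alpha^Z$, whereas you allow general $A\subset\{\varphi>\ell\}$; and you spell out the consistency of normalising constants via the Kac relation $\bar\tau=\bar\rho\,\bar\varphi$, which the paper leaves implicit in its choice of normalisations.
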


\begin{proof}
  To verify that $\pi_\Delta$ is a semiconjugacy ($\pi_\Delta\circ\tilde g=g\circ\pi_\Delta$), we show that
  $\pi_\Delta\circ\tilde g(z,\varphi_j(z)+\ell)=
  g\circ\pi_\Delta(z,\varphi_j(z)+\ell)$ for all $z,j,\ell$.

  Now,
  \[
    \begin{split}
      g\circ\pi_\Delta(z,\varphi_j(z)+\ell) &
      = g(F^jz,\ell)=\begin{cases} (F^jz,\ell+1) & \ell\le\varphi(F^jz)-2 \\
        (F^{j+1}z,0) & \ell=\varphi(F^jz)-1. \end{cases}
    \end{split}
  \]
  Also,
  \[
    \begin{split}
      \pi_\Delta\circ\tilde g(z,\varphi_j(z)+\ell) & =
      \begin{cases} \pi_\Delta(z,\varphi_j(z)+\ell+1) & \ell\le\varphi(F^jz)-2 \\
        \pi_\Delta(z,\varphi_{j+1}(z)) & \ell=\varphi(F^jz)-1,\,j\le\rho(z)-2 \\
        \pi_\Delta(Gz,0) &
        \ell=\varphi(F^jz)-1,\,j=\rho(z)-1 \end{cases}
      \\
      & =
      \begin{cases} (F^jz,\ell+1) & \ell\le\varphi(F^jz)-2 \\
        (F^{j+1}z,0) & \ell=\varphi(F^jz)-1. \end{cases}
    \end{split}
  \]
Hence $\pi_\Delta$ is a semiconjugacy.

  It remains to show that 
  $(\pi_\Delta)_*\tilde\mu_\Delta=\mu_\Delta$.  It suffices to test this for sets $E\times\{\ell\}$ where $E$ is a measurable subset of a partition element
$a\subset Z$, $a\in\alpha^Z$, and $0\le\ell\le\varphi(a)-1$.
By~\eqref{eq:E},
$\mu_\Delta(E\times\{\ell\})=\mu_Y(E)
     =(1/\bar\rho)\int_Z\sum_{j=0}^{\rho-1}1_E\circ F^j\,d\mu_Z$.
On the other hand 
\begin{align*}
(\pi_\Delta)_*\tilde\mu_\Delta(E\times\{\ell\}) & =
\tilde\mu_\Delta(\pi_\Delta^{-1}(E\times\{\ell\}))
= \tilde\mu_\Delta\{(x,\varphi_j(z)+\ell):F^jz\in E,\,j<\rho(a)\}
\\ & =(1/\bar\rho)\int_Z \sum_{j=0}^{\rho-1}1_E\circ F^j\,d\mu_Z.
\end{align*}
This completes the proof.
\end{proof}

\begin{pfof}{Lemma~\ref{lem:aperiodic}}
Suppose that $u:\tilde\Delta\to S^1$ is measurable and
$u\circ\tilde g=\omega u$ for some $\omega\in S^1$.
Define $V:Z\to S^1$, $V(z)=u(z,0)$.
Then $V(Gz)=u(Gz,0)=u\circ \tilde g^{\tau(z)}(z,0)=\omega^{\tau(z)}V(z)$.
Since $G$ is a full branch Gibbs-Markov map, for every $a\in\alpha^Z$ there
exists $z_a\in a$ with $Gz_a=z_a$ and so $V(z_a)=\omega^{\tau(a)}V(z_a)$.
Hence $\omega^{\tau(a)}=1$ for all $a$.
By Lemma~\ref{lem:M}(c),
  $\gcd\{\tau(a),\,a\in\alpha^Z\}=1$ and it follows that $\omega=1$.
  In other words, $\tilde g:\tilde \Delta\to\tilde \Delta$ is weak mixing.  

  By Proposition~\ref{prop:Delta}, $g:\Delta\to\Delta$ is weak mixing.  Again
  this means that the equation
  $u\circ g = \omega u$ has no measurable solutions $u:\Delta\to S^1$
for each $\omega\in S^1\setminus\{1\}$.

  Let $\omega\in S^1\setminus\{1\}$ and suppose that $v:Y\to S^1$ is a
  measurable solution to~\eqref{eq:cohom}.  Define
  $u(y,\ell)=\omega^\ell v(y)$.  Then $u:\Delta\to S^1$ is measurable.  If
  $\ell\le \varphi(y)-2$, then
  $u\circ g(y,\ell)=u(y,\ell+1)=\omega^{\ell+1}v(y)=\omega u(y,\ell)$.  If
  $\ell= \varphi(y)-1$, then
  $u\circ
  g(y,\ell)=u(Fy,0)=v(Fy)=\omega^{\varphi(y)}v(y)=\omega^{\ell+1}v(y)=\omega u(y,\ell)$.
  This shows that $u\circ g=\omega u$ which is impossible since $g$ is weak mixing.  Hence there are
  no such measurable solutions to~\eqref{eq:cohom}.
\end{pfof}

\section{Estimates in two-dimensional BV}
\label{sec:BV}

Let $\lambda_m$ denote $m$-dimensional Lebesgue measure.
For $v\in L^1(Y)$, define the variation
\[
\Var v=\sup_{\omega}\int_{\R^2} v\divv\omega\,d\lambda_2,
\]
where the supremum is taken over all compactly supported $C^1$ test functions
$\omega:\R^2\to\R^2$ such that $|\omega|_\infty\le1$.
Let $\BV(Y)$ consist of those functions $v\in L^1(Y)$ such that
$\Var v<\infty$.  This is a Banach space with norm $\|v\|_\BV=\int_Y|v|\,d\lambda_2+\Var v$.
Recall that $C^1$ functions lie in $\BV(Y)$ and
$\Var v=\int_Y|\nabla v|\,d\lambda_2$ for such functions,
where $|\nabla v|=(|\partial v/\partial y|^2+|\partial v/\partial\theta|^2)^\frac12$.

We use the fact~\cite[Remark~2.14]{Giusti} that if $w$ is continuous on a set $U$ with Lipschitz boundary and $w$ is $C^1$ on $\Int U$, then
$\Var(1_U w)=\int_U|\nabla w|\,d\lambda_2+\int_{\partial U}|w|\,d\lambda_1$.
(The measure will often be suppressed when the meaning is clear.)

The following standard result \cite[Theorem~1.17]{Giusti} allows us to reduce to considering $C^1$ functions $v:\R^2\to\R$ in many estimates.

\begin{prop}  \label{prop:C1}
Let $v\in\BV(Y)$.  There exists a sequence of $C^1$ functions $v_n:\R^2\to\R$ such that $v_n\to v$ in $L^1(Y)$
and $\lim_{n\to\infty}\Var v_n=\Var v$. \qed
\end{prop}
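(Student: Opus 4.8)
The plan is to prove this by the classical mollification argument. First I would regard $v$ as a compactly supported function on $\R^2$ by extending it by zero outside (a fundamental domain for) $Y$; this is consistent with the definition of $\Var$ above, which already tests against compactly supported $C^1$ vector fields on all of $\R^2$. Fix an even mollifier $\varrho\in C^\infty_c(\R^2)$ with $\varrho\ge0$, $\int_{\R^2}\varrho=1$, supported in the unit ball, and set $\varrho_\eps(x)=\eps^{-2}\varrho(x/\eps)$. Define $v_n=v*\varrho_{1/n}$. Then $v_n\in C^\infty(\R^2)\subset C^1(\R^2)$, and the standard properties of mollification give $v_n\to v$ in $L^1(\R^2)$, hence in particular $v_n\to v$ in $L^1(Y)$. (On the $\T$-factor this is just periodic mollification and causes no essential difficulty.)

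The key step is the inequality $\Var v_n\le\Var v$ for every $n$. Given a test field $\omega$ admissible in the definition of $\Var$, Fubini's theorem together with $\varrho_\eps*\divv\omega=\divv(\varrho_\eps*\omega)$ gives
\[
\int_{\R^2}v_n\,\divv\omega\,d\lambda_2=\int_{\R^2}v\,\divv(\varrho_{1/n}*\omega)\,d\lambda_2,
\]
and $\varrho_{1/n}*\omega$ is again admissible (it is $C^\infty$, compactly supported, and $|\varrho_{1/n}*\omega|_\infty\le|\varrho_{1/n}|_1|\omega|_\infty\le1$). Taking the supremum over $\omega$ yields $\Var v_n\le\Var v$ for all $n$. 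For the reverse inequality I would invoke lower semicontinuity of the variation: for each fixed $\omega$ the map $w\mapsto\int_{\R^2}w\,\divv\omega\,d\lambda_2$ is continuous on $L^1$, so $\Var(\cdot)$, being a supremum of such maps, is lower semicontinuous along $L^1$-convergent sequences; since $v_n\to v$ in $L^1$ this gives $\Var v\le\liminf_n\Var v_n$. Combining the two inequalities yields $\lim_n\Var v_n=\Var v$, as required.

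The only point needing care is the estimate $\Var v_n\le\Var v$, i.e.\ that mollification does not increase total variation; this rests on the Fubini identity above together with the observation that convolving an admissible test field with a probability density produces another admissible test field (crucially, the $L^\infty$ bound is preserved). The fact that $\Var$ is defined globally on $\R^2$ rather than intrinsically on $Y$ is actually convenient here: after extending $v$ by zero there is no boundary of $Y$ to track, and the whole argument takes place on $\R^2$. (This is precisely the content of \cite[Theorem~1.17]{Giusti}.)
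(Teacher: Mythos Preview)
Your proof is correct. The paper does not give its own argument but simply cites \cite[Theorem~1.17]{Giusti}. Giusti's proof, stated for a general open set $\Omega\subset\R^n$ with variation defined intrinsically on $\Omega$, requires a partition of unity subordinate to an exhaustion of $\Omega$ by compactly contained subsets, so that mollification is well-defined on each piece and boundary effects are handled locally. Your observation that the paper defines $\Var$ globally on $\R^2$ after extension by zero means this machinery is unnecessary here: a single global mollifier already produces $C^1$ approximants on all of $\R^2$, and your two-line argument (convolving an admissible test field with a probability density preserves admissibility, plus lower semicontinuity of $\Var$ under $L^1$ convergence) is exactly the clean core of the result. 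The only cost is that your $v_n$ need not be supported in $Y$, but the statement does not ask for that. As a bonus, your construction immediately gives $|v_n|_\infty\le|v|_\infty$ when $v\in L^\infty$, which is sharper than the factor $3$ obtained in Remark~\ref{rmk:C1} via Giusti's partition-of-unity construction.
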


\begin{cor} \label{cor:C1}
Let $A:L^1(Y)\to L^1(Y)$ be a bounded linear operator.
If $\Var(Av)\le C_1\int_Y|v|+C_2\Var v$ for all $v\in C^1$,
then 
$\Var(Av)\le C_1\int_Y|v|+C_2\Var v$ for all $v\in \BV(Y)$.
\end{cor}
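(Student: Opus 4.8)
The plan is to combine Proposition~\ref{prop:C1} with the lower semicontinuity of the variation under $L^1$ convergence. Fix $v\in\BV(Y)$ and let $v_n:\R^2\to\R$ be the $C^1$ functions supplied by Proposition~\ref{prop:C1}, so that $v_n\to v$ in $L^1(Y)$ and $\lim_{n\to\infty}\Var v_n=\Var v$. (Here, as in the definition of $\Var$, all the functions in play are regarded as functions on $\R^2$, extended by zero off $Y$.)

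The first step is to record that $\Var$ is lower semicontinuous with respect to $L^1$ convergence. This is immediate from the definition $\Var w=\sup_\omega\int_{\R^2}w\divv\omega\,d\lambda_2$: for each admissible test function $\omega$, the map $w\mapsto\int_{\R^2}w\divv\omega\,d\lambda_2$ is continuous on $L^1$ because $\divv\omega$ is bounded with compact support, so $\Var$ is a supremum of $L^1$-continuous functionals and is therefore $L^1$-lower semicontinuous.

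The second step is to transport this along $A$. Since $A:L^1(Y)\to L^1(Y)$ is bounded and $v_n\to v$ in $L^1(Y)$, we have $Av_n\to Av$ in $L^1(Y)$. Applying lower semicontinuity to the sequence $Av_n$ and then the hypothesis (valid since each $v_n\in C^1$),
\[
\Var(Av)\le\liminf_{n\to\infty}\Var(Av_n)\le\liminf_{n\to\infty}\Big(C_1\int_Y|v_n|\,d\lambda_2+C_2\Var v_n\Big).
\]
Finally, $\int_Y|v_n|\to\int_Y|v|$ because $v_n\to v$ in $L^1(Y)$, and $\Var v_n\to\Var v$ by Proposition~\ref{prop:C1}, so the right-hand side converges to $C_1\int_Y|v|\,d\lambda_2+C_2\Var v$, which is the desired bound.

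There is no genuine obstacle here: the argument is the standard density/lower-semicontinuity approximation scheme. The only point needing a moment's care is the lower semicontinuity claim, which as indicated is a formal consequence of writing $\Var$ as a supremum of $L^1$-continuous linear functionals.
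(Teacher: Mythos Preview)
Your proof is correct and follows essentially the same route as the paper: approximate by $C^1$ functions via Proposition~\ref{prop:C1}, use boundedness of $A$ to get $Av_n\to Av$ in $L^1$, and then invoke lower semicontinuity of $\Var$ under $L^1$ convergence (which the paper writes out directly by testing against a single $\omega$ and taking the supremum). The only cosmetic difference is that you package the lower semicontinuity step as a named property, whereas the paper unfolds it inline.
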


\begin{proof}
Let $v\in \BV(Y)$ and choose a sequence $v_n$ as in Proposition~\ref{prop:C1}.
Let $\omega$ be a $C^1$ test function.
Since $Av_n\to Av$ in $L^1(Y)$,
\[
  \begin{split}
\int_{\R^2}Av\divv\omega & 
=\lim_{n\to\infty}\int_{\R^2}Av_n\divv\omega 
=\limsup_{n\to\infty}\int_{\R^2}Av_n\divv\omega  
\le \limsup_{n\to\infty}\Var(Av_n)
\\ &
\le \limsup_{n\to\infty}\Big(C_1\int_Y|v_n|+C_2\Var v_n\Big)
 = C_1\int_Y|v|+C_2\Var v.
\end{split}
\]
Taking the supremum over $\omega$ yields the desired result.
\end{proof}

We make some additional observations that are used in Section~\ref{sec:R}.

\begin{rmk}  \label{rmk:C1}
Returning to Proposition~\ref{prop:C1}, suppose in addition that $v\in L^\infty(Y)$.
Then the sequence $v_n$ can be chosen to have the additional property that
$\sup_Y|v_n|\le 3|v|_\infty$.
To see this we use the notation from the proof of~\cite[Theorem~1.17]{Giusti} 
where $v$ is denoted by $f$ and the approximating sequence $v_n$ is denoted by
$f_\eps=\sum_i\eta_{\eps_i} \star (f \varphi_i)$.  It is immediate from the definitions in~\cite{Giusti} that
$\sup_Y|f_\eps|\le 3\max_i \sup_Y| \eta_{\eps_i} \star (f \varphi_i) |\le 3\max_i (\int_Y|\eta_{\eps_i}|) \sup_Y | f \varphi_i |\le 3|f|_\infty$.
\end{rmk}

Let $\BV\!_\infty(Y)=\BV(Y)\cap L^\infty(Y)$
with norm $\|v\|_{\BV_\infty}=|v|_\infty+\Var v$.

\begin{cor} \label{cor:alg}
$\BV\!_\infty(Y)$ is a Banach algebra.
\end{cor}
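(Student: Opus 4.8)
The plan is to check the two defining properties of a Banach algebra. \emph{Completeness} of $\BV_\infty(Y)$ is inherited from the ambient spaces: a sequence that is Cauchy for $\|\cdot\|_{\BV_\infty}$ is Cauchy both in $\BV(Y)$ and in $L^\infty(Y)$, hence converges in each to limits that must agree a.e.\ (both convergences imply $L^1$ convergence on the bounded set $Y$); the common limit lies in $\BV_\infty(Y)$ and is the $\|\cdot\|_{\BV_\infty}$-limit. The substantive point is \emph{submultiplicativity}, namely $\|uv\|_{\BV_\infty}\le\|u\|_{\BV_\infty}\|v\|_{\BV_\infty}$ for $u,v\in\BV_\infty(Y)$. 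Since $|uv|_\infty\le|u|_\infty|v|_\infty$ and $|u|_\infty|v|_\infty+|u|_\infty\Var v+|v|_\infty\Var u\le(|u|_\infty+\Var u)(|v|_\infty+\Var v)$, it suffices to prove the Leibniz-type bound $\Var(uv)\le|u|_\infty\Var v+|v|_\infty\Var u$.

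For $u,v$ Lipschitz on $Y$ (in particular $C^1$) this bound is immediate from the a.e.\ identity $\nabla(uv)=u\,\nabla v+v\,\nabla u$ together with $\Var w=\int_Y|\nabla w|$ for $w\in W^{1,1}(Y)$. For general $u,v\in\BV_\infty(Y)$ I would pass to the limit. Using Proposition~\ref{prop:C1}, take $C^1$ functions $u_n\to u$, $v_n\to v$ in $L^1(Y)$ with $\Var u_n\to\Var u$ and $\Var v_n\to\Var v$, and truncate: set $\tilde u_n=\max\{-|u|_\infty,\min\{|u|_\infty,u_n\}\}$ and similarly $\tilde v_n$. Each $\tilde u_n$ is Lipschitz with $|\tilde u_n|_\infty\le|u|_\infty$ and, by the chain rule, $\Var\tilde u_n\le\Var u_n$; moreover $\tilde u_n\to u$ in $L^1(Y)$ since $|\tilde u_n-u|\le|u_n-u|$, and then lower semicontinuity of $\Var$ under $L^1$ convergence (the argument in the proof of Corollary~\ref{cor:C1}) forces $\Var\tilde u_n\to\Var u$; likewise for $\tilde v_n$. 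Applying the Lipschitz case to $\tilde u_n,\tilde v_n$ gives $\Var(\tilde u_n\tilde v_n)\le|u|_\infty\Var\tilde v_n+|v|_\infty\Var\tilde u_n$; since also $\tilde u_n\tilde v_n\to uv$ in $L^1(Y)$ (bound $\|\tilde u_n\tilde v_n-uv\|_1$ by $|u|_\infty\|\tilde v_n-v\|_1+|v|_\infty\|\tilde u_n-u\|_1$), one last appeal to lower semicontinuity of $\Var$ yields $\Var(uv)\le|u|_\infty\Var v+|v|_\infty\Var u$, hence $uv\in\BV_\infty(Y)$ with the claimed norm bound.

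The only slightly delicate ingredient is the $L^\infty$ control of the approximants: Remark~\ref{rmk:C1} by itself only supplies $C^1$ approximants with $|v_n|_\infty\le 3|v|_\infty$, which already gives $\|uv\|_{\BV_\infty}\le 3\|u\|_{\BV_\infty}\|v\|_{\BV_\infty}$ and thus makes $\BV_\infty(Y)$ a Banach algebra up to an equivalent norm; the truncation step is included precisely to recover the sharp constant $1$ so that the stated norm is itself submultiplicative. Everything else used — lower semicontinuity of $\Var$ with respect to $L^1$ convergence, the chain and product rules for Lipschitz functions, and $\Var w=\int|\nabla w|$ on $W^{1,1}(Y)$ — is standard.
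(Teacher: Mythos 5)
Your proof is correct and follows essentially the same route as the paper: approximate by $C^1$ functions (Proposition~\ref{prop:C1} and Remark~\ref{rmk:C1}), apply the Leibniz bound to the smooth products, and pass to the limit via lower semicontinuity of $\Var$ under $L^1$ convergence. The only real difference is that the paper stops at $\|vw\|_{\BV_\infty}\le 3\|v\|_{\BV_\infty}\|w\|_{\BV_\infty}$, accepting the factor $3$ from Remark~\ref{rmk:C1} (which already gives the Banach algebra property up to renorming), whereas your truncation step legitimately sharpens the constant to $1$; your explicit completeness check is also correct and is simply left implicit in the paper.
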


\begin{proof}  Let $v,w\in \BV\!_\infty(Y)$.
By Remark~\ref{rmk:C1}, there exists a sequence of $C^1$ functions
$v_n$ such that $v_n\to v$ in $L^1(Y)$, $\sup_Y|v_n|\le 3|v|_\infty$
and $\Var v_n\to\Var v$.  Let $w_n$ be a similar approximating sequence for $w$.

Note that $v_nw_n$ is $C^1$ and hence lies in $\BV$, while  $\int_Y|v_nw_n-vw|\le \sup_Y|v_n|\int_Y|w_n-w|+(\int_Y|v_n-v|)|w|_\infty
\le 3|v|_\infty\int|w_n-w|+|w|_\infty\int_Y|v_n-v|\to0$ as \mbox{$n\to\infty$}.
Hence it follows from~\cite[Theorem~1.9]{Giusti} that
$\Var (vw)\le\liminf_{n\to\infty}\Var(v_nw_n)$.

Since $v_n$ and $w_n$ are $C^1$, we have
$\Var(v_n w_n)\le \sup_Y|v_n|\Var w_n +\sup_Y|w_n|\Var w_n$.
Hence
\[
\Var(vw)\le 
 \liminf_{n\to\infty} 3(|v|_\infty\Var w_n+|w|_\infty\Var v_n)
=
 3(|v|_\infty\Var w+|w|_\infty\Var v).
\]
It follows that
\[
\|vw\|_{BV_\infty}\le |vw|_\infty+3(|v|_\infty\Var w+|w|_\infty\Var v)
\le 3\|v\|_{BV_\infty} \|w\|_{BV_\infty}
\]
as required.
\end{proof}

Throughout the remainder of this section, $|v|_1$ denotes $\int_Y|v|\,d\lambda_2$.

\subsection{Boundary terms}

The primary difficulty in dealing with multidimensional BV is the occurrence 
of certain boundary terms.
Let $a\in\alpha^Y$ denote a partition element and consider the branch $F_a:a\to Fa$.
 Let $\partial F_a$ denote $F|_{\partial a}:\partial a\to\partial Fa$
 with $1$-dimensional derivative $D\partial F_a$.
For the Lasota-Yorke inequality (Subsection~\ref{sec:LY} below) given $v\in C^1$,
we are required to estimate
terms of the form 
\[
\int_{\partial a}|v| |D\partial F_a/JF_a|,
\]
relative to the BV norm $\|v\|_{\BV(Y)}$.
In one dimension,  $\BV(Y)$ is embedded in $L^\infty$ which simplifies the 
estimates considerably.   For higher dimensions, much more work is required,
see~\cite{Cowieson00,Cowieson02,GoraBoyarsky89} and references therein.

Our main results in this subsection are:

\begin{lemma} \label{lem:small}
Let $v:\R^2\to\R$ be $C^1$.
There is a constant $C_1>0$ such that 
\[
 \int_{\partial a}|v| |D\partial F_a/JF_a| \le C_1(|1_av|_1+n^{-(1+\alpha)}|1_a\nabla v|_1)
\quad\text{for all $a\in\alpha^Y$ with $\varphi(a)=n$}.
\]
\end{lemma}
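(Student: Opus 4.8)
The plan is to reduce the boundary integral to a controllable form by a combination of the trace-type identity for $\mathrm{BV}$ functions, the geometric estimates on the partition elements $Y_{n,j}$ from Section~\ref{sec:calc}, and the bounded distortion bounds. First I would recall that $\partial a = \partial Y_{n,j}$ consists of (essentially) four pieces: two ``horizontal'' curves $y = y_n(\theta)$ and $y = y_{n-1}(\theta)$ (graphs over $\theta$ with $|y_n'(\theta)|\le Cn^{-(1+\alpha)}$ by Proposition~\ref{prop:xn} and Remark~\ref{rmk:u}), and two ``vertical'' segments $\theta = (j-1)/\uex^n$, $\theta = j/\uex^n$. On each piece I would estimate the one-dimensional Jacobian factor $|D\partial F_a / JF_a|$ using Corollary~\ref{cor:DF}: $JF_a \sim \uex^{n+1}n^{1+\alpha}$, while $D\partial F_a$ along the horizontal curves is comparable to the larger of the two relevant partial derivatives of $F$, namely $|A|\sim \uex n^{1+\alpha}$ combined with the slope bound, and along the vertical segments is comparable to $\uex^n$ (the expansion in $\theta$). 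In both cases the ratio $|D\partial F_a/JF_a|$ is $O(\uex^{-n})$ on the vertical pieces and $O(n^{-(1+\alpha)})$ on the horizontal pieces — the point being that on the horizontal curves the length element and the Jacobian nearly cancel the leading $\uex^n$, leaving the polynomial factor $n^{-(1+\alpha)}$.

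Next I would bound $\int_{\partial a}|v|\,d\lambda_1$ in terms of the $\mathrm{BV}$ data of $v$ restricted to $a$. The standard trace inequality says that for a set $U$ with Lipschitz boundary and $v\in C^1$, $\int_{\partial U}|v|\,d\lambda_1 \le C(|1_U v|_1 + |1_U \nabla v|_1)$, with the constant depending only on the Lipschitz character of $\partial U$; since the boundary curves of $Y_{n,j}$ have slope uniformly bounded (by $7/\sqrt{72}$, thanks to Remark~\ref{rmk:u}) and the elements, after rescaling $\theta$ by $\uex^n$, are uniformly comparable to rectangles (Remark~\ref{rmk:square}), this Lipschitz constant is uniform in $n,j$. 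Actually, to get the correct weighting I would treat the two families of boundary pieces separately: for the horizontal curves, use a trace inequality adapted to ``slabs'' of thickness $\sim n^{-(1+\alpha)}$ (the height of $Y_{n,j}$, by Remark~\ref{rmk:square}), which gives $\int_{\text{horiz}}|v| \le C(n^{1+\alpha}|1_a v|_1 + |1_a \partial v/\partial y|_1)$; multiplying by the factor $n^{-(1+\alpha)}$ from the first paragraph yields exactly the $|1_a v|_1 + n^{-(1+\alpha)}|1_a \nabla v|_1$ shape. For the vertical segments, a trace inequality in the $\theta$-direction gives $\int_{\text{vert}}|v| \le C(\uex^n |1_a v|_1 + |1_a \partial v/\partial\theta|_1)$ (the width of $Y_{n,j}$ in $\theta$ is $\uex^{-n}$), and multiplying by the $O(\uex^{-n})$ Jacobian factor again produces terms dominated by $|1_a v|_1 + \uex^{-n}|1_a \nabla v|_1 \le |1_a v|_1 + n^{-(1+\alpha)}|1_a\nabla v|_1$ since $\uex^{-n}\le C n^{-(1+\alpha)}$.

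The main obstacle I anticipate is making the trace/geometry step genuinely uniform in $n$: a naive application of the trace theorem on $Y_{n,j}$ produces a constant that blows up as the elements degenerate (they become thin in $y$ and thin in $\theta$ at different rates). The fix is to pull back by the linear rescaling $(y,\theta)\mapsto(n^{1+\alpha}y,\uex^n\theta)$ (or to work on $FY_{n,j}$, which is one of finitely many fixed sets $([\uexfrac,1]\times\T)\cap X_i$ — this is where the ``finitely many images'' property is crucial), apply the trace inequality with an absolute constant there, and track how the $L^1$ norm and the gradient components transform under the rescaling; the anisotropic scaling is exactly what converts the naive constant into the claimed $n^{-(1+\alpha)}$ weight on the gradient term. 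One must also handle the at-most-finitely-many ``corner'' irregularities in $\partial Y_{n,j}$ (where the partition into inverse branches of $f^n$ meets), but these contribute zero $\lambda_1$-measure and can be ignored. Finally, I would quote Corollary~\ref{cor:C1} to note that, although the lemma is stated for $C^1$ functions $v$, this is the form actually needed as input to the Lasota--Yorke estimate.
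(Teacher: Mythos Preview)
Your approach is essentially the paper's: split $\partial a$ into the two pairs of edges, bound $|D\partial F_a/JF_a|$ on each via Corollary~\ref{cor:DF}, and pair this with anisotropic trace inequalities (the paper states these as Lemmas~\ref{lem:Ha} and~\ref{lem:Va} in Appendix~B, proved by exactly the rescaling-to-a-fixed-rectangle idea you sketch). One caveat to clean up: your intermediate identification of $|D\partial F_a|$ on the two edge families is swapped --- on the segments $\theta=\text{const}$ the tangential derivative is $A\sim \uex\, n^{1+\alpha}$ (the expansion in $y$), while on the curves $y=y_n(\theta)$ it is $\sim \uex^n$ --- though your final ratios $\uex^{-n}$ and $n^{-(1+\alpha)}$ land on the correct pieces, so the remainder of the argument is unaffected.
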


\begin{lemma}\label{lem:large}
Let $v:\R^2\to\R$ be $C^1$.
Suppose that $u$ is sufficiently close to constant as in
Remark~\ref{rmk:u}.  Then there exists $\kappa_0\in(0,\uexfrac)$, and
for any $N_0\ge1$, there exists a constant $C_2>0$ such that
\[
\int_{\partial a}|v| |D\partial F_a/JF_a|
 \le  C_2|1_av|_1+ \kappa_0|1_a\nabla v|_1
\]
for all $a\in\alpha^Y$ with $\varphi(a)\le N_0$.
\end{lemma}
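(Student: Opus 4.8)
The plan is to estimate the boundary integral face by face over the finitely many smooth pieces of $\partial a$, using the triangular form of $DF_a$ from Corollary~\ref{cor:DF} together with the small-slope hypothesis of Remark~\ref{rmk:u}. First I would record the geometry: for $a=Y_{n,j}$ with $n=\varphi(a)\le N_0$, the boundary $\partial a$ consists of two ``horizontal'' faces lying on the graphs $y=y_n(\theta)$ and $y=y_{n-1}(\theta)$ over a $\theta$-interval of length $\uex^{-n}$ (for $n=1$ these graphs are $y=\uexfracsquare$ and $y=f_1(\uexfrac,\cdot)$, see~\eqref{eq:Y1}), together with two ``vertical'' faces $\{\theta=\text{const}\}$; the latter also enter through the identification seam of the band $Fa$, across which $(v/JF_a)\circ F_a^{-1}$ has a jump. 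By Proposition~\ref{prop:xn} and Remark~\ref{rmk:u} all the graph faces have uniformly small slope, and $a$ is $y$-convex.

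The first step is to evaluate $|D\partial F_a/JF_a|$ on each face using Corollary~\ref{cor:DF}, where $DF_a=\left(\begin{smallmatrix}A&B\\0&\uex^n\end{smallmatrix}\right)$, $JF_a=\uex^n A$, and $A\ge\uex$ (since $A$ is $\uex$ times a product of factors $\partial f_1/\partial x\ge1$). Along a vertical face $F$ acts with tangential derivative $A$, so there $|D\partial F_a/JF_a|=\uex^{-n}\le\uex^{-1}$. Along a horizontal face the tangential derivative is $\uex^n$ up to the arc-length factor $(1+(y_k')^2)^{1/2}$ coming from the slope; that factor also appears in the arc-length element and cancels, so integrated against arc length the weight equals $1/A\le\uex^{-1}$. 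Thus on every face the relevant weight is at most $\uex^{-1}$.

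The second step is a one-dimensional trace estimate on each face. On a horizontal face I would write $v(y_k(\theta),\theta)=v(y,\theta)-\int_{y_k(\theta)}^y(\partial v/\partial y)(t,\theta)\,dt$ and average over $y$ in the vertical extent of $a$ at angle $\theta$; by $y$-convexity this extent lies inside $a$, giving $\int_{\text{face}}|v|\,|D\partial F_a/JF_a|\le\uex^{-1}\big(C(N_0)|1_a v|_1+\int_a|\partial v/\partial y|\big)$, where $C(N_0)$ bounds the inverse vertical width of $a$ — uniform in $j$ and depending only on $N_0$ since $n\le N_0$ leaves finitely many shapes. On a vertical face (and the seam) I would argue symmetrically, using $|w^+-w^-|\le|w^+|+|w^-|$ on the seam and then integrating $\partial v/\partial\theta$ along horizontal segments through $a$; this is precisely where the small-slope hypothesis is used, to ensure those segments remain inside $a$ up to bounded overlap, yielding $\uex^{-1}\big(C(N_0)|1_a v|_1+\int_a|\partial v/\partial\theta|\big)$. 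Summing the (at most two) horizontal and (at most two) vertical contributions, $\int_{\partial a}|v|\,|D\partial F_a/JF_a|\le C_2|1_a v|_1+2\uex^{-1}\big(\int_a|\partial v/\partial y|+\int_a|\partial v/\partial\theta|\big)\le C_2|1_a v|_1+2\sqrt2\,\uex^{-1}|1_a\nabla v|_1$; since $2\sqrt2/\uex=1/\sqrt2<\uexfrac$, one may take $\kappa_0$ to be any fixed value in $(1/\sqrt2,\uexfrac)$, which does not depend on $N_0$.

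The hard part will be the gradient term: unlike in Lemma~\ref{lem:small}, there is no decaying factor $n^{-(1+\alpha)}$ available to absorb it, so the boundary integral must be controlled purely geometrically and the resulting coefficient of $|1_a\nabla v|_1$ must be forced \emph{strictly} below $\uexfrac$. This is what necessitates the small-slope hypothesis — both so that the transverse segments in the trace estimates stay within $a$ and so that the arc-length factors on the slanted faces do not inflate the coefficient — and it is the point at which $\uex$ being comfortably larger than $3$ enters. Keeping the $|1_a v|_1$-constant uniform in $j$ while allowing dependence on $N_0$ is then routine once the finitely many shapes of partition element with $n\le N_0$ have been enumerated.
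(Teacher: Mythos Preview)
Your overall strategy---bound $|D\partial F_a/JF_a|$ by $\uexfracone$ on every face and then apply a one-dimensional trace inequality---is the same as the paper's, but the trace step for the $\{\theta=\theta_0\}$ faces is where the argument fails. You claim $\int_{\text{face}}|v(y,\theta_0)|\,dy \le C(N_0)|1_av|_1 + \int_a|\partial_\theta v|$, obtained by averaging along $\theta$-segments through $a$. This inequality is false for non-rectangular $a$, and no smallness of the slope fixes it: take $a=\{m\theta\le y\le m\theta+L,\;0\le\theta\le H\}$ with $m>0$ and $v(y,\theta)=w(y)$ with $w$ a smooth bump supported in $[0,\epsilon]$. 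Then $\partial_\theta v\equiv 0$, $\int_{\{\theta=0\}}|v|\approx\epsilon$, but $|1_av|_1\approx\epsilon^2/(2m)$, so no fixed $C$ works as $\epsilon\to 0$. The geometric reason is the corner: the portion of the $\theta$-segment from $(y,0)$ that stays in $a$ has length $y/m\to 0$, so the averaging factor blows up; ``bounded overlap'' into neighbouring cells does not help either, since that destroys the per-$a$ estimate the lemma asserts and, once summed in Corollary~\ref{cor:largesmall}, any overlap factor would push the gradient coefficient above $\uexfrac$.

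The paper handles this via a change of variables (Appendix~B): the diffeomorphism $g(y,\theta)=\bigl((2y-\psi_1-\psi_2)/(\psi_2-\psi_1),\,\theta\bigr)$ straightens $a$ to a rectangle, on which the elementary trace of Proposition~\ref{prop:rectangle} (using disjoint half-rectangles so that two opposite faces share a single copy of $|1_a\partial_\bullet v|_1$) applies. Tracking the Jacobian of $g$ produces slope corrections; Lemma~\ref{lem:Ha} shows the correct trace for the $\{\theta=\theta_0\}$ faces carries an extra $|M|_\infty|1_a\partial_yv|_1$ term, which is exactly what the counterexample above needs. Combined with Lemma~\ref{lem:Va} this yields Theorem~\ref{thm:a}: $\int_{\partial a}|v|\le K(a)|1_av|_1+\sqrt2\bigl((1+|M|_\infty^2)^{1/2}+|M|_\infty\bigr)|1_a\nabla v|_1$, and Remark~\ref{rmk:u} is precisely the condition making this gradient coefficient $<3$, hence $\kappa_0<\uexfrac$ after multiplying by the weight $\uexfracone$. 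Your route, even once patched with the missing $M$-term, would also need the disjoint-halves optimisation; without it each pair of faces contributes twice, and the coefficient comes out $<3/2$ rather than $<\uexfrac$.
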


An immediate consequence is:
\begin{cor} \label{cor:largesmall}
Let $v:\R^2\to\R$ be $C^1$.
There exists $\kappa_0\in(0,\uexfrac)$ and
$C_3>0$ such that
\[ 
\sum_a \int_{\partial a}|v| |D\partial F_a/JF_a|
 \le  C_3\sum_a |1_av|_1+ \kappa_0 \sum_a |1_a\nabla v|_1
 =  C_3|v|_1+ \kappa_0 \Var v.
\]

\vspace{-6ex}
\qed
\end{cor}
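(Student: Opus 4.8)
The plan is to split the sum over partition elements $a\in\alpha^Y$ according to whether the first return time $\varphi(a)$ is small or large, and to bound the two pieces using Lemmas~\ref{lem:large} and~\ref{lem:small} respectively. The key point is to choose the threshold $N_0$ so that the coefficient multiplying the gradient term coming from the large-$\varphi$ part matches the constant $\kappa_0$ produced by Lemma~\ref{lem:large}.

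First I would extract $\kappa_0\in(0,\uexfrac)$ from Lemma~\ref{lem:large}; recall that this $\kappa_0$ is fixed and does not depend on $N_0$. Since $1+\alpha>1$, we have $n^{-(1+\alpha)}\to0$, so I can choose $N_0\ge1$ with $C_1 N_0^{-(1+\alpha)}\le\kappa_0$, where $C_1$ is the constant of Lemma~\ref{lem:small}. With this $N_0$ now fixed, Lemma~\ref{lem:large} supplies a constant $C_2>0$. Then for every $a\in\alpha^Y$ with $\varphi(a)\le N_0$, Lemma~\ref{lem:large} gives $\int_{\partial a}|v|\,|D\partial F_a/JF_a|\le C_2|1_av|_1+\kappa_0|1_a\nabla v|_1$, while for every $a$ with $\varphi(a)=n>N_0$, Lemma~\ref{lem:small} together with the choice of $N_0$ gives $\int_{\partial a}|v|\,|D\partial F_a/JF_a|\le C_1|1_av|_1+C_1 n^{-(1+\alpha)}|1_a\nabla v|_1\le C_1|1_av|_1+\kappa_0|1_a\nabla v|_1$. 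Summing over all $a$ and setting $C_3=\max\{C_1,C_2\}$ yields $\sum_a\int_{\partial a}|v|\,|D\partial F_a/JF_a|\le C_3\sum_a|1_av|_1+\kappa_0\sum_a|1_a\nabla v|_1$.

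Finally, since the elements of $\alpha^Y$ cover $Y$ (mod $0$), $\sum_a|1_av|_1=|v|_1$, and since $v$ is $C^1$ we have $\sum_a|1_a\nabla v|_1=\int_Y|\nabla v|\,d\lambda_2=\Var v$; this gives the equality on the right-hand side of the statement. There is no substantive obstacle here, as the corollary is essentially a bookkeeping combination of the two lemmas; the only point demanding care is the order of the quantifiers: $\kappa_0$ must be read off from Lemma~\ref{lem:large} before $N_0$ is selected, and $C_2$ only afterwards, so that a single $\kappa_0<\uexfrac$ controls the gradient term for both the small and the large values of $\varphi$.
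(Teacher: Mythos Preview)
Your proof is correct and matches the paper's approach: the paper treats the corollary as an immediate consequence of Lemmas~\ref{lem:small} and~\ref{lem:large}, and your split at a threshold $N_0$ chosen so that $C_1 N_0^{-(1+\alpha)}\le\kappa_0$ is exactly the intended argument. Your remark about the order of quantifiers---fixing $\kappa_0$ first, then $N_0$, then reading off $C_2$---is precisely the point that makes the combination work.
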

In the remainder of this subsection, we prove Lemmas~\ref{lem:small} 
and~\ref{lem:large}.

Recall from Section~\ref{sec:partition}
that the partition elements form a `rectangular' grid $\{Y_{n,j},\,n\ge1,\,j=1\dots,\uex^n\}$ where there are infinitely many columns $\mathcal{C}_n$, $n\ge1$, bounded by `vertical' curves $\xi_n(\theta)$, $0\le\theta\le1$.
The column $\mathcal{C}_n$ is divided into $\uex^n$ partition elements $\{Y_{n,j}\}$ bounded by horizontal lines $\theta=j\uex^{-n}$, $j=0,\dots,\uex^n$.
In particular, the partition element $a=Y_{n,j}$ is given by
\[
Y_{n,j}=\{(y,\theta):\xi_n(\theta)\le y\le \xi_{n-1}(\theta),\,(j-1)\uex^{-n}\le\theta\le j\uex^{-n}\}.
\]
By Proposition~\ref{prop:xn},
\begin{equation} \label{eq:diff_xi}
(\xi_{n-1}(\theta)-\xi_n(\theta))^{-1}\ll n^{1+\alpha},
\end{equation}
uniformly in $\theta$.
Also, by Proposition~\ref{prop:xn},
\begin{equation} \label{eq:Mn}
M_n=\max\{|\xi_{n-1}'|_\infty,|\xi'_n|_\infty\}\ll n^{-(1+\alpha)}.
\end{equation}

We write $\partial a=H_a\cup V_a$ where $H_a$ is the union of the two horizontal edges and $V_a$ is the union of the two `vertical' edges.

\begin{pfof}{Lemma~\ref{lem:small}}
By Lemma~\ref{lem:Ha} and~\eqref{eq:diff_xi},~\eqref{eq:Mn},
\[
  \begin{split}
\int_{H_a}|v|
& \ll (\uex^n+M_n(\xi_{n-1}-\xi_n)^{-1})|1_av|_1 + |1_a\partial_\theta v|_1
+M_n|1_a\partial_yv|_1
\\ & \ll \uex^n|1_av|_1 + |1_a\partial_\theta v|_1
+|1_a\partial_yv|_1 \ll \uex^n|1_av|_1 + |1_a\nabla v|_1.
\end{split}
\]
On the horizontal edges,
$\partial F_a(y,\theta_0)=F_1(y,\theta_0)$ since horizontal lines are mapped to horizontal lines.
By Corollary~\ref{cor:DF}, $D\partial F_a = A$ 
and $|D\partial F_a/JF_a| = \uex^{-n}$.  Hence
\begin{equation} \label{eq:small_Ha}
\int_{H_a}|v|D\partial F_a/JF_a 
\ll |1_av|_1+\uex^{-n}|1_a\nabla v|_1.
\end{equation}

Similarly, it follows from Lemma~\ref{lem:Va} that
\[
\int_{V_a}|v|\ll n^{1+\alpha}|1_av|_1+|1_a\nabla v|_1.
\]
On the `vertical' edges,
by Corollary~\ref{cor:DF}, $|D\partial F_a|\ll\uex^n$ 
and $|D\partial F_a/JF_a| \ll n^{-(1+\alpha)}$.  Hence
\begin{equation} \label{eq:small_Va}
\int_{V_a}|v| |D\partial F_a/JF_a|
 \ll |1_av|_1+n^{-(1+\alpha)}|1_a\nabla v|_1.
\end{equation}
Combining~\eqref{eq:small_Ha} and~\eqref{eq:small_Va}, we obtain the result.
\end{pfof}

\begin{pfof}{Lemma~\ref{lem:large}}
We apply Theorem~\ref{thm:a}.  Since $u$ is nearly constant as in Remark~\ref{rmk:u}, we
have $|M|_\infty< 7/\sqrt{72}$ by Remark~\ref{rmk:u}  and hence
\[
  \sqrt2((1+|M|_\infty^2)^{1/2}+|M|_\infty) = \uex\kappa_0,
  \]
where $\kappa_0<\uexfrac$.
Hence  $\int_{\partial a}|v|\le K(a)|1_av|_1+\uex\kappa_0 |1_a\nabla v|_1$
where $K(a)$ is a constant.
The result follows since $|D\partial F_a/JF_a|\le \uexfracone$.
\end{pfof}

\begin{rmk} \label{rmk:boundary}
We can relax the artificial condition in Remark~\ref{rmk:u} by increasing the expansivity of $f$ so that $D\partial F_a/JF_a$ is sufficiently small.

Alternatively, we could consider higher iterates.
But now we have to check that the calculations in
Lemma~\ref{lem:small} remain intact. Note that Lemma~\ref{lem:M} also requires a
certain amount of expansion for \(F\) to overcome the complexity
growth of discontinuities of~\(F\).
\end{rmk}

\subsection{Lasota-Yorke inequality}
\label{sec:LY}

Let $\hF:L^1(Y)\to L^1(Y)$ be the transfer operator corresponding to $F$ relative to Lebesgue measure.
(So $\int_Y \hF v\,w\,d\lambda_2=
\int_Y v\,w\circ F\,d\lambda_2$ for all $v\in L^1$, $w\in L^\infty$.)
Then $\hF v=\sum_a 1_{Fa}(gv)\circ F_a^{-1}$
where $g=1/|\det(DF)|=(JF)^{-1}$.

Also, for $\omega\in\barD$, we consider the twisted transfer operator $\hF(\omega)$
given by $\hF(\omega)v=\hF(\omega^\varphi v)$,
so 
\[
\hF(\omega)v=\sum_a 1_{Fa}(g\omega^\varphi v)\circ F_a^{-1}
=\sum_a \omega^{\varphi(a)}1_{Fa}(gv)\circ F_a^{-1}.
\]
We note that
\begin{equation} \label{eq:L1}
|\hF(\omega)v|_1=|\hF(\omega^\varphi v)|_1\le |\omega^\varphi v|_1 \le |\omega||v|_1.
\end{equation}

\begin{lemma} \label{lem:LY}
There exist constants $C>0$ and $\kappa_1\in(0,1)$ such that
\[
\Var(\hF(\omega)v)\le |\omega|(C|v|_1+\kappa_1\Var v)\quad\text{for all $v\in\BV(Y)$, $\omega\in\barD$}.
\]
\end{lemma}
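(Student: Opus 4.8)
The plan is to reduce to the case $v\in C^1$ by Corollary~\ref{cor:C1} (the bound is linear in $|v|_1$ and $\Var v$, so it transfers to all of $\BV(Y)$), and then to estimate $\Var(\hF(\omega)v)$ branch by branch. Write $\hF(\omega)v=\sum_a \omega^{\varphi(a)} 1_{Fa}\,(gv)\circ F_a^{-1}$ with $g=(JF)^{-1}$. Since $|\omega^{\varphi(a)}|\le|\omega|$ and $\Var$ is subadditive, it suffices to show
\[
\sum_a \Var\big(1_{Fa}\,(gv)\circ F_a^{-1}\big)\le C|v|_1+\kappa_1\Var v ,
\]
with the $|\omega|$ pulled out front; the $L^1$ contraction is already recorded in~\eqref{eq:L1}. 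The set $Fa$ has Lipschitz boundary and $(gv)\circ F_a^{-1}$ is $C^1$ on its interior, so by the formula quoted from~\cite[Remark~2.14]{Giusti} quoted in Section~\ref{sec:BV},
\[
\Var\big(1_{Fa}(gv)\circ F_a^{-1}\big)=\int_{Fa}\big|\nabla\big((gv)\circ F_a^{-1}\big)\big|\,d\lambda_2 + \int_{\partial Fa}\big|(gv)\circ F_a^{-1}\big|\,d\lambda_1 .
\]

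For the interior term I would change variables back to $a$: $\int_{Fa}|\nabla((gv)\circ F_a^{-1})| = \int_a |\nabla(gv)\,(DF_a)^{-1}|\,JF_a$, expand $\nabla(gv)=g\nabla v+v\nabla g$, and estimate. The $g\nabla v$ piece gives $\int_a |g(DF_a)^{-1}|JF_a\,|\nabla v| = \int_a |(DF_a)^{-1}|\,|\nabla v|$, and by the expansion estimate $|(DF_a)^{-1}|\le \uex^{-1}+o(1)$ from Proposition~\ref{prop:expand} (more precisely the matrix form~\eqref{eq:DF} in Corollary~\ref{cor:DJ}, whose off-diagonal entries are $O(n^{-(1+\alpha)})$, hence harmless), this contributes at most $\kappa\int_a|\nabla v|$ for some $\kappa<1$ after summing over $a$; the $v\nabla g$ piece is controlled by the bounded distortion Corollary~\ref{cor:DJ}, $|\nabla g\,(DF_a)^{-1}|JF_a\le C$, contributing $\le C\int_a|v|$, which sums to $C|v|_1$. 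For the boundary term, change variables to $\partial a$: $\int_{\partial Fa}|(gv)\circ F_a^{-1}| = \int_{\partial a}|gv|\,|D\partial F_a| = \int_{\partial a}|v|\,|D\partial F_a/JF_a|$, which is exactly the quantity estimated in Corollary~\ref{cor:largesmall}: it is bounded by $C_3|v|_1+\kappa_0\Var v$ after summing, with $\kappa_0<\uexfrac<1$.

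Combining, $\sum_a\Var(1_{Fa}(gv)\circ F_a^{-1})\le C|v|_1 + (\kappa+\kappa_0)\Var v$ for $v\in C^1$; provided the expansion in Proposition~\ref{prop:expand}/Remark~\ref{rmk:u} is strong enough that $\kappa+\kappa_0<1$ (this is where the ``$u$ close to constant'' hypothesis and the factor $\uex$ of expansion enter), we get the claimed inequality with $\kappa_1=\kappa+\kappa_0$. Multiplying through by $|\omega|$ and invoking~\eqref{eq:L1} for the $L^1$ part, then Corollary~\ref{cor:C1} to pass from $C^1$ to $\BV(Y)$, finishes the proof. The main obstacle is the boundary term: in multidimensional BV it is not absorbed automatically and genuinely requires the delicate geometric estimates of Lemmas~\ref{lem:small} and~\ref{lem:large}; everything else (interior term, distortion, $L^1$ contraction) is routine once Corollaries~\ref{cor:DJ} and~\ref{cor:largesmall} are in hand.
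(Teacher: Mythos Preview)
Your proposal is correct and follows essentially the same approach as the paper's proof: reduce to $C^1$ via Corollary~\ref{cor:C1}, decompose $\Var(1_{Fa}(gv)\circ F_a^{-1})$ into interior and boundary integrals using the Giusti formula, control the interior term by the product rule plus Corollary~\ref{cor:DJ} and the expansion bound $|(DF_a)^{-1}|\le\uexfracone$, and control the boundary term by Corollary~\ref{cor:largesmall}. The paper is slightly more explicit about the constants, obtaining exactly $\kappa_1=\uexfracone+\kappa_0<1$ (since $\kappa_0<\uexfrac$), whereas you leave this as ``$\kappa+\kappa_0<1$ provided the expansion is strong enough''; also your reference to~\eqref{eq:L1} for the $L^1$ part is extraneous since the lemma only bounds $\Var$, but neither point affects correctness.
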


\begin{proof}   
By Corollary~\ref{cor:C1}, it suffices to prove this for $v\in C^1$.

First we consider the case $\omega=1$.
Note that $(gv)\circ F_a^{-1}$ is $C^1$ on $Fa$, and so
\[
  \begin{split} 
\Var(\hF v) & \le \sum_a \Var(1_{Fa} (gv)\circ F_a^{-1})
\\ & =\sum_a \int_{Fa}|\nabla [(gv)\circ F_a^{-1}]|\,d\lambda_2+
\sum_a \int_{\partial Fa}|(gv)\circ F_a^{-1}|\,d\lambda_1.
\end{split}
\]

Now,
\[
  \begin{split}
\int_{Fa}|\nabla & [(gv)\circ F_a^{-1}]| 
= \int_{Fa}|\nabla (gv)\circ F_a^{-1}\cdot D(F_a^{-1})|
\\ &  = \int_{Fa}|\nabla (gv)\circ F_a^{-1}\cdot (DF_a)^{-1}\circ F_a^{-1}|
 = \int_{a}|\nabla (gv)(DF_a)^{-1}|JF_a\\ &
\le  \int_a |v||(\nabla g)(DF_a)^{-1}|JF_a
+  \int_a |g||\nabla v||(DF_a)^{-1}|JF_a
\\ &
=  \int_a |v||(\nabla g)(DF_a)^{-1}|JF_a
+  \int_a |\nabla v||(DF_a)^{-1}|
\le  C_1|1_av|_1 +{\SMALL\uexfracone} |1_a\nabla v|_1,
\end{split}
\]
where $C_1=\sup_a
|(\nabla g)(DF_a)^{-1}|JF_a<\infty$ by Corollary~\ref{cor:DJ}
and we have used the fact that $|DF|\ge \uex$.
Also,
\[
  \begin{split}
\int_{\partial Fa}|(gv)\circ F_a^{-1}|
& =\int_{\partial a}|gv||D(\partial F_a)|
=\int_{\partial a}|v| |D\partial F_a/JF_a|.
\end{split}
\]
We have shown that
\[
  \begin{split} 
\Var(\hF v) &  
\le C_1|v|_1+ \uexfracone|\nabla v|_1+\sum_a 
\int_{\partial a}|v| |D\partial F_a/JF_a|.
\end{split}
\]
Applying Corollary~\ref{cor:largesmall}, we obtain
$\Var(\hF v)\le C|v|_1+\kappa_1\Var v$,
with $\kappa_1= \uexfracone+\kappa_0$.

For general $\omega$, we have an extra factor of $|\omega|^{\varphi(a)}$ throughout.
Since $|\omega|\le1$ and $\varphi\ge1$, this is bounded by $|\omega|$.
\end{proof}

\begin{cor} \label{cor:LY}
(a) 
$1$ is a simple eigenvalue for $\hF:L^1(Y)\to L^1(Y)$ 
with eigenfunction $h_Y=d\mu_Y/d\Leb$.
\\[.75ex]
(b) 
$\hF(\omega):L^1(Y)\to L^1(Y)$ has spectral radius at most $|\omega|$
for all $\omega\in\barD$.
\\[.75ex]
(c)
Let $\kappa_1\in(0,1)$ be as in Lemma~\ref{lem:LY}.  
Then $\hF(\omega):\BV(Y)\to\BV(Y)$ has 
essential spectral radius at most $\kappa_1|\omega|$ for all $\omega\in\barD$.
\\[.75ex]
(d) $h_Y\in\BV\!_\infty(Y)$.
\\[.75ex]
(e) 
Regarding $\hF(\omega)$ as operators on $\BV(Y)$, it holds that
$1$ is a simple isolated eigenvalue in $\spec\hF$ and
$1\not\in\spec\hF(\omega)$ for all $\omega\in\barD\setminus\{1\}$.
\end{cor}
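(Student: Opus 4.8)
The plan is to deduce (a)--(e) from three inputs already in hand: the $L^1$ contraction~\eqref{eq:L1}, the Lasota--Yorke inequality of Lemma~\ref{lem:LY}, and the dynamical information about $F$ (ergodicity and the two-sided density bounds of Lemma~\ref{lem:Fmix}, and the aperiodicity statement Corollary~\ref{cor:aperiodic}). Part (a) is immediate: $\hF h_Y=h_Y$ because $\mu_Y=h_Y\,d\Leb$ is $F$-invariant, and the fixed subspace of $\hF$ in $L^1(Y)$ is one-dimensional by ergodicity of $F$ (Lemma~\ref{lem:Fmix}). Part (b) follows by iterating~\eqref{eq:L1}, which gives $|\hF(\omega)^n v|_1\le|\omega|^n|v|_1$ for all $n$ and hence spectral radius at most $|\omega|$ on $L^1(Y)$.

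For (c) I would iterate the Lasota--Yorke inequality, substituting $\hF(\omega)^{n-1}v$ for $v$ and using the bound from (b) on $|\hF(\omega)^{n-1}v|_1$; this yields a uniform estimate
\[
\Var(\hF(\omega)^n v)\le C'|\omega|^n|v|_1+(\kappa_1|\omega|)^n\Var v,\qquad n\ge1,
\]
with $C'=C/(1-\kappa_1)$, so that $\|\hF(\omega)^n v\|_\BV\le(\kappa_1|\omega|)^n\|v\|_\BV+(1+C')|\omega|^n|v|_1$. Since the unit ball of $\BV(Y)$ is relatively compact in $L^1(Y)$ (compactness of the embedding $\BV\hookrightarrow L^1$ on the bounded region $Y$), a standard application of Hennion's theorem gives that $\hF(\omega)$ is bounded on $\BV(Y)$ with essential spectral radius at most $\kappa_1|\omega|$. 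For (d), taking $\omega=1$ makes $\hF$ quasicompact and power bounded on $\BV(Y)$; the Ces\`aro averages $n^{-1}\sum_{k=0}^{n-1}\hF^k 1_Y$ are then bounded in $\BV(Y)$, so by the compact embedding together with lower semicontinuity of $\Var$ under $L^1$-convergence a subsequence converges in $L^1(Y)$ to some $h\in\BV(Y)$ with $h\ge0$, $\int_Y h=1$ and $\hF h=h$. By ergodicity $h=h_Y$, hence $h_Y\in\BV(Y)$, and combined with $h_Y\in L^\infty(Y)$ from Lemma~\ref{lem:Fmix} this gives $h_Y\in\BV\!_\infty(Y)$; in particular $1\in\spec\hF$ on $\BV(Y)$.

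For (e), part (c) with $\omega=1$ shows the essential spectral radius of $\hF$ on $\BV(Y)$ is at most $\kappa_1<1$, so the eigenvalue $1$ is isolated; its eigenspace lies in the one-dimensional $L^1(Y)$ fixed space and contains $h_Y$ by (d), so equals $\C h_Y$, and there is no nilpotent part because $\hF w=w+c h_Y$ with $c\ne0$ forces $\hF^n w=w+nc h_Y$, violating power boundedness. Thus $1$ is a simple isolated eigenvalue. For the twisted operators with $|\omega|<1$: any spectral point of $\hF(\omega)$ on $\BV(Y)$ of modulus exceeding $\kappa_1|\omega|$ is an eigenvalue whose eigenfunction lies in $\BV\subset L^1$, hence by (b) has modulus $\le|\omega|<1$, while the rest of the spectrum has modulus $\le\kappa_1|\omega|$; so $1\notin\spec\hF(\omega)$. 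The case $\omega\in S^1\setminus\{1\}$ is where the real work is: by (c) it suffices to show $1$ is not an eigenvalue, so suppose $\hF(\omega)v=v$ with $v\in\BV(Y)\setminus\{0\}$. Put $\psi=v/h_Y$, which lies in $L^1(Y)$ since $h_Y$ is bounded below; the change-of-density identity $R(\omega)\psi=h_Y^{-1}\hF(\omega)(h_Y\psi)$, relating the $\mu_Y$-transfer operator $R(\omega)$ of Subsection~\ref{sec:ap} to $\hF(\omega)$, gives $R(\omega)\psi=h_Y^{-1}\hF(\omega)v=\psi$. By Proposition~\ref{prop:aperiodic} (applicable since $F$ is ergodic) $|\psi|$ is constant and nonzero, so $\psi/|\psi|\colon Y\to S^1$ is a measurable solution of $R(\omega)\cdot=\cdot$, contradicting Corollary~\ref{cor:aperiodic}. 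Hence $v=0$ and $1\notin\spec\hF(\omega)$.

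I expect the main obstacle to be precisely this last step of (e): passing cleanly between the transfer operator $\hF(\omega)$ normalised against Lebesgue measure (for which the Lasota--Yorke and $\BV$ spectral machinery is set up) and the transfer operator $R(\omega)$ normalised against $\mu_Y$ (for which aperiodicity was established in Subsection~\ref{sec:ap}), and in particular verifying that $\psi=v/h_Y$ genuinely lies in $L^1(Y)$ so that Proposition~\ref{prop:aperiodic} and Corollary~\ref{cor:aperiodic} apply. This hinges on the two-sided bound on $h_Y$ and on $h_Y\in\BV\!_\infty(Y)$ from part (d), which is why (d) must be proved before finishing (e).
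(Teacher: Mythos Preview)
Your proof is correct and follows essentially the same route as the paper: parts (a)--(c) are handled identically via~\eqref{eq:L1}, Lemma~\ref{lem:LY}, and Hennion's theorem, and part (e) proceeds via the conjugacy $\hF(\omega)=h_Y R(\omega)h_Y^{-1}$ and Corollary~\ref{cor:aperiodic}, just as the paper does. Your treatment is somewhat more detailed in places---the Ces\`aro-average construction for (d), the explicit power-boundedness argument ruling out a Jordan block at $1$, and the explicit invocation of Proposition~\ref{prop:aperiodic} to reduce to an $S^1$-valued eigenfunction before applying Corollary~\ref{cor:aperiodic}---where the paper is terser (citing Hennion directly for the existence of a $\BV$ eigenfunction and leaving the algebraic-simplicity and normalization steps implicit), but there is no substantive difference in strategy.
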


\begin{proof}
(a) We have $\hF h_Y=h_Y$ so $1$ is an eigenvalue for $\hF$.
Simplicity follows from the fact that $h_Y$ is the unique invariant density
(Lemma~\ref{lem:Fmix}).
\\[.75ex]
(b) 
This is immediate by~\eqref{eq:L1}.
\\[.75ex]
(c,d) 
By~\eqref{eq:L1} and Lemma~\ref{lem:LY}, 
$\|\hF(\omega)v\|_\BV\le |\omega|\{(C+1)|v|_1+\kappa_1 \|v\|_\BV\}$.
Since the unit ball in $\BV(Y)$ is compact in $L^1(Y)$, the estimate on 
the essential spectrum radius follows from~\cite{Hennion93}.
Moreover, $1$ is an eigenvalue for $\hF:\BV(Y)\to\BV(Y)$ by~\cite{Hennion93}.  By Lemma~\ref{lem:Fmix}, the corresponding density coincides with $h_Y$,
 so $h_Y\in\BV(Y)$.
Also, $h_Y\in L^\infty(Y)$ by Lemma~\ref{lem:Fmix}.
\\[.75ex]
(e) 
By part (c), it suffices to consider the multiplicity of $1$ as an eigenvalue
for $\hF(\omega)$ acting on $L^1(Y)$.
Hence the result follows from part (a) for $\omega=1$ and part (b)
for $|\omega|<1$.
Finally, we note that
$\hF(\omega)=h_YR(\omega)h_Y^{-1}$ where
$R(\omega)$ is the normalized transfer operator corresponding to the
invariant measure $\mu_Y$.
By Corollary~\ref{cor:aperiodic}, $1$ is not an eigenvalue for
$R(\omega)$ when $\omega\in S^1\setminus\{1\}$.
Hence the same holds for
$\hF(\omega)$.
\end{proof}

\subsection{Tail of the return time function}

Let $c'>0$ be as in Proposition~\ref{prop:xn}.

\begin{prop} \label{prop:mu}  
There exists a constant $C>0$ such that
$\int_{\{\varphi=n\}}|v|\,d\Leb\le C  n^{-(1+\alpha)}\|v\|_\BV$
for all $v\in\BV(Y)$.
Moreover, for \(v\in \BV(Y)\),
\begin{equation} \label{eq:mu}
\int_{\{\varphi=n\}}v\,d\Leb
\sim {\SMALL \uexfracone}c' \int_\T v({\SMALL\uexfrac}+,\theta)\,d\theta\, n^{-(1+\alpha)},
\end{equation}
where the one-sided limit $v(\uexfrac+,\theta)=\lim_{y\to\uexfrac+}v(y,\theta)$ exists for
almost every $\theta$ and is integrable.

Taking $v$ to be the density $h_Y$, we obtain
\[
\mu_Y(\varphi=n)\sim c_2
n^{-(1+\alpha)} \quad\text{where
$c_2={\SMALL \uexfracone}c' \int_\T h_Y({\SMALL\uexfrac}+,\theta)\,d\theta$}.
\]
\end{prop}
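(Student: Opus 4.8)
The plan is to slice the thin column $\{\varphi=n\}$ in the $y$-direction and reduce to a one-dimensional $\BV$ estimate. Recall from Section~\ref{sec:partition} and~\eqref{eq:diff_xi} that $\{\varphi=n\}=\mathcal{C}_n=\{(y,\theta):\xi_n(\theta)\le y\le\xi_{n-1}(\theta),\ \theta\in\T\}$, where, by Proposition~\ref{prop:xn}, the width $w_n(\theta):=\xi_{n-1}(\theta)-\xi_n(\theta)$ satisfies $w_n(\theta)\sim\uexfracone c' n^{-(1+\alpha)}$ uniformly in $\theta$, while $\xi_n(\theta),\xi_{n-1}(\theta)\downarrow\uexfrac$ as $n\to\infty$. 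Fix $v\in\BV(Y)$. I would use the standard sectioning property of $\BV$ functions: for a.e.\ $\theta$ the slice $y\mapsto v(y,\theta)$ agrees a.e.\ with a function $g_\theta$ of bounded variation on $[\uexfrac,1]$, and $\int_\T\Var g_\theta\,d\theta\le\Var v$.

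For the first assertion, the elementary one-dimensional bound $|g_\theta(y)|\ll\int_{\uexfrac}^1|g_\theta|+\Var g_\theta$ valid for every $y\in[\uexfrac,1]$ (bound $|g_\theta(y)|\le|g_\theta(z)|+\Var g_\theta$ and average over $z$) gives $\int_a^b|g_\theta|\ll(b-a)\big(\int_{\uexfrac}^1|g_\theta|+\Var g_\theta\big)$ for every subinterval $[a,b]\subset[\uexfrac,1]$. Taking $[a,b]=[\xi_n(\theta),\xi_{n-1}(\theta)]$, using $w_n(\theta)\ll n^{-(1+\alpha)}$, and integrating in $\theta$,
\[
\int_{\{\varphi=n\}}|v|\,d\Leb=\int_\T\int_{\xi_n(\theta)}^{\xi_{n-1}(\theta)}|g_\theta(y)|\,dy\,d\theta\ll n^{-(1+\alpha)}\Big(|v|_1+\int_\T\Var g_\theta\,d\theta\Big)\ll n^{-(1+\alpha)}\|v\|_\BV,
\]
as required (the finitely many small $n$, in particular $n=1$ where $\{\varphi=1\}$ has a slightly different shape, being covered trivially by $\int_{\{\varphi=n\}}|v|\le|v|_1$).

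For the asymptotic~\eqref{eq:mu}, since $g_\theta$ is $\BV$ on $[\uexfrac,1]$ it has a right limit $g_\theta(\uexfrac+)$ at $\uexfrac$ for a.e.\ $\theta$; this is $v(\uexfrac+,\theta)$, and the display above (on all of $[\uexfrac,1]$) gives $|v(\uexfrac+,\theta)|\ll\int_{\uexfrac}^1|g_\theta|+\Var g_\theta$, hence $v(\uexfrac+,\cdot)\in L^1(\T)$. Because $|g_\theta(y)-g_\theta(\uexfrac+)|\le\Var_{[\uexfrac,y]}g_\theta\to0$ as $y\downarrow\uexfrac$ and $\xi_n(\theta),\xi_{n-1}(\theta)\downarrow\uexfrac$, we have $w_n(\theta)^{-1}\int_{\xi_n(\theta)}^{\xi_{n-1}(\theta)}g_\theta(y)\,dy\to v(\uexfrac+,\theta)$ for a.e.\ $\theta$, while $n^{1+\alpha}w_n(\theta)\to\uexfracone c'$ uniformly. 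Writing
\[
n^{1+\alpha}\int_{\{\varphi=n\}}v\,d\Leb=\int_\T\big(n^{1+\alpha}w_n(\theta)\big)\Big(w_n(\theta)^{-1}\int_{\xi_n(\theta)}^{\xi_{n-1}(\theta)}g_\theta(y)\,dy\Big)d\theta,
\]
the integrand converges a.e.\ to $\uexfracone c'\,v(\uexfrac+,\theta)$ and is dominated by a constant times $\int_{\uexfrac}^1|g_\theta|+\Var g_\theta\in L^1(\T)$ (by the previous paragraph); dominated convergence then yields~\eqref{eq:mu}. Finally, $h_Y\in\BV(Y)$ by Corollary~\ref{cor:LY}(d), so applying~\eqref{eq:mu} with $v=h_Y$ gives $\mu_Y(\varphi=n)\sim c_2 n^{-(1+\alpha)}$ with $c_2=\uexfracone c'\int_\T h_Y(\uexfrac+,\theta)\,d\theta$.

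The main obstacle is making the slicing rigorous: one must work with the canonical representative of $v$ so that the slices $g_\theta$ are genuinely $\BV$, justify $\int_\T\Var g_\theta\,d\theta\le\Var v$, and know that $v(\uexfrac+,\theta)$ exists for a.e.\ $\theta$ and lies in $L^1(\T)$ — all classical $\BV$ facts, but not bypassable by a crude $C^1$ reduction via Proposition~\ref{prop:C1}. Indeed, for fixed $n$ the upper bound does pass to the $\BV$ limit since $v\mapsto\int_{\{\varphi=n\}}|v|$ is $L^1$-continuous, but the boundary trace $v(\uexfrac+,\cdot)$, and hence the constant in~\eqref{eq:mu}, is not continuous under $L^1$-convergence together with convergence of variation, so the sharp asymptotic genuinely requires the trace theory.
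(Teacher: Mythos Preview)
Your proposal is correct and follows essentially the same approach as the paper: slice in the $y$-direction using the one-dimensional sectioning property of $\BV$ (the paper cites \cite[p.~29]{Giusti}), bound $\sup_y|g_\theta|$ by $\int|g_\theta|\,dy+\Var^y g_\theta$, use that one-dimensional $\BV$ functions have one-sided limits to define the trace $v(\uexfrac+,\theta)$, and conclude via dominated convergence. The paper organizes the limiting step slightly differently (writing $B_n(\theta)$ as a sum of two terms rather than your product form), but the argument and the required $\BV$ facts are identical.
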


\begin{proof}  
By \cite[p.~29]{Giusti},  \(\Var v=\int \Var^{y}v\, d\theta=
\int \Var^{\theta}v\,dy\)
where $(\Var^yv)(\theta)$ denotes the one-dimensional variation of $v(\cdot,\theta)$ in the $y$-variable, and similarly for $(\Var^\theta v)(y)$.

 Recall from Section~\ref{sec:partition} that for \(n \ge 1\), 
$\{\varphi=n\}=\{(y,\theta):y\in[y_n(\theta),y_{n-1}(\theta)],\theta\in\T\}$ and that $y_{n-1}-y_n\sim\uexfracone c' n^{-(1+\alpha)}$ uniformly in $\theta$. 
Hence for a.e. \(\theta\),
\begin{align*}
n^{1+\alpha} & \int_{\{\varphi(\cdot,\theta)=n\}}  |v(y,\theta)|\,dy 
 =
n^{1+\alpha}\int_{y_n(\theta)}^{y_{n-1} (\theta)}|v(y,\theta)|\,dy 
\\ & \quad  \le n^{1+\alpha}(y_{n-1} (\theta)-y_n(\theta)){\SMALL\sup_y} |v(y,\theta)|
 \ll \int|v(y,\theta)|\,dy+(\Var^yv)(\theta),
\end{align*}
so
\begin{align*}
n^{1+\alpha}\int_{\{\varphi=n\}}|v(y,\theta)|\,d\Leb
\ll \int_\T\Big(\int|v(y,\theta)|\,dy+(\Var^yv)(\theta)\Big)\,d\theta
= \|v\|_\BV.
\end{align*}
This completes the proof of the first statement.

Next, note that $\BV$ functions restrict
 to $\BV$ functions on almost all one-dimensional slices
 (see~\cite[Lemma~A.1, third statement]{Liverani13} which is based
 on~\cite[Section~5.10.2, Theorem 2]{EvansGariepy}.
Moreover, one-dimensional BV functions have one-sided limits.  Hence
$J(\theta)=\lim_{y\to0+}v(\uexfrac+y,\theta)$ exists a.e.\ and
is measurable (being a limit of measurable functions by Fubini's theorem).
For a.e.\ $\theta$,
\[
|J(\theta)|\le \sup_y |v(y,\theta)|\le \int |v(y,\theta)|\,dy+(\Var^yv)(\theta).
\]
Hence $J$ is integrable and
both sides of~\eqref{eq:mu} are well-defined.  

Let $A_n=n^{1+\alpha}\int_{\{\varphi=n\}}v\,d\Leb
- {\SMALL \uexfracone}c' \int_\T J(\theta)\,d\theta$.
To prove validity of~\eqref{eq:mu}, we must show that $\lim_{n\to\infty}A_n=0$.
Write
\[
A_n=\int_\T B_n(\theta)\,d\theta, \qquad 
B_n(\theta)=n^{1+\alpha}\int_{y_n(\theta)}^{y_{n-1} (\theta)}v(y,\theta)\,dy
- {\SMALL \uexfracone}c' J(\theta).
\]
We apply the dominated convergence theorem.

We have already seen that 
$B_n$ is dominated by the $L^1$ function $\int|v(y,\cdot)|\,dy+\Var^y+\uexfracone c' |J|$.
Next,
\[
B_n(\theta)  =
  \bigl\{n^{1+\alpha}(y_{n-1} (\theta)-y_n(\theta))-{\SMALL\uexfracone} c'\bigr\}J(\theta)
 +n^{1+\alpha}\int_{y_n(\theta)}^{y_{n-1} (\theta)}\big\{v(y,\theta)-J(\theta)\big\}\,dy.
\]
The first term converges to zero a.e.\ by the estimate for $y_{n-1}-y_n$.  
Also, $y_n\to\uexfrac+$, so
\begin{align*}
\Big|n^{1+\alpha}\int_{y_{n+1}(\theta)}^{y_n(\theta)}(v(y,\theta)-J(\theta))\,dy\Big| & \le n^{1+\alpha}(y_{n-1} (\theta)-y_n(\theta))\!\sup_{y\in [\uexfrac,y_{n-1} (\theta)]}|v(y,\theta)-J(\theta)|
\\ & \ll \sup_{y\in [\uexfrac,y_{n-1} (\theta)]}|v(y,\theta)-J(\theta)|\to0\;a.e.
\end{align*}
by the definition of $J(\theta)$.
Hence $B_n(\theta)\to0$ a.e.\ completing the proof of~\eqref{eq:mu}.

The estimate for $\mu_Y(\varphi>n)$ follows immediately.  
\end{proof}

\subsection{Estimates for $\|F_n\|_\BV$}
\label{sec:Fn}

Define the family of operators $\hF_n:\BV(Y)\to\BV(Y)$, $n\ge1$, given by
$\hF_nv=\hF(1_{\{\varphi=n\}}v)$.

\begin{lemma} \label{lem:Fn}
$\|\hF_n\|_\BV=O(n^{-(1+\alpha)})$.
\end{lemma}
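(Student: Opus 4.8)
The plan is to split $\|\hF_nv\|_\BV=|\hF_nv|_1+\Var(\hF_nv)$ and to reduce the variation part to $C^1$ functions. The $L^1$ bound is immediate: since $\hF$ is an $L^1$ contraction (take $\omega=1$ in~\eqref{eq:L1}), we have $|\hF_nv|_1=|\hF(1_{\{\varphi=n\}}v)|_1\le\int_{\{\varphi=n\}}|v|\,d\Leb$, which is $O(n^{-(1+\alpha)}\|v\|_\BV)$ by the first assertion of Proposition~\ref{prop:mu}. For the variation, by Corollary~\ref{cor:C1} it suffices to prove $\Var(\hF_nv)\le Cn^{-(1+\alpha)}\bigl(|v|_1+\Var v\bigr)$ for $v\in C^1$ and then pass to general $v\in\BV(Y)$. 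No circularity arises: Proposition~\ref{prop:mu} already holds on all of $\BV(Y)$, and Lemma~\ref{lem:small} is stated for exactly the class of $C^1$ functions we work with here.

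Fix $v\in C^1$. Writing $\hF_nv=\sum_{a:\varphi(a)=n}1_{Fa}\,(gv)\circ F_a^{-1}$ with $g=(JF)^{-1}$, I would run the decomposition from the proof of Lemma~\ref{lem:LY} verbatim, but now summing only over the finitely many partition elements of the $n$-th column, $\{a\in\alpha^Y:\varphi(a)=n\}=\{Y_{n,j}:1\le j\le\uex^n\}$. This gives
\[
\Var(\hF_nv)\le\sum_{a:\varphi(a)=n}\Bigl(\int_a|v|\,\bigl|(\nabla g)(DF_a)^{-1}\bigr|\,JF_a+\int_a|\nabla v|\,\bigl|(DF_a)^{-1}\bigr|+\int_{\partial a}|v|\,\bigl|D\partial F_a/JF_a\bigr|\Bigr).
\]
The first integrand is $\le C\,|1_av|_1$ by Corollary~\ref{cor:DJ}. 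For the second, the decay in $n$ enters through the observation that on $a=Y_{n,j}$ the matrix $(DF_a)^{-1}$ has operator norm $O(n^{-(1+\alpha)})$: by Corollary~\ref{cor:DF} (see~\eqref{eq:DF}) all its entries are $O(n^{-(1+\alpha)})$, the bottom-right one even being $O(\uex^{-n})$. Hence $\int_a|\nabla v|\,|(DF_a)^{-1}|\ll n^{-(1+\alpha)}|1_a\nabla v|_1$. The boundary integral is controlled by Lemma~\ref{lem:small}, which yields precisely $\int_{\partial a}|v|\,|D\partial F_a/JF_a|\le C_1\bigl(|1_av|_1+n^{-(1+\alpha)}|1_a\nabla v|_1\bigr)$ for $a$ with $\varphi(a)=n$.

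It remains to sum over $\{a:\varphi(a)=n\}$. The ``$|1_av|_1$'' terms add up to a constant times $\int_{\{\varphi=n\}}|v|\,d\Leb=O(n^{-(1+\alpha)}\|v\|_\BV)$ by Proposition~\ref{prop:mu}, while the ``$n^{-(1+\alpha)}|1_a\nabla v|_1$'' terms add up to at most $n^{-(1+\alpha)}\int_Y|\nabla v|=n^{-(1+\alpha)}\Var v$. This yields $\Var(\hF_nv)\ll n^{-(1+\alpha)}(|v|_1+\Var v)$ for $v\in C^1$; Corollary~\ref{cor:C1} extends it to $\BV(Y)$, and combining with the $L^1$ estimate gives $\|\hF_nv\|_\BV\ll n^{-(1+\alpha)}\|v\|_\BV$, i.e.\ $\|\hF_n\|_\BV=O(n^{-(1+\alpha)})$. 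The main point to watch is conceptual rather than computational: a direct appeal to Lemma~\ref{lem:LY} only delivers $\|\hF_n\|_\BV=O(1)$, and the extra factor $n^{-(1+\alpha)}$ has to be extracted from two separate mechanisms --- the smallness $\int_{\{\varphi=n\}}|v|\,d\Leb=O(n^{-(1+\alpha)}\|v\|_\BV)$ of the $n$-th column (Proposition~\ref{prop:mu}), which absorbs the ``$|v|_1$'' contributions, and the strong $y$-expansion of $F$ on that column (Corollary~\ref{cor:DF}) together with the sharpened boundary estimate of Lemma~\ref{lem:small}, which together absorb the ``$\nabla v$'' contributions.
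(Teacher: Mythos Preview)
Your proof is correct and follows essentially the same route as the paper: the $L^1$ part via Proposition~\ref{prop:mu}, reduction of the variation to $C^1$ via Corollary~\ref{cor:C1}, the three-term decomposition from the Lasota--Yorke argument restricted to $\{\varphi=n\}$, and the respective bounds from Corollary~\ref{cor:DJ}, \eqref{eq:DF}, and Lemma~\ref{lem:small}. The paper's proof is organized identically (its $I_1,I_2,I_3$ are exactly your three integrals), so there is nothing to add.
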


\begin{proof}
Recall that
$\|\hF_nv\|_\BV= 
|\hF_nv|_1+ \Var(\hF_nv)$.
Now,
\begin{align*}
|\hF_nv|_1 & =
|\hF(1_{\{\varphi=n\}}v)|_1
 \le
|1_{\{\varphi=n\}}v|_1
\ll n^{-(1+\alpha)}\|v\|_\BV
\end{align*}
by Proposition~\ref{prop:mu}.

Next, we estimate $\Var(\hF_nv)$.  By Corollary~\ref{cor:C1},
it suffices to do this for $v\in C^1$.
Adapting the calculations in the proof of Lemma~\ref{lem:LY}, we have
\(
\Var(\hF_nv)  \le I_1+I_2+I_3,
\)
where
\[
  \begin{split}
I_1 & =
\sum_{\varphi(a)=n} 
|1_a(\nabla g)(DF_a)^{-1}JF|_\infty\int_a|v| ,
\\ 
I_2 & =
\sum_{\varphi(a)=n} 
|(DF_a)^{-1}|_\infty \int_a|\nabla v|,
\qquad
I_3  = 
\sum_{\varphi(a)=n} 
\int_{\partial a}|v||D\partial F_a/JF_a|.
\end{split}
\]

We consider partition elements of the form $a=Y_{n,j}$.
By Corollary~\ref{cor:DJ},
$|1_a(\nabla g)(DF_a)^{-1}JF|_\infty =O(1)$.
Hence  $I_1\ll\int_{\{\varphi=n\}}|v| \ll
n^{-(1+\alpha)}\|v\|_\BV$ by Proposition~\ref{prop:mu}.

By~\eqref{eq:DF}, $|(DF_a)^{-1}|_\infty \ll n^{-(1+\alpha)}$.
Hence $I_2\ll n^{-(1+\alpha)}\int_{\{\varphi=n\}}|\nabla v|
\le n^{-(1+\alpha)}\|v\|_\BV$.

Finally, by Lemma~\ref{lem:small} and
Proposition~\ref{prop:mu},
$I_3\ll \int_{\{\varphi=n\}}|v|+n^{-(1+\alpha)}\int_{\{\varphi=n\}}|\nabla v|\ll n^{-(1+\alpha)}\|v\|_\BV$.
\end{proof}

\section{Lower bounds on decay of correlations and infinite measure mixing}
\label{sec:R}

By Proposition~\ref{prop:mu}, the return time $\varphi$ is integrable if and only if $\gamma<1$. 
In this section, we establish lower bounds on decay of correlations
for a class of observables supported on $Y$ when $\gamma<1$.
Also, for $\gamma\ge1$, we obtain results on mixing for the infinite measure $\mu_Y$ for the same class of observables.

By Lemma~\ref{lem:Fmix}, the invariant density $h_Y=d\mu_Y/d\Leb$ is
bounded above and below, so the $L^p$ spaces with respect to $\mu_Y$
and $\Leb$ are identical and we can just write $L^p(Y)$.  We have
$\BV(Y)\subset L^2(Y)$ since the domain $Y$ is two-dimensional.

The transfer operator $R$ corresponding to the $F$-invariant measure
$\mu_Y$ is given by $Rv=h_Y^{-1}\hF(h_Yv)$.  Again, we consider
the twisted transfer operators
$R(\omega)v=R(\omega^\varphi v)=h_Y^{-1}\hF(\omega)(h_Yv)$.  These act
naturally on the Banach space $\mathcal{B}(Y)=h_Y^{-1}\BV(Y)$ which
consists of functions $v:Y\to\R$ such that $h_Yv\in\BV(Y)$ with norm
$\|v\|_\cB=\|h_Yv\|_{\BV}$.
Similarly we define $R_n:\cB\to\cB$, $n\ge1$, given by $R_nv=R(1_{\{\varphi=n\}}v)=h_Y^{-1}\hF_n(h_Yv)$.

Recall that $h_Y\in \BV\!_\infty(Y)=\BV(Y)\cap L^\infty(Y)$.

\begin{prop} \label{prop:L1}
  $\BV\!_\infty(Y)\subset\mathcal{B}(Y)\subset L^2(Y)$.
\end{prop}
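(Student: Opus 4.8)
The statement has two inclusions, and the plan is to treat them separately; both are essentially bookkeeping built on results already in place, so the argument will be short.

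\emph{First inclusion} $\BV\!_\infty(Y)\subset\mathcal{B}(Y)$. By the definition of $\mathcal{B}(Y)=h_Y^{-1}\BV(Y)$, a function $v$ lies in $\mathcal{B}(Y)$ precisely when $h_Yv\in\BV(Y)$. So, given $v\in\BV\!_\infty(Y)$, I would simply invoke Corollary~\ref{cor:alg} (that $\BV\!_\infty(Y)$ is a Banach algebra) together with the fact, recalled immediately before the Proposition (Corollary~\ref{cor:LY}(d), Lemma~\ref{lem:Fmix}), that $h_Y\in\BV\!_\infty(Y)$. Then $h_Yv\in\BV\!_\infty(Y)\subset\BV(Y)$, hence $v\in\mathcal{B}(Y)$. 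The algebra estimate in Corollary~\ref{cor:alg} moreover yields $\|v\|_{\cB}=\|h_Yv\|_\BV\le\|h_Yv\|_{\BV_\infty}\le 3\|h_Y\|_{\BV_\infty}\|v\|_{\BV_\infty}$, so the inclusion is bounded.

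\emph{Second inclusion} $\mathcal{B}(Y)\subset L^2(Y)$. Let $v\in\mathcal{B}(Y)$, so that $h_Yv\in\BV(Y)$. Since $Y$ is two-dimensional, the Sobolev-type embedding $\BV(Y)\subset L^2(Y)$ noted at the start of this section gives $h_Yv\in L^2(Y)$. By Lemma~\ref{lem:Fmix} the density $h_Y$ is bounded below, say $h_Y\ge c>0$, so $|v|\le c^{-1}|h_Yv|$ pointwise and hence $v\in L^2(Y)$, with $\|v\|_{L^2}\le c^{-1}\|h_Yv\|_{L^2}\ll\|h_Yv\|_\BV=\|v\|_{\cB}$.

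As for the main obstacle: there is no substantive difficulty here — this is a packaging statement. The only two points that need attention are that the Banach algebra property of Corollary~\ref{cor:alg}, which was proved earlier with some care precisely because multidimensional $\BV$ is not obviously closed under products, is exactly what powers the first inclusion; and that it is the lower bound on $h_Y$ (not the upper bound) that is used in the second inclusion to transfer $L^2$ membership from $h_Yv$ down to $v$.
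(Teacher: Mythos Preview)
Your proof is correct and matches the paper's argument essentially line for line: the first inclusion via the Banach algebra property of $\BV\!_\infty(Y)$ and $h_Y\in\BV\!_\infty(Y)$, and the second via $\BV(Y)\subset L^2(Y)$ together with the lower bound on $h_Y$. The paper is slightly terser and omits the explicit boundedness constants you supply, but there is no substantive difference.
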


\begin{proof} 
  Let $v\in\mathcal{B}(Y)$.  Then
  $\int v^2\,d\Leb\le |h_Y^{-2}|_\infty\int (h_Yv)^2\,d\Leb \ll
  \|h_Yv\|^2_\BV=\|v\|^2_\cB$ establishing the second inclusion.

  For the first inclusion, let $v\in \BV\!_\infty(Y)$.
By Corollary~\ref{cor:alg},
$h_Yv\in\BV\!_\infty(Y)\subset\BV(Y)$, so
$v\in\cB(Y)$.
\end{proof} 

\begin{cor} \label{cor:specR} Consider the operators
  $R(\omega):\mathcal{B}(Y)\to\mathcal{B}(Y)$.
  \\[.75ex]
  (i) $1$ is a simple isolated eigenvalue in the spectrum of $R$.
  \\[.75ex]
  (ii) $1\not\in\spec R(\omega)$ for all $\omega\in\barD\setminus\{1\}$.
  \\[.75ex]
  (iii) $\|R_n\|_\cB=O(n^{-(1+\alpha)})$.
\end{cor}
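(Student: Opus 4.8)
The plan is to observe that, on $\cB(Y)$, the operator $R(\omega)$ is conjugate to $\hF(\omega)$ acting on $\BV(Y)$, so that parts (i) and (ii) reduce at once to Corollary~\ref{cor:LY}(e) and part (iii) to Lemma~\ref{lem:Fn}. First I would make the conjugacy precise. Let $M_{h_Y}\colon\cB(Y)\to\BV(Y)$ be the multiplication operator $M_{h_Y}v=h_Yv$. By the very definition of $\cB(Y)=h_Y^{-1}\BV(Y)$ and of its norm $\|v\|_\cB=\|h_Yv\|_\BV$, the map $M_{h_Y}$ is surjective onto $\BV(Y)$ and isometric; it is injective because $h_Y$ is bounded below by Lemma~\ref{lem:Fmix}, with inverse $w\mapsto h_Y^{-1}w\in\cB(Y)$. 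Hence $M_{h_Y}$ is an isometric isomorphism. Since $R(\omega)v=h_Y^{-1}\hF(\omega)(h_Yv)$ by definition, we have $M_{h_Y}R(\omega)=\hF(\omega)M_{h_Y}$, i.e.\ $R(\omega)=M_{h_Y}^{-1}\hF(\omega)M_{h_Y}$ on $\cB(Y)$.

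For (i) and (ii): conjugation by a bounded invertible operator preserves the spectrum, the nature of each spectral point (isolated or not), and the multiplicity of eigenvalues. Therefore $\spec R(\omega)$ in $\cB(Y)$ coincides with $\spec\hF(\omega)$ in $\BV(Y)$ with matching multiplicities, and (i), (ii) follow directly from Corollary~\ref{cor:LY}(e). (Proposition~\ref{prop:L1} and Corollary~\ref{cor:alg} guarantee that $\cB(Y)$ is a genuine Banach space containing $\BV\!_\infty(Y)$, which is what makes the statement meaningful, but neither is needed for the conjugacy argument itself.)

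For (iii): since $R_nv=h_Y^{-1}\hF_n(h_Yv)$, the same conjugacy gives $M_{h_Y}R_n=\hF_nM_{h_Y}$, so for $v\in\cB(Y)$,
\[
\|R_nv\|_\cB=\|h_YR_nv\|_\BV=\|\hF_n(h_Yv)\|_\BV\le\|\hF_n\|_\BV\,\|h_Yv\|_\BV=\|\hF_n\|_\BV\,\|v\|_\cB .
\]
Hence $\|R_n\|_\cB\le\|\hF_n\|_\BV=O(n^{-(1+\alpha)})$ by Lemma~\ref{lem:Fn}.

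There is no serious obstacle here: all the substantive work is already contained in the Lasota--Yorke estimate and its spectral consequences (Corollary~\ref{cor:LY}), the boundedness above and below of $h_Y$ (Lemma~\ref{lem:Fmix}), and the tail bound on $\|\hF_n\|_\BV$ (Lemma~\ref{lem:Fn}). The only point meriting (minimal) care is checking that $M_{h_Y}$ genuinely defines an isometric isomorphism $\cB(Y)\to\BV(Y)$, which is immediate from the definition of $\cB(Y)$ together with $h_Y$ being bounded away from $0$ and $\infty$.
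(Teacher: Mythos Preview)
Your proposal is correct and follows essentially the same approach as the paper: the paper's proof simply notes that multiplication by $h_Y^{-1}$ is an isomorphism $\BV(Y)\to\cB(Y)$ conjugating $\hF(\omega)$ to $R(\omega)$, so $R(\omega)$ inherits the spectral properties of $\hF(\omega)$ from Corollary~\ref{cor:LY} and $R_n$ inherits the norm bound from Lemma~\ref{lem:Fn}. You have spelled out the conjugacy (via $M_{h_Y}$, the inverse of the paper's isomorphism) and the isometry in more detail, but the argument is the same.
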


\begin{proof}
  Multiplication by $h_Y^{-1}$ is an isomorphism from
  $\BV(Y)\to \mathcal{B}(Y)$ that conjugates $\hF(\omega)$ to
  $R(\omega)$.  Hence $R(\omega)$ inherits properties of $\hF(\omega)$ in
  Corollary~\ref{cor:LY}.  
Similarly, $R_n$ inherits properties of $\hF_n$ in Lemma~\ref{lem:Fn}.
\end{proof}

For observables $v,w$ supported in $Y$, we can
write $\int v\,w\circ f^n\,d\mu=\int T_nv\,w\,d\mu$ where
$T_n=1_YL^n1_Y$ and $L$ is the transfer operator for $f$.
Defining $T(\omega)=\sum_{n=0}^\infty T_n\omega^n$, we recall from~\cite[Proposition~1]{Sarig02} the operator renewal equation $T(\omega)=(I-R(\omega))^{-1}$.

\begin{pfof}{Theorem~\ref{thm:decay}(b,c)}
Recall that $F$ is the first return map to $Y$ so $\mu_Y=\mu|_Y/\mu(Y)$.
By Corollary~\ref{cor:specR}, we have verified the assumptions of Gou\"ezel~\cite{Gouezel04a}.
Let $v\in\mathcal{B}(Y)$ and $w\in L^2(Y)\;$\footnote{In general, we require $w$ in
$L^2$ since Proposition~\ref{prop:L1} only gives $\mathcal{B}\subset L^2$.  For $v\in\BV\!_\infty$, we can take $w\in L^1$.}.
By~\cite[Theorem~1.1]{Gouezel04a},
\[
\Big|\rho(n)- \mu(Y)\sum_{j>n}\mu_Y(\varphi>j)\int v\,d\mu \int w\,d\mu\Big|\ll E(n)\|v\|_{\mathcal{B}}|w|_2
\]
But $\sum_{j>n}\mu_Y(\varphi>j)\sim \gamma(\alpha-1)^{-1}c_2 n^{-(\alpha-1)}$
with $c_2$ as given in Proposition~\ref{prop:mu}. 
Part~(b) follows with $c=\mu(Y)\gamma(\alpha-1)^{-1}c_2$.

Part~(c) is a consequence of~\cite[Theorem~1.2]{Gouezel04a}.
\end{pfof}

\begin{pfof}{Theorem~\ref{thm:infinite}}
By Corollary~\ref{cor:specR} we have verified the
assumptions in Gou\"ezel~\cite[Theorem~1.4]{Gouezel11} and
Melbourne \& Terhesiu~\cite[Theorem~2.1]{MT12}. 
Let $d_\gamma=\frac{1}{\pi}\sin\alpha\pi$.
Let $c_2$ be the constant in Proposition~\ref{prop:mu}
and define $c_4=\mu(Y)\gamma c_2$.
For $\gamma>1$, we obtain
\[
c_4 n^{1-\alpha}\int v\,w\circ f^n\,d\mu\sim d_\gamma \int v\,d\mu \int w\,d\mu,
\]
for all $v\in\mathcal{B}(Y)$ and $w\in L^2(Y)$.
For $\gamma=1$ the asymptotic holds with 
$d_\gamma=1$ and $n^{1-\alpha}$ replaced by $\log n$.
Part~(a) follows with $c=d_\gamma/c_4$.

Part~(b) is a consequence of~\cite[Theorem~2.2(c)]{MT12}.
\end{pfof}

\section{Convergence to stable laws and L\'evy processes}
\label{sec:levy}

In this section, we prove Theorem~\ref{thm:stab}.
Set $\alpha=\frac{1}{\gamma}\in(1,2)$ and
let $G_\alpha$ denote the totally skewed $\alpha$-stable law in Theorem~\ref{thm:stab}.
Define
$\sigma={\SMALL \uexfracone}c'\gamma \int_\T h_Y({\SMALL\uexfrac}+,\theta)\,d\theta\,\Gamma(1-\alpha)\cos{\SMALL\frac{\alpha\pi}{2}}$
where $c'$ is as in Proposition~\ref{prop:xn}.

\begin{prop} \label{prop:stable}
$n^{-1/\alpha}\sum_{j=0}^{n-1}(\varphi\circ f^j-\int_Y\varphi\,d\mu_Y)\to_d \sigma^{1/\alpha}G_\alpha$.
\end{prop}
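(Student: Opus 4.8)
The plan is to derive the stable limit for the return-time Birkhoff sums $\varphi_n:=\sum_{j=0}^{n-1}\varphi\circ F^j$ over $F$ on $(Y,\mu_Y)$ from the spectral data already assembled for the twisted transfer operators, together with the tail asymptotics of $\varphi$ in Proposition~\ref{prop:mu}; convergence in distribution relative to $\mu_Y$ is equivalent to that relative to $\Leb_Y$ since $h_Y$ is bounded above and below (cf.~\cite{Zweimuller07}). Write $\omega=e^{is}$. By Corollary~\ref{cor:specR}, $R(\omega):\cB(Y)\to\cB(Y)$ depends continuously on $\omega$, has $1$ as a simple isolated eigenvalue at $\omega=1$ with eigenfunction the constant function $\mathbf1$, and has essential spectral radius bounded uniformly away from $1$; moreover $R(\omega)-R=\sum_{n\ge1}(e^{isn}-1)R_n$ with $\|R_n\|_\cB=O(n^{-(1+\alpha)})$ (Corollary~\ref{cor:specR}(iii)), so
\[
\|R(e^{is})-R\|_{\cB}\ \ll\ \sum_{n\ge1}\min(2,|s|n)\,n^{-(1+\alpha)}\ \ll\ |s|^{\alpha},
\]
using $1<\alpha<2$. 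By analytic perturbation theory, for $s$ in a neighbourhood of $0$ the operator $R(e^{is})$ has a simple leading eigenvalue $\lambda(e^{is})$ near $1$ with spectral projection $\Pi(e^{is})$, and $R(e^{is})^n=\lambda(e^{is})^n\Pi(e^{is})+Q(e^{is})^n$ with $\|Q(e^{is})^n\|_\cB\le C\theta_0^n$ uniformly for some fixed $\theta_0\in(0,1)$, while $\Pi(e^{is})\mathbf1\to\mathbf1$ in $\cB(Y)$ as $s\to0$. A standard telescoping using $\int_Y Ru\,d\mu_Y=\int_Y u\,d\mu_Y$ gives $\int_Y R(\omega)^n\mathbf1\,d\mu_Y=\int_Y\omega^{\varphi_n}\,d\mu_Y$, so
\[
\int_Y e^{is\varphi_n}\,d\mu_Y=\lambda(e^{is})^n\,\ell(s)+O(\theta_0^n),\qquad\ell(s):=\int_Y\Pi(e^{is})\mathbf1\,d\mu_Y\to1\ (s\to0),
\]
uniformly for $s$ near $0$.

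The heart of the matter is the expansion of $\lambda(e^{is})$ as $s\to0$. The standard eigenvalue perturbation formula gives $\lambda(e^{is})=\int_Y R(e^{is})\mathbf1\,d\mu_Y+O(\|R(e^{is})-R\|_\cB^2)=\int_Y e^{is\varphi}\,d\mu_Y+O(|s|^{2\alpha})$, and $O(|s|^{2\alpha})=o(|s|^\alpha)$. Since $\mu_Y(\varphi>n)\sim\alpha^{-1}c_2\,n^{-\alpha}$ by Proposition~\ref{prop:mu} with $\alpha\in(1,2)$, and $\bar\varphi:=\int_Y\varphi\,d\mu_Y<\infty$, the classical Karamata/Tauberian estimate for the characteristic function of a regularly varying tail of index $-\alpha$ (as in the computations of~\cite{Gouezel04}) yields
\[
\int_Y e^{is\varphi}\,d\mu_Y=1+i\bar\varphi s-\sigma|s|^\alpha\bigl(1-i\,\sgn(s)\tan{\SMALL\frac{\alpha\pi}{2}}\bigr)+o(|s|^\alpha),
\]
with $\sigma=\alpha^{-1}c_2\,\Gamma(1-\alpha)\cos{\SMALL\frac{\alpha\pi}{2}}$; using $\gamma=1/\alpha$ and $c_2={\SMALL\uexfracone}c'\int_\T h_Y({\SMALL\uexfrac}+,\theta)\,d\theta$ from Proposition~\ref{prop:mu}, this is precisely the constant $\sigma$ in the statement. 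Hence $\lambda(e^{is})\,e^{-i\bar\varphi s}=1-\sigma|s|^\alpha\bigl(1-i\,\sgn(s)\tan{\SMALL\frac{\alpha\pi}{2}}\bigr)+o(|s|^\alpha)$, the cross terms being $O(s^2)=o(|s|^\alpha)$ since $\alpha<2$.

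To conclude, fix $t\in\R$ and set $s_n=t\,n^{-1/\alpha}\to0$, which eventually lies in the neighbourhood of $0$ above; then
\[
\int_Y e^{is_n(\varphi_n-n\bar\varphi)}\,d\mu_Y=\bigl(\lambda(e^{is_n})\,e^{-i\bar\varphi s_n}\bigr)^{n}\ell(s_n)+O(\theta_0^n)\ \longrightarrow\ \exp\!\bigl(-\sigma|t|^\alpha(1-i\,\sgn(t)\tan{\SMALL\frac{\alpha\pi}{2}})\bigr),
\]
which is the characteristic function of $\sigma^{1/\alpha}G_\alpha$. By L\'evy's continuity theorem, $n^{-1/\alpha}(\varphi_n-n\bar\varphi)\to_d\sigma^{1/\alpha}G_\alpha$ on $(Y,\mu_Y)$, as claimed. (Only the behaviour of $R(e^{is})$ for $s$ near $0$ is used; the aperiodicity of Corollary~\ref{cor:aperiodic} is not needed here --- it enters, rather, in the local limit theorem.)

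\textbf{Main obstacle.} The substantive step is the eigenvalue expansion: isolating the first-order term $\int_Y e^{is\varphi}\,d\mu_Y$ and showing the eigenfunction correction is negligible at order $|s|^\alpha$ --- this is exactly where $\alpha<2$ is used, via the bound $\|R(e^{is})-R\|_\cB=O(|s|^\alpha)$, which forces the higher-order perturbation terms to be $O(|s|^{2\alpha})=o(|s|^\alpha)$ --- followed by the Karamata-type computation matching the $|s|^\alpha$-coefficient to the precise constant $\sigma$. Since all the functional-analytic hypotheses are already in hand (Corollaries~\ref{cor:LY} and~\ref{cor:specR}, Proposition~\ref{prop:mu}), an equally good route is to invoke a ready-made stable-limit theorem for observables with regularly varying tails over a system carrying a spectral-gap family of twisted transfer operators, as in~\cite{Gouezel04}, and to verify its hypotheses against those results.
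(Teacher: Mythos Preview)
Your argument is correct and follows essentially the same route as the paper, which packages the spectral--perturbation computation as an abstract stable-law theorem (Appendix~\ref{sec:stable}) and then verifies its hypotheses from Corollary~\ref{cor:specR} and Proposition~\ref{prop:mu}. One slip: since $\|R_n\|_\cB=O(n^{-(1+\alpha)})$ with $\alpha>1$, you have $\sum_n n\|R_n\|_\cB<\infty$, so $t\mapsto R(e^{it})$ is $C^1$ and $\|R(e^{is})-R\|_\cB=O(|s|)$, not $O(|s|^\alpha)$ (your sum $\sum_n\min(2,|s|n)n^{-(1+\alpha)}$ is $O(|s|)$ for $\alpha>1$); this only improves matters, since the eigenvalue remainder is then $O(|s|^2)=o(|s|^\alpha)$ for $\alpha<2$, which is exactly what you need.
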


\begin{proof}
We verify the conditions stated in Appendix~\ref{sec:stable}.
Taking $\omega=1$ in Corollary~\ref{cor:specR}, we see that $R:\cB(Y)\to\cB(Y)$ satisfies the required spectral gap condition.

Let $\psi=\varphi-\int_Y\varphi\,d\mu_Y$.  This is an $L^1$ function with mean zero.  Clearly $\psi$ is bounded below.  By Proposition~\ref{prop:mu},
$\mu_Y(\psi>x)\sim \sigma_1x^{-\alpha}$
where 
$\sigma_1={\SMALL \uexfracone}c'\gamma \int_\T h_Y({\SMALL\uexfrac}+,\theta)\,d\theta$.
Hence condition~\eqref{eq:tails} is satisfied (with $\sigma_2=0$).

Define $R_t=R(e^{it})$ for $t\in\R$.  By Section~\ref{sec:R}, $R_t$ is a bounded linear operator on $\cB(Y)$ for all $t$.
Note that $R_t=\sum_{n=1}^\infty R_n e^{int}$.  
It follows from Corollary~\ref{cor:specR}(iii) that 
$\sum_{n=1}^\infty n \|R_n\|_\cB<\infty$ so $t\mapsto R_t$ is $C^1$.
In particular, $\|R_t\|_\cB = O(|t|)$.
The result now follows from Theorem~\ref{thm:stable} (with $\beta=1$).
\end{proof}

\begin{prop} \label{prop:I}
Let $v:X\to\R$ be H\"older.  Suppose that $v(0,\theta)\equiv I$
for some $I\in\R$.  Define $V=\sum_{\ell=0}^{\varphi-1}v\circ f^\ell$.
Then $V-I\varphi\in L^p(Y)$ for some $p>\alpha$.
\end{prop}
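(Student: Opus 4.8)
The plan is to split $V - I\varphi = \sum_{\ell=0}^{\varphi-1}\big(v\circ f^\ell - v(0,\theta_\ell)\big)$ (where $\theta_\ell$ denotes the $\T$-coordinate of $f^\ell(y,\theta)$, and we use $v(0,\theta)\equiv I$) into the contributions coming from the orbit segment while it lies deep in the intermittent region near $\{x=0\}$ and the bounded-length tail. Concretely, for $(y,\theta)\in Y_{n,j}$ we write $f^\ell(y,\theta) = (y_\ell,\theta_\ell)$; since $F = f^\varphi$ is the first return to $Y = ([\uexfrac,1]\times\T)\cap X$, for $1 \le \ell \le \varphi - 1$ the point $(y_\ell,\theta_\ell)$ lies in $[0,\uexfrac]\times\T$ with $x$-coordinate small when $n - \ell$ is large. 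The key quantitative input is Proposition~\ref{prop:xn} (and Lemma~\ref{lem:DF}): along such an orbit, $y_\ell \sim (c_0\gamma(n-\ell))^{-\alpha}$ uniformly, i.e. $|y_\ell| \ll (n-\ell)^{-\alpha}$.

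First I would bound $|v\circ f^\ell - v(0,\theta_\ell)| = |v(y_\ell,\theta_\ell) - v(0,\theta_\ell)| \le \|v\|_{\C^\eta}\,|y_\ell|^\eta \ll (n-\ell)^{-\alpha\eta}$ using that $v$ is H\"older with some exponent $\eta\in(0,1)$ and that the first coordinate of $(0,\theta_\ell)$ and $(y_\ell,\theta_\ell)$ differ by $y_\ell$. Summing over $\ell = 0,\dots,\varphi - 1 = n-1$ on $Y_{n,j}$ gives, when $\alpha\eta > 1$, a bounded contribution $\sum_{k\ge 1}k^{-\alpha\eta} < \infty$; when $\alpha\eta \le 1$ it gives $O(n^{1-\alpha\eta})$ (with a $\log n$ at equality). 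Either way, $|V - I\varphi| \ll 1 + \varphi^{\,1-\alpha\eta}$ pointwise on $Y$ (interpreting the exponent as $0$, with a log, if $\alpha\eta \ge 1$). Then I would integrate the $p$-th power over $Y$ using the tail estimate from Proposition~\ref{prop:mu}, namely $\mu_Y(\varphi = n) \ll n^{-(1+\alpha)}$: the integral $\int_Y |V - I\varphi|^p\,d\mu_Y \ll \sum_n n^{p(1-\alpha\eta)}\,n^{-(1+\alpha)}$ converges provided $p(1-\alpha\eta) - (1+\alpha) < -1$, i.e. $p(1-\alpha\eta) < \alpha$, i.e. $p < \alpha/(1-\alpha\eta)$. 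The right-hand side exceeds $\alpha$ (strictly) as soon as $1 - \alpha\eta < 1$, which holds automatically since $\alpha\eta > 0$; and if $\alpha\eta \ge 1$ the pointwise bound is even $O(1)$ (or $O(\log\varphi)$) and any $p < 1 + \alpha$ works, in particular some $p > \alpha$ since $\alpha < 2 \le 1+\alpha$. Hence there exists $p > \alpha$ with $V - I\varphi \in L^p(Y)$, which is the claim.

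The one point requiring a little care — and the main obstacle — is to make the pointwise estimate $|y_\ell| \ll (n-\ell)^{-\alpha}$ genuinely uniform over all of $Y_{n,j}$ and all $j$, not just asymptotically for the specific backward orbit used in the statement of Proposition~\ref{prop:xn}. This is already handled there: the proposition states $x_n \sim c_1 n^{-\alpha}$ uniformly in $\theta$, and Lemma~\ref{lem:DF} gives the corresponding uniform control on $f^\ell(y,\theta)$ for $(y,\theta)\in Y_{n,j}$ via $y_{n-k} \sim (c_0\gamma k)^{-\alpha}$ uniformly in the initial $\theta$. So the uniform constant in $|v\circ f^\ell - v(0,\theta_\ell)| \ll (n-\ell)^{-\alpha\eta}$ depends only on $\|v\|_{\C^\eta}$, $c_0$, $\gamma$, and we may sum freely. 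A minor secondary check is that the $\ell = 0$ and small-$n - \ell$ terms (where $y_\ell$ is merely bounded, not small) contribute only $O(1)$ per unit length of orbit near the return, which is harmless. With these observations the estimate goes through and yields the stated $L^p$ membership for some $p > \alpha$.
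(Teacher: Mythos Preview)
Your proof is correct and follows essentially the same approach as the paper: bound $|v(y_\ell,\theta_\ell)-v(0,\theta_\ell)|\le \|v\|_\eta\,y_\ell^\eta\ll (n-\ell)^{-\alpha\eta}$ using Proposition~\ref{prop:xn}, sum to get $|V-I\varphi|\ll \varphi^{1-\alpha\eta}$, and then use the tail of $\varphi$ to conclude $L^p$-membership for some $p>\alpha$. The only cosmetic difference is that the paper avoids your case split by first reducing $\eta$ (without loss) so that $\eta<\gamma$, i.e.\ $\alpha\eta<1$, and then invokes $\varphi\in L^q$ for all $q<\alpha$ rather than summing $\mu_Y(\varphi=n)$ directly.
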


\begin{proof}
Let $\eta\in(0,1)$ be the H\"older exponent for $v$ and suppose without loss that $\eta<\gamma$.  Set $\delta=\eta\alpha\in(0,1)$.
Since $\varphi\in L^q(Y)$ for all $q<\alpha$, it suffices to show that
$V-I\varphi=O(\varphi^{1-\delta})$.

Let $(y,\theta)\in Y_{n,j}$.
Then 
\[
V(y,\theta)-I\varphi(y,\theta)=\sum_{\ell=0}^{n-1}v(f^\ell(y,\theta))
-nI
=\sum_{\ell=0}^{n-1}(v(f^\ell(y,\theta)-v(f^\ell(0,\theta)).
\]
Write $f^\ell(y,\theta)=(y_\ell,\theta_\ell)$.
Then $f^\ell(0,\theta)=(0,\theta_\ell)$,
so
$|V(y,\theta)-I\varphi(y,\theta)|\le |v|_\eta \sum_{\ell=0}^{n-1}y_\ell^\eta$.
By Proposition~\ref{prop:xn},
$y_\ell\ll (n-\ell)^{-\alpha}$ for $\ell=0,\dots,n-1$, so
$|V-I\varphi|\ll |v|_\eta n^{1-\eta\alpha}\ll \varphi^{1-\delta}$ as required.
\end{proof}

\begin{pfof}{Theorem~\ref{thm:stab}}
First we prove the result under the additional assumption
that $v(0,\theta)$ is independent of $\theta$.  Evidently
this constant value is $I_v$, so Proposition~\ref{prop:I}
implies that $V-I_v\varphi\in L^p(Y)$ for some $p>\alpha$.
This is~\cite[condition~(3.2)]{MVsub}.
Also,~\cite[condition~(3.1)]{MVsub} follows from
Proposition~\ref{prop:stable}.
Hence convergence to the desired stable law follows from~\cite[Theorem~3.1]{MVsub} with $c=\bar\varphi^{-1/\alpha}I_v\sigma $.

Define $M_1=\max_{1\le\ell'\le\ell\le\varphi}(v_{\ell'}-v_\ell)\wedge
\max_{1\le\ell'\le\ell\le\varphi}(v_\ell-v_{\ell'})$
where $v_\ell=\sum_{j=0}^{\ell-1}v\circ f^j$.
Suppose for definiteness that $I_v>0$ (the case $I_v$ is treated similarly).
The calculation in Proposition~\ref{prop:I}
shows that $v_\ell=I_v \ell+O(\varphi^{1-\delta})$ for all
$0\le\ell\le\varphi$.  Hence
\[
0\le M_1\le \max_{1\le\ell'\le\ell\le\varphi}(v_{\ell'}-v_\ell)
= \max_{1\le\ell'\le\ell\le\varphi}I_v(\ell'-\ell)+O(\varphi^{1-\delta}).
\]
Since $I_v>0$ it follows that $M_1\ll \varphi^{1-\delta}$.
By~\cite[Proposition~3.5]{MVsub}, $n^{-1/\alpha}\max_{j\le n}M_1\circ F^j\to_p0$ on $(Y,\mu_Y)$.  
Hence convergence to the desired L\'evy process follows from~\cite[Theorem~3.2(a)]{MVsub}.

Finally, we relax the additional assumption on $v$.
Write $v=v'+v''$ where
$v''(y,\theta)=v(0,\theta)-I_v$.
We have $W_n=W_n'+W_n''$ where
\[
\SMALL W_n'(t)=n^{-1/\alpha}\sum_{j=0}^{[nt]-1}v'\circ f^j,\qquad
W_n''(t)=n^{-1/\alpha}\sum_{j=0}^{[nt]-1}v''\circ f^j.
\]
Note that $v''$, and hence $v'$, is H\"older and mean zero.
Moreover, $v'(0,\theta)\equiv I_v$, so
$W_n'\to_w W$ in $(D[0,\infty),\cM_1)$.
Also, $u(\theta)=v''(y,\theta)$ is a H\"older mean zero observable for the uniformly expanding map $f_2:\T\to\T$, so 
$n^{-1/2}\sum_{j=0}^{[nt]-1}u\circ f_2^j$ converges weakly to Brownian motion in the uniform topology
(see for example~\cite[Theorem~5]{HofbauerKeller82} which establishes the ASIP and hence the weak convergence).
Hence $n^{-1/2}\sum_{j=0}^{[nt]-1}v''\circ f^j$ converges weakly, so
$W_n''\to_w0$. The result follows.~
\end{pfof}

\appendix

\renewcommand{\thesubsection}{\Alph{section}.\arabic{subsection}}
\section{Construction of the Gibbs-Markov map $G$} \label{app-tower}

This section is devoted to the proof of Lemma \ref{lem:M}. The main step is to verify the hypotheses of
Theorem~3 of \cite{Esl19}.
This is done using Theorem \ref{thm:PE} below.

Recall that \(Y \subset \R^2\) is endowed with the Euclidean metric
\( |(y_1,\theta_1)-(y_2,\theta_2)|=((y_1-y_2)^2+(\theta_1-\theta_2)^2)^{1/2}\). 
For $x\in\R^2$ and $A\subset\R^2$, let  $\metr(x,A)=\inf_{y\in A}|x-y|$
(with $\metr(x,A)=\infty$ if $A=\emptyset$).
Given \(A \subset \R^2\), $\ve>0$, define
\[
\partial_{\ve} A=\{x \in A : \metr(x, \partial A)\le
\ve\}\subset A,
\] 
where \(\partial A\) is the boundary of \(A\) as
a subset of \(\R^2\). 

We prove that the first return map $F:Y\to Y$ satisfies the following properties:

\begin{thm} \label{thm:PE}
Let $\Lambda\in (\frac{1}{\uex},\frac13)$.  There exists $\epstail\in(0,1)$ and
$C>0$ such that the following hold:

\vspace{1ex}
\noindent {\bf Uniform expansion:}
    $|F_a^{-1}z_1-F_a^{-1}z_2| \le \Lambda|z_1-z_2|$
for all $z_1,z_2\in a$ with $|z_1-z_2|<\epstail$ and all $a\in\alpha^Y$.

\vspace{1ex}
\noindent {\bf Bounded distortion:}
  $(JF_a^{-1})(z_1) \leq e^{C |z_1-z_2|}(JF_a^{-1})(z_2)$
for all $z_1,z_2\in a$ and all $a\in\alpha^Y$.

\vspace{1ex}
\noindent {\bf Controlled complexity:}
For every open set \(I \subset Y\) with \(\diam I \le \epstail\) and
all \(\ve< \epstail\),
\[
 \sum_{a\in\alpha^Y}
\frac{\Leb(F_a^{-1} (\partial_{\ve}F(I\cap a))\setminus \partial_{\ve\Lambda}I)}{\Leb\partial_{\ve\Lambda} I} < \Lambda^{-1}-1.
\]

\vspace{1ex}
\noindent {\bf Set \(Z\):}
For all $\delta>0$ sufficiently small, there exist rectangles $Z,Z' \subset Y$  with $\Leb Z<\Leb Z'$ and $\diam Z' \le \delta$ such that

  \begin{equation} \label{eq:supsetY}
F(Z \cap \Int\{\varphi=n\}) = Y\quad\text{for sufficiently large $n$}; 
\end{equation}
and, there exists 
$a_1,a_2\in\alpha^Y$  such that
  \(a_i \subset Z\) and
  \(Fa_i \supset Z'\) for $i=1,2$ and 
\begin{equation} \label{eq:a1a2}
\varphi|_{a_2}-\varphi|_{a_1}=1.
\end{equation}
 \end{thm}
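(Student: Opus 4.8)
The plan is to verify the four displayed properties of Theorem~\ref{thm:PE} one at a time; the first two are immediate from estimates already in hand, the construction of $Z,Z'$ is an explicit geometric computation, and the controlled complexity bound is where essentially all the work lies.

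\textbf{Uniform expansion and bounded distortion.} On $Y$ we have $(Df)_{(y,\theta)}=\uex I$, and by hypothesis $|(Df)_{(x,\theta)}v|\ge|v|$ for $(x,\theta)\in[0,\uexfrac]\times\T$, hence on all of $X$ (using $f(X)=X$). For $a=Y_{n,j}$ and $z\in a$, $F=f^n$ on $a$, so $(DF)_z^{-1}=(Df_z)^{-1}(Df_{fz})^{-1}\cdots(Df_{f^{n-1}z})^{-1}$; since $\varphi(z)=n$ the iterates $fz,\dots,f^{n-1}z$ lie outside $Y$, so every factor except the first is non-expanding while the first equals $\uex^{-1}I$. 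Hence $\|(DF_a)^{-1}\|_\infty\le\uex^{-1}=\tfrac14<\Lambda$, and by the mean value inequality (joining nearby $z_1,z_2$ by a short arc in $a$, which is almost rectangular by Remark~\ref{rmk:square} after shrinking $\epstail$) we get $|F_a^{-1}z_1-F_a^{-1}z_2|\le\tfrac14|z_1-z_2|\le\Lambda|z_1-z_2|$. Bounded distortion is precisely the estimate proved inside Corollary~\ref{cor:PEGM} (equivalently Corollary~\ref{cor:PE2}): $|\log JF(x)-\log JF(x')|\le C_1|Fx-Fx'|$ for $x,x'\in a$; taking $x=F_a^{-1}z_1$, $x'=F_a^{-1}z_2$ gives $(JF_a^{-1})(z_1)\le e^{C_1|z_1-z_2|}(JF_a^{-1})(z_2)$, so $C=C_1$.

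\textbf{Controlled complexity.} Fix open $I\subset Y$ with $\diam I\le\epstail$ and $\ve<\epstail$. The first step is the inclusion
\[
F_a^{-1}\big(\partial_\ve F(I\cap a)\big)\setminus\partial_{\ve\Lambda}I\ \subset\ \{p\in I\cap a:\metr(Fp,\,F(\partial a\cap\overline I))\le\ve\},
\]
which holds because $\partial F(I\cap a)\subset F(\partial I\cap\overline a)\cup F(\partial a\cap\overline I)$ and any point lying over the $\ve$-collar of the first piece pulls back, by the uniform expansion just proved, to within $\Lambda\ve$ of $\partial I$ and is therefore removed by $\partial_{\ve\Lambda}I$; thus the left side is concentrated near the \emph{interior cuts} of $I$, which are arcs of the horizontal segments $\{\theta=k\uex^{-n}\}$ and of the nearly-flat curves $\xi_n(\theta)$. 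The second step bounds the contribution of $a=Y_{n,j}$: an $\ve$-tube about an image arc of length $L$ has area $\ll\ve L$, so pulling back with the estimates of Corollary~\ref{cor:DF} (on horizontal edges $|D\partial F_a|=A\sim\uex n^{1+\alpha}$, on vertical edges $|D\partial F_a|\ll\uex^n$, and $JF\sim\uex^{n+1}n^{1+\alpha}$) a single interior cut in $a$ contributes $\ll\ve\,\uex^{-n-1}n^{-(1+\alpha)}$, the transverse width of the pulled-back collar being $\ll\ve\uex^{-n}$ along a horizontal cut and $\ll\ve n^{-(1+\alpha)}$ along a curve $\xi_n$ — in either case $<\ve\Lambda$, because every cut is expanded transversally by at least $\uex$ (the transverse derivatives being $|(DF_a)\partial_\theta|\ge\uex^n$ and $|(DF_a)\partial_y|=A\ge\uex$). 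The third step sums over $a$: within the column $\mathcal C_n$ the set $\overline I$ meets at most $\tau_I\uex^n+1$ horizontal cuts (where $\tau_I\le\epstail$ is the $\T$-extent of $I$), and $\xi_n,\xi_{n-1}$ are cut into at most $\tau_I\uex^n+1$ pieces in $\overline I$, and the resulting series is summable in $n$ since $\sum_n n^{-(1+\alpha)}<\infty$ and $\sum_n\uex^{-n}n^{-(1+\alpha)}<\infty$; this is also exactly why sets $I$ abutting $\{y=\uexfrac\}$, which meet infinitely many columns $\mathcal C_n$, cause no trouble. The main obstacle is the final comparison with $\Leb\partial_{\ve\Lambda}I$: since $F_a^{-1}(\partial_\ve F(I\cap a))\subset I\cap a$ always, the numerator vanishes whenever $\partial_{\ve\Lambda}I=I$, and in general one must turn the column-by-column estimate into the clean inequality with right-hand side $\Lambda^{-1}-1\in(2,3)$, uniformly over all open $I$ — in particular over $I$ whose collar $\partial_{\ve\Lambda}I$ is small; the mechanism is that each interior cut's pulled-back collar has width $<\ve\Lambda$ and so contributes a controlled fraction of $\Leb\partial_{\ve\Lambda}I$, but making the constants uniform is delicate, and is where the amount of expansion of $f$ is consumed (the hypothesis that $u$ is nearly constant, cf.\ Remarks~\ref{rmk:u} and~\ref{rmk:boundary}); here one may invoke Lemmas~\ref{lem:small} and~\ref{lem:large} and Corollary~\ref{cor:largesmall} as the packaged boundary estimates.

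\textbf{The set $Z$.} Given $\delta>0$ small, let $Z'=(\uexfrac+\tfrac\delta4,\uexfrac+\tfrac\delta2)\times(0,\tfrac\delta4)\subset(\uexfrac,\uexfracsquare)\times\T$, a rectangle with $\diam Z'\le\delta$. Since $f_1(\uexfrac,\theta)>\uexfracsquare$ for all $\theta$, each $X_i\supset[0,\uexfracsquare]\times\T$, hence $FY_{n,j}=([\uexfrac,1]\times\T)\cap X_i\supset(\uexfrac,\uexfracsquare)\times\T\supset Z'$ for every $n\ge2$ and every $j$; moreover, as $j$ runs over $\uex$ consecutive values the images $FY_{n,j}$ run over all $([\uexfrac,1]\times\T)\cap X_i$, $i=0,\dots,\uexminus$ (the $\T$-coordinate being multiplied by $\uex^n$ under $f^n$), so $\bigcup_j FY_{n,j}=([\uexfrac,1]\times\T)\cap X=Y$ over any run of $\uex$ consecutive $j$. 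Now take $Z=(\uexfrac,\uexfrac+h)\times(0,w)\subset Y$ with $0<h<\uexfracsquare-\uexfrac$ and $hw<\Leb Z'$. Because $\xi_{n-1}(\theta)\to\uexfrac$ uniformly (Proposition~\ref{prop:xn}) and $\uex^{-n}\to0$, for all large $n$ every $Y_{n,j}$ whose $\T$-interval lies in $(0,w)$ is contained in $Z$, and there are at least $\uex$ consecutive such $j$; since each $Y_{n,j}$ is open, $F(Z\cap\Int\{\varphi=n\})\supset\bigcup_{i=0}^{\uexminus}\big(([\uexfrac,1]\times\T)\cap X_i\big)=Y$, and trivially $\subset Y$, giving~\eqref{eq:supsetY}. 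Finally set $a_1=Y_{n,1}$ and $a_2=Y_{n+1,1}$ for such an $n$: both lie in $Z$, $\varphi|_{a_2}-\varphi|_{a_1}=(n+1)-n=1$, and $Fa_i\supset(\uexfrac,\uexfracsquare)\times\T\supset Z'$, which is~\eqref{eq:a1a2}; and $\Leb Z=hw<\Leb Z'$ by construction.
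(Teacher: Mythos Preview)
Your treatments of uniform expansion, bounded distortion, and the set $Z$ are essentially correct (modulo a slip: the short arc joining $z_1,z_2$ must lie in $Fa$, not in $a$; Remark~\ref{rmk:square} is about $a=Y_{n,j}$, whereas what you need is that the finitely many image sets $Fa$ have piecewise $C^1$ boundary, so that nearby points can be joined by an arc of length $(1+\delta)|z_1-z_2|$ inside $Fa$). Your construction of $Z,Z'$ differs from the paper's (which takes concentric squares at the corner $(\uexfrac,0)$) but verifies the stated properties.

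The genuine gap is in controlled complexity. Your steps~1--3 lead correctly to the inclusion the paper also uses, namely
\[
F_a^{-1}(\partial_\ve F(I\cap a))\setminus\partial_{\ve\Lambda}I\ \subset\ (I\cap\tilde\partial_{\ve\Lambda_n}\partial a)\setminus\partial_{\ve\Lambda}I,
\]
with $\Lambda_n=\sup|DF_a^{-1}|$. But your step~4 does not work. An area bound of the form $\ll\ve\cdot(\text{length})$ on each pulled-back collar cannot be turned into a bound by a fixed multiple of $\Leb\partial_{\ve\Lambda}I$ uniformly over all open $I$: there is no uniform lower bound on $\Leb\partial_{\ve\Lambda}I$ in terms of $\ve$ alone. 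Moreover, Lemmas~\ref{lem:small}, \ref{lem:large} and Corollary~\ref{cor:largesmall} are $\BV$ boundary estimates for integrals $\int_{\partial a}|v|\,|D\partial F_a/JF_a|$ against a $C^1$ test function $v$; they say nothing about Lebesgue measures of collars and cannot be invoked here.

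What is actually needed is a geometric comparison lemma of B\'alint--T\'oth type: if $E$ is the graph of an $L$-Lipschitz function, then for $0\le\bar\xi\le1$,
\[
\Leb\big((I\cap\tilde\partial_{\ve\bar\xi}E)\setminus\partial_\ve I\big)\ \le\ \bar\xi(1+L)\,\Leb\partial_\ve I.
\]
This is precisely the mechanism by which the collar around an interior cut is dominated by $\Leb\partial_{\ve\Lambda}I$, with the factor $\bar\xi=\Lambda_n/\Lambda$ providing summability over $n$. The paper proves this extension of \cite[Sublemma~C.1]{BT08} and then organises the argument into two cases (small $\diam I$ meeting at most one horizontal and one vertical cut among the first $n_0$ columns, where the bound $2+L_1<\Lambda^{-1}-1$ is used; or $I$ contained in columns with $n\ge n_0$, where the summability of $(\Lambda_n/\Lambda)(1+L_0)$ is used). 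Without this lemma or an equivalent device, the comparison in your step~4 is not justified.
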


\begin{pfof}{Lemma~\ref{lem:M}}
Theorem~\ref{thm:PE} implies in particular that we have verified the hypotheses of~\cite[Theorem~3]{Esl19}.  This guarantees the existence of the desired refinement $\alpha^Y_1$, the subset $Z$ (as given in Theorem~\ref{thm:PE}), the return time $\rho:Z\to\Z^+$ constant on elements of $\alpha^Z=\{a'\in\alpha^Y_1:a'\subset Z\}$ and the induced map $G=F^\rho:Z\to Z$.  Moreover, conditions (a) and (b) of Lemma~\ref{lem:M} follow directly from~\cite[Theorem~3]{Esl19}(a),(c).

In addition,~\cite[Theorem~3]{Esl19}(b) states that 
    \(\Leb(\{\rho =1\} \cap a_i) >0\) for $i=1,2$.
This combined with~\eqref{eq:a1a2} guarantees that 
Lemma~\ref{lem:M}(c) holds.
Finally, Lemma~\ref{lem:M}(d) follows from~\eqref{eq:supsetY}.
\end{pfof}

In the next four subsections, we verify the four properties listed in Theorem~\ref{thm:PE}.

\subsection{Uniform expansion}

By Proposition~\ref{prop:expand},
$|DF_a^{-1}|\le \uexfracone$ on $Fa$ for all $a\in\alpha^Y$.

\begin{lemma} \label{lem:unifexp} 
For every \(\delta>0\) there exists
  \(\epstail>0\) such that for all $z_1, z_2 \in Fa$ with
  \(| z_1-z_2| < \epstail\) and all $a\in\alpha^Y$, 
there exists a path
  \(\gamma: [0,1] \to \R^2\) contained in \(Fa\), joining
  \(z_1\) and \(z_2\), and having length bounded by
  \((\curvlen) |z_1-z_2|\).
\end{lemma}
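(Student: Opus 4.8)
\medskip
\noindent\textbf{Outline of a proof.}\quad The plan is to exploit the very simple shape of the image sets $Fa$. First I would record, from the construction of $F$ in Section~\ref{sec:calc}, that $\{Fa:a\in\alpha^Y\}$ is finite and that every $Fa$ is a region of the form
\[
R_\psi=\{(y,\theta):\theta\in\T,\ \uexfrac\le y\le\psi(\theta)\},
\]
i.e.\ it sits above the circle $\{y=\uexfrac\}$ and is capped by the graph of a Lipschitz function $\psi:\T\to(\uexfrac,1]$. Indeed, for $\varphi(a)=n\ge2$ we have $Fa=([\uexfrac,1]\times\T)\cap X_i=R_{\psi_i}$ with $\psi_i(\theta)=f_1(\uexfrac,\tfrac{i+\theta}{\uex})$, while for $\varphi(a)=1$ the linear branch $f(y,\theta)=(\uex y-\uexminus,\uex\theta\bmod1)$ unrolls the $\theta$-strip of $Y_{1,j}$ onto all of $\T$ and rescales $y$ affinely, so $FY_{1,j}$ is again of this form. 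Since $\tfrac{\partial f_1}{\partial\theta}(\uexfrac,\theta)=(\uexfrac)^{1+\gamma}\tfrac{\partial u}{\partial\theta}(\uexfrac,\theta)$, each such $\psi$ satisfies $|\psi'|_\infty\le M$ with $M\to0$ as $|\tfrac{\partial u}{\partial\theta}|_\infty\to0$; by the standing hypothesis that $u$ is close to constant (Remark~\ref{rmk:u}) we may assume $M\le\sqrt{2\delta}$. Finally, since $Y\subset[\uexfrac,1]\times\T$, every point of $Y$ has $y$-coordinate at least $\uexfrac$.

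Given $z_1=(y_1,\theta_1),z_2=(y_2,\theta_2)\in Fa=R_\psi$ with $|z_1-z_2|<\epstail$, I would choose $\epstail<\tfrac12$ so that $\theta_1,\theta_2$ span an honest subinterval of $\T$ with no wrap-around at the seam of the fundamental domain; say $\theta_1\le\theta_2$. If $\theta_1=\theta_2$, the vertical segment from $z_1$ to $z_2$ lies in $R_\psi$ and has length $|z_1-z_2|$, so assume $\theta_1<\theta_2$. Let $\ell$ be the affine function on $[\theta_1,\theta_2]$ with $\ell(\theta_i)=y_i$, put $\gamma=\min\{\ell,\psi\}$, and let $\Gamma(\theta)=(\gamma(\theta),\theta)$, $\theta\in[\theta_1,\theta_2]$. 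Then $\Gamma$ joins $z_1$ to $z_2$ (because $y_i\le\psi(\theta_i)$ gives $\gamma(\theta_i)=y_i$) and $\Gamma\subset R_\psi$: by construction $\gamma\le\psi$, while $\gamma\ge\uexfrac$ since $\ell$ takes values between $y_1$ and $y_2$, both $\ge\uexfrac$, and $\psi\ge\uexfrac$. (If $Fa$ is taken to be open, one replaces $\psi$ by $\psi-\eta$ with $0<\eta<\min_i(\psi(\theta_i)-y_i)$ to keep $\Gamma$ in $Fa$; this changes neither $|\psi'|_\infty$ nor any length estimate.)

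For the length, write $A=\{\theta\in[\theta_1,\theta_2]:\ell\le\psi\}$ and $B=[\theta_1,\theta_2]\setminus A$, so that $\gamma=\ell$ on $A$ and $\gamma=\psi$ on $B$. Setting $\ell'=\tfrac{y_2-y_1}{\theta_2-\theta_1}$ and using $|z_1-z_2|=(\theta_2-\theta_1)\sqrt{1+(\ell')^2}$, the length of $\Gamma$ equals (recall $\psi$ is Lipschitz)
\[
\int_A\sqrt{1+(\ell')^2}\,d\theta+\int_B\sqrt{1+(\psi')^2}\,d\theta\ \le\ \sqrt{1+(\ell')^2}\,|A|+\sqrt{1+M^2}\,|B|.
\]
If $|\ell'|\ge M$, the right-hand side is $\le\sqrt{1+(\ell')^2}(|A|+|B|)=|z_1-z_2|$. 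If $|\ell'|<M$, it equals $|z_1-z_2|+|B|\big(\sqrt{1+M^2}-\sqrt{1+(\ell')^2}\big)\le|z_1-z_2|+(\theta_2-\theta_1)(\sqrt{1+M^2}-1)\le(1+\tfrac{1}{2}M^2)|z_1-z_2|\le(\curvlen)|z_1-z_2|$ by the choice of $M$ (here $\theta_2-\theta_1\le|z_1-z_2|$). In either case the length of $\Gamma$ is at most $(\curvlen)|z_1-z_2|$, which is the assertion.

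The only points that require care are: checking that all of the finitely many images $Fa$ — in particular the four images of the $\varphi=1$ cells — genuinely have the clean ``graph over $\T$'' shape $R_\psi$ with small slope; and the case split $|\ell'|\gtrless M$ in the length estimate, which is exactly where the assumption that $u$ is close to constant is used (it is what lets us bound $\sqrt{1+M^2}-1$ by $\delta$). Everything else is routine.
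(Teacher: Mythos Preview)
Your construction of the path as the graph of $\gamma=\min\{\ell,\psi\}$ is essentially the same detour-along-the-boundary that the paper uses (straight segment where possible, boundary arc where not), so the geometric idea is right. The issue is in the length estimate, where there is a quantifier slip.

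You bound $\int_B\sqrt{1+(\psi')^2}\le\sqrt{1+M^2}\,|B|$ using the global Lipschitz constant $M$ of $\psi$, and then arrange $M\le\sqrt{2\delta}$ by invoking Remark~\ref{rmk:u}. But the map $f$---and hence $M$---is fixed \emph{before} the lemma is stated, whereas $\delta$ is the free quantifier in the lemma. Once $u$ is chosen, $M$ is a fixed positive number, and your bound gives only
\[
\text{length}(\Gamma)\le\big(1+\tfrac12 M^2\big)|z_1-z_2|,
\]
which establishes the conclusion only for $\delta\ge M^2/2$, not for every $\delta>0$. (Note also that your $\epstail$ plays no role in the length bound; you use it only to avoid wrap-around in $\T$.) The paper instead exploits that $\psi$ is $C^1$: for a $C^1$ curve the arc-length/chord-length ratio tends to $1$ as the chord shrinks, so taking $\epstail$ small forces the detour to have length $(1+o(1))$ times the straight segment, uniformly over the finitely many images $Fa$. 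That is how the paper gets the result for arbitrary $\delta$ without any smallness assumption on $u$.

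Your argument is easily repaired along the same lines. If $B\neq\emptyset$ then $\ell-\psi$ vanishes at $\theta_1,\theta_2$ and is positive somewhere in between, so there exists $\theta^\ast\in(\theta_1,\theta_2)$ with $\psi'(\theta^\ast)=\ell'$. By continuity of $\psi'$ (and finiteness of $\{Fa\}$) one has $\sup_{[\theta_1,\theta_2]}|\psi'-\ell'|\to0$ as $\epstail\to0$, hence
\[
\int_B\sqrt{1+(\psi')^2}\,d\theta\le\big(1+o(1)\big)\sqrt{1+(\ell')^2}\,|B|,
\]
and the total length is $\le(1+o(1))|z_1-z_2|$. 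This recovers the lemma for every $\delta>0$ and makes the appeal to Remark~\ref{rmk:u} unnecessary.
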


\begin{proof}
  The boundary of \(Fa\) is a rectangle except that its right
  boundary is a $C^1$ curve which we denote by \(\psi\). 
  Denote the line segment joining \(z_1\)
  and \(z_2\) by \(S\). If \(S\) lies in \(Fa\), then take
  \(\gamma\) to be the path corresponding to this line segment. If
  not, then \(S\) intersects the boundary of \(Fa\). Let
  \(p_1, p_2\) be the points of intersection closest to
  \(z_1, z_2\), respectively. Define \(\gamma\) to be the path
  corresponding to starting at \(z_1\), travelling on \(S\) until
  \(p_1\), then travelling on the boundary of \(Fa\) until
  \(p_2\) and then continuing on \(S\) to~\(z_2\). Since \(\psi\)
  is smooth, the length of \(\gamma\) can be made arbitrarily close to
  the length of \(S\) by choosing \(\epstail \) sufficiently
  small. (The path $\gamma$ may not be entirely contained in
  \(Fa\), but a small translation of it will be entirely inside
  \(Fa\).)
\end{proof}

Choose $\delta$ so that $\uexfracone(1+\delta)<\Lambda$, and fix $\epstail$
as in Lemma \ref{lem:unifexp}.
Let $z_1,z_2\in Fa$ with $|z_1-z_2|<\epstail$ and choose $\gamma$ as in
Lemma \ref{lem:unifexp}.

Now,
    $F_a^{-1}z_2-F_a^{-1}z_1
     = (F_a^{-1} \circ \gamma)(1) -(F_a^{-1} \circ \gamma) (0) 
= \int_0^1 D(F_a^{-1}\circ \gamma)(t) dt$,
so by Lemma \ref{lem:unifexp},
\[
  \begin{split}
    |F_a^{-1}z_1-F_a^{-1}z_2|
    &\leq \int_0^1|(DF_a^{-1}) (\gamma(t))\,
      \gamma'(t)|\,dt \leq \sup_{t \in [0,1]} |D
      F_a^{-1} (\gamma(t))| \int_0^1
    |\gamma'(t)|\,dt\\
    & \SMALL \leq \uexfracone (\curvlen) |z_1-z_2|<\Lambda|z_1-z_2|,
  \end{split}
\]
as required.

\subsection{Bounded distortion}
By Corollary \ref{cor:PE2}, there exists $C>0$ such that
  \[
    |1/JF(y_1,\theta_1)-1/JF(y_2,\theta_2)|\le C
    \inf_a(1/JF)|F(y_1,\theta_1)-F(y_2,\theta_2)|
  \]
  for all $(y_1,\theta_1),\,(y_2,\theta_2)\in a$ and all $a$.
Writing $z_1=F(y_1,\theta_1)$, 
$z_2=F(y_2,\theta_2)$, 
it follows that
\[
  \frac{(JF_a^{-1})(z_1)}{(JF_a^{-1})(z_2)} \leq 1+ C |z_1- z_2| \leq e^{C |z_1- z_2|},
\]
yielding the desired distortion condition.

\subsection{Controlled Complexity}
\label{subsec:Complexity}
For the proof of this property we need a generalization of \cite [Sublemma
C.1]{BT08}, which appears below as Proposition~\ref{BTGenLip}.

Recall that $\partial_\ve A$ is defined as a subset of $A$.
We also define $\tilde\partial_\ve A=\{x\in\R^2:\metr(x,\partial A)\le\ve\}$.
(Hence $\partial_\ve A=\tilde\partial_\ve A \cap A$).

Let us recall \cite[Sublemma C.1]{BT08} in a form that suffices for
our purposes. 
We refer to
its proof briefly at the end of the proof of Lemma \ref{BTGen}. 

\begin{lemma}[Sublemma C.1 of \cite{BT08}] \label{lem:BT} Suppose
  \(I\) is a non-empty measurable bounded subset of the plane and
  \(E\) is a straight line cutting \(I\) into left and right parts
  \(I_{l}\) and \(I_{r}\). Then for all \(\ve \ge 0\), \(0 \le \xi \le 1\), 
  \begin{equation} \label{eq:BT}
    \begin{split}
      \Leb(\{x \in I_{l}: \metr(x, E)\leq \ve\xi \}\setminus \{x \in
      I: \metr(x, \partial I) \leq \ve\})
      &\leq \\
      & \hspace{-3cm} \xi \Leb\{x \in I_{r}: \metr(x,
      \partial I)\le \ve\}.
    \end{split}
  \end{equation}  
\end{lemma}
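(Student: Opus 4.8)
The plan is to reduce \eqref{eq:BT} to a one-dimensional estimate via Fubini.  First I would apply a rigid motion so that $E$ is the $x_2$-axis, with (after possibly swapping the two sides) $I_l=I\cap\{x_1\le0\}$, $I_r=I\cap\{x_1\ge0\}$ and $\metr(x,E)=|x_1|$; this changes nothing since $\Leb$ is invariant under rigid motions.  For $c\in\R$ let $I_c=\{s\in\R:(s,c)\in I\}$ be the horizontal slice, which is a bounded measurable subset of $\R$ for a.e.\ $c$, and let $\partial I_c$ be its boundary \emph{in} $\R$.  The single geometric fact I would isolate is
\[
\metr\big((s,c),\partial I\big)\le\metr\big(s,\partial I_c\big)\qquad\text{for all }s,c,
\]
which holds because $\partial I_c\times\{c\}\subset\partial I$: a point $s_0\in\partial I_c$ is a limit of points of $I_c$ and of points of $\R\setminus I_c$, so $(s_0,c)$ is a limit of points of $I$ and of points of $\R^2\setminus I$, i.e.\ $(s_0,c)\in\partial I$.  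Consequently the planar set $\{x:\metr(x,\partial I)\le\ve\}$ slices to a superset of $\{s:\metr(s,\partial I_c)\le\ve\}$, which is the direction of inclusion I need both when subtracting on the left of \eqref{eq:BT} and when bounding on the right.

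Next I would prove the one-dimensional version: for a bounded measurable $J\subset\R$, with $J_l=J\cap(-\infty,0]$ and $J_r=J\cap[0,\infty)$,
\[
\Leb\big(\{x\in J_l:|x|\le\ve\xi\}\setminus\{x\in J:\metr(x,\partial J)\le\ve\}\big)\le\xi\,\Leb\{x\in J_r:\metr(x,\partial J)\le\ve\}.
\]
If the left-hand set is empty this is trivial; otherwise the key observation is that any $x\in J$ with $\metr(x,\partial J)>\ve$ is interior to $J$ and satisfies $(x-\ve,x+\ve)\subset J$ (the interval is connected, contains the interior point $x$, and misses the closed set $\partial J$).  Let $\beta=\sup\{t>0:[0,t)\subset J\}$, which then lies in $(0,\infty)$, and note $\beta\in\partial J$ with $[0,\beta)\subset J_r$.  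For $x$ in the left-hand set one gets $[0,x+\ve)\subset J$, hence $x+\ve\le\beta$, so the left-hand set sits inside $[-\ve\xi,\min(0,\beta-\ve)]$; on the other hand the right-hand set contains $[\max(0,\beta-\ve),\beta)$, of measure $\min(\ve,\beta)$.  A two-case check now finishes it: if $\beta\ge\ve$ the left measure is at most $\ve\xi=\xi\ve\le\xi\min(\ve,\beta)$, and if $\beta<\ve$ it is at most $\beta-\ve(1-\xi)\le\xi\beta$, the last step using $0\le\xi\le1$ in the form $(1-\xi)\beta\le(1-\xi)\ve$.

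Finally I would slice \eqref{eq:BT}, apply the one-dimensional inequality to $I_c$ for a.e.\ $c$, use the geometric fact above to pass back from slices of $\partial I_c$ to slices of $\partial I$ on both sides, and integrate over $c$.  I expect the only real fuss to be this slicing bookkeeping — making sure the planar $\ve$-neighbourhood of $\partial I$ is used in the correct direction on each side of \eqref{eq:BT}, and absorbing the a.e.\ caveats from Fubini together with degenerate slices $I_c=\emptyset$ — rather than any single hard step, since the one-dimensional estimate is elementary once the maximal interval $[0,\beta)$ is brought in.  (Alternatively one may just quote \cite[Sublemma~C.1]{BT08}, from which the displayed statement is taken verbatim.)
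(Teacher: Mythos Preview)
The paper does not prove this lemma; it is quoted from \cite{BT08} and the paper simply refers the reader there (the only use of the argument is the one-line invocation at the end of the proof of Lemma~\ref{BTGen}).  Your slicing-by-lines-perpendicular-to-$E$ reduction to a one-dimensional estimate is correct and is essentially the argument in \cite[p.~1364]{BT08}; indeed the paper's own proof of the related Lemma~\ref{BTGen} proceeds by the same Fubini strategy, integrating a slice estimate over $z\in S$.

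One small remark on your 1D step: you pass from ``left-hand set nonempty'' to ``$\beta>0$'' by picking $x$ in the left-hand set and using $[0,x+\ve)\subset J$, but if $\xi=1$ and the only such $x$ is $x=-\ve$ then $x+\ve=0$ and the interval is empty.  This is harmless (the left-hand set then has measure~$0$), but to be clean you should assume the left-hand set has \emph{positive} measure, pick $x>-\ve\xi$ in it (possible since $\{-\ve\xi\}$ is null), and then $x+\ve>\ve(1-\xi)\ge0$ strictly.  With that tweak your argument is complete.
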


In Proposition~\ref{BTGenLip} we generalize to the case
where \(E\) is the graph of a Lipschitz function, but first we prove a
lemma which is similar in flavour but applies to segments which may or
may not intersect \(I\).  This lemma would follow from the one above if
\(0 \le \xi \le 1\) and \(S\) (taking the place of \(E\)) were a
hyperplane in \(\R^2\) cutting through \(I\).

Given a straight line segment \(S \in \R^2\) and $x\in\R^2$, define
\(\metr^{\perp}\) as follows. Suppose
\(x \in \R^2\). If there exists a line that passes through \(x\),
intersects \(S\) and is perpendicular to \(S\), then
\(\metr^{\perp}(x, S) = \metr(x, S)\). If not, then
\(\metr^{\perp}(x, S) = \infty\). If \(S\) is the graph of a piecewise
constant function, then one can define \(\metr^{\perp}(x, S) \)
similarly.
  
\begin{lemma} \label{BTGen} Suppose \(I\) is a measurable
  bounded subset of the plane and \(S\) is a straight line segment in
  the plane.  Then for all \(\ve \ge 0\), \(\xi \ge 0\),
  \begin{equation} \label{eq:BTGen}
    \begin{split}
      \Leb(\{x \in I: \metr^{\perp}(x, S)\leq \ve\xi \}\setminus \{x
      \in I: \metr(x, \partial I) \leq \ve\})
      &\leq \\
      & \hspace{-3cm} \xi \Leb\{x \in I: \metr(x,
      \partial I)\le \ve\}.
    \end{split}
  \end{equation}  
\end{lemma}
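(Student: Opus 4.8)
The plan is to rotate and translate coordinates so that $S=[0,a]\times\{0\}$ with $a$ the length of $S$, and then to reduce the planar estimate to a one-dimensional statement along the vertical lines $\ell_t=\{t\}\times\R$, which are exactly the lines perpendicular to $S$. In these coordinates $\metr^{\perp}(x,S)=|s|$ for $x=(t,s)$ with $t\in[0,a]$, and $\metr^{\perp}(x,S)=\infty$ otherwise, so the two sets in question are
\[
A:=\{x\in I:\metr^{\perp}(x,S)\le\ve\xi\}=I\cap([0,a]\times[-\ve\xi,\ve\xi]),
\qquad
B:=\{x\in I:\metr(x,\partial I)\le\ve\}.
\]
Both are Lebesgue measurable ($\partial I$ is closed and $\metr(\cdot,\partial I)$ is continuous), so by Tonelli it suffices to bound the $t$-slices. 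For $t\in[0,a]$ set $I_t=\{s:(t,s)\in I\}\subset\R$, $A_t=I_t\cap[-\ve\xi,\ve\xi]$, $B_t=\{s\in I_t:\metr((t,s),\partial I)\le\ve\}$, and the intrinsic slice version $\widetilde B_t=\{s\in I_t:\metr(s,\partial I_t)\le\ve\}$, the latter distance taken in $\R$ to the topological boundary of the slice $I_t$. Note that $A\setminus B$ has $t$-slice $A_t\setminus B_t$, and that $A_t=\emptyset$ for $t\notin[0,a]$, so it is enough to prove $\Leb(A_t\setminus B_t)\le\xi\,\Leb(B_t)$ for each $t\in[0,a]$ and then integrate, using $\int_0^a\Leb(B_t)\,dt\le\Leb(B)$.

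The proof of the per-slice bound rests on two observations. First, $\partial I_t\subset\{s:(t,s)\in\partial I\}$: a point $s^\ast\in\partial I_t$ is simultaneously a limit of points of $I_t$ and of points of its complement, hence $(t,s^\ast)$ is a limit of points of $I$ and of points of $I^c$ and therefore lies in $\partial I$; consequently $\metr(s,\partial I_t)\ge\metr((t,s),\partial I)$ and so $\widetilde B_t\subset B_t$. Second — and this is the decisive point — either $A_t\setminus\widetilde B_t=\emptyset$ or $\Leb(\widetilde B_t)\ge 2\ve$. Indeed, if $s_0\in A_t\setminus\widetilde B_t$ then $s_0\in I_t$ with $\metr(s_0,\partial I_t)>\ve$; in particular $s_0\notin\partial I_t$, so $s_0\in\Int I_t$, and the (bounded) connected component $(c,d)$ of $\Int I_t$ containing $s_0$ has $c,d\in\partial I_t$ and $d-c>2\ve$, whence $(c,c+\ve)\cup(d-\ve,d)\subset\widetilde B_t$, a set of measure $2\ve$. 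Combining the two observations: for every $t\in[0,a]$,
\[
\Leb(A_t\setminus B_t)\le\Leb(A_t\setminus\widetilde B_t)\le\Leb(A_t)\le 2\ve\xi\le\xi\,\Leb(\widetilde B_t)\le\xi\,\Leb(B_t),
\]
where the chain is to be read as $0\le\xi\,\Leb(B_t)$ in the case $A_t\setminus\widetilde B_t=\emptyset$. Integrating in $t$ over $[0,a]$ and using Tonelli gives $\Leb(A\setminus B)\le\xi\,\Leb(B)$, which is the assertion.

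The routine parts are the coordinate change, the measurability bookkeeping needed to invoke Tonelli, and the elementary one-dimensional geometry of $\Int I_t$ (open in $\R$, hence a countable disjoint union of intervals). The one place that needs a little attention is that $I$ is only assumed measurable rather than open, so the argument must be phrased throughout in terms of the topological interior and boundary of the planar set $I$ and of its slices $I_t$ rather than assuming, say, $I$ open. The main obstacle is spotting the dichotomy in the second observation: it is precisely the statement that a single point of a slice lying in $A_t$ but far from $\partial I_t$ already forces a full $2\ve$ of near-boundary mass in that slice, and it is this fact — not available from Lemma~\ref{lem:BT} — that lets the estimate hold for all $\xi\ge 0$ rather than only for $\xi\le 1$, and also removes the need for $S$ to be a full line cutting $I$.
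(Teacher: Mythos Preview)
Your proof is correct and, like the paper's, slices perpendicular to $S$ and then integrates a per-slice estimate.  The per-slice argument, however, is genuinely different.  The paper partitions $A_z$ into $\lfloor\xi\rfloor$ subintervals of length $\ve$ plus a fractional remainder; for each full subinterval it uses a translation argument along $e_z$ to show its measure is at most $\Leb_z B_z$, and for the fractional piece it appeals to the proof of Lemma~\ref{lem:BT} from~\cite{BT08}.  Your argument instead passes to the intrinsic one-dimensional boundary $\partial I_t$ of the slice (using $\partial I_t\subset\{s:(t,s)\in\partial I\}$, hence $\widetilde B_t\subset B_t$) and then exploits a clean dichotomy: either $A_t\setminus\widetilde B_t=\emptyset$, or a single point of $A_t$ at distance $>\ve$ from $\partial I_t$ forces its connected component in $\Int I_t$ to have length $>2\ve$, giving $\Leb(\widetilde B_t)\ge 2\ve$ and hence $\Leb(A_t)\le 2\ve\xi\le\xi\Leb(\widetilde B_t)$.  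This is more elementary and entirely self-contained: it avoids the translation trick, the decomposition into $\lfloor\xi\rfloor$ pieces, and the external reference.  The paper's route, on the other hand, tracks more closely how the result generalises~\cite[Sublemma~C.1]{BT08}, and its translation idea is what makes the later Lipschitz extension (Proposition~\ref{BTGenLip}) feel natural; your dichotomy gives the same endpoint but obscures that lineage.
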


\begin{proof}
  Fix \(\ve \ge 0\), \(\xi \ge 0\). Let 
\[
      A = \{x \in I: \metr^{\perp}(x,S)\leq 
      \ve\xi\}\setminus B, \text{ where }
      B = \{x \in I: \metr(x,
      \partial I)\le \ve\}.
  \]
  We show that \(\Leb A \le \xi \Leb B\).

  Let \(e_z\) be the line perpendicular to \(S\) at the point
  \(z \in S\) and let \(A_z= A \cap e_z\) and
  \(B_z=B \cap e_z\). Given \(\ve'>0\) and an interval
  \(J_{\ve'}\) of length \(\ve'\) inside \(e_z\), denote
  \(A_z(\ve') = A_z \cap J_{\ve'}\). Points of
  \(A_z(\ve)\) are by definition at least distance \(\ve\) from
  \(\partial I\) so there exists a translate of \(A_z(\ve)\)
  along \(e_z\) that lies in \(B_z\).  It follows from translation
  invariance of Lebesgue measure $\Leb_z$ on $e_z$ that
  \(\Leb_z A_z(\ve) \le \Leb_z B_z\). 

Let us write \(\xi = \lfloor \xi \rfloor + \{\xi\} \), where \(\{\xi\}\)
  denotes the fractional part of \(\xi\). Since \(A_z\)
  can be partitioned by
  \(\lfloor \xi \rfloor\) sets of the form \(A_z(\ve)\) plus
  one remainder set of the form \(A_z(\{\xi\}\ve)\), it
  follows that
  \begin{equation} \label{eq:temp} \Leb_z A_z \le
    \lfloor \xi \rfloor \Leb_z (B_z) + \Leb_z  A_z(\ve\{\xi\}).
  \end{equation}
Now we show that
  \(\Leb_z A_z(\ve\{\xi\}) \le \{\xi\}
  \Leb_z B_z\) bounding the second term of
  \eqref{eq:temp}. If \(z \in S \setminus I\), then
  \(A_z(\ve\{\xi\}) = \emptyset\) because \(\{\xi\} < 1\), so
  we are done. Otherwise, if \(z \in S \cap I\), the claim follows
  directly from the proof of Lemma \ref{lem:BT} given in
  \cite[p.1364]{BT08} because \(0\le \{\xi\} < 1\).

  We have proved that
  \(\Leb_z A_z \le \xi \Leb_z B_z\). 
  Integrating over \(z \in S\) with respect to Lebesgue measure on
  \(S\), we obtain \(\Leb A \le \xi \Leb B\) as required.
\end{proof}

\begin{prop} \label{BTGenLip} Suppose \(I\) is a measurable
  bounded subset of the plane and \(E\) is the graph of an
  \(L\)-Lipschitz function in the plane.
  Then for all \(\ve \ge 0\), \(0\le \bar \xi \le 1\)
\[
    \begin{split}
      \Leb(\{x \in I: \metr(x, E)\leq \ve\bar\xi\}\setminus \{x \in I:
      \metr(x, \partial I) \leq \ve\})
      &\leq \\
      & \hspace{-3cm} \bar\xi (1+L) \Leb\{x \in I: \metr(x,
      \partial I)\le \ve\}.
    \end{split}
  \]
In other words,
$\Leb((I\cap\tilde\partial_{\ve\bar\xi} E) \setminus \partial_\ve I)
\le \bar\xi (1+L) \Leb\partial_\ve I$.
\end{prop}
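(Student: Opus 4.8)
The plan is to foliate a neighbourhood of $E$ by straight lines transverse to it and to reduce, fibrewise, to the one–dimensional estimate that already underlies the proofs of Lemmas~\ref{lem:BT} and~\ref{BTGen}; the loss is concentrated in a single elementary geometric containment, which is where the factor $1+L$ enters.

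Write $E=\{(t,\phi(t)):t\in J\}$ with $\phi$ an $L$-Lipschitz function on an interval $J$ (the case of a graph over the other coordinate is symmetric, via the isometry $(a,b)\mapsto(b,a)$). Extending $\phi$ by constants to all of $\R$ keeps it $L$-Lipschitz and only enlarges $E$, hence only enlarges the sets under consideration, so I would assume $\phi$ is defined on all of $\R$. The first step is the containment
\[
\{x\in\R^2:\metr(x,E)\le\ve\bar\xi\}\ \subseteq\ W:=\{(t,s)\in\R^2:|s-\phi(t)|\le\ve\bar\xi(1+L)\}.
\]
Indeed, if $x=(t,s)$ and $(t',\phi(t'))\in E$ realises $\metr(x,E)\le\ve\bar\xi$, then, by Cauchy--Schwarz, $|s-\phi(t)|\le|s-\phi(t')|+L|t-t'|\le\sqrt{1+L^2}\,|x-(t',\phi(t'))|\le\sqrt{1+L^2}\,\ve\bar\xi\le(1+L)\ve\bar\xi$. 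So it is enough to bound $\Leb\big((I\cap W)\setminus\partial_\ve I\big)$.

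Next I would slice along the vertical lines $L_{t_0}=\{t_0\}\times\R$, which are transverse to $E$. Put $\mu=\bar\xi(1+L)$ and $c=\phi(t_0)$; then the slice $W_{t_0}=\{s:(t_0,s)\in W\}$ is the interval $[c-\ve\mu,c+\ve\mu]$, and since $E$ and $\partial I$ are closed all the relevant sets are measurable. By Fubini,
\[
\Leb\big((I\cap W)\setminus\partial_\ve I\big)=\int_\R\Leb_1\big((I_{t_0}\cap W_{t_0})\setminus(\partial_\ve I)_{t_0}\big)\,dt_0,\qquad
\Leb\,\partial_\ve I=\int_\R\Leb_1\big((\partial_\ve I)_{t_0}\big)\,dt_0,
\]
so it suffices to prove, for each $t_0$, that $\Leb_1\big((I_{t_0}\cap W_{t_0})\setminus(\partial_\ve I)_{t_0}\big)\le\mu\,\Leb_1\big((\partial_\ve I)_{t_0}\big)$. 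On the line $L_{t_0}$, the set $I_{t_0}\cap W_{t_0}$ is exactly the set of points of $I_{t_0}$ within distance $\ve\mu$ of the single point $c$, with $(\partial_\ve I)_{t_0}=\{s\in I_{t_0}:\metr((t_0,s),\partial I)\le\ve\}$ its $\ve$-collar. This is precisely the leafwise inequality $\Leb_z A_z\le\xi\Leb_z B_z$ established inside the proof of Lemma~\ref{BTGen} (there $z$ ranges over a segment $S$ and $A_z,B_z$ are the traces on the perpendicular leaf $e_z$), now read with $z=c$, with $I_{t_0}$ and $(\partial_\ve I)_{t_0}$ in place of $I\cap e_z$ and $B\cap e_z$, and with $\mu$ (possibly $>1$) in place of $\xi$; its proof — the decomposition~\eqref{eq:temp} of $\{s\in I_{t_0}:|s-c|\le\ve\mu\}\setminus(\partial_\ve I)_{t_0}$ into $\lfloor\mu\rfloor$ subsets each contained in an interval of length $\ve$, translated along $L_{t_0}$ into $(\partial_\ve I)_{t_0}$, plus a remainder in an interval of length $\ve\{\mu\}$ controlled by Lemma~\ref{lem:BT} since $\{\mu\}<1$ — applies verbatim. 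Integrating in $t_0$ and combining with the containment gives $\Leb\big(\{x\in I:\metr(x,E)\le\ve\bar\xi\}\setminus\partial_\ve I\big)\le\bar\xi(1+L)\,\Leb\,\partial_\ve I$, and the restated form follows since $I\cap\tilde\partial_{\ve\bar\xi}E=\{x\in I:\metr(x,E)\le\ve\bar\xi\}$.

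The one point needing care is that the fibre argument of~\cite{BT08}, devised for a foliation by lines perpendicular to a straight cut, is here used with leaves that are merely transverse to the curved set $E$. The resolution is that perpendicularity enters there only to identify the distance along the leaf with $\metr^\perp(\cdot,S)$; in our setting the geometric containment has already traded $\metr(\cdot,E)$ for the vertical distance to $E$, at the cost of the factor $1+L$, after which the leaves serve purely as a Fubini bookkeeping device and the loss per leaf is exactly $\mu=\bar\xi(1+L)$. Everything else — measurability of the slices, the Lipschitz extension, the Cauchy--Schwarz bound — is routine.
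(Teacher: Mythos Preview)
Your proof is correct and shares the paper's key step: the geometric containment that trades Euclidean distance to the Lipschitz graph for vertical distance at the cost of the factor $1+L$. After that, both arguments reduce to the one-dimensional leafwise estimate from the proof of Lemma~\ref{BTGen}.

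The organisation differs slightly. The paper approximates $E$ by the graph $S$ of a piecewise constant function $g_t$ (with step width $t\ve$), applies Lemma~\ref{BTGen} as a black box on each flat piece of $S$, sums, and then lets $t\to0$ to kill the extra $Lt$ in $\xi_t=\bar\xi(1+L)+Lt$. You bypass this approximation entirely by observing that the leafwise inequality $\Leb_z A_z\le\xi\Leb_z B_z$ inside the proof of Lemma~\ref{BTGen} never used straightness of $S$ --- only that on each perpendicular leaf the relevant set is an interval centred at a single point --- so you can foliate by vertical lines and invoke that inequality directly with $\xi=\mu=\bar\xi(1+L)$. This is a genuine, if minor, streamlining: no auxiliary parameter, no limit. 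The paper's route has the compensating virtue of using Lemma~\ref{BTGen} only through its statement rather than reaching into its proof.
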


\begin{proof}
Suppose \(E\) is the graph of the Lipschitz function
  \(\psi:\R \to \R\).  
By a rotation of $I$ and $E$, we can suppose that
the domain of $\psi$ is the horizontal axis.

Fix \(\ve \ge 0\), \(0 \le \bar \xi \le
  1\). For $t>0$, let
  $\{A_j\}$ be a partition of the horizontal axis \(\R\) into intervals of length \(t\ve\). 
  Define
  $g_t:\R\to\R$,  $g_t|_{A_j}\equiv  (\Leb A_j)^{-1}\int_{A_j}\psi$, and
  denote \(S = \graph g_t\). 
Note that $|\psi-g_t|_\infty\le Lt\ve$.

 We claim that
\[
    \text{If } \metr(x,E)\le  \ve\bar\xi ,
     \text{ then } \metr^{\perp}(x, E)
    \le \ve\bar \xi  (1+L).
\]
  Here \(\metr^{\perp}(x, E)\)  means the vertical distance from \(x\) to
  $E$.
  Since $|\psi-g_t|_\infty\le Lt\ve$, it follows from the claim that
  \[
\{x\in I:\metr(x,E)\le\ve\bar\xi\}
\subset \{x\in I:\metr^\perp(x,S)\le\ve\xi_t\},
\]
where $\xi_t=\bar \xi  (1+L)+Lt$.

  Now applying Lemma \ref{BTGen} with $\xi = \xi_t$ on
  constant pieces \(\graph(g_t|_{A_j})\) of \(S\)
  separately and adding the contributions, we get
  \begin{align*}
    \Leb(\{x\in I:\metr(x,E)\le \ve\bar\xi\} \setminus \partial_\ve I) & \le
    \Leb(\{x\in I:\metr^\perp(x,S)\le \ve\xi_t\} \setminus \partial_\ve I) 
\\ & \le
    \xi_t \Leb\partial_{\ve}I
    =(\bar \xi  (1+L)+Lt) \Leb\partial_{\ve}I.
  \end{align*}
  Since $t>0$ is arbitrary, we obtain the desired result.
  
It remains to prove the claim.  Write $x=(x_1,x_2)$ and choose $z=(z_1,z_2)\in E$ with $|x-z|\le\ve\bar\xi$.  Let $v=(v_1,v_2)\in E$ with $v_1=x_1$.
Then 
\begin{align*}
\metr^\perp(x,E)  =|x_2-v_2| & \le |x_2-z_2|+|v_2-z_2|\le |x_2-z_2|+L|v_1-z_1|
\\ & =|x_2-z_2|+L|x_1-z_1|\le(1+L)|x-z|\le\ve\bar\xi(1+L),
\end{align*}
as required.
\end{proof}

\paragraph{Verification of controlled complexity}
Set $\Lambda_n=\sup_j |DF_{n,j}^{-1}|$ and  note that $\Lambda_n\le\uexfracone<\Lambda$.
Also, $\Lambda_n =O(n^{-(1+\alpha)})$ by~\eqref{eq:DF}.
Recall that $Y_n=\bigcup_{j=1}^{\uex^n}Y_{n,j}$ is bounded by two flat horizontal sides and two smooth vertical curves.  By Proposition~\ref{prop:xn}, the Lipschitz constants corresponding to the vertical curves are bounded by some $L_0>0$.

We choose $n_0 \ge1$ sufficiently large so that
    $\sum_{n=n_0}^{\infty} \frac{\Lambda_n}{\Lambda} (1+L_0) \le \frac18$
and then shrink $\epstail$ if needed so that if $I\subset Y$ has $\diam I\le \epstail$ then
at least one of the following holds:
\begin{itemize}
\item[(i)] $I\subset\bigcup_{n=1}^{n_0} Y_n$ and 
$I\cap \bigcup_{n=1}^{n_0}\sum_{j=1}^{\uex^n} \partial Y_{n,j}$ consists of
at most one horizontal curve $H$ and one vertical curve $V$.
\item[(ii)]  $I\subset\bigcup_{n=n_0}^\infty Y_n$.
\end{itemize}
In case~(i), $H$ is flat and $V$ is smooth.
Recall that $\Lambda<\frac13$.
Shrinking $\epstail$ further, we can suppose that $I\cap V$ is the graph of a function with Lipschitz constant $L_1$ satisfying
$3+L_1<\Lambda^{-1}$.

Let $I\subset Y$ be an open subset with $\diam I\le\epstail$.
Note that
  \[
    F_{n,j}^{-1} (\partial_{\ve}F(I\cap Y_{n,j} ))\setminus
\partial_{\ve\Lambda}I
 \subset (I\cap \partial_{\ve\Lambda_n}Y_{n,j})\setminus
\partial_{\ve\Lambda}I.
  \]
Recall that $\partial Y_{n,j}=H_{n,j}\cup V_{n,j}$ 
where $H_{n,j}$ consists of two flat horizontal edges and 
$V_{n,j}$ consists of two vertical curves.
Hence
  \begin{equation} \label{eq:partial}
    F_{n,j}^{-1} (\partial_{\ve}F(I\cap Y_{n,j} ))\setminus
\partial_{\ve\Lambda}I
 \subset \{(I\cap \tilde\partial_{\ve\Lambda_n}H_{n,j})
\setminus \partial_{\ve\Lambda}I\}
 \cup\{(I\cap \tilde\partial_{\ve\Lambda_n}V_{n,j})
\setminus \partial_{\ve\Lambda}I\}.
  \end{equation}

\noindent{\bf Case~(i)}.  Since the only intersections are with $H$ and $V$,~\eqref{eq:partial} simplifies to
\[
 \bigcup_a
F_a^{-1} (\partial_{\ve}F(I\cap a
    ))\setminus \partial_{\ve\Lambda}I
\subset 
 \{(I\cap \tilde\partial_{\ve\Lambda}H)
\setminus \partial_{\ve\Lambda}I\}
 \cup\{(I\cap \tilde\partial_{\ve\Lambda}V)
\setminus \partial_{\ve\Lambda}I\}.
\]
  By Proposition \ref{BTGenLip} (taking
  $\bar \xi = 1$ and replacing \(\ve\) by
  \(\ve\Lambda \)),
 \[
\Leb((I\cap \tilde\partial_{\ve\Lambda}V)\setminus \partial_{\ve\Lambda}I) \le 
(1+L_1) \Leb\partial_{\ve\Lambda}I.
\]
Similarly, 
$\Leb((I\cap \tilde\partial_{\ve\Lambda}H)\setminus \partial_{\ve\Lambda}I) \le 
\Leb\partial_{\ve\Lambda}I$.
Hence
\[
 \sum_a
\frac{\Leb(F_a^{-1} (\partial_{\ve}F(I\cap a
    ))\setminus \partial_{\ve\Lambda}I)}{\Leb\partial_{\ve\Lambda} I} 
\le 2+L_1<\Lambda^{-1}-1.
\]

\vspace{1ex}
\noindent{\bf Case~(ii)}.  
By~\eqref{eq:partial}.
\[
 \bigcup_a
F_a^{-1} (\partial_{\ve}F(I\cap a
    ))\setminus \partial_{\ve\Lambda}I
\subset S_H+S_V,
\]
where 
\[
S_H=\bigcup_{n=n_0}^\infty\bigcup_{j=1}^{\uex^n} \{(I\cap \tilde\partial_{\ve\Lambda_n}H_{n,j})
\setminus \partial_{\ve\Lambda}I\}, \qquad
S_V=\bigcup_{n=n_0}^\infty\bigcup_{j=1}^{\uex^n} \{(I\cap \tilde\partial_{\ve\Lambda_n}V_{n,j})
\setminus \partial_{\ve\Lambda}I\}.
\]
We estimate $S_H$ and $S_V$ separately.
The curve $V_n=\bigcup_{j=1}^{\uex^n}V_{j,n}$ is smooth with Lipschitz constant bounded by $L_0$, and
\[
S_V=\bigcup_{n=n_0}^\infty 
 \Leb \{(I\cap \tilde\partial_{\ve\Lambda_n}V_n)
\setminus \partial_{\ve\Lambda}I\}.
\]
  By Proposition \ref{BTGenLip} (taking
  \(\bar \xi = \Lambda_n/\Lambda\) and replacing \(\ve\) by
  \(\ve\Lambda \)),
\[
\Leb((I\cap \tilde\partial_{\ve\Lambda_n}V_n)\setminus
\partial_{\ve\Lambda}I) \le \frac{2\Lambda_n}{\Lambda}(1+L_0)
\Leb\partial_{\ve\Lambda}I.
\]
Hence, by the choice of $n_0$,
\[
\frac{\Leb S_V}{\Leb \partial_{\ve\Lambda}I}\le \sum_{n=n_0}^\infty \frac{2\Lambda_n}{\Lambda}(1+L_0)\le \frac14.
\]
Shrinking $\epstail$ further if necessary, it follows from the 
skew-product structure of $F$ (where vertical distances are contracted by $\uex^{-\varphi}$) that $\Lambda_n$ can be improved to $\uex^{-n}$ in the formula for $S_H$ leading to the estimate $\frac{\Leb S_H}{\Leb\partial_{\ve\Lambda}I}\le \frac14$.

Hence again we obtain the desired complexity bound
\[
 \sum_a
\frac{\Leb(F_a^{-1} (\partial_{\ve}F(I\cap a
    ))\setminus \partial_{\ve\Lambda}I)}{\Leb\partial_{\ve\Lambda} I} 
\le \frac12<\Lambda^{-1}-1.
\]

\subsection{Set \(Z\)}
\label{subsec:Tower}

The construction of $Z$ and $Z'$ proceeds as follows:
Recall that the partition elements in $\alpha^Y$ accumulate on
the left vertical side $\{\uexfrac\}\times\T$ of \(Y\).  
Let $c=1/100$ and let $S_0, S_1$ denote open squares with side lengths $c\delta$ and $2c\delta$, respectively, and centred at $l_0=(\uexfrac,0)$. Let $Z=S_0\cap Y$ and \(Z' = S_1 \cap Y\).   
It is immediate that $Z'\supset Z$, $\Leb Z<\Leb Z'$, and
\(\diam Z' \le \delta\).

Now \(Z\) is a rectangle with vertex $l_0$,
  and elements of $\alpha^Y$
accumulate at $l_0$ and shrink in diameter.  Hence there exists
\(n_0 \ge2\), $i_{0} \ge1$ such that $Y_{n,i}\subset Z$ for all \(n\ge n_0\), $i=1,\dots,\uex \, (\bmod i_n)$, where $i_n=\uex^{n-n_0}(i_0-1)$.
For \(n\ge n_0\),
\[
  \SMALL F(Z \cap \Int\{\varphi=n\}) \supset F\Big(\bigcup_{i=1}^{\uex} Y_{n,i_{n}+i}\Big)
  = Y
\]
proving~\eqref{eq:supsetY}. 
Setting
$a_1=Y_{n_0,i_{n_0}+1}$ and $a_2=Y_{n_0+1,(i_{n_0+1})+1}$,
we have $a_i\subset Z$ and $Fa_i\supset [\uexfrac,\uexfracsquare]\times\T
\supset Z'$ for $i=1,2$.
Moreover, $\varphi|_{a_1}=n_0$ and $\varphi|_{a_2}=n_0+1$, verifying~\eqref{eq:a1a2}.

\section{Boundary terms}

In this appendix we recall some standard estimates for computing
integrals around the boundary of ``rectangular'' domains.  Consider a
domain of the form
\[
  a=\{(y,\theta)\in\R\times[C,D]:\psi_1(\theta)\le y\le
  \psi_2(\theta)\},
\]
where $\psi_1,\psi_2:[C,D]\to\R$ are $C^1$ with $\psi_1<\psi_2$.
Define $M:[C,D]\to\R$,
$M(\theta)=\max\{|\psi_1'(\theta)|,|\psi_2'(\theta)|\}$.

\begin{thm} \label{thm:a} Let $v:\R^2\to\R$ be a $C^1$ function.  Then
  \[
    \begin{split}
    \int_{\partial a}|v|\le 
    & 2\Bigl\{(D-C)^{-1}+ \bigl|((1+M^2)^{1/2}+2M)/(\psi_2-\psi_1)\bigr|_\infty\Bigr\}
      |1_av|_1 \\ & \qquad +
                    \sqrt 2\bigl((1+|M|_\infty^2)^{1/2}+|M|_\infty\bigr)|1_a\nabla v|_1.
                    \end{split}
                  \]
\end{thm}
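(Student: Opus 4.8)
The plan is to parametrize $\partial a$ in four pieces — two ``horizontal'' edges $\{\theta=C\}$ and $\{\theta=D\}$ (the segments $\psi_1(C)\le y\le\psi_2(C)$ and $\psi_1(D)\le y\le\psi_2(D)$) and two ``vertical'' graph-edges $y=\psi_i(\theta)$, $\theta\in[C,D]$ — and estimate $\int|v|$ on each, using one-dimensional trace/Sobolev inequalities on $a$ combined with the fundamental theorem of calculus. The two structural tools are: (1) the one-dimensional fact that for a $C^1$ function $g$ on an interval $[p,q]$ and any $s\in[p,q]$, $|g(s)|\le (q-p)^{-1}\int_p^q|g|+\int_p^q|g'|$ (average value plus total variation); and (2) the arclength element on the graph $y=\psi_i(\theta)$ is $(1+\psi_i'(\theta)^2)^{1/2}\,d\theta\le(1+|M|_\infty^2)^{1/2}\,d\theta$, while the unit normal has horizontal component of size $\le|M|_\infty(1+M^2)^{-1/2}$, which is what lets us convert a line integral along the slanted edge into a $\theta$-integral that sees both $\partial v/\partial y$ and $\partial v/\partial\theta$.

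First I would handle the two horizontal edges. For $\theta_0\in\{C,D\}$ and $y\in[\psi_1(\theta_0),\psi_2(\theta_0)]$, write $v(y,\theta_0)=(D-C)^{-1}\int_C^D v(y,\theta)\,d\theta+\big(v(y,\theta_0)-(D-C)^{-1}\int_C^D v(y,\theta)\,d\theta\big)$ and bound the remainder by $\int_C^D|\partial_\theta v(y,\theta)|\,d\theta$ via the fundamental theorem of calculus (for each fixed $y$ the segment $\{y\}\times[C,D]$ need not lie in $a$, so one actually integrates only over the sub-interval of $\theta$ for which $(y,\theta)\in a$, which is an interval because $a$ is of the stated ``rectangular'' form — here is where the geometry of $a$ is used). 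Integrating in $y$ over the horizontal edge and summing the two edges gives a bound of the form $2(D-C)^{-1}|1_av|_1+|1_a\partial_\theta v|_1\le 2(D-C)^{-1}|1_av|_1+|1_a\nabla v|_1$. Next, the vertical edges: on $y=\psi_i(\theta)$ I would again compare $v(\psi_i(\theta),\theta)$ to its average over the horizontal slice $[\psi_1(\theta),\psi_2(\theta)]\times\{\theta\}$, picking up a term $(\psi_2-\psi_1)^{-1}\int_{\psi_1(\theta)}^{\psi_2(\theta)}|v(y,\theta)|\,dy$ plus a term $\int_{\psi_1(\theta)}^{\psi_2(\theta)}|\partial_y v(y,\theta)|\,dy$; multiply by the arclength factor $(1+M(\theta)^2)^{1/2}$ and integrate in $\theta$. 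This produces $\big|(1+M^2)^{1/2}/(\psi_2-\psi_1)\big|_\infty|1_av|_1+(1+|M|_\infty^2)^{1/2}|1_a\partial_y v|_1$. Finally, to also capture $\partial_\theta v$ with the ``$+2M$'' / ``$+|M|_\infty$'' corrections, I would instead move along the edge itself: $\frac{d}{d\theta}v(\psi_i(\theta),\theta)=\psi_i'(\theta)\partial_y v+\partial_\theta v$, integrate from a basepoint where the value is controlled by the slice average, and bound $|\psi_i'|\le M$; tracking constants carefully through the triangle inequality and $|\nabla v|=(|\partial_yv|^2+|\partial_\theta v|^2)^{1/2}$ (so $|\partial_yv|,|\partial_\theta v|\le|\nabla v|$, but a pair like $|\partial_yv|+|\partial_\theta v|\le\sqrt2|\nabla v|$) is what yields exactly the asserted coefficients $2\{(D-C)^{-1}+|((1+M^2)^{1/2}+2M)/(\psi_2-\psi_1)|_\infty\}$ and $\sqrt2((1+|M|_\infty^2)^{1/2}+|M|_\infty)$.

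The main obstacle is purely bookkeeping: keeping the constants sharp enough to land on precisely the stated inequality rather than a weaker ``up to a constant'' version. In particular one must be careful that (a) the horizontal slices used as reference intervals genuinely lie inside $a$ (guaranteed by the form of $a$, with $\psi_1<\psi_2$ on all of $[C,D]$), so that $(\psi_2-\psi_1)^{-1}$ and $(D-C)^{-1}$ are the right normalizing factors; (b) the arclength factor and the normal-component factor are not double-counted — choosing the ``average over the horizontal slice'' as the comparison value rather than, say, an endpoint value, is what keeps the $1_av$ coefficient of the form $(1+M^2)^{1/2}/(\psi_2-\psi_1)$ rather than something larger; and (c) the factor $2$ in front of the $|1_av|_1$ terms correctly accounts for the two horizontal and two vertical edges. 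None of the individual estimates is deep — each is a one-line consequence of the fundamental theorem of calculus — but assembling them with the exact advertised constants is the only real content. I expect no genuine difficulty beyond this careful accounting, since both $\psi_1,\psi_2$ are $C^1$ and $v$ is $C^1$ on $\R^2$, so every integration by parts / fundamental-theorem step is legitimate without any approximation argument.
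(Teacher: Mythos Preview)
Your outline works cleanly for the \emph{vertical} edges $y=\psi_i(\theta)$: for each fixed $\theta$ the horizontal slice $[\psi_1(\theta),\psi_2(\theta)]\times\{\theta\}$ lies entirely in $a$, so the one-dimensional trace inequality applies exactly as you describe. The gap is in the \emph{horizontal} edges. For $(y,\theta_0)$ with $\theta_0\in\{C,D\}$ and $y\in[\psi_1(\theta_0),\psi_2(\theta_0)]$, the vertical segment $\{y\}\times[C,D]$ need not lie in $a$, as you note. But your proposed fix---restricting to ``the sub-interval of $\theta$ for which $(y,\theta)\in a$''---fails on two counts. First, the $\theta$-section $\{\theta\in[C,D]:\psi_1(\theta)\le y\le\psi_2(\theta)\}$ is not an interval in general (take $\psi_1$ with a local maximum above $y$). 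Second, and decisively, even on a connected component containing $\theta_0$, the inequality $|g(\theta_0)|\le L^{-1}\int|g|+\int|g'|$ produces $L^{-1}$ where $L$ is the length of that component, and $L$ can be much smaller than $D-C$. So you cannot recover the coefficient $2(D-C)^{-1}$ in front of $|1_av|_1$ this way; nor does your scheme explain where the $M/(\psi_2-\psi_1)$ contribution to the $|1_av|_1$ coefficient (the ``$+2M$'' in the statement) comes from on the horizontal edges.

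The paper's proof supplies the missing idea: straighten $a$ to the genuine rectangle $[-1,1]\times[C,D]$ via the diffeomorphism $g(y,\theta)=\bigl((2y-\psi_1-\psi_2)/(\psi_2-\psi_1),\,\theta\bigr)$, prove the rectangle case directly (essentially your argument, with the half-interval refinement that gives coefficient $1$ rather than $2$ on the gradient term), and then pull back. In rectangle coordinates the vertical segments always have full length $D-C$, so the $(D-C)^{-1}$ factor is automatic; the price is that $\partial_\theta(v\circ g^{-1}\cdot\partial_y h)$ picks up cross terms involving $\partial_\theta h$ and $\partial_\theta\partial_y h=\tfrac12(\psi_2'-\psi_1')$, and these are exactly what generate the $|M|_\infty|1_a\partial_y v|_1$ and $|M/(\psi_2-\psi_1)|_\infty|1_av|_1$ corrections with the announced constants. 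Without this change of variables (or an equivalent device that replaces vertical segments by curves staying inside $a$ for all $\theta\in[C,D]$), the horizontal-edge estimate does not close.
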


First, we consider the special case where $a$ is a rectangle.

\begin{prop} \label{prop:rectangle} Suppose that $a=[A,B]\times[C,D]$
  is a rectangle and that $v$ is $C^1$.  Write
  $\partial a=H_a\cup V_a$ where $H_a$ is the union of the two
  horizontal edges and $V_a$ is the union of the two `vertical' edges.
  Then
\[
    \int_{H_a}|v| \le  2(D-C)^{-1}|1_av|_1+ |1_a\partial v/\partial\theta|_1, 
    \quad
    \int_{V_a}|v| \le  2(B-A)^{-1}|1_av|_1+ |1_a\partial v/\partial y|_1.
  \]
  Consequently,
  $\int_{\partial a}|v|\le K|1_av|_1+\sqrt2|1_a\nabla v|_1$ where
  $K=2(B-A)^{-1}+2(D-C)^{-1}$.
\end{prop}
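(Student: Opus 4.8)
The plan is to establish the two edge estimates by the standard one–dimensional trace argument (fundamental theorem of calculus followed by averaging), and then add them. I would first treat the horizontal edges $H_a=[A,B]\times\{C\}\cup[A,B]\times\{D\}$. Fixing $y\in[A,B]$ and writing $m=(C+D)/2$, for any $\theta_0\in[C,m]$ one has $|v(y,C)|\le|v(y,\theta_0)|+\int_C^m|\partial_\theta v(y,\theta)|\,d\theta$; averaging over $\theta_0\in[C,m]$ (an interval of length $(D-C)/2$) gives $|v(y,C)|\le\frac{2}{D-C}\int_C^m|v(y,\theta)|\,d\theta+\int_C^m|\partial_\theta v(y,\theta)|\,d\theta$, and integrating in $y$ over $[A,B]$ bounds $\int_{[A,B]\times\{C\}}|v|$ by $\frac{2}{D-C}\int_{R_-}|v|+\int_{R_-}|\partial_\theta v|$, where $R_-=[A,B]\times[C,m]$. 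The symmetric computation on $[m,D]$ bounds the top edge $\int_{[A,B]\times\{D\}}|v|$ by the same expression with $R_+=[A,B]\times[m,D]$. Since $R_-\cup R_+=a$ up to a null set, adding the two estimates gives precisely $\int_{H_a}|v|\le2(D-C)^{-1}|1_av|_1+|1_a\partial v/\partial\theta|_1$.

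For $V_a$ I would run the identical argument with the roles of $y$ and $\theta$ interchanged: split $[A,B]$ at its midpoint and use $\partial_y v$, obtaining $\int_{V_a}|v|\le2(B-A)^{-1}|1_av|_1+|1_a\partial v/\partial y|_1$. Summing the two displays then yields $\int_{\partial a}|v|\le K|1_av|_1+\bigl(|1_a\partial v/\partial\theta|_1+|1_a\partial v/\partial y|_1\bigr)$ with $K=2(B-A)^{-1}+2(D-C)^{-1}$, and the pointwise inequality $|s|+|t|\le\sqrt2\,(s^2+t^2)^{1/2}$ applied to $(\partial_y v,\partial_\theta v)$ turns the last bracket into $\sqrt2\,|1_a\nabla v|_1$, which completes the proof.

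There is no real obstacle here; this is a routine trace/Poincar\'e-type estimate. The only point requiring a little care is obtaining the coefficient $1$ rather than $2$ in front of the derivative terms — hence the device of splitting the rectangle into two halves and pairing each of the two parallel edges with a disjoint half. (If the weaker constant $2$ on the gradient terms were acceptable, which would still suffice for the Lasota–Yorke bound in Lemma~\ref{lem:LY} after harmless changes of constants, one could simply average over the whole interval and skip the splitting.)
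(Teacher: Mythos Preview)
Your proof is correct and follows essentially the same approach as the paper: split the rectangle at the midline, pair each edge with the adjacent half via the fundamental theorem of calculus plus averaging, then add and use $|s|+|t|\le\sqrt2(s^2+t^2)^{1/2}$. Your closing remark about why the halving is needed to get the constant~$1$ on the derivative term is also apt.
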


\begin{proof}
  We give the details for the horizontal edges.  The vertical edges
  are dealt with in the identical manner.  The final statement follows
  from the fact that $|x|+|y|\le \sqrt 2(x^2+y^2)^\frac12$.

  Note that
  $\int_{H_a}|v| =\int_A^B|v(y,C)|\,dy+\int_A^B|v(y,D)|\,dy$.
  (Throughout, we work in the coordinate system $(y,\theta)$.)  On the
  bottom edge,
  \[
    \begin{split} \int_A^B|v(y,C)| & \,dy =
      [(D-C)/2)]^{-1}\int_A^B\Bigl\{\int_C^{(C+D)/2}|v(y,C)|\,d\theta\Bigr\}\,dy
      \\
      & \le [(D-C)/2)]^{-1}\Bigl\{|1_{a_1}v|_1+
      \int_A^B\Bigl\{\int_C^{(C+D)/2}|v(y,\theta)-v(y,C)|\,d\theta\Bigr\}\,dy\Bigr\}
    \end{split}
  \]
  where $a_1$ is the rectangle $[A,B]\times[C,(C+D)/2]$.  But
  \[
    |v(y,\theta)-v(y,C)| =\Bigl|\int_C^\theta \frac{\partial
      v}{\partial \theta}(y,\psi)\,d\psi\Bigr| \le \int_C^{(C+D)/2}
    \Bigl|\frac{\partial v}{\partial \theta}(y,\psi)\Bigr|\,d\psi,
  \]
  and it follows that
  \[
    \int_A^B|v(y,C)|\,dy \le 2(D-C)^{-1}|1_{a_1}v|_1+ |1_{a_1}\partial
    v/\partial\theta|_1.
  \]
  The same estimate holds for the top edge $\int_A^B|v(y,D)|\,dy$, but
  with $a_1$ replaced by $a_2=[A,B]\times[(C+D)/2,D]$.  Since
  $a_1\cup a_2=a$ we obtain the required estimate for $\int_{H_a}|v|$.
\end{proof}

To prove Theorem~\ref{thm:a}, we introduce the diffeomorphism
$g:a\to[-1,1]\times[C,D]$ given by
\[
  g(y,\theta)=\Bigl(\frac{2y-(\psi_2(\theta)+\psi_1(\theta))}{\psi_2(\theta)-\psi_1(\theta)},\theta\Bigr),
  \quad g^{-1}(y,\theta)=(h(y,\theta),\theta),
\]
where
$h(y,\theta)=\frac12(\psi_2(\theta)-\psi_1(\theta))y+\frac12(\psi_2(\theta)+\psi_1(\theta))$.
Note that $Jg=1/\partial_yh=2/(\psi_2-\psi_1)$.

\begin{prop}
  $|\partial_\theta h(y,\theta)|\le
  M(\theta)=\max\{|\psi_1'(\theta)|,|\psi_2'(\theta)|\}$ for all
  $y\in[-1,1]$, $\theta\in[C,D]$.
\end{prop}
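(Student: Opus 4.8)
The plan is to differentiate the explicit formula for $h$ in the $\theta$ variable and then recognize the derivative as a convex combination of $\psi_1'(\theta)$ and $\psi_2'(\theta)$. Recall
\[
h(y,\theta)=\tfrac12(\psi_2(\theta)-\psi_1(\theta))y+\tfrac12(\psi_2(\theta)+\psi_1(\theta)),
\]
so differentiating in $\theta$ (with $y$ held fixed) gives
\[
\partial_\theta h(y,\theta)=\tfrac12(\psi_2'(\theta)-\psi_1'(\theta))y+\tfrac12(\psi_2'(\theta)+\psi_1'(\theta)).
\]

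Next I would regroup this expression by collecting the coefficients of $\psi_1'(\theta)$ and $\psi_2'(\theta)$ separately, obtaining
\[
\partial_\theta h(y,\theta)=\tfrac{1+y}{2}\,\psi_2'(\theta)+\tfrac{1-y}{2}\,\psi_1'(\theta).
\]
Since $y\in[-1,1]$, both weights $\tfrac{1+y}{2}$ and $\tfrac{1-y}{2}$ are nonnegative and sum to $1$, so $\partial_\theta h(y,\theta)$ is a convex combination of $\psi_1'(\theta)$ and $\psi_2'(\theta)$ and therefore lies in the closed interval with those endpoints. Consequently
\[
|\partial_\theta h(y,\theta)|\le \tfrac{1+y}{2}|\psi_2'(\theta)|+\tfrac{1-y}{2}|\psi_1'(\theta)|\le \max\{|\psi_1'(\theta)|,|\psi_2'(\theta)|\}=M(\theta),
\]
and this bound is uniform in $y\in[-1,1]$, which is exactly the claim.

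There is no genuine obstacle here; the statement is a one-line computation. The only point requiring a little care is the sign bookkeeping in the regrouping step, which is why I would write the coefficients explicitly in the form $\tfrac{1\pm y}{2}$ rather than leaving the derivative as $\tfrac12(\psi_2'-\psi_1')y+\tfrac12(\psi_2'+\psi_1')$; once in convex-combination form the estimate is immediate from the triangle inequality.
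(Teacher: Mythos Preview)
Your proof is correct and essentially the same as the paper's: both compute $\partial_\theta h=\tfrac12(\psi_2'-\psi_1')y+\tfrac12(\psi_2'+\psi_1')$ and observe that for $y\in[-1,1]$ this lies between $\psi_1'(\theta)$ and $\psi_2'(\theta)$. The paper checks the endpoints $y=\pm1$ directly while you phrase it as a convex combination, but these are the same observation.
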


\begin{proof}
  Write
  $\partial_\theta
  h=\frac12(\psi_2'-\psi_1')y+\frac12(\psi_2'+\psi_1')$.  If
  $\psi_2'>\psi_1'$, then the maximum value $m_2$ and minimum value
  $m_1$ are obtained at $y=1$ and $y=-1$ respectively yielding
  $m_2=\psi_2'$ and $m_1=\psi_1'$ respectively.  The values are
  reversed if $\psi_2'<\psi_1'$.
\end{proof}

\paragraph{Vertical edges}

Let $\gamma_1$ be the left edge and ${\gamma_2}$ the right edge and
write $V_a=\gamma_1\cup \gamma_2$.

\begin{lemma} \label{lem:Va} Let $v:\R^2\to\R$ be a $C^1$ function.
  Then
  \[
    \int_{V_a}|v|\le
    2\bigl|((1+M^2)^{1/2}+M)/(\psi_2-\psi_1)\bigr|_\infty |1_av|_1+
    |(1+M^2)^{1/2}|_\infty|1_a\partial_yv|_1.
  \]
\end{lemma}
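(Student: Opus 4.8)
The plan is to reduce the integral over the two vertical curves $\gamma_1,\gamma_2$ to an integral over the two vertical edges of the standard rectangle $[-1,1]\times[C,D]$ by pulling back along the diffeomorphism $g:a\to[-1,1]\times[C,D]$ introduced above, then to apply the rectangular estimate of Proposition~\ref{prop:rectangle}. Concretely, $\gamma_i$ is the image under $g^{-1}$ of the vertical edge $\{(\pm1,\theta):\theta\in[C,D]\}$, and the curve $\gamma_i$ is parametrised by $\theta\mapsto(h(\pm1,\theta),\theta)$, so its arc length element is $\sqrt{1+(\partial_\theta h(\pm1,\theta))^2}\,d\theta$. By the preceding Proposition, $|\partial_\theta h|\le M(\theta)\le|M|_\infty$, so $\int_{\gamma_i}|v|\le |(1+M^2)^{1/2}|_\infty\int_C^D|v(h(\pm1,\theta),\theta)|\,d\theta$, i.e.\ up to the factor $|(1+M^2)^{1/2}|_\infty$ the vertical-edge integral over $\gamma_i$ is controlled by the corresponding vertical-edge integral for the function $v\circ g^{-1}$ over the rectangle.

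The next step is to estimate $\int_{V'}|v\circ g^{-1}|$, where $V'$ denotes the two vertical edges of $[-1,1]\times[C,D]$, using the vertical-edge bound in Proposition~\ref{prop:rectangle} with $A=-1$, $B=1$: this gives $\int_{V'}|v\circ g^{-1}|\le |1_{[-1,1]\times[C,D]}\, (v\circ g^{-1})|_1 + |1_{[-1,1]\times[C,D]}\,\partial_y(v\circ g^{-1})|_1$ (note $2(B-A)^{-1}=1$). Then I change variables back to $a$ via $g$, whose Jacobian is $Jg=2/(\psi_2-\psi_1)$, equivalently $d\lambda_2$ on $[-1,1]\times[C,D]$ pulls back to $\frac{2}{\psi_2-\psi_1}\,d\lambda_2$ on $a$. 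Hence $|1_{[-1,1]\times[C,D]}(v\circ g^{-1})|_1 = \int_a |v|\,\frac{2}{\psi_2-\psi_1}\,d\lambda_2 \le 2|1/(\psi_2-\psi_1)|_\infty |1_av|_1$, which will combine with the $|(1+M^2)^{1/2}+M|$-type factor below. For the derivative term, $\partial_y(v\circ g^{-1})(y,\theta) = (\partial_y v)(h(y,\theta),\theta)\,\partial_y h(y,\theta)$ with $\partial_y h=\frac12(\psi_2-\psi_1)$, so after the same change of variables $|1_{[-1,1]\times[C,D]}\partial_y(v\circ g^{-1})|_1 = \int_a |\partial_y v|\,|\partial_y h|\,\frac{2}{\psi_2-\psi_1}\,d\lambda_2 = \int_a|\partial_y v|\,d\lambda_2 = |1_a\partial_y v|_1$.

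Assembling: $\int_{V_a}|v| = \int_{\gamma_1}|v|+\int_{\gamma_2}|v| \le |(1+M^2)^{1/2}|_\infty \int_{V'}|v\circ g^{-1}| \le |(1+M^2)^{1/2}|_\infty\big(2|1/(\psi_2-\psi_1)|_\infty|1_av|_1 + |1_a\partial_y v|_1\big)$. Since $|(1+M^2)^{1/2}|_\infty \le (1+|M|_\infty^2)^{1/2}$, the second term is already $\le |(1+M^2)^{1/2}|_\infty|1_a\partial_y v|_1$ as claimed. For the first term I want the bound $2|((1+M^2)^{1/2}+M)/(\psi_2-\psi_1)|_\infty|1_av|_1$; the extra additive $M$ in the numerator gives slack, so the estimate $|(1+M^2)^{1/2}|_\infty\cdot|1/(\psi_2-\psi_1)|_\infty \le |((1+M^2)^{1/2}+M)/(\psi_2-\psi_1)|_\infty$ suffices (it is genuinely needed to keep the bound pointwise in $\theta$ rather than splitting the sup, which is why the statement carries this form). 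The main obstacle is bookkeeping the arc-length parametrisation of the curved edges correctly and making sure the change-of-variables factors $Jg$, $\partial_y h$, $\partial_\theta h$ land in the right places so that the $M$-dependence comes out exactly as in the statement; once the parametrisation $\theta\mapsto(h(\pm1,\theta),\theta)$ and the identity $\partial_y h=\tfrac12(\psi_2-\psi_1)$ are in hand, everything else is routine.
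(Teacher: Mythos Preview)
Your argument contains a genuine gap at the very last step. You correctly derive the bound
\[
\int_{V_a}|v|\le 2\,|(1+M^2)^{1/2}|_\infty\,|1/(\psi_2-\psi_1)|_\infty\,|1_av|_1+|(1+M^2)^{1/2}|_\infty|1_a\partial_yv|_1,
\]
but then you assert that
\(
|(1+M^2)^{1/2}|_\infty\cdot|1/(\psi_2-\psi_1)|_\infty \le \bigl|((1+M^2)^{1/2}+M)/(\psi_2-\psi_1)\bigr|_\infty.
\)
This is false in general: a product of suprema of two positive functions of $\theta$ can exceed the supremum of their (even enlarged) product, since the two suprema may be attained at different points. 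The extra $+M$ in the numerator does not rescue this; it only helps pointwise, not after the suprema have already been split. So your argument proves a valid inequality, but not the one stated in the lemma.

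The paper avoids this by \emph{not} pulling out the arc-length factor as a supremum at the start. Instead it sets
\(
w=(v\circ g^{-1})\,(1+(\partial_\theta h)^2)^{1/2}
\)
(a function of both $y$ and $\theta$), so that $\int_{\gamma_i}|v|=\int_C^D|w(\pm1,\theta)|\,d\theta$ exactly, and applies Proposition~\ref{prop:rectangle} to $w$. When computing $\partial_y w$ there are now two terms: one from $\partial_y(v\circ g^{-1})$, giving your $|(1+M^2)^{1/2}|_\infty|1_a\partial_yv|_1$, and one from $\partial_y\bigl((1+(\partial_\theta h)^2)^{1/2}\bigr)$, which after the change of variables contributes $\le 2|M/(\psi_2-\psi_1)|_\infty|1_av|_1$. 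Keeping the weight inside before taking suprema then yields the single sup $\bigl|((1+M^2)^{1/2}+M)/(\psi_2-\psi_1)\bigr|_\infty$ in the $|1_av|_1$ coefficient. The fix to your argument is simply to postpone taking the supremum of the arc-length factor until after the change of variables and the differentiation, i.e.\ to work with $w$ rather than with $v\circ g^{-1}$.
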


\begin{proof}
  \[
    \begin{split}
    \int_{\gamma_2} |v| & =\int_C^D |v(\psi_2(\theta),\theta)|(1+(\psi_2'(\theta))^2)^{1/2}\,d\theta
    \\ &  =\int_C^D |v(g^{-1}(1,\theta))|(1+[(\partial_\theta h)(1,\theta)]^2)^{1/2}\,d\theta
         =\int_C^D |w(1,\theta)|\,d\theta,
       \end{split}
       \]
  where $w:[-1,1]\times[C,D]\to \R$ is given by
  $w=v\circ g^{-1}\,(1+(\partial_\theta h)^2)^{1/2}$.  Similarly,
  $\int_{\gamma_1}|v|=\int_C^D|w(-1,\theta)|\,d\theta$.

  By Proposition~\ref{prop:rectangle},
  \[
    \int_{V_a} |v|\le
    |1_{[-1,1]\times[C,D]}w|_1+|1_{[-1,1]\times[C,D]}\partial_y w|_1.
  \]

  For the first term,
  \[
  \begin{split}
    |1_{[-1,1]\times[C,D]}w|_1 & =\int_{[-1,1]\times[C,D]} |v\circ g^{-1}|\,(1+(\partial_\theta h)^2)^{1/2}
    \\ &  \le \int_{[-1,1]\times[C,D]} |v\circ g^{-1}|\,(1+M^2)^{1/2} 
         = \int_a |v|\,(1+M^2)^{1/2}Jg 
    \\ & \le 2\int_a |v|\,(1+M^2)^{1/2}/(\psi_2-\psi_1)
         \le 2|(1+M^2)^{1/2}/(\psi_2-\psi_1)|_\infty |1_av|_1.
       \end{split}
       \]
       Next, we have
       \[
  \begin{split}
    \partial_yw & =(\partial_yv)\circ g^{-1}\,\partial_yh (1+(\partial_\theta h)^2)^{1/2}+v\circ g^{-1}(1+(\partial_\theta h)^2)^{-1/2}\partial_\theta h\,\partial_\theta\partial_yh.
  \end{split}
  \]
  Hence
  \[
  \begin{split}
    |1_{[-1,1]\times[C,D]}\partial_yw|_1 
    & \le I_1+I_2
  \end{split}
  \]
  where
  \[
  \begin{split}
    I_1 & =\int_{[-1,1]\times[C,D]} |(\partial_yv)\circ g^{-1}|\,|\partial_yh|\,(1+(\partial_\theta h)^2)^{1/2}  
          \\ & =\int_a |\partial_yv|\,(1+(\partial_\theta h)^2\circ g)^{1/2} 
         \le |(1+M^2)^{1/2}|_\infty|1_a\partial_y v|_1,
       \end{split}
       \]
       and
       \[
  \begin{split}
    I_2 & =\int_{[-1,1]\times[C,D]}|v\circ g^{-1}|\,(1+(\partial_\theta h)^2)^{-1/2}|\partial_\theta h|\,|\partial_\theta\partial_yh|
          \le \int_{[-1,1]\times[C,D]}|v\circ g^{-1}|\,|\partial_\theta\partial_yh|  \\ &
                                                                                          = \int_a |v|\,|\partial_\theta\partial_yh|\,Jg
                                                                                          =\int_a|v|\,|\psi_2'-\psi_1'|/(\psi_2 -\psi_1)
    \\ & \le |(\psi_2'-\psi_1')/(\psi_2-\psi_1)|_\infty |1_av|_1
         \le 2|M/(\psi_2-\psi_1)|_\infty |1_av|_1.
       \end{split}
       \]

\vspace{-5ex}
\end{proof}

\paragraph{Horizontal edges}

Next we let $H_a$ denote the union of the horizontal edges.

\begin{lemma} \label{lem:Ha} Let $v:\R^2\to\R$ be a $C^1$ function.
  Then
  \[
    \int_{H_a}|v|\le
    2\Bigl\{(D-C)^{-1}+\bigl|M/(\psi_2-\psi_1)\bigr|_\infty\Bigr\}
    |1_av|_1+|1_a\partial_\theta v|_1+|M|_\infty |1_a\partial_yv|_1.
  \]
\end{lemma}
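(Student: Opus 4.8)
The plan is to mimic the proof of Lemma~\ref{lem:Va} for the vertical edges, using the same straightening diffeomorphism $g:a\to[-1,1]\times[C,D]$ with $g^{-1}(y,\theta)=(h(y,\theta),\theta)$, and then reducing to the rectangular case via Proposition~\ref{prop:rectangle}. First I would observe that $g$ carries the two horizontal edges $\{\theta=C\}$ and $\{\theta=D\}$ of $a$ onto the corresponding horizontal edges of the rectangle $[-1,1]\times[C,D]$. Parametrising the bottom edge by $y=h(s,C)$, $s\in[-1,1]$, so that $dy=\partial_s h(s,C)\,ds=\tfrac12(\psi_2(C)-\psi_1(C))\,ds$, and likewise the top edge, one gets
\[
\int_{H_a}|v|=\int_{H_{[-1,1]\times[C,D]}}|w|,\qquad w:=(v\circ g^{-1})\,\partial_s h,
\]
where $\partial_s h(s,\theta)=\tfrac12(\psi_2(\theta)-\psi_1(\theta))=1/Jg$. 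Since $v$ and $g^{-1}$ are $C^1$ and $\psi_1,\psi_2\in C^1$, the function $w$ is $C^1$, so Proposition~\ref{prop:rectangle} applies to $w$ on the rectangle and gives
\[
\int_{H_a}|v|\le 2(D-C)^{-1}|1_{[-1,1]\times[C,D]}w|_1+|1_{[-1,1]\times[C,D]}\partial_\theta w|_1.
\]

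The remaining work is to push these two $L^1$ norms back to $a$. The key identity is that the Jacobian $Jg=2/(\psi_2-\psi_1)$ picked up by undoing the change of variables is exactly cancelled by the weight $\partial_s h$ sitting inside $w$ (note that $\partial_s h$, hence $1/Jg$, is independent of $s$). Thus $|1_{[-1,1]\times[C,D]}w|_1=\int_a|v|=|1_av|_1$. For the derivative term I would compute
\[
\partial_\theta w=\bigl[(\partial_yv)\circ g^{-1}\,\partial_\theta h+(\partial_\theta v)\circ g^{-1}\bigr]\partial_s h+(v\circ g^{-1})\,\partial_\theta\partial_s h,
\]
and use the bound $|\partial_\theta h|\le M(\theta)$ from the Proposition preceding Lemma~\ref{lem:Va}, together with $|\partial_\theta\partial_s h|=\tfrac12|\psi_2'-\psi_1'|\le M(\theta)$. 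Changing variables back in the three resulting integrals: the first two again carry the weight $\partial_s h$, so the Jacobian cancels and they contribute $|M|_\infty|1_a\partial_yv|_1$ and $|1_a\partial_\theta v|_1$ respectively; the third has no $\partial_s h$ weight, so it retains $Jg=2/(\psi_2-\psi_1)$ and yields $2|M/(\psi_2-\psi_1)|_\infty|1_av|_1$. Collecting the four contributions produces exactly the claimed inequality.

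I do not anticipate a genuine obstacle; the argument runs parallel to Lemma~\ref{lem:Va}. The only point that needs care is the bookkeeping of the change of variables: one must be consistent about whether $g$ or $g^{-1}$ is in play, and remember that $g$ leaves the $\theta$-coordinate untouched, so that $M\circ g$, $(\psi_2-\psi_1)\circ g$ and $\partial_s h\circ g$ are simply $M(\theta)$, $(\psi_2-\psi_1)(\theta)$ and $\tfrac12(\psi_2-\psi_1)(\theta)$. A secondary routine check is that Proposition~\ref{prop:rectangle} is being applied to a genuinely $C^1$ integrand on the rectangle, which holds because $h$ inherits $C^1$ regularity from $\psi_1$ and $\psi_2$.
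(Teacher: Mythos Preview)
Your proposal is correct and follows essentially the same approach as the paper: you use the same straightening diffeomorphism $g$, define the same weighted function $w=(v\circ g^{-1})\,\partial_s h$ (the paper writes $\partial_y h$), apply Proposition~\ref{prop:rectangle} to $w$ on the rectangle, and then push back the $L^1$ norms via the same change of variables with the same Jacobian cancellation. The decomposition of $\partial_\theta w$ into three terms and their respective bounds are identical to the paper's $I_1,I_2,I_3$.
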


\begin{proof}
  For the bottom edge, we write
  \[
    \int_{\psi_1(C)}^{\psi_2(C)}|v(t,C)|\,dt= \int_{-1}^1
    |v(h(y,C),C)||(\partial_yh)(y,C)|\,dy =\int_{-1}^1| w(y,C)|\,dy,
  \]
  where $w=v\circ g^{-1}\partial_yh$.  Similarly,
  $\int_{\psi_1(D)}^{\psi_2(D)}|v(t,D)|\,dt =\int_{-1}^1|
  w(y,D)|\,dy$.

  By Proposition~\ref{prop:rectangle},
  \[
    \int_{H_a}|v(t,C)|\,dt\le 2(D-C)^{-1}|1_{[-1,1]\times[C,D]}w|_1+
    |1_{[-1,1]\times[C,D]}\partial_\theta w|_1.
  \]

  Now
  \[
    |1_{[-1,1]\times[C,D]}w|_1= \int_{[-1,1]\times[C,D]}|v\circ
    g^{-1}|\,|\partial_yh| =\int_a|v|\,|\partial_yh|Jg =|1_av|_1.
  \]
  Also,
  \[
    \partial_\theta w=\partial_y v\circ g^{-1}\,\partial_\theta
    h\,\partial_yh+\partial_\theta v\circ g^{-1}\,\partial_yh+v\circ
    g^{-1}\,\partial_\theta\partial_yh,
  \]
  and so
\[
    |1_{[-1,1]\times[C,D]}\partial_\theta w|_1\le I_1+I_2+I_3,
  \]
  where
  \[
    \begin{split}
    I_1 & =
          \int_{[-1,1]\times[C,D]}|\partial_yv\circ g^{-1}||\partial_\theta h||\partial_yh|
          =\int_a |\partial_yv| |\partial_\theta h|\circ g |\partial_yh| Jg
    \\ & =\int_a |\partial_yv| |\partial_\theta h|\circ g 
         \le |M|_\infty |1_a\partial_yv|_1,
       \end{split}
       \]
  \[
    I_2= \int_{[-1,1]\times[C,D]}|\partial_\theta v\circ
    g^{-1}||\partial_yh| =\int_a|\partial_\theta v||\partial_yh|Jg
    =\int_a|\partial_\theta v|=|1_a\partial_\theta v|_1,
  \]
  and
  \[
    \begin{split}
    I_3 & 
          =\int_{[-1,1]\times[C,D]}|v\circ g^{-1}||\partial_\theta\partial_yh|
          =\int_a |v||\partial_\theta\partial_y h|Jg
          \le |(\psi_2'-\psi_1')/(\psi_2-\psi_1)|_\infty |1_av|_1
    \\ & \le 2|M/(\psi_2-\psi_1)|_\infty |1_av|_1.
  \end{split}
  \]

\vspace{-5ex}
\end{proof}

\begin{pfof}{Theorem~\ref{thm:a}}
  We combine the contributions from Lemmas~\ref{lem:Va}
  and~\ref{lem:Ha}.  The coefficient of the $|1_av|_1$ term is
  immediate.  The remaining terms yield
  \[
    |1_a\partial_\theta
    v|_1+\bigl((1+|M|_\infty^2)^{1/2}+|M|_\infty\bigr)|1_a \partial_y
    v|_1.
  \]
  The result follows since
  $|\partial_\theta v|+|\partial_y v|\le \sqrt 2|\nabla v|$.
\end{pfof}

\section{Convergence to a stable law}
\label{sec:stable}

In this appendix, we describe a general functional-analytic framework for establishing convergence to a stable law.  Our presentation follows~\cite[Theorem~6.1]{AaronsonDenker01} with a simplification due to~\cite{Gouezel10b}.

Let $F:Y\to Y$ be an ergodic measure-preserving transformation on a probability space $(Y,\mu_Y)$ with transfer operator $R:L^1(Y)\to L^1(Y)$.
Let $\cB(Y)\subset L^1(Y)$ be a Banach space containing constant functions.
In particular, $1$ is a simple eigenvalue for $R:\cB(Y)\to\cB(Y)$.
We assume that there is a spectral gap for $R:\cB(Y)\to\cB(Y)$, so $\spec R\subset \{1\}\cup B_{\kappa}(0)$ for some $\kappa<1$.

Let $\psi\in L^1(Y)$ with $\int_Y\psi\,d\mu_Y=0$, and suppose that
there are constants $\sigma_1,\sigma_2\ge0$ with $\sigma_1+\sigma_2>0$, and $\alpha\in(1,2)$, such that
\begin{equation} \label{eq:tails}
\mu_Y(\psi>x)= (\sigma_1+o(1))x^{-\alpha} \quad\text{and}\quad
\mu_Y(\psi<-x)= (\sigma_2+o(1))x^{-\alpha}
\quad\text{as $x\to\infty$.}
\end{equation}
Define
\[
\sigma= (\sigma_1+\sigma_2)\Gamma(1-\alpha)\cos{\SMALL\frac{\alpha\pi}{2}},   \qquad \beta=(\sigma_1-\sigma_2)/(\sigma_1+\sigma_2).
\]
It follows from these assumptions on $\psi$ (see~\cite[Theorem~2.6.5]{IbragimovLinnik}) that
 \[
\int_Y e^{it\psi}\,d\mu_Y=1-\sigma|t|^\alpha(1-i\beta\sgn t\tan{\SMALL\frac{\alpha\pi}{2}})+o(|t|^\alpha)
\quad\text{as $t\to0$}. 
\]

Define the twisted transfer operators $R_t:L^1(Y)\to L^1(Y)$, $t\in\R$, by
$R_tv=R(e^{it\psi}v)$.
Our final assumption is that there exists $t_0>0$, $\alpha'\in(\frac12\alpha,1]$ and $C>0$ such that
$R_t$ restricts to an operator $R_t:\cB(Y)\to\cB(Y)$ and 
$\|R_t-R\|_\cB\le C|t|^{\alpha'}$ for all $|t|<t_0$.
Let $\psi_n=\sum_{j=0}^{n-1}\psi\circ F^j$.

\begin{thm} \label{thm:stable}
Under the above assumptions,
$n^{-1/\alpha}\psi_n\to_d \sigma^{1/\alpha}G_{\alpha,\beta}$
where $G_{\alpha,\beta}$ is the $\alpha$-stable law with characteristic 
function
$\E(e^{itG_{\alpha,\beta}})= \exp\{-|t|^\alpha(1-i\beta\sgn t\tan{\SMALL\frac{\alpha\pi}{2}})\}$.
\end{thm}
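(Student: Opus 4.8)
The plan is a standard Nagaev--Guivarc'h spectral argument. First I would record the basic identity relating the twisted transfer operators to characteristic functions: for $v\in L^1(Y)$ and $n\ge1$,
\[
\int_Y R_t^n v\,d\mu_Y=\int_Y e^{it\psi_n}v\,d\mu_Y ,
\]
which follows by induction on $n$ from $R_t v=R(e^{it\psi}v)$ and $\int_Y Rw\,d\mu_Y=\int_Y w\,d\mu_Y$. Taking $v\equiv1$ (constants lie in $\cB(Y)$) gives $\E_{\mu_Y}(e^{it\psi_n})=\int_Y R_t^n 1\,d\mu_Y$, so it suffices to analyse $R_t^n$ for small $t$. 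Since $R:\cB(Y)\to\cB(Y)$ has a spectral gap, $1$ is a simple isolated eigenvalue with rank-one eigenprojection $P$, $Pv=(\int_Y v\,d\mu_Y)1$, and the rest of $\spec R$ lies in $B_\kappa(0)$; as $\|R_t-R\|_\cB\le C|t|^{\alpha'}\to0$, analytic perturbation theory provides $t_1>0$ such that for $|t|<t_1$ the operator $R_t$ has a simple eigenvalue $\lambda(t)$ with $\lambda(0)=1$, rank-one eigenprojection $P_t$, and $R_t^n=\lambda(t)^nP_t+Q_t^n$ with $P_tQ_t=Q_tP_t=0$ and $\sup_{|t|<t_1}\|Q_t^n\|_\cB\le C\kappa_1^n$ for some $\kappa_1<1$; moreover $\lambda(t)\to1$ and, writing $v_t=P_t1$, $v_t\to1$ in $\cB(Y)$, hence in $L^1(Y)$, so $\int_Y v_t\,d\mu_Y\to1$.

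The heart of the matter is the expansion of $\lambda(t)$. From $R_t v_t=\lambda(t)v_t=R(e^{it\psi}v_t)$, integrating against $\mu_Y$ gives $\lambda(t)\int_Y v_t\,d\mu_Y=\int_Y e^{it\psi}v_t\,d\mu_Y$. Writing $v_t=1+w_t$ with $\|w_t\|_\cB\le C|t|^{\alpha'}$,
\[
\int_Y e^{it\psi}v_t\,d\mu_Y=\int_Y e^{it\psi}\,d\mu_Y+\int_Y(e^{it\psi}-1)w_t\,d\mu_Y+\int_Y w_t\,d\mu_Y .
\]
The first term equals $1-\sigma|t|^\alpha(1-i\beta\sgn t\tan{\SMALL\frac{\alpha\pi}{2}})+o(|t|^\alpha)$ by the stated consequence of~\eqref{eq:tails}. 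For the second term I would use $(R_t-R)w=R((e^{it\psi}-1)w)$, so that $\int_Y(e^{it\psi}-1)w_t\,d\mu_Y=\int_Y(R_t-R)w_t\,d\mu_Y$, which is $O(\|R_t-R\|_\cB\|w_t\|_\cB)=O(|t|^{2\alpha'})=o(|t|^\alpha)$ precisely because $\alpha'>\tfrac12\alpha$ (here using $\cB(Y)\hookrightarrow L^1(Y)$). Dividing by $\int_Y v_t\,d\mu_Y=1+\int_Y w_t\,d\mu_Y=1+o(1)$ and absorbing the $o(1)$ factor into the existing $o(|t|^\alpha)$ error yields
\[
\lambda(t)=1-\sigma|t|^\alpha(1-i\beta\sgn t\tan{\SMALL\tfrac{\alpha\pi}{2}})+o(|t|^\alpha)\qquad(t\to0).
\]

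Finally I would conclude via Lévy's continuity theorem. Fix $t\in\R$ and set $s_n=tn^{-1/\alpha}$; for $n$ large $|s_n|<t_1$, so
\[
\E_{\mu_Y}(e^{itn^{-1/\alpha}\psi_n})=\int_Y R_{s_n}^n1\,d\mu_Y=\lambda(s_n)^n\int_Y v_{s_n}\,d\mu_Y+\int_Y Q_{s_n}^n1\,d\mu_Y .
\]
The last term is $O(\kappa_1^n)\to0$, and $\int_Y v_{s_n}\,d\mu_Y\to1$. From the eigenvalue expansion, $n\log\lambda(s_n)=-\sigma|t|^\alpha(1-i\beta\sgn t\tan{\SMALL\frac{\alpha\pi}{2}})+o(1)$ (using $\sgn s_n=\sgn t$ for $t\neq0$), so $\lambda(s_n)^n\to\exp\{-\sigma|t|^\alpha(1-i\beta\sgn t\tan{\SMALL\frac{\alpha\pi}{2}})\}$, which is the characteristic function of $\sigma^{1/\alpha}G_{\alpha,\beta}$ since $\sigma^{1/\alpha}>0$. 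Hence $n^{-1/\alpha}\psi_n\to_d\sigma^{1/\alpha}G_{\alpha,\beta}$. The only non-routine point is the eigenvalue expansion: one must control the perturbation $w_t$ of the eigenfunction just sharply enough that its contribution is $o(|t|^\alpha)$, and this is exactly what the hypothesis $\alpha'>\alpha/2$ delivers; everything else is bookkeeping with the spectral decomposition and the Lévy continuity theorem.
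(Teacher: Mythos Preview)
Your proof is correct and follows essentially the same Nagaev--Guivarc'h spectral route as the paper: spectral decomposition of $R_t^n$ via perturbation theory, expansion of the leading eigenvalue $\lambda(t)$ to order $o(|t|^\alpha)$ using $\|R_t-R\|_\cB=O(|t|^{\alpha'})$ with $2\alpha'>\alpha$, and conclusion by L\'evy continuity. The only cosmetic difference is that the paper normalises the eigenfunction $w_t$ so that $\int_Y w_t\,d\mu_Y=1$, which makes the division step unnecessary (since then $\lambda_t=\int_Y R_t w_t\,d\mu_Y=\int_Y R_t1\,d\mu_Y+\int_Y(R_t-R)(w_t-1)\,d\mu_Y$ directly), whereas you take $v_t=P_t1$ and divide by $\int_Y v_t\,d\mu_Y$ at the end; both are equivalent and the $O(|t|^{2\alpha'})$ bookkeeping goes through identically.
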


\begin{proof}
The argument is by now standard.  Since we could not find the result stated in the literature, we give the details.

Since $t\mapsto R_t:\cB(Y)\to\cB(Y)$ is continuous at $t=0$,
there exists $t_1\in(0,t_0]$, $\kappa_0\in(\kappa,1)$ and $\lambda_t\in B_1(0)$,
such that $\lambda_t$ is a simple isolated eigenvalue for $R_t$
and 
$\spec R_t\subset \{\lambda_t\}\cup B_{\kappa_0}(0)$
for all $|t|<t_1$.
Moreover, $|\lambda_t-1|\ll |t|^{\alpha'}$.

Let $w_t\in\cB(Y)$ denote the family of eigenfunctions corresponding to $\lambda_t$ with \mbox{$w_0=1$}. Shrinking $t_1$ if necessary, we can ensure that $w_t>0$.
In particular, we can normalize so that $\int_Y w_t\,d\mu_Y=1$ for all $|t|<t_1$.

Let $P_t$ be the corresponding family of spectral projections with
$P_0v=\int_Y v\,d\mu_Y$.  Again $\|P_t-P_0\|_\cB\ll |t|^{\alpha'}$.
We have
\[
R_t^n=\lambda_t^n P_t+R_t^n(I-P_t).
\]
Let $\kappa_1\in(\kappa_0,1)$.
Then there exists a constant $C>0$ and functions $a_1(t)$, $a_2(t,n)$ 
such that
\[
\int_Y R_t^n1\,d\mu_Y=\lambda_t^n(1+a_1(t))\,+\,a_2(t,n)
\]
and
\[
|a_1(t)|\le C|t|^{\alpha'},\qquad  |a_2(t,n)|\le C\kappa_1^n,
\]
for all $|t|<t_1$, $n\ge1$.

Next, 
\begin{align*}
\lambda_t & =\int_Y\lambda_tw_t\,d\mu_Y
=\int_Y R_tw_t\,d\mu_Y
=\int_Y R_t1\,d\mu_Y+\int_Y (R_t-R)(w_t-w_0)\,d\mu_Y
\\
& = \int_Y e^{it\psi}\,d\mu_Y + O(t^{2\alpha'})
 =\int_Y e^{it\psi}\,d\mu_Y=1-\sigma|t|^\alpha(1-i\beta\sgn t\tan{\SMALL\frac{\alpha\pi}{2}})+o(t^\alpha).
\end{align*}

Now fix $t\in\R$. Then
\begin{align*}
\int_Y  & e^{itn^{-1/\alpha}\psi_n}\,d\mu_Y
 =\int_YR^n(e^{itn^{-1/\alpha}\psi_n})\,d\mu_Y
=\int_Y R_{tn^{-1/\alpha}}^n1\,d\mu_Y
\\ & = \lambda_{tn^{-1/\alpha}}^n(1+a_1(tn^{-1/\alpha}))\,+\,a_2(tn^{-1/\alpha},n)
=  \lambda_{tn^{-1/\alpha}}^n(1+O(n^{-\alpha'/\alpha})) \,+\,O(\kappa_1^n)
\\ & =  \big(1-\sigma|t|^\alpha n^{-1}(1-i\beta\sgn t\tan{\SMALL\frac{\alpha\pi}{2}}\big)^n(1+O(n^{-\alpha'/\alpha})) \,+\,O(\kappa_1^n) \\ & \to 
\exp\{-\sigma|t|^\alpha(1-i\beta\sgn t\tan{\SMALL\frac{\alpha\pi}{2}})\},
\end{align*}
as $n\to\infty$.
Replacing $t$ by $t\sigma^{-1/\alpha}$,
it follows from the L\'evy continuity theorem that
$\sigma^{-1/\alpha}n^{-1/\alpha}\psi_n\to_d G_{\alpha,\beta}$.
\end{proof}

\begin{rmk} \label{rmk:NDA}  
Similarly, following~\cite{AaronsonDenker01b}, if
$\mu_Y(|\psi|>x)\sim(\sigma^2+o(1))x^{-2}$ as $x\to\infty$,
then $\lambda_t=1+\sigma^2 t^2\log|t|+o(t^2\log|t|)$.
(Here, we require that $\|R_t-R\|_\cB\le C|t|$.)  The above argument then shows that
$(n\log n)^{-1/2}\psi_n\to_d N(0,\sigma^2)$.
\end{rmk}

\begin{rmk}  
We have restricted to tails of the form $\mu_Y(|\psi|>x)=\ell(x)x^{-\alpha}$ where $\lim_{x\to\infty}\ell(x)=c$ for some $c>0$, since this suffices for our examples.  The general case with $\ell$ slowly varying goes through as in~\cite{AaronsonDenker01,AaronsonDenker01b}.
\end{rmk}

\paragraph{Acknowledgements}
The research of PE and IM was supported in part by a European Advanced
Grant {\em StochExtHomog} (ERC AdG 320977).


\begin{thebibliography}{10}

\bibitem{Aaronson} J.~Aaronson. \emph{{An Introduction to Infinite
      Ergodic Theory}}. Math. Surveys and Monographs \textbf{50},
  Amer. Math. Soc., 1997.

\bibitem{AaronsonDenker01}
J.~Aaronson and M.~Denker. {Local limit theorems for partial sums of stationary
  sequences generated by Gibbs-Markov maps}. \emph{Stoch. Dyn.} \textbf{1}
(2001) 193--237.

\bibitem{AaronsonDenker01b}
J.~Aaronson and M.~Denker. A local limit theorem for stationary processes in
  the domain of attraction of a normal distribution. \emph{Asymptotic methods
  in probability and statistics with applications ({S}t. {P}etersburg, 1998)},
  Stat. Ind. Technol., Birkh\"{a}user Boston, Boston, MA, 2001, pp.~215--223.

\bibitem{AntoniouMapp}
M.~Antoniou and I.~Melbourne. Rate of convergence in the weak invariance
  principle for deterministic systems. \emph{Comm. Math. Phys.}
\textbf{369} (2019) 1147-1165.

\bibitem{BahsounBoseDuan14}
W.~Bahsoun, C.~Bose and Y.~Duan. Decay of correlation for random intermittent
  maps. \emph{Nonlinearity} \textbf{27} (2014) 1543--1554.

\bibitem{BahsounBose16}
W.~Bahsoun and C.~Bose. Mixing rates and limit theorems for random intermittent
  maps. \emph{Nonlinearity} \textbf{29} (2016) 1417--1433.

\bibitem{BahsounBoseRuziboev19}
W.~Bahsoun, C.~Bose and M.~Ruziboev. Quenched decay of correlations for slowly
  mixing systems. \emph{Trans. Amer. Math. Soc.} \textbf{372} (2019)
  6547--6587.

    \bibitem{BT08} P.~ B\'alint and I.~P.~T\'oth. \newblock Exponential decay
  of correlations in multi-dimensional dispersing billiards.
  \newblock{\em Ann. Henri Poincar\'e} \textbf{9} (2008) 1309--1369.

\bibitem{BBM19}
P.~B{\'a}lint, O.~Butterley and I.~Melbourne. Polynomial decay of correlations
  for flows, including Lorentz gas examples. \emph{Comm. Math. Phys.}
  \textbf{368} (2019) 55--111.

\bibitem{BMTsub}
H.~Bruin, I.~Melbourne and D.~Terhesiu. {Sharp polynomial bounds on decay of
  correlations for multidimensional nonuniformly hyperbolic systems and
  billiards}. \emph{Ann. Henri Lebesgue} \textbf{4} (2021) 407--451.

\bibitem{Buzzi00}
J.~Buzzi. Absolutely continuous invariant probability measures for arbitrary
  expanding piecewise {$\mathbf R$}-analytic mappings of the plane. \emph{Ergodic
  Theory Dynam. Systems} \textbf{20} (2000) 697--708.

\bibitem{Che1} N.~Chernov. 
  Statistical properties of
    piecewise smooth hyperbolic systems in high dimensions.
  \newblock{\em Discrete Contin. Dynam. Systems} \textbf{5} (1999)
  425--448.

\bibitem{ChernovZhang05}
N.~I. Chernov and H.-K. Zhang. {Billiards with polynomial mixing rates}.
  \emph{Nonlinearity} \textbf{18} (2005) 1527--1553.

\bibitem{ChernovZhang08}
N.~I. Chernov and H.-K. Zhang. {Improved estimates for correlations in
  billiards}. \emph{Comm. Math. Phys.} \textbf{77} (2008) 305--321.

\bibitem{CFKMZ}
I.~Chevyrev, P.~K. Friz, A.~Korepanov, I.~Melbourne and H.~Zhang. Multiscale
  systems, homogenization, and rough paths. 
In: 
Probability and Analysis in Interacting Physical Systems: In Honor of S.R.S. Varadhan, Berlin, August, 2016, eds.\ 
        P. Friz et al. \emph{Springer Proc. in Maths. \& Stat.} \textbf{283}
(2019) 17--42.

\bibitem{CFKMZsub}
I.~Chevyrev, P.~K. Friz, A.~Korepanov, I.~Melbourne and H.~Zhang.
  Deterministic homogenization under optimal moment assumptions for fast-slow
  systems. Part 2. Preprint, 2020.

\bibitem{CunyDedeckerKorepanovMerlevedesub}
C.~Cuny, J.~Dedecker, A. Korepanov and F.~Merlev{\`e}de. {Rates in almost sure
  invariance principle for slowly mixing dynamical systems}. 
\emph{Ergodic Theory Dynam. Systems} \textbf{40} (2020) 2317--2348.

\bibitem{CunyMerlevede15}
C.~Cuny and F.~Merlev{\`e}de. {Strong invariance principles with rate for
  ``reverse'' martingales and applications}. \emph{J. Theor. Probab.} (2015)
  137--183.

\bibitem{Cowieson00} W.~J. Cowieson. Stochastic stability for
  piecewise expanding maps in {${\bf R}^d$}. \emph{Nonlinearity}
  \textbf{13} (2000) 1745--1760.

\bibitem{Cowieson02} W.~J. Cowieson. Absolutely continuous invariant
  measures for most piecewise smooth expanding maps. \emph{Ergodic
    Theory Dynam. Systems} \textbf{22} (2002) 1061--1078.

\bibitem{DedeckerMerlevede15}
J.~Dedecker and F.~Merlev\`{e}de. Moment bounds for dependent sequences in
  smooth {B}anach spaces. \emph{Stochastic Process. Appl.} \textbf{125} (2015)
  3401--3429.

\bibitem{Dol00}
  D.~Dolgopyat. On dynamics of mostly contracting diffeomorphisms.
  \emph{Comm. Math. Phys.} \textbf{213} (2000) 181--201.

  \bibitem{Esl19} P.~Eslami. \newblock Inducing schemes for multi-dimensional piecewise expanding maps.
  Preprint, 2019. To Appear in \emph{Discrete Contin. Dyn. Syst.}


\bibitem{EvansGariepy} L.~C. Evans and R.~F. Gariepy. \emph{Measure
    theory and fine properties of functions}. Studies in Advanced
  Mathematics, CRC Press, Boca Raton, FL, 1992.

  \bibitem{Giusti} E.~Giusti. \emph{Minimal surfaces and functions of
      bounded variation}. Volume 80 of \emph{Monographs in
      Mathematics}. Birkh\"auser Verlag, Basel, 1984.

\bibitem{GoraBoyarsky89} P.~G{\'o}ra and A.~Boyarsky. Absolutely
  continuous invariant measures for piecewise expanding {$C^2$}
  transformation in {${\bf R}^N$}. \emph{Israel J.  Math.} \textbf{67}
  (1989) 272--286.

\bibitem{GM13}
G.~A. Gottwald and I.~Melbourne. {Homogenization for deterministic maps and
  multiplicative noise}. \emph{Proc. R. Soc. London A} (2013) 20130201.


\bibitem{Gouezel04a} S.~Gou{\"e}zel. {Sharp polynomial estimates for
    the decay of correlations}.  \emph{Israel J. Math.} \textbf{139}
  (2004) 29--65.

\bibitem{Gouezel04}
S.~Gou{\"e}zel. {Central limit theorem and stable laws for intermittent maps}.
  \emph{Probab. Theory Relat. Fields} \textbf{128} (2004) 82--122.


\bibitem{Gouezel05}
S.~Gou{\"e}zel. Berry-{E}sseen theorem and local limit theorem for non
  uniformly expanding maps. \emph{Ann. Inst. H. Poincar\'e Probab. Statist.}
  \textbf{41} (2005) 997--1024.

 \bibitem{Gouezel07}
 S.~Gou{\"e}zel. {Statistical properties of a skew product with a curve of
   neutral points}. \emph{Ergodic Theory Dynam. Systems} \textbf{27} (2007)
   123--151.

\bibitem{Gouezel10b}
S.~Gou{\"e}zel. Characterization of weak convergence of {B}irkhoff sums for
  {G}ibbs-{M}arkov maps. \emph{Israel J. Math.} \textbf{180} (2010) 1--41.


\bibitem{Gouezel11} S.~Gou{\"e}zel. Correlation asymptotics from large
  deviations in dynamical systems with infinite
  measure. \emph{Colloq. Math.} \textbf{125} (2011) 193--212.

\bibitem{GouezelM14}
S.~Gou{\"e}zel and I.~Melbourne. {Moment bounds and concentration inequalities
  for slowly mixing dynamical systems}. \emph{Electron. J. Probab.} \textbf{19}
  (2014) 1--30.

\bibitem{Hennion93} H.~Hennion. {Sur un th\'eor\`eme spectral et son
    application aux noyaux
    lipchitziens}. \emph{Proc. Amer. Math. Soc.} \textbf{118} (1993)
  627--634.

\bibitem{HofbauerKeller82}
F.~Hofbauer and G.~Keller. {Ergodic properties of invariant measures for
  piecewise monotonic transformations}. \emph{Math. Z.} \textbf{180} (1982)
  119--140.

\bibitem{Hu04}
H.~Hu. Decay of correlations for piecewise smooth maps with indifferent fixed
  points. \emph{Ergodic Theory Dynam. Systems} \textbf{24} (2004) 495--524.

\bibitem{HuVaienti09} H.~Hu and S.~Vaienti. {Absolutely continuous
    invariant measures for non-uniformly expanding
    maps}. \emph{Ergodic Theory Dynam. Systems} \textbf{29} (2009)
  1185--1215.

\bibitem{HuVaientiapp}
H.~Hu and S.~Vaienti. {Lower bounds for the decay of correlations in
  non-uniformly expanding maps}. 
\emph{Ergodic Theory Dynam. Systems} \textbf{39} (2019) 1936--1970.

\bibitem{IbragimovLinnik}
I.~A. Ibragimov and Y.~V. Linnik. \emph{{Independent and Stationary Sequences
  of Random Variables}}. Wolters-Noordhoff, Groningen, 1971.

\bibitem{KM16}
D.~Kelly and I.~Melbourne. {Smooth approximation of stochastic differential
  equations}. \emph{Ann. Probab.} \textbf{44} (2016) 479--520.

\bibitem{KM17}
D.~Kelly and I.~Melbourne. Homogenization for deterministic fast-slow systems
  with multidimensional multiplicative noise. \emph{J. Funct. Anal.}
  \textbf{272} (2017) 4063--4102.

\bibitem{Korepanov18}
A.~Korepanov. Equidistribution for nonuniformly expanding dynamical systems,
  and application to the almost sure invariance principle. \emph{Comm. Math.
  Phys.} \textbf{359} (2018) 1123--1138.


\bibitem{KKM18}
A.~Korepanov, Z.~Kosloff and I.~Melbourne. Martingale-coboundary decomposition
  for families of dynamical systems. \emph{Ann. Inst. H. Poincar\'e Anal. Non
  Lin\'eaire} \textbf{35} (2018) 859--885.

\bibitem{KKMsub}
A.~Korepanov, Z.~Kosloff and I.~Melbourne. Deterministic homogenization under
  optimal moment assumptions for fast-slow systems. Part 1. Preprint, 2020. To Appear in \emph{Ann. Inst. H. Poincar\'e (B) Probab. Statist.}


\bibitem{Liverani13} C.~Liverani. Multidimensional expanding maps with
  singularities: a pedestrian approach. \emph{Ergodic Theory
    Dynam. Systems} \textbf{33} (2013) 168--182.

\bibitem{LSV99}
C.~Liverani, B.~Saussol and S.~Vaienti. {A probabilistic approach to
  intermittency}. \emph{Ergodic Theory Dynam. Systems} \textbf{19} (1999)
  671--685.

\bibitem{Markarian04}
R.~Markarian. Billiards with polynomial decay of correlations. \emph{Ergodic
  Theory Dynam. Systems} \textbf{24} (2004) 177--197.

\bibitem{M09}
I.~Melbourne. {Large and moderate deviations for slowly mixing dynamical
  systems}. \emph{Proc. Amer. Math. Soc.} \textbf{137} (2009) 1735--1741.


\bibitem{MN05}
I.~Melbourne and M.~Nicol. Almost sure invariance principle for nonuniformly
  hyperbolic systems. \emph{Comm. Math. Phys.} \textbf{260} (2005) 131--146.

\bibitem{MN08}
I.~Melbourne and M.~Nicol. Large deviations for nonuniformly hyperbolic
  systems. \emph{Trans. Amer. Math. Soc.} \textbf{360} (2008) 6661--6676.


 \bibitem{MT12} I.~Melbourne and D.~Terhesiu. {Operator renewal theory
     and mixing rates for dynamical systems with infinite
     measure}. \emph{Invent. Math.} \textbf{189} (2012) 61--110.


\bibitem{MTorok12}
I.~Melbourne and A.~T{\" o}r{\" o}k. {Convergence of moments for Axiom A and
  nonuniformly hyperbolic flows}. \emph{Ergodic Theory Dynam. Systems}
  \textbf{32} (2012) 1091--1100.

\bibitem{MVsub} I.~Melbourne and P.~Varandas. Convergence to a L\'evy process in the Skorohod
  ${\mathcal M}_1$ and ${\mathcal M}_2$ topologies for nonuniformly hyperbolic
  systems, including billiards with cusps.  
\emph{Comm. Math. Phys.} \textbf{375} (2020) 653--678.

\bibitem{MZ15}
I.~Melbourne and R.~Zweim{\"u}ller. {Weak convergence to stable L\'evy
  processes for nonuniformly hyperbolic dynamical systems}. \emph{Ann\ Inst. H.
  Poincar{\'e} (B) Probab. Statist.} \textbf{51} (2015) 545--556.

\bibitem{PomeauManneville80}
Y.~Pomeau and P.~Manneville. Intermittent transition to turbulence in
  dissipative dynamical systems. \emph{Comm. Math. Phys.} \textbf{74} (1980)
  189--197.


\bibitem{RychlikPhD} M~.~R. Rychlik. \emph{Invariant measures and the
    variational principle for Lozi mappings}. ProQuest LLC, Ann Arbor,
  MI, 1983, Ph. D. Thesis, University of California, Berkeley.

\bibitem{Sarig02}
O.~M. Sarig. {Subexponential decay of correlations}. \emph{Invent. Math.}
  \textbf{150} (2002) 629--653.

\bibitem{Saussol00}
B.~Saussol. Absolutely continuous invariant measures for multidimensional
  expanding maps. \emph{Israel J. Math.} \textbf{116} (2000) 223--248.

\bibitem{Skorohod56}
A.~V. Skorohod. Limit theorems for stochastic processes. \emph{Teor.
  Veroyatnost. i Primenen.} \textbf{1} (1956) 289--319.

\bibitem{SV07} D.~Sz\'asz, T. Varj\'u.
  Limit laws and recurrence for the planar Lorentz process
  with infinite horizon.
  \emph{J. Stat. Phys.} \textbf{129} (2007) 59--80.

\bibitem{Thaler80}
M.~Thaler. Estimates of the invariant densities of endomorphisms with
  indifferent fixed points. \emph{Israel J. Math.} \textbf{37} (1980)
  303--314.

\bibitem{Tsujii00}
M.~Tsujii. Absolutely continuous invariant measures for piecewise real-analytic
  expanding maps on the plane. \emph{Comm. Math. Phys.} \textbf{208} (2000)
  605--622.

\bibitem{Whitt}
W.~Whitt. \emph{Stochastic-process limits}. Springer Series in Operations
  Research, Springer-Verlag, New York, 2002. An introduction to
  stochastic-process limits and their application to queues.


\bibitem{Young99} L.-S. Young. Recurrence times and rates of mixing.
  \emph{Israel J. Math.}  \textbf{110} (1999) 153--188.

\bibitem{Zweimuller98} R.~Zweim{\"u}ller. Ergodic structure and
  invariant densities of non-{M}arkovian interval maps with
  indifferent fixed points. \emph{Nonlinearity} \textbf{11} (1998)
  1263--1276.

\bibitem{Zweimuller07}
R.~Zweim{\"u}ller. Mixing limit theorems for ergodic transformations. \emph{J.
  Theoret. Probab.} \textbf{20} (2007) 1059--1071.

\end{thebibliography}
\end{document}